
\documentclass{article}
\usepackage[accepted]{icml2024}
\usepackage{microtype}
\usepackage{graphicx}
\usepackage{subfigure}
\usepackage{tcolorbox}
\usepackage{enumitem}
\usepackage{booktabs} 
\usepackage{multirow}
\usepackage{changepage}




\usepackage{amsmath}
\usepackage{amssymb}
\usepackage{mathtools}
\usepackage{amsthm}



\newtheorem{theorem}{Theorem}[section]
\newtheorem{lemma}{Lemma}[section]
\newtheorem{proposition}{Proposition}[section]

\newtheorem{assumption}{Assumption}[section]
\newtheorem{remark}{Remark}[section]
\newtheorem{definition}{Definition}[section]

\usepackage[textsize=tiny]{todonotes}

\usepackage{color}
\definecolor{salmon}{rgb}{1.0, 0.55, 0.41}

\usepackage[colorlinks=true,breaklinks=true,bookmarks=true,urlcolor=teal,citecolor=teal,linkcolor=teal,bookmarksopen=false,draft=false]{hyperref}  
\usepackage[nameinlink, capitalize,noabbrev]{cleveref}
\crefname{assumption}{Assumption}{Assumptions}
\crefname{Assumption}{Assumption}{Assumptions}
\crefname{Lemma}{Lemma}{Lemmata}
\crefname{lemma}{Lemma}{Lemmata}
\crefname{Theorem}{Theorem}{Theorems}
\crefname{theorem}{Theorem}{Theorems}
\crefname{corollary}{Corollary}{Corollaries}
\crefname{proposition}{Proposition}{Propositions}
\crefname{claim}{Claim}{Claims}
\crefname{procedure}{Procedure}{Procedures}
\crefname{algorithm}{Algorithm}{Algorithms}
\crefname{figure}{Figure}{Figures}
\crefname{remark}{Remark}{Remarks}
\crefname{section}{Section}{Sections}
\crefname{procedure}{Procedure}{Procedures}
\crefname{definition}{Definition}{Definitions}
\crefname{example}{Example}{Examples}
\crefname{table}{Table}{Tables}
\crefname{equation}{}{}
\crefname{enumi}{}{}
\crefname{item}{Item}{Items}

\usepackage{xcolor}
\usepackage{colortbl}
\usepackage{tabularx, makecell, booktabs}

\usepackage[toc,page,header]{appendix}
\usepackage[]{minitoc}

\usepackage{fancyhdr}
\usepackage{ifthen}
\newboolean{ICML}
\newboolean{Arxiv}
\setboolean{ICML}{true}   
\setboolean{Arxiv}{false} 

\icmltitlerunning{A Single-loop Robust Policy Gradient Method for Robust Markov Decision Processes}

\begin{document}
\global\long\def\lrinprod#1#2{\text{\ensuremath{\langle#1,#2\rangle}}}%

\global\long\def\inner#1#2{\langle#1,#2\rangle}%

\global\long\def\binner#1#2{\big\langle#1,#2\big\rangle}%

\global\long\def\bbinprod#1#2{\bigg<#1,#2\bigg>}%

\global\long\def\Binner#1#2{\text{\ensuremath{\Big<}#1,#2\ensuremath{\Big>}}}%

\global\long\def\norm#1{\Vert#1\Vert}%
\global\long\def\bnorm#1{\big\Vert#1\big\Vert}%
\global\long\def\Bnorm#1{\Big\Vert#1\Big\Vert}%
\global\long\def\red#1{\textcolor{red}{#1}}%
\global\long\def\blue#1{\textcolor{blue}{#1}}%
\global\long\def\pink#1{\textcolor{pink}{#1}}%
\global\long\def\yellow#1{\textcolor{yellow}{#1}}%
\global\long\def\green#1{\textcolor{green}{#1}}%
\global\long\def\orange#1{\textcolor{orange}{#1}}%
\global\long\def\purple#1{\textcolor{purple}{#1}}%
\global\long\def\brown#1{\textcolor{brown}{#1}}%
\global\long\def\indigo#1{\textcolor{indigo}{#1}}%
\global\long\def\teal#1{\textcolor{teal}{#1}}%

\global\long\def\brbra#1{\big(#1\big)}%
\global\long\def\Brbra#1{\Big(#1\Big)}%
\global\long\def\rbra#1{(#1)}%

\global\long\def\sbra#1{[#1]}%
\global\long\def\bsbra#1{\big[#1\big]}%
\global\long\def\Bsbra#1{\Big[#1\Big]}%
\global\long\def\abs#1{\vert#1\vert}%
\global\long\def\babs#1{\big\vert#1\big\vert}%
\global\long\def\Babs#1{\Big|#1\Big|}%

\global\long\def\cbra#1{\{#1\}}%
\global\long\def\bcbra#1{\big\{#1\big\}}%
\global\long\def\Bcbra#1{\Big\{#1\Big\}}%

\global\long\def\bbcbra#1{\bigg\{#1\bigg\}}%
\global\long\def\bbrbra#1{\bigg(#1\bigg)}%

\global\long\def\vertiii#1{\left\vert \kern-0.25ex  \left\vert \kern-0.25ex  \left\vert #1\right\vert \kern-0.25ex  \right\vert \kern-0.25ex  \right\vert }%
\global\long\def\matr#1{\bm{#1}}%
\global\long\def\til#1{\tilde{#1}}%
\global\long\def\wtil#1{\widetilde{#1}}%
\global\long\def\wh#1{\widehat{#1}}%
\global\long\def\mcal#1{\mathcal{#1}}%
\global\long\def\mbb#1{\mathbb{#1}}%
\global\long\def\mtt#1{\mathtt{#1}}%
\global\long\def\ttt#1{\texttt{#1}}%
\global\long\def\dtxt{\textrm{d}}%
\global\long\def\bignorm#1{\bigl\Vert#1\bigr\Vert}%
\global\long\def\Bignorm#1{\Bigl\Vert#1\Bigr\Vert}%
\global\long\def\rmn#1#2{\mathbb{R}^{#1\times#2}}%
\global\long\def\deri#1#2{\frac{d#1}{d#2}}%
\global\long\def\pderi#1#2{\frac{\partial#1}{\partial#2}}%
\global\long\def\limk{\lim_{k\rightarrow\infty}}%
\global\long\def\trans{\textrm{T}}%
\global\long\def\onebf{\mathbf{1}}%
\global\long\def\Bbb{\mathbb{B}}%
\global\long\def\hbf{\mathbf{h}}%
\global\long\def\zerobf{\mathbf{0}}%
\global\long\def\zero{\bm{0}}%

\global\long\def\Euc{\mathrm{E}}%
\global\long\def\Expe{\mathbb{E}}%
\global\long\def\rank{\mathrm{rank}}%
\global\long\def\range{\mathrm{range}}%
\global\long\def\diam{\mathrm{diam}}%
\global\long\def\epi{\mathrm{epi} }%
\global\long\def\inte{\operatornamewithlimits{int}}%
\global\long\def\dist{\operatornamewithlimits{dist}}%
\global\long\def\proj{\operatorname{Proj}}%
\global\long\def\cov{\mathrm{Cov}}%
\global\long\def\argmin{\operatornamewithlimits{argmin}}%
\global\long\def\argmax{\operatornamewithlimits{argmax}}%
\global\long\def\tr{\operatornamewithlimits{tr}}%
\global\long\def\dis{\operatornamewithlimits{dist}}%
\global\long\def\sign{\operatornamewithlimits{sign}}%
\global\long\def\prob{\mathrm{Prob}}%
\global\long\def\st{\operatornamewithlimits{s.t.}}%
\global\long\def\dom{\mathrm{dom}}%
\global\long\def\prox{\mathrm{prox}}%
\global\long\def\diag{\mathrm{diag}}%
\global\long\def\and{\mathrm{and}}%
\global\long\def\aleq{\overset{(a)}{\leq}}%
\global\long\def\aeq{\overset{(a)}{=}}%
\global\long\def\ageq{\overset{(a)}{\geq}}%
\global\long\def\bleq{\overset{(b)}{\leq}}%
\global\long\def\beq{\overset{(b)}{=}}%
\global\long\def\bgeq{\overset{(b)}{\geq}}%
\global\long\def\cleq{\overset{(c)}{\leq}}%
\global\long\def\ceq{\overset{(c)}{=}}%
\global\long\def\cgeq{\overset{(c)}{\geq}}%
\global\long\def\dleq{\overset{(d)}{\leq}}%
\global\long\def\deq{\overset{(d)}{=}}%
\global\long\def\dgeq{\overset{(d)}{\geq}}%
\global\long\def\eleq{\overset{(e)}{\leq}}%
\global\long\def\eeq{\overset{(e)}{=}}%
\global\long\def\egeq{\overset{(e)}{\geq}}%
\global\long\def\fleq{\overset{(f)}{\leq}}%
\global\long\def\feq{\overset{(f)}{=}}%
\global\long\def\fgeq{\overset{(f)}{\geq}}%
\global\long\def\gleq{\overset{(g)}{\leq}}%
\global\long\def\as{\textup{a.s.}}%
\global\long\def\ae{\textup{a.e.}}%
\global\long\def\Var{\operatornamewithlimits{Var}}%
\global\long\def\clip{\operatorname{clip}}%
\global\long\def\conv{\operatorname{conv}}%
\global\long\def\Cov{\operatornamewithlimits{Cov}}%
\global\long\def\raw{\rightarrow}%
\global\long\def\law{\leftarrow}%
\global\long\def\Raw{\Rightarrow}%
\global\long\def\Law{\Leftarrow}%
\global\long\def\vep{\varepsilon}%
\global\long\def\dom{\operatornamewithlimits{dom}}%
\global\long\def\tsum{{\textstyle {\sum}}}%
\global\long\def\Cbb{\mathbb{C}}%
\global\long\def\Ebb{\mathbb{E}}%
\global\long\def\Fbb{\mathbb{F}}%
\global\long\def\Nbb{\mathbb{N}}%
\global\long\def\Rbb{\mathbb{R}}%
\global\long\def\extR{\widebar{\mathbb{R}}}%
\global\long\def\Pbb{\mathbb{P}}%
\global\long\def\Mrm{\mathrm{M}}%
\global\long\def\Acal{\mathcal{A}}%
\global\long\def\Bcal{\mathcal{B}}%
\global\long\def\Ccal{\mathcal{C}}%
\global\long\def\Dcal{\mathcal{D}}%
\global\long\def\Ecal{\mathcal{E}}%
\global\long\def\Fcal{\mathcal{F}}%
\global\long\def\Gcal{\mathcal{G}}%
\global\long\def\Hcal{\mathcal{H}}%
\global\long\def\Ical{\mathcal{I}}%
\global\long\def\Kcal{\mathcal{K}}%
\global\long\def\Lcal{\mathcal{L}}%
\global\long\def\Mcal{\mathcal{M}}%
\global\long\def\Ncal{\mathcal{N}}%
\global\long\def\Ocal{\mathcal{O}}%
\global\long\def\Pcal{\mathcal{P}}%
\global\long\def\Scal{\mathcal{S}}%
\global\long\def\Tcal{\mathcal{T}}%
\global\long\def\Xcal{\mathcal{X}}%
\global\long\def\Ycal{\mathcal{Y}}%
\global\long\def\Zcal{\mathcal{Z}}%
\global\long\def\i{i}%

\global\long\def\abf{\mathbf{a}}%
\global\long\def\bbf{\mathbf{b}}%
\global\long\def\cbf{\mathbf{c}}%
\global\long\def\fbf{\mathbf{f}}%
\global\long\def\qbf{\mathbf{q}}%
\global\long\def\gbf{\mathbf{g}}%
\global\long\def\ebf{\mathbf{e}}%
\global\long\def\lambf{\bm{\lambda}}%
\global\long\def\alphabf{\bm{\alpha}}%
\global\long\def\sigmabf{\bm{\sigma}}%
\global\long\def\thetabf{\bm{\theta}}%
\global\long\def\deltabf{\bm{\delta}}%
\global\long\def\lbf{\mathbf{l}}%
\global\long\def\ubf{\mathbf{u}}%
\global\long\def\pbf{\mathbf{\mathbf{p}}}%
\global\long\def\vbf{\mathbf{v}}%
\global\long\def\wbf{\mathbf{w}}%
\global\long\def\xbf{\mathbf{x}}%
\global\long\def\ybf{\mathbf{y}}%
\global\long\def\zbf{\mathbf{z}}%
\global\long\def\dbf{\mathbf{d}}%
\global\long\def\Wbf{\mathbf{W}}%
\global\long\def\Abf{\mathbf{A}}%
\global\long\def\Ubf{\mathbf{U}}%
\global\long\def\Pbf{\mathbf{P}}%
\global\long\def\Ibf{\mathbf{I}}%
\global\long\def\Ebf{\mathbf{E}}%
\global\long\def\sbf{\mathbf{s}}%
\global\long\def\Mbf{\mathbf{M}}%
\global\long\def\Dbf{\mathbf{D}}%
\global\long\def\Qbf{\mathbf{Q}}%
\global\long\def\Lbf{\mathbf{L}}%
\global\long\def\Pbf{\mathbf{P}}%
\global\long\def\Xbf{\mathbf{X}}%

\global\long\def\abm{\bm{a}}%
\global\long\def\bbm{\bm{b}}%
\global\long\def\cbm{\bm{c}}%
\global\long\def\dbm{\bm{d}}%
\global\long\def\ebm{\bm{e}}%
\global\long\def\fbm{\bm{f}}%
\global\long\def\gbm{\bm{g}}%
\global\long\def\hbm{\bm{h}}%
\global\long\def\pbm{\bm{p}}%
\global\long\def\qbm{\bm{q}}%
\global\long\def\rbm{\bm{r}}%
\global\long\def\sbm{\bm{s}}%
\global\long\def\tbm{\bm{t}}%
\global\long\def\ubm{\bm{u}}%
\global\long\def\vbm{\bm{v}}%
\global\long\def\wbm{\bm{w}}%
\global\long\def\xbm{\bm{x}}%
\global\long\def\ybm{\bm{y}}%
\global\long\def\zbm{\bm{z}}%
\global\long\def\Abm{\bm{A}}%
\global\long\def\Bbm{\bm{B}}%
\global\long\def\Cbm{\bm{C}}%
\global\long\def\Dbm{\bm{D}}%
\global\long\def\Ebm{\bm{E}}%
\global\long\def\Fbm{\bm{F}}%
\global\long\def\Gbm{\bm{G}}%
\global\long\def\Hbm{\bm{H}}%
\global\long\def\Ibm{\bm{I}}%
\global\long\def\Jbm{\bm{J}}%
\global\long\def\Lbm{\bm{L}}%
\global\long\def\Obm{\bm{O}}%
\global\long\def\Pbm{\bm{P}}%
\global\long\def\Qbm{\bm{Q}}%
\global\long\def\Rbm{\bm{R}}%
\global\long\def\Ubm{\bm{U}}%
\global\long\def\Vbm{\bm{V}}%
\global\long\def\Wbm{\bm{W}}%
\global\long\def\Xbm{\bm{X}}%
\global\long\def\Ybm{\bm{Y}}%
\global\long\def\Zbm{\bm{Z}}%
\global\long\def\lambm{\bm{\lambda}}%
\global\long\def\alphabm{\bm{\alpha}}%
\global\long\def\albm{\bm{\alpha}}%
\global\long\def\taubm{\bm{\tau}}%
\global\long\def\mubm{\bm{\mu}}%
\global\long\def\yrm{\mathrm{y}}%
\global\long\def\iid{i.i.d}%
\global\long\def\vec{\operatorname{vec}}%
\global\long\def\rone{\textrm{I}}%
\global\long\def\rtwo{\textrm{II}}%
\global\long\def\rthree{\textrm{III}}%
\global\long\def\rfour{\textrm{IV}}%
\global\long\def\rfive{\textrm{V}}%
\global\long\def\rsix{\textrm{VI}}%
\global\long\def\rseven{\textrm{VII}}%
\global\long\def\reight{\textrm{VIII}}%
\global\long\def\rnine{\textrm{IX}}%
\global\long\def\rten{\textrm{X}}%
\global\long\def\hess{\operatorname{Hess}}%
\global\long\def\grad{\operatorname{grad}}%
\global\long\def\brho{\boldsymbol{\rho}}
\global\long\def\srpg{\text{SRPG}}
\global\long\def\garnet{\text{GARNET}}
\doparttoc[n]
\faketableofcontents 

\twocolumn[
\icmltitle{A Single-Loop Robust Policy Gradient Method for Robust Markov Decision Processes}




\icmlsetsymbol{equal}{*}

\begin{icmlauthorlist}
\icmlauthor{Zhenwei Lin}{equal,sufe}
\icmlauthor{Chenyu Xue}{equal,sufe}
\icmlauthor{Qi Deng}{sjtu}
\icmlauthor{Yinyu Ye}{stanford}
\end{icmlauthorlist}

\icmlaffiliation{sufe}{Shanghai University of Finance and Economics}
\icmlaffiliation{sjtu}{Antai College of Economics and Management, Shanghai Jiao Tong University}
\icmlaffiliation{stanford}{Stanford University}

\icmlcorrespondingauthor{Qi Deng}{qdeng24@sjtu.edu.cn}

\icmlkeywords{Machine Learning, ICML}

\vskip 0.3in
]

\printAffiliationsAndNotice{\icmlEqualContribution}

\begin{abstract}
Robust Markov Decision Processes (RMDPs) have recently been recognized as a valuable and promising approach to discovering a policy with creditable performance, particularly in the presence of a dynamic environment and estimation errors in the transition matrix due to limited data. 
Despite extensive exploration of dynamic programming algorithms for solving RMDPs, there has been a notable upswing in interest in developing efficient algorithms using the policy gradient method.
In this paper, we propose the first single-loop robust policy gradient (SRPG) method with the global optimality guarantee for solving RMDPs through its minimax formulation. 
Moreover, we complement the convergence analysis of the nonconvex-nonconcave min-max optimization problem with the objective function's gradient dominance property, which is not explored in the prior literature.
Numerical experiments validate the efficacy of SRPG, demonstrating its faster and more robust convergence behavior compared to its nested-loop counterpart.
\end{abstract}
\section{Introduction}

Markov decision processes (MDPs) serve as an important model for sequential decision-making under uncertainty and enjoys wide applications in finance \cite{deng2016deep, jiang2017deep}, autonomous driving \cite{kiran2021deep, sallab2017deep}, and revenue management \cite{den2015dynamic}, etc. However, in most applications, the decision-maker can only estimate model parameters, especially the transition kernel, from noisy and scarce observation data. Consequently, a policy that exhibits poor performance with respect to the true parameters may be employed since the optimal policy obtained from the estimated parameters could be highly sensitive to the small changes in the problem parameters, resulting in suboptimal outcomes \cite{goyal2023beyond}. 

Motivated by recent developments in robust optimization and its practical performance \cite{ben2002robust, bertsimas2004price}, robust Markov Decision Processes (RMDPs) have emerged as a valuable and promising approach to overcome this obstacle. It assumes that the transition kernel lies in a pre-determined ambiguity set and then seeks a policy with the best performance under the worst-case transition kernel. Hence, it involves solving a min-max optimization problem. Moreover, it has been demonstrated that the optimal policies of RMDPs display an advantageous performance in out-of-sample scenarios when the transition kernel needs to be estimated from limited data or undergoes changes over time \cite{xu2009parametric, ghavamzadeh2016safe, mannor2016robust}.

With general uncertainty sets, \citet{wiesemann2013robust} prove that it is NP-hard to find the optimal policy for RMDPs. However, under certain rectangular assumptions of the ambiguity set, the problem becomes tractable. For example, when the ambiguity set is $(s, a)$-rectangular, the dynamic programming techniques apply, and the value iteration is known to achieve linear convergence to optimal robust value~\cite{iyengar2005robust, nilim2005robust}. Here, the $(s, a)$-rectangular ambiguity set allows the adversarial nature to choose the worst-case transition probability vector of each state and action pair independently. Since the $(s, a)$-rectangular assumption is too restrictive and leads to conservative policies, we consider the more general $s$-rectangular ambiguity set, which allows the nature to choose the transition kernel for each state without observing the action and also preserves tractability \cite{wiesemann2013robust}.

Nowadays, the policy gradient (PG) method has become the workhorse for solving a special case of RMDPs, where there is no ambiguity in the transition kernel. It is scalable, easy to implement, and versatile across various settings, including model-free and continuous state-action spaces \cite{kakade2002approximately, schulman2017equivalence}. However, the policy gradient approach to solving general RMDPs has been much less investigated. 
Recently, \citet{wang2023policy} propose a double-loop robust policy gradient (DRPG) for solving $s$-rectangular RMDPs. 
The outer loop of DRPG is designed for updating policies, which resembles policy gradient updates in non-robust MDPs, while the inner loop is to solve the maximization problem with a given policy over the ambiguity set and update the worst-case transition matrices. They prove that DRPG is guaranteed to converge to a globally optimal policy within $\mathcal{O}(1/\vep^{4})$ outer iterations. However, the critical drawback of DRPG is that the inner loop needs to solve a series of nontrivial nonconcave subproblems with increasing accuracy, which brings an unacceptable computation burden and a worse convergence complexity than $\mathcal{O}(1/\vep^{4})$.

In this paper, our goal is to develop a more efficient algorithm for RMDPs that significantly improves the convergence complexity of the existing work.
Our main contributions are summarized as follows.

First, we present a single-loop algorithm, Single-loop Robust Policy Gradient ({\srpg}), for RMDPs with an $s$-rectangular ambiguity set.  
Our algorithm is adapted from the primal-dual algorithm for min-max optimization~\citep{zhang2020single, zheng2023universal}, which employs the Moreau-Yosida smoothing technique to enhance the balance between the primal and dual updates. 
To our knowledge, \srpg{} is the first policy gradient method for RMDPs without involving a more complicated nested loop. 
Consequently, our algorithm not only obtains the best iteration complexity of policy gradient methods for RMDPs with $s$-rectangular ambiguity sets, but also has a much faster update per iteration than the double-loop algorithm proposed in \citet{wang2023policy}. Our numerical experiments further show the superior performance of SRPG.
    
Second, we prove the {\srpg} converges at rate $\mcal O(1/\vep^{4})$ under gradient dominance property. This result appears to be new even for general nonconvex-nonconcave min-max optimization. 
To our knowledge, such convergence rate for min-max optimization with gradient dominance property was only obtained when the max component is a concave function. While nonconvex-concave min-max optimization has a wide range of applications, unfortunately, it is not satisfied in the RMDP setting.

Finally, we provide a detailed analysis of the varying convergence rates of {\srpg} when applied to RMDPs. Leveraging the gradient-dominance-like property for nonsmooth weakly convex functions, we demonstrate that the objective, as a function of $\pi$, exhibits exponential growth within a specific region. This observation implies an accelerated convergence rate when the iterative sequence is distant from the optimal solution set. While this does not improve the convergence rate in terms of $\vep$, it helps explain the initially faster convergence observed in our experiments.

The paper is organized in the following manner. In the remainder of this section, we review the literature related to our work. In \cref{sec:pre}, we introduce some preliminaries and notations and make some assumptions for the later analysis. \cref{sec:properties_rmdp} provides some important properties of RMDPs under $s$-rectangular ambiguity set. \cref{sec:dsgda} presents the specific details of {\srpg} for RMDPs and the comprehensive convergence analysis for {\srpg}.
In \cref{sec:num}, we conduct two numerical experiments to examine the convergence performance of {\srpg}. Finally, we draw the conclusion in \cref{sec:num}. All omitted proofs and details of numerical experiments can be found in the Appendix.

\subsection{Literature Review}

Our paper is related to two streams of the literature, including the policy gradient method applied to RMDPs and min-max optimization.

\textbf{Policy gradient for RMDPs.} Recently, there has been a surge of interest in the policy gradient method for RMDPs with different ambiguity sets. 
\citet{wang2022policy} propose a policy gradient method for solving RMDPs under a $r$-contamination ambiguity set, which is restrictive compared with $s$-rectangular ambiguity set, and the problem can be reduced to an ordinary MDP~\cite{wang2023policy}. 
For $(s, a)$-rectangular ambiguity set, \citet{li2022first} develop an algorithm with a linear convergence rate. 
However, their analysis depends on the $(s, a)$-rectangular assumption and can not be applied to our $s$-rectangular set. 
\citet{wang2024bring} focus on the KL ambiguity set and approximate the worst transition kernel at each iteration instead of fully solving the inner problem.
There are also some papers considering different structured $s$-rectangular ambiguity sets \cite{grand2021scalable, kumar2023policy, li2023first}. 
\citet{grand2021scalable} propose an algorithm interleaving primal-dual first-order updates with approximate value iteration updates and prove its ergodic convergence for ellipsoidal and Kullback-Leibler $s$-rectangular ambiguity sets. 
\citet{kumar2023policy} consider the $L_p$-ball $s$-rectangular ambiguity set, give a closed-form expression of the worst-case transition kernel, and propose the robust policy gradient method. 
However, they mainly discuss the time complexity involved in computing the robust policy gradient without providing the convergence analysis of the overall algorithm. 
\citet{li2023first} introduce a new type of $s$-rectangular ambiguity set: a convex combination of a fixed (possibly unknown) transition kernel and another transition kernel belonging to a pre-specified convex set. 
They focus on the policy evaluation problem and formulate it as an MDP from the view of nature. 
For the general $s$-rectangular ambiguity set, \citet{kumar2023towards} propose an algorithm achieving the $\mathcal{O}(1/\vep)$ convergence rate. However, it relies on a strong assumption that the objective function of the minimization problem is smooth, which does not necessarily hold for many ambiguity sets. 
\citet{wang2023policy} and \citet{li2024policy} propose a nested-loop algorithm for solving $s$-rectangular and non-rectangular RMDPs, respectively, and they both obtain the $\mathcal{O}(1/\vep^{4})$ convergence rate. However, the inner loop needs to optimize over the ambiguity set of transition matrices, which requires high computational cost.  
To avoid such computation burden, we propose a single-loop robust policy gradient method for the general $s$-rectangular ambiguity set.

\textbf{Min-max optimization.}
Minimax optimization has garnered significant attention across various fields, including robust optimization~\cite{duchi2019variance}, game theory~\cite{bailey2020finite}, and adversarial machine learning~\cite{goodfellow2014explaining}. Although there is a comprehensive range of literature on minimax optimization, most prior research \cite{hamedani2021primal,ouyang2021lower,zhang2017stochastic} predominantly concentrates on the convex-convex setting. 
Recently, there has been a shift in focus towards nonconvex or nonconcave minimax problems \cite{zheng2023universal,xu2023unified,lin2020gradient,zhang2020single}. This emerging interest is primarily due to the prevalent occurrence of such problems in practical applications, as discussed in~\citet{razaviyayn2020nonconvex}.
The convergence guarantee of most current algorithms in this field depends on additional information such as one-side convexity, PL condition, weak Minty Variational Inequality~\cite{diakonikolas2021efficient}, positive interaction dominance~\cite{hajizadeh2023linear} and K\L{} condition~\cite{zheng2023universal}. However, these properties do not apply in the specific context of RMDPs. The gradient dominance property associated with RMDPs bears similarities to the K\L{} property yet extends beyond its scope.

\section{\label{sec:pre}Preliminaries and Notations}
Throughout the paper, we use $\Delta^d$ to denote the $d$-dimension probability simplex. With slight abuse of notation, we use $\norm{\cdot}$ to represent the Frobenius norm for matrix and $L_2$ norm for vector. We also use $[n]$ to denote the set $\{1, 2, \ldots, n\}$.

We first introduce the notations of MDPs. An ordinary MDP is specified by a tuple $(\mcal S,\mcal A,p,c,\gamma,\rho )$, where $\mcal S :=\bcbra{1,2,\cdots, S}$ is the finite state set of size $S$, $\mcal A := \bcbra{1,2,\cdots, A}$ is the action set of size $A$, $p = (p_{sa})_{s\in\mcal S,a\in\mcal A}\in(\Delta^{S})^{S\times A}$ is the transition kernel with $p_{sa}\in\Delta^{S}$ being the transition probability vector from a current state $s$ to a subsequent state $s^{\prime}$ after taking an action $a$, $ c = ( c_{cas^{\prime}} )_{s \in \mathcal{S}, a \in \mathcal{A}, s^{\prime} \in \mathcal{S}} $ is the cost of the aforementioned transition, $\gamma\in(0,1)$ is the discount factor, and $\rho\in\Delta^{S}$ is the distribution of the initial state. Moreover, a policy maps from state $s\in \mcal S$ to a distribution over action $a\in \mcal A$ is denoted by $\pi:=(\pi_s)_{s\in \mcal S}$. 

We further define the policy space $\Pi:=(\Delta^{A})^{S}$ and transition kernel space $\mathcal{P}:=(\Delta^{S})^{S\times A}$. We use $\mcal N_{\Pi}(\pi):=\bcbra{\tilde{\pi}\mid \inner{\tilde{\pi}}{\bar{\pi} - \pi}\leq 0,\forall \bar{\pi}\in \Pi}$ and $\mcal N_{\mathcal{P}}(p):=\bcbra{\tilde{p}\mid \inner{\tilde{p}}{\bar{p} - p}\leq 0,\forall \bar{p}\in \mathcal{P}}$ to denote the normal cones of $\Pi$ at policy $\pi$ and $\mathcal{P}$ at transition kernel $p$, respectively. We also denote proximal operator of $\phi(\cdot)$ as $\prox_{\phi,r_1}(\pi):=\argmin_{\tilde{\pi} \in \Pi}\phi(\tilde{\pi}) + {r_1}\norm{\tilde{\pi} - \pi}^2 / 2$. We use $\dist(v,\mcal V):=\min_{\bar{v}\in\mcal V}\norm{\bar{v}-v}$ to denote distance between vector or matrix $v$ and set $\mcal V$. We also define some notations in Table~\ref{tab:Notations1}, which will be used frequently in \cref{sec:dsgda}.
\begin{table}[htb]
\begin{centering}
\renewcommand{\arraystretch}{1.2}
\begin{tabular}{|l|l|}
\hline 
Notation & Meaning \\
\hline
$\phi(\pi)$ & $\max_{p\in \mcal P}J_{\rho}(\pi, p)$ \\
\hline
$\chi(\pi,p,\bar{\pi},\bar{p})$ & $J_{\rho}(\pi,p)+ \frac{r_{1}}{2} \norm{\pi-\bar{\pi}}^{2}  - \frac{r_{2}}{2} \norm{p-\bar{p}}^{2} $ \\
\hline 
$\varphi_{\pi}(p,\bar{\pi},\bar{p})$ & $\min_{\pi\in\Pi}\chi(\pi,p,\bar{\pi},\bar{p})$ \\
\hline  
$\varphi_{\pi,p}(\bar{\pi},\bar{p})$ & $\max_{p\in\mcal P}\varphi_{\pi}(p,\bar{\pi},\bar{p})$\\
\hline 
$\varphi_{\pi,p,\bar{\pi}}(\bar{p})$ & $\min_{\bar{\pi}}\varphi_{\pi,p}(\bar{\pi},\bar{p})$ \\
\hline 
$\varphi_{\pi,p,\bar{p}}(\bar{\pi})$ & $\max_{\bar{p}}\varphi_{\pi,p}(\bar{\pi},\bar{p})$\\
\hline 
$\underline{\chi}$ & $\min_{\bar{\pi}}\max_{\bar{p}}\varphi_{\pi,p}(\bar{\pi},\bar{p})$ \\
\hline 
$p^{+}(\bar{\pi},\bar{p})$ & $\proj_{\mcal P}(p+\sigma\nabla_{p}\chi(\pi(p,\bar{\pi},\bar{p}),p,\bar{\pi},\bar{p}))$  \\
\hline 
$\bar{p}^{+}(\bar{\pi})$ & $\bar{p}+\mu\brbra{p(\pi(\bar{\pi},\bar{p}),\bar{\pi},\bar{p})-\bar{p}}$ \\
\hline 
$p^{+}(\bar{p})$ & $\proj_{\mcal P}(p+\sigma\nabla_{p}J_{\rho}(\pi(p,\bar{\pi};r_{1}),p))$ \\
\hline 
$\bar{p}(\bar{\pi})$ & $\argmax_{\bar{p}}\min_{\pi\in \Pi}\max_{p\in \mcal P}\chi (\pi, p, \bar{\pi}, \bar{p})$  \\
\hline 
$\pi(\bar{\pi}, \bar{p})$ & $\argmin_{\pi \in \Pi}\max_{p\in \mcal P}\chi (\pi, p, \bar{\pi}, \bar{p})$ \\
\hline 
\end{tabular}
\par\end{centering}
\caption{\label{tab:Notations1}Useful Notations.}
\end{table}

We now give some useful definitions in MDP. The discounted state occupancy measure $d_{\rho}^{\pi,p}:\mcal S\to[0,1]$
for an initial distribution $\rho$, a policy $\pi$, and a transition kernel $p$ is defined as $d_{\rho}^{\pi,p}\left(s^{\prime}\right)=(1-\gamma)\sum_{s\in\mathcal{S}}\sum_{t=0}^{\infty}\gamma^{t}\rho(s)p_{ss^{\prime}}^{\pi}(t)$, where $p_{ss^{\prime}}^{\pi}(t)$ is the probability of arriving in a state $s^{\prime}$ after transiting $t$ time steps from state $s$ according to policy $\pi$ and transition kernel $p$. The performance of a policy $\pi$ is measured by the value function $v^{\pi, p}:=[v_1^{\pi, p}, \ldots, v_S^{\pi, p}]^\top \in \mbb R^{S}$, where $v_s^{\pi, p}$ is defined by
$v_{s}^{\pi,p}=\mbb E_{\pi,p}\bsbra{\sum_{t=0}^{\infty}\gamma^{t} c_{s_{t}a_{t}s_{t+1}}\mid s_{0}=s}$. The value function of taking action $a$ at state $s$, namely the action value function, is defined by $q_{sa}^{\pi,p}=\mbb E_{\pi,p}\bsbra{\sum_{t=0}^{\infty}\gamma^{t}c_{s_{t}a_{t}s_{t+1}}\mid s_{0}=s,a_{0}=a}$. The objective of an ordinary MDP is to find an optimal policy $\pi^{*}\in\mbb R^{S\times A}$ to minimize the total expected cost: $\pi^{*}=\argmin_{\pi\in\Pi}\mbb E_{\pi,p,s_{0}\sim\rho}\bsbra{\sum_{t=0}^{\infty}\gamma^{t}c_{s_{t}a_{t}s_{t+1}}}$.

In real applications, the transition kernel is typically unknown, but the decision-maker may have some partial knowledge of it and can build a corresponding ambiguity set. This can be modeled by an RMDP, characterized by $( \mcal S, \mcal A, \mcal P, c, \gamma, \rho )$, where the transition kernel $p$ is replaced by the ambiguity set $\mcal P$. The goal of an RMDP is to find an optimal policy under the worst-case transition kernel: 
\begin{equation}\label{eq:J_obj}
\min_{\pi\in\Pi}\max_{p\in\mcal P}J_{\rho}(\pi,p):=\sum_{s\in\mcal S}\rho_{s}v_{s}^{\pi,p}.
\end{equation}

In the rest of this section, we introduce some assumptions used throughout the paper.
\begin{assumption}
[Bounded Cost]\label{assu:bounded_cost}For any $(s,a,s^{\prime})\in\mcal S\times\mcal A\times\mcal S$,
the cost $c_{sas^{\prime}}\in[0,1]$.
\end{assumption}
\begin{assumption}[$s$-Rectangular Ambiguity Set]
\label{assu:s-rectangular}
The ambiguity set $\mcal P$ is convex and $s$-rectangular, namely the transition probabilities are independent at different
states, i.e., $\mcal P$ can be decomposed as $\mcal P=\mcal P_1{\times}\cdots \times \mcal P_{S}$,
where $\mcal P_{s}\subseteq\mbb R_{+}^{S\times A}$ for all $s = 1, 2, \ldots, S$.
\end{assumption}
\begin{assumption}[Bounded Distribution Mismatch Coefficient]
\label{assu:irreduc}
Define the distribution mismatch coefficient by $D = \|d_{\rho}^{\pi,p} / \rho \|_{\infty}$. It is finite for the worst-case transition kernel $p^*(\pi) := \arg\max_{p \in \mathcal{P}} J_{\rho}(\pi, p)$ for all $\pi\in \Pi$, and for the policy $\pi^*(p) := \arg\min_{\pi \in \Pi} J_{\rho}(\pi, p)$ for all $p \in \mathcal{P}$.
\end{assumption}

The first assumption is standard in the existing literature \cite{wang2023policy}, and does not change the optimal policy set. The second assumption suggests that the ambiguity set is representable as a Cartesian product of separate ambiguity sets for the transition probability matrix associated with distinct states, and is widely used. The last assumption implies the bounded distribution mismatch coefficient, which is crucial for \cref{thm:Moreau_smooth_grad_dom} in \cref{sec:properties_rmdp}.

\section{\label{sec:properties_rmdp}Properties of RMDPs}
In this section, we discuss several key properties of RMDPs for developing our algorithm design and convergence analysis. 
\begin{lemma}
\label{lem:smoothmdp}
For general RMDPs, we have:
\begin{enumerate}[label=\roman*)]
\item \label{item:i)}$J_{\rho}(\pi,p)$ is $L_{\pi}$-Lipschitz
and $\ell_{\pi}$-smooth in $\pi$ with $L_{\pi}:=\sqrt{A}/(1-\gamma)^{2}$
and $\ell_{\pi}:=2\gamma A/(1-\gamma)^{3}$.
\item \label{item:iii)}$\phi(\pi):=\max_{p\in\mcal P}J_{\rho}(\pi,p)$
is $L_\pi$-Lipschitz and $\ell_{\pi}$-weakly convex, i.e., $\phi(\pi) + \ell_{\pi} \norm{\pi}^{2} / 2$ is convex.
\end{enumerate}
Furthermore, when the ambiguity set satisfies \cref{assu:s-rectangular}, the following statements also hold:
\begin{enumerate}[label=\roman*)]
\setcounter{enumi}{2}
\item \label{item:ii)}$ J_{\rho}(\pi,p)$ is $L_{p}$-Lipschitz and $\ell_{p}$-smooth in $p$ with $L_{p}:=\sqrt{SA}/(1-\gamma)^{2}$
and $\ell_{p}:=2\gamma S/(1-\gamma)^{3}$.
\item \label{item:iv)} There exist two positive constants $\ell_{\pi p},\ell_{p\pi}>0$ such that for $\forall \pi,\pi'\in\Pi$ and $\forall p,p'\in\mcal P$, we have
{\footnotesize
\begin{align*}
\hspace*{-2em}\|\nabla_{\pi}J_{\rho}(\pi,p)-\nabla_{\pi}J_{\rho}(\pi^{\prime},p^{\prime})\| & \leq\ell_{\pi p}(\|\pi-\pi'\|+\|p-p'\|),\\
\hspace*{-2em}\|\nabla_{p}J_{\rho}(\pi,p)-\nabla_{p}J_{\rho}(\pi^{\prime},p^{\prime})\| & \leq\ell_{p\pi}(\|\pi-\pi'\|+\|p-p'\|).
\end{align*}
}
\end{enumerate}
\end{lemma}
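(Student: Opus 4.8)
The overall strategy is to reduce every claim to a careful manipulation of the discounted-return formula and its derivatives, exploiting the fact that value functions are rational functions of $\pi$ and $p$ obtained by inverting $I-\gamma P^{\pi}$. I would set up notation first: write $P^{\pi}\in\Rbb^{S\times S}$ for the state-to-state transition matrix induced by $\pi$ and $p$, and $c^{\pi}\in\Rbb^{S}$ for the induced expected one-step cost, so that $v^{\pi,p}=(I-\gamma P^{\pi})^{-1}c^{\pi}$ and $J_{\rho}(\pi,p)=\rho^{\top}v^{\pi,p}$. The key analytic facts I will lean on throughout are: (a) $\|(I-\gamma P^{\pi})^{-1}\|_{\infty}\le 1/(1-\gamma)$ since $P^{\pi}$ is row-stochastic; (b) the policy-gradient identity $\nabla_{\pi_{s}}J_{\rho}(\pi,p)=\frac{1}{1-\gamma}\,d_{\rho}^{\pi,p}(s)\,q_{s\cdot}^{\pi,p}$, and the analogous identity for $\nabla_{p}J_{\rho}$; and (c) $\|q^{\pi,p}\|_{\infty}\le 1/(1-\gamma)$ and $\|v^{\pi,p}\|_{\infty}\le 1/(1-\gamma)$ under Assumption 2.1. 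The bounded-cost assumption is what makes all these quantities $O(1/(1-\gamma))$.

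For item \ref{item:i)}, the Lipschitz bound follows by summing $\|\nabla_{\pi_{s}}J\|\le\frac{1}{1-\gamma}d_{\rho}^{\pi,p}(s)\sqrt{A}\cdot\frac{1}{1-\gamma}$ over $s$ and using $\sum_{s}d_{\rho}^{\pi,p}(s)=1$, which yields $L_{\pi}=\sqrt{A}/(1-\gamma)^{2}$ (here the $\sqrt{A}$ comes from bounding the $\ell_2$ norm of the per-state gradient block by $\sqrt{A}\|q_{s\cdot}\|_{\infty}$). For smoothness, I would differentiate the policy-gradient expression once more: the Hessian has terms coming from differentiating $d_{\rho}^{\pi,p}$ and from differentiating $q^{\pi,p}$, each of which introduces one more factor of $(I-\gamma P^{\pi})^{-1}$ hence one more factor $1/(1-\gamma)$, together with a $\gamma$ and a combinatorial $A$; collecting these gives $\ell_{\pi}=2\gamma A/(1-\gamma)^{3}$. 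Item \ref{item:iii)} is then immediate: a maximum of $\ell_{\pi}$-smooth (hence $\ell_{\pi}$-weakly-convex) functions over $p$ is $\ell_{\pi}$-weakly convex, since adding $\frac{\ell_{\pi}}{2}\|\pi\|^{2}$ makes each $J_{\rho}(\cdot,p)$ convex and the pointwise sup of convex functions is convex; Lipschitzness of $\phi$ is inherited from the uniform Lipschitzness of $J_{\rho}(\cdot,p)$.

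Items \ref{item:ii)} and \ref{item:iv)} are where the $s$-rectangularity enters and where the main work lies. Under Assumption 2.2 the variable $p$ decomposes as $p=(p_{sa})$ with $p_{sa}\in\Delta^S$, and $\nabla_{p_{sa}}J_{\rho}(\pi,p)=\frac{\gamma}{1-\gamma}d_{\rho}^{\pi,p}(s)\pi_{s}(a)v^{\pi,p}$; summing the squared norms over the $SA$ blocks and using $\|v^{\pi,p}\|\le\sqrt S/(1-\gamma)$ produces $L_{p}=\sqrt{SA}/(1-\gamma)^{2}$, and differentiating once more in $p$ gives $\ell_{p}=2\gamma S/(1-\gamma)^{3}$ by the same "one extra resolvent, one extra $1/(1-\gamma)$" bookkeeping. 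For the mixed/cross Lipschitz bounds in \ref{item:iv)}, the plan is a standard telescoping: write $\nabla_{\pi}J(\pi,p)-\nabla_{\pi}J(\pi',p') = \big(\nabla_{\pi}J(\pi,p)-\nabla_{\pi}J(\pi',p)\big) + \big(\nabla_{\pi}J(\pi',p)-\nabla_{\pi}J(\pi',p')\big)$, bound the first difference by $\ell_{\pi}\|\pi-\pi'\|$ via item \ref{item:i)}, and bound the second by showing $p\mapsto\nabla_{\pi}J(\pi',p)$ is Lipschitz — this is again a resolvent-perturbation estimate, using $(I-\gamma P)^{-1}-(I-\gamma P')^{-1}=\gamma(I-\gamma P)^{-1}(P-P')(I-\gamma P')^{-1}$ together with the fact that $P^{\pi}$ depends linearly (hence Lipschitz-ly) on $p$. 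Taking $\ell_{\pi p}$ to be the max of $\ell_{\pi}$ and this cross-Lipschitz constant, and symmetrically for $\ell_{p\pi}$, finishes the claim. The main obstacle is purely the careful power-counting in these second-derivative/perturbation estimates: one must track how many copies of $(I-\gamma P^{\pi})^{-1}$ appear after each differentiation, and make sure the $\sqrt{A}$ vs. $A$ and $\sqrt S$ vs. $S$ factors land exactly as claimed rather than off by a power — there is nothing conceptually deep, but it is the step most prone to error, so I would organize it as a single lemma on derivatives of $\rho^{\top}(I-\gamma P^{\pi})^{-1}c^{\pi}$ and apply it uniformly.
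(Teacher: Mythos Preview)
Your approach is correct, and for items \ref{item:i)}–\ref{item:ii)} it coincides with the paper's (which simply cites \citet{wang2023policy} for those parts). The interesting divergence is in item \ref{item:iv)}. You telescope
\[
\nabla_{\pi}J(\pi,p)-\nabla_{\pi}J(\pi',p')=\big(\nabla_{\pi}J(\pi,p)-\nabla_{\pi}J(\pi',p)\big)+\big(\nabla_{\pi}J(\pi',p)-\nabla_{\pi}J(\pi',p')\big),
\]
bound the first piece by the already-established $\ell_{\pi}$-smoothness in $\pi$, and handle the second piece by the resolvent perturbation identity $(I-\gamma P)^{-1}-(I-\gamma P')^{-1}=\gamma(I-\gamma P)^{-1}(P-P')(I-\gamma P')^{-1}$ applied to the explicit formulas for $d_{\rho}^{\pi,p}$ and $q^{\pi,p}$. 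The paper instead establishes \ref{item:iv)} by bounding the mixed Hessian entrywise: it writes out a recursion for $\partial^{2} v_{s_{1}}^{\pi,p}/(\partial\pi_{s_{2}a_{1}}\partial p_{s_{3}a_{2}s})$ across all the index-coincidence cases, shows each entry is bounded by a geometric series in $\gamma$, and then invokes norm equivalence to conclude the cross-Hessian block has bounded spectral norm. Your route is shorter and stays at the first-order level, while the paper's route is more explicit about the combinatorics of the second derivative but correspondingly more tedious; both yield only existence of $\ell_{\pi p},\ell_{p\pi}$ without sharp constants, so neither dominates the other in outcome. One small slip: your formula for $\nabla_{p_{sa}}J_{\rho}$ should be $\tfrac{1}{1-\gamma}d_{\rho}^{\pi,p}(s)\pi_{s}(a)\big(c_{sa\cdot}+\gamma v^{\pi,p}\big)$ rather than $\tfrac{\gamma}{1-\gamma}d_{\rho}^{\pi,p}(s)\pi_{s}(a)v^{\pi,p}$, but since $c$ is bounded this does not affect the argument.
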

The above lemma establishes that the objective function $J_{\rho}(\pi,p)$ is two-sided Lipschitz and smooth, and the induced function $\phi(\pi)$ is weakly-convex. Moreover, we discover a stronger Lipschitz condition satisfied by the gradient of $J_{\rho}(\pi,p)$ than prior results.
\begin{remark}
While statements i)-iii) have been established in existing literature \cite{agarwal2021theory, wang2023policy}, the Lipschitz gradient condition iv) is stronger than the results in \citet{wang2023policy}, which only proves $J_{\rho}(\pi,\rho)$ to be partially smooth for $\pi$ and $p$ when fixing the other variable. To interpret statement iv), it indicates the bounded matrix norm of the Hessian of $J_{\rho}(\pi,p)$, which is critical for the convergence analysis of our single-loop algorithm. 
\end{remark}

\subsection{Gradient dominance property}

In this subsection, we focus on the gradient dominance property satisfied by RMDPs.

For general RMDPs, the objective function exhibits a gradient dominance property with respect to $\pi$. Specifically, there exists a constant $\bar{D}_\pi>0$ such that for all $ p \in \mcal P, \pi \in \Pi$:
\begin{equation*}
    J_{\rho}(\pi,p)-\Psi(p) \leq\bar{D}_{\pi}\dist\brbra{0,\nabla_{\pi}J_{\rho}(\pi,p)+\mcal N_{\Pi}(\pi)}, 
\end{equation*}
where $\Psi(p) = \min_{\pi \in \Pi} J_{\rho}(\pi, p) $.
This is the key property to ensure global convergence of policy gradient method for non-robust MDP \cite{agarwal2021theory}, and it is also important for our further analysis. Meanwhile, when the ambiguity set in RMDPs satisfies \cref{assu:s-rectangular}, the gradient dominance property also holds for $J_\rho(\pi, p)$ with respect to $p$, i.e., for all $ p \in \mcal P, \pi \in \Pi$ with $\bar{D}_{p}>0$:
\begin{equation}\label{eq:partial_grad_dom_p}
    \phi(\pi)-J_{\rho}(\pi,p)  \leq\bar{D}_{p}\dist\brbra{0,\nabla_{p}J_{\rho}(\pi,p)-\mcal N_{\mcal P}(p)}.
\end{equation}

Although the function $\phi(\pi)$ in RMDPs may not have the desired gradient dominance property, its weak convexity, combined with the property of $J_\rho(\pi,p)$, make the Moreau envelope function $\phi_{2\ell_\pi}(\pi)$ satisfy a gradient-dominance-like property \cite{wang2023policy}. 

\begin{theorem}[Informal]
\label{thm:Moreau_smooth_grad_dom}
Let $\pi^{*}$ be a global optimal policy for RMDPs. Then there exists a constant $C_{\ell_{\pi}}>0$ such that for any policy $\pi\in\Pi$:
\begin{equation}\label{eq:gradient-like-property}
\phi_{2\ell_{\pi}}(\pi) \leq \phi(\pi)\leq C_{\ell_{\pi}} \norm{\nabla\phi_{2\ell_{\pi}}(\pi)}+\phi(\pi^{*}),
\end{equation}
where $\phi_{2\ell_{\pi}}(\pi):=\inf_{\pi'\in\Pi}\{\phi(\pi')+\ell_{\pi}\norm{\pi-\pi'}^{2}\}$.
\end{theorem}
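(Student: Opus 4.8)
The plan is to establish the two inequalities in \eqref{eq:gradient-like-property} separately. The left inequality $\phi_{2\ell_\pi}(\pi)\le\phi(\pi)$ is immediate: taking $\pi'=\pi$ in the infimum defining $\phi_{2\ell_\pi}$ yields $\phi_{2\ell_\pi}(\pi)\le\phi(\pi)+\ell_\pi\norm{\pi-\pi}^2=\phi(\pi)$. The substance is the right inequality. First I would recall the standard properties of the Moreau envelope of a $\ell_\pi$-weakly convex function (\cref{lem:smoothmdp}\ref{item:iii)}): with smoothing parameter chosen so the regularizer dominates weak convexity, $\phi_{2\ell_\pi}$ is continuously differentiable, its gradient is $\nabla\phi_{2\ell_\pi}(\pi)=2\ell_\pi(\pi-\wh\pi)$ where $\wh\pi:=\prox_{\phi,2\ell_\pi}(\pi)=\argmin_{\pi'\in\Pi}\{\phi(\pi')+\ell_\pi\norm{\pi-\pi'}^2\}$, and moreover $\phi(\wh\pi)\le\phi_{2\ell_\pi}(\pi)\le\phi(\pi)$. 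So it suffices to bound $\phi(\pi^*)$ from below by $\phi(\wh\pi)-(\text{const})\norm{\nabla\phi_{2\ell_\pi}(\pi)}$, i.e.\ to control the suboptimality $\phi(\wh\pi)-\phi(\pi^*)$ by $\norm{\pi-\wh\pi}$.

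The key step is to exploit the gradient dominance of $J_\rho(\cdot,p)$ in $\pi$ at the point $\wh\pi$. Let $p^*(\wh\pi)=\argmax_{p\in\mcal P}J_\rho(\wh\pi,p)$, so $\phi(\wh\pi)=J_\rho(\wh\pi,p^*(\wh\pi))$. The gradient dominance property stated before the theorem gives
\begin{equation*}
\phi(\wh\pi)-\phi(\pi^*)\le\phi(\wh\pi)-\Psi(p^*(\wh\pi))=J_\rho(\wh\pi,p^*(\wh\pi))-\Psi(p^*(\wh\pi))\le\bar D_\pi\,\dist\brbra{0,\nabla_\pi J_\rho(\wh\pi,p^*(\wh\pi))+\mcal N_\Pi(\wh\pi)},
\end{equation*}
using $\phi(\pi^*)=\min_\pi\phi(\pi)\ge\min_\pi J_\rho(\pi,p^*(\wh\pi))=\Psi(p^*(\wh\pi))$. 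It remains to show the residual $\dist(0,\nabla_\pi J_\rho(\wh\pi,p^*(\wh\pi))+\mcal N_\Pi(\wh\pi))$ is $O(\norm{\pi-\wh\pi})$. Here I would use the first-order optimality of $\wh\pi$ for the strongly convex problem $\min_{\pi'\in\Pi}\{\phi(\pi')+\ell_\pi\norm{\pi-\pi'}^2\}$, which gives $0\in\partial\phi(\wh\pi)+2\ell_\pi(\wh\pi-\pi)+\mcal N_\Pi(\wh\pi)$, hence $2\ell_\pi(\pi-\wh\pi)\in\partial\phi(\wh\pi)+\mcal N_\Pi(\wh\pi)$. The remaining task is to relate the Clarke/convex-composite subdifferential $\partial\phi(\wh\pi)$ to $\nabla_\pi J_\rho(\wh\pi,p^*(\wh\pi))$: since $\phi=\max_{p\in\mcal P}J_\rho(\cdot,p)$ is a pointwise max of smooth functions, by Danskin-type reasoning $\nabla_\pi J_\rho(\wh\pi,p^*(\wh\pi))\in\partial\phi(\wh\pi)$ (it lies in the convex hull of gradients over maximizers), so one can pick the subgradient $\nabla_\pi J_\rho(\wh\pi,p^*(\wh\pi))$ in the inclusion — or, more carefully, argue that the distance from $0$ to $\nabla_\pi J_\rho(\wh\pi,p^*(\wh\pi))+\mcal N_\Pi(\wh\pi)$ is itself controlled, combining the optimality inclusion with Lipschitz smoothness of $J_\rho$ in $\pi$ (\cref{lem:smoothmdp}\ref{item:i)}). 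This yields $\dist(0,\nabla_\pi J_\rho(\wh\pi,p^*(\wh\pi))+\mcal N_\Pi(\wh\pi))\le c\,\ell_\pi\norm{\pi-\wh\pi}=\tfrac{c}{2}\norm{\nabla\phi_{2\ell_\pi}(\pi)}$ for an absolute constant $c$, and chaining everything gives $\phi(\pi)\ge\phi(\wh\pi)\ge\phi(\pi^*)-C_{\ell_\pi}\norm{\nabla\phi_{2\ell_\pi}(\pi)}$ with $C_{\ell_\pi}=\tfrac{c}{2}\bar D_\pi$; rearranging is \eqref{eq:gradient-like-property}.

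The main obstacle I anticipate is the subdifferential bookkeeping in the last step: matching the subgradient of $\phi$ appearing in the optimality condition for $\wh\pi$ with the specific gradient $\nabla_\pi J_\rho(\wh\pi,p^*(\wh\pi))$ that the gradient dominance hypothesis is phrased in terms of. The clean route is to note that $\wh\pi$ together with $p^*(\wh\pi)$ behaves like an approximate stationary pair: the weak-convexity/proximal inequality gives that the proximal map moves by at most $\norm{\pi-\wh\pi}$, and then one must be careful that the max over $p$ does not destroy the residual bound — this is where \cref{assu:irreduc} (bounded distribution mismatch coefficient) enters, since $\bar D_\pi$ is only finite under that assumption. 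I would also need to track that $\pi^*$ being a \emph{global} optimum of the min-max problem is exactly what licenses $\phi(\pi^*)\ge\Psi(p^*(\wh\pi))$ (weak duality for the inner max / outer min), which is the only place global optimality is used. Everything else — differentiability of the Moreau envelope, the gradient formula, the sandwich $\phi(\wh\pi)\le\phi_{2\ell_\pi}(\pi)\le\phi(\pi)$ — is standard for weakly convex functions and can be cited.
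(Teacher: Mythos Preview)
There is a genuine gap in your final chaining step. You establish $\phi(\wh\pi)-\phi(\pi^*)\le C\norm{\nabla\phi_{2\ell_\pi}(\pi)}$ and then write ``chaining everything gives $\phi(\pi)\ge\phi(\wh\pi)\ge\phi(\pi^*)-C_{\ell_\pi}\norm{\nabla\phi_{2\ell_\pi}(\pi)}$; rearranging is \eqref{eq:gradient-like-property}.'' But this does not rearrange to the desired inequality: you need an \emph{upper} bound on $\phi(\pi)$, and the sandwich $\phi(\wh\pi)\le\phi_{2\ell_\pi}(\pi)\le\phi(\pi)$ goes the wrong way. Your earlier reduction ``it suffices to bound $\phi(\pi^*)$ from below by $\phi(\wh\pi)-(\text{const})\norm{\nabla\phi_{2\ell_\pi}(\pi)}$'' is therefore also incorrect. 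What is missing is the decomposition
\[
\phi(\pi)-\phi(\pi^*)=\bigl[\phi(\pi)-\phi(\wh\pi)\bigr]+\bigl[\phi(\wh\pi)-\phi(\pi^*)\bigr],
\]
where the first bracket is controlled by $L_\pi$-Lipschitz continuity of $\phi$ (\cref{lem:smoothmdp}\ref{item:iii)}): $\phi(\pi)-\phi(\wh\pi)\le L_\pi\norm{\pi-\wh\pi}=\tfrac{L_\pi}{2\ell_\pi}\norm{\nabla\phi_{2\ell_\pi}(\pi)}$. The paper itself defers the proof to \cite{wang2023policy,li2024policy}, but its stated constant $C_{\ell_\pi}=\tfrac{D\sqrt{SA}}{1-\gamma}+\tfrac{L_\pi}{2\ell_\pi}$ makes the two-term structure visible, and the paper's own proof of the closely related \cref{thm:moreau_grad_dom} follows exactly this Lipschitz-plus-gradient-dominance-at-the-prox-point pattern.

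On the subdifferential bookkeeping you flag: the clean resolution is that $(\wh\pi,\hat p)$, with $\hat p\in\argmax_{p}J_\rho(\wh\pi,p)$, is a saddle point of the strongly-convex--concave problem $\min_{\pi'\in\Pi}\max_{p\in\mcal P}\{J_\rho(\pi',p)+\ell_\pi\norm{\pi'-\pi}^2\}$ (Sion applies because the added quadratic dominates the $\ell_\pi$-weak convexity). Hence the first-order condition at $\wh\pi$ holds with the \emph{specific} gradient $\nabla_\pi J_\rho(\wh\pi,\hat p)$, giving $2\ell_\pi(\pi-\wh\pi)-\nabla_\pi J_\rho(\wh\pi,\hat p)\in\mcal N_\Pi(\wh\pi)$ and thus $\dist\bigl(0,\nabla_\pi J_\rho(\wh\pi,\hat p)+\mcal N_\Pi(\wh\pi)\bigr)\le\norm{\nabla\phi_{2\ell_\pi}(\pi)}$. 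No selection argument from $\partial\phi(\wh\pi)$ is needed.
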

\begin{remark}
For weakly convex optimization,
it has been shown by \citet{davis2019stochastic} that iterative algorithms, such as the proximal subgradient method, implicitly optimize the smooth approximation of the weakly convex function given by the Moreau envelope.
The gradient-dominance-like property implies that any first-order stationary point of the Moreau envelope function is globally optimal. 
Hence, it is the key prerequisite for the global optimality established in \cref{thm:global_opt} later. 
\end{remark}

\begin{remark}
    We remark that $\phi(\pi)\ge \phi_{2\ell_{\pi}}(\pi)$  holds for any $\pi\in \Pi$ and $\phi(\pi^*) = \phi_{2\ell_{\pi}}(\pi^*)$~\cite{davis2019stochastic}, which means that $\phi_{2\ell_{\pi}}(\pi)$ is also gradient-dominated. This inspires the fine-grained characterization of region-dependent convergence rate discussed in \cref{sec:reg_exp}.
\end{remark}

Additionally, a crucial and general conclusion arising from the gradient dominance property and the Moreau envelope is presented as follows.
\begin{theorem}
\label{thm:moreau_grad_dom}
Suppose that a $L_{f}$-Lipschitz continuous, $\ell_f$-weakly
convex function $f(\cdot)$ also satisfies the gradient dominance property with a constant $C>0$ on set $\mcal X\subseteq \dom f$, i.e., $f(x)-f(x^{*})\leq C\dist\brbra{\partial f(x)+{\cal N}_{\mcal X}(x)}$. Then we have the Moreau envelope function of $f$
with parameter $2r$, i.e., $f_{2r}(x)=\inf_{z\in\mcal X}f(z)+r\norm{z-x}^{2}$, also satisfies the gradient dominance property, where $r>\ell_f$.
\end{theorem}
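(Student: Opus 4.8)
The plan is to relate a (first-order stationary) point of the Moreau envelope $f_{2r}$ back to a \emph{near}-stationary point of $f$ itself, and then to feed that into the gradient dominance hypothesis on $f$. I would first note that, as in the RMDP instances of interest (where $\mcal X$ is $\Pi$ or $\mcal P$), $\mcal X$ should be taken closed and convex; this is what makes the proximal problem well posed. Fix $x\in\mcal X$ and let $\hat x:=\argmin_{z\in\mcal X}\{f(z)+r\norm{z-x}^{2}\}$. Because $f$ is $\ell_f$-weakly convex and $r>\ell_f$, the map $z\mapsto f(z)+r\norm{z-x}^{2}$ is $(2r-\ell_f)$-strongly convex on $\mcal X$, so $\hat x$ is unique, $\hat x\in\mcal X$, and by the standard smoothness of Moreau envelopes of weakly convex functions \citep{davis2019stochastic}, $f_{2r}$ is differentiable at $x$ with $\nabla f_{2r}(x)=2r(x-\hat x)$. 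The first-order optimality condition for $\hat x$ is $0\in\partial f(\hat x)+2r(\hat x-x)+\mcal N_{\mcal X}(\hat x)$, i.e.\ $2r(x-\hat x)\in\partial f(\hat x)+\mcal N_{\mcal X}(\hat x)$, so that $\dist\brbra{0,\partial f(\hat x)+\mcal N_{\mcal X}(\hat x)}\le\norm{2r(x-\hat x)}=\norm{\nabla f_{2r}(x)}$.

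Next I would pin down the minima. Since $f_{2r}(y)\ge\min_{\mcal X}f$ for every $y$ while $f_{2r}(x^{*})\le f(x^{*})+0=\min_{\mcal X}f$, we get $f_{2r}(x^{*})=\min_{\mcal X}f_{2r}=f(x^{*})$, so $x^{*}$ is also a minimizer of $f_{2r}$ and there is no ambiguity in the target value. Using $f_{2r}(x)=f(\hat x)+r\norm{\hat x-x}^{2}$, I decompose
\begin{equation*}
f_{2r}(x)-f(x^{*})=\bsbra{f(\hat x)-f(x^{*})}+r\norm{\hat x-x}^{2}.
\end{equation*}
Since $\hat x\in\mcal X$, the gradient dominance hypothesis on $f$ applies at $\hat x$, and combined with the bound from the previous paragraph it gives $f(\hat x)-f(x^{*})\le C\dist\brbra{0,\partial f(\hat x)+\mcal N_{\mcal X}(\hat x)}\le C\norm{\nabla f_{2r}(x)}$, which takes care of the first bracket.

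The remaining, and in my view the only delicate, point is that the residual $r\norm{\hat x-x}^{2}$ is a priori only \emph{quadratic} in $\norm{\nabla f_{2r}(x)}=2r\norm{\hat x-x}$, whereas a genuine gradient dominance bound must be linear in it. This is where Lipschitz continuity of $f$ and the feasibility $x\in\mcal X$ are essential: because $\hat x$ minimizes $f(z)+r\norm{z-x}^{2}$ over $\mcal X$ and $x$ itself is feasible, $f(\hat x)+r\norm{\hat x-x}^{2}\le f(x)$, hence $r\norm{\hat x-x}^{2}\le f(x)-f(\hat x)\le L_f\norm{x-\hat x}=\tfrac{L_f}{2r}\norm{\nabla f_{2r}(x)}$. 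Adding this to the first-bracket bound yields
\begin{equation*}
f_{2r}(x)-f(x^{*})\le\Brbra{C+\tfrac{L_f}{2r}}\norm{\nabla f_{2r}(x)},
\end{equation*}
and since also $f_{2r}(x)\le f(x)$ for $x\in\mcal X$, this is precisely the gradient dominance property for $f_{2r}$ with constant $C+L_f/(2r)$. I expect the step that needs the most care is exactly this conversion of the quadratic proximal-descent term into a linear-in-gradient quantity (and the observation that feasibility of $x$ in the proximal subproblem is what permits it); the rest is bookkeeping about the Moreau envelope — uniqueness of the prox, the gradient formula, the optimality inclusion with the normal-cone term, and the identity $\min_{\mcal X}f=\min_{\mcal X}f_{2r}$.
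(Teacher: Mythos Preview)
Your proof is correct and essentially the same as the paper's, arriving at the identical constant $C+L_f/(2r)$. The only presentational difference is that the paper bypasses your ``delicate point'' by immediately invoking $f_{2r}(x)\le f(x)$ (which is equivalent to your inequality $r\norm{\hat x-x}^{2}\le f(x)-f(\hat x)$) and then splitting $f(x)-f^{*}=[f(x)-f(\hat x)]+[f(\hat x)-f^{*}]$, whereas you first write $f_{2r}(x)-f^{*}=[f(\hat x)-f^{*}]+r\norm{\hat x-x}^{2}$ and then reduce the quadratic term via the same prox-descent and Lipschitz steps.
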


The above theorem indicates that the Moreau envelope of a weakly convex and gradient-dominated function is also gradient-dominated. 
This extends the results in \citet{yu2022kurdyka}, which shows that the Bregman envelope of a K\L{} function is also a K\L{} function. We leave more discussion in \cref{sec:app_notations}. 
\cref{thm:moreau_grad_dom} plays an important role in the following convergence analysis and may be of independent interest for weakly convex optimization.

\section{\label{sec:dsgda}Single-loop Robust Policy Gradient Method}
In this section, we introduce our single-loop algorithm {\srpg} designed for RMDPs and present its convergence analysis. The motivation and details of {\srpg} are provided in \cref{sec:srpg}. \cref{sec:general_con} gives a standard convergence analysis of {\srpg}, showing a convergence rate of $\mcal O(1/\vep^{4})$ with the gradient dominance condition. In \cref{sec:reg_exp}, we reveal the interesting regional exponential growth condition induced by the gradient dominance property and provide a more precise analysis of the convergence rate. 
\subsection{The detailed algorithm\label{sec:srpg}}
A simple single-loop algorithm designed for min-max optimization is the well-known gradient descent ascent (GDA), which alternatively performs gradient descent on the minimization problem and gradient ascent on the maximization problem. It has been proved that GDA can converge to an $\vep$-stationary point for nonconvex-strongly-concave problem with an iteration complexity of $O(1/\vep^2)$~\citep{lin2020gradient}. However, since the objective function of RMDPs is generally nonconvex-nonconcave, GDA may suffer from oscillation and even diverge~\citep{zhang2020single}.

To address this problem, some recent works employ the Moreau-Yosida smoothing technique to make the iteration sequence stable and converge \cite{zhang2020single, yang2022faster, zheng2023universal}.
Inspired by these works, we consider the following regularized function,
$$
\chi(\pi,p,\bar{\pi},\bar{p}):=J_{\rho}(\pi,p)+\frac{r_{1}}{2}\norm{\pi-\bar{\pi}}^{2}-\frac{r_{2}}{2}\norm{p-\bar{p}}^{2}.
$$
This function introduces two ancillary variables $\bar{\pi}$ and $\bar{p}$ for the primal variable $\pi$ and dual variable $p$, respectively, and smooths the primal update and dual update simultaneously by incorporating two quadratic terms. It is crucial to choose appropriate $r_1$ and $r_2$ to ensure that this regularized function is convex in $\pi$ and concave in $p$. To balance the primal and dual updates, $r_1$ and $r_2$ are not necessarily equal.

We provide the details of our {\srpg} in \cref{alg:dsgda}. At each iteration, it performs gradient descent on $\pi$ and gradient ascent on $p$ based on the gradient of $\chi$ function. Then, the two auxiliary variables are exponentially averaged to make sure that they do not deviate far from $\pi$ and $p$, which contributes to the sequence stability.

\begin{algorithm}[tb]
   \caption{Single-loop Robust Policy Gradient Method}
   \label{alg:dsgda}
    \begin{algorithmic}
       \STATE {\bfseries Input:} $\pi_{0}=\bar{\pi}_{0}\in \Pi,p_{0}=\bar{p}_{0} \in \mcal P$,~stepsize~$\tau,\sigma>0$~and $0<\beta,\mu<1$
       \FOR{$k=0$ {\bfseries to} $K-1$}
       \STATE $\pi_{k+1}=\proj_{\Pi}\Brbra{\pi_{k}-\tau\nabla_{\pi}\chi(\pi_{k},p_{k},\bar{\pi}_{k},\bar{p}_{k})}$
       \STATE $p_{k+1}=\proj_{\mcal P}\Brbra{p_{k}+\sigma\nabla_{p}\chi(\pi_{k+1},p_{k},\bar{\pi}_{k},\bar{p}_{k})}$
       \STATE $\bar{\pi}_{k+1}=\bar{\pi}_{k}+\beta(\pi_{k+1}-\bar{\pi}_{k})$
       \STATE $\bar{p}_{k+1}=\bar{p}_{k}+\mu(p_{k+1}-\bar{p}_{k})$
       \ENDFOR
       \STATE {\bfseries Output:} return a policy from $\cbra{\bar{\pi}_k}_{k=1}^{K}$ uniformly
    \end{algorithmic}
\end{algorithm}

\subsection{\label{sec:general_con} Convergence analysis: an overview}

We provide a convergence analysis of {\srpg}, and a more precise analysis will be presented in \cref{sec:reg_exp}. For clarity, we begin with a proof sketch. Firstly, we inherit the Lyapunov function introduced in \citet{zheng2023universal} and derive the sufficient descent property between two consecutive iteration of {\srpg} in \cref{prop:prop1_un}. Then, we utilize the gradient dominance property of the dual function in \cref{prop:upperbound_neg} to give an upper bound of the negative term presented in the descent property. Finally, we present the convergence rate of SRPG in \cref{thm:stationary_result}, which further implies the global convergence result of SRPG due to the gradient dominance property with respect to $\pi$.

Before delving into the analysis, we define the stationary measure discussed in this paper as follows.
\begin{definition}
    We say point $(\pi, p) \in \Pi \times \mathcal{P}$ is a $\vep$-game stationary point if $\dist(\zerobf,\nabla_{\pi} J_{\rho}(\pi, p)+\mcal N_{\Pi}(\pi))\leq \vep$ and $\dist(\zerobf,-\nabla_p J_{\rho}(\pi, p)+\mcal N_{\mcal P}(p))\leq \vep$. Moreover, we say point $\pi$ is a $\vep$-optimization stationary point if $\norm{\prox_{\phi(\cdot), r_1} (\pi) - \pi }\leq \vep$ for $r_1>\ell_{\pi}$ is a constant.
\end{definition}
The definition above is adapted from the one in \citet{zheng2023universal}. The $\vep$-game stationary point is extended from the first-order stationary point in the minimization-only problem, and widely used in nonconvex-nonconcave optimization~\cite{diakonikolas2021efficient, lee2021fast}. The $\vep$-optimization stationary point is well-studied for weakly convex minimization problem \cite{davis2019stochastic}. Since we can regard our minimax problems as weakly convex minimization problem over $\pi$, we also consider this stationary measure in the following convergence analysis.

Similar to \citet{zheng2023universal}, we consider the Lyapunov function as follows:
\begin{align*}
&\Phi(\pi,p,\bar{\pi},\bar{p})\\
& := \chi(\pi,p,\bar{\pi},\bar{p})-\varphi_{\pi}(p,\bar{\pi},\bar{p}) + \varphi_{\pi,p}(\bar{\pi},\bar{p})-\varphi_{\pi}(p,\bar{\pi},\bar{p})\\
 & \quad +\varphi_{\pi,p,\bar{p}}(\bar{\pi})-\varphi_{\pi,p}(\bar{\pi},\bar{p})+\varphi_{\pi,p,\bar{p}}(\bar{\pi})-\underline{\chi}+\underline{\chi}.\nonumber 
\end{align*}

To interpret this Lyapunov function, the primal update and dual update correspond to the primal descent term $\chi(\pi,p,\bar{\pi},\bar{p})-\varphi_{\pi}(p,\bar{\pi},\bar{p})$ and dual ascent term $\varphi_{\pi,p}(\bar{\pi},\bar{p})-\varphi_{\pi}(p,\bar{\pi},\bar{p})$. The averaging updates of auxiliary variables $\bar{\pi}$ and $\bar{p}$ can be considered as an approximate gradient descent on $\varphi_{\pi,p,\bar{p}}(\bar{\pi})$ and an approximate gradient ascent on  $\varphi_{\pi,p}(\bar{\pi},\bar{p})$. With this function, we establish the following descent property of {\srpg}.
 
\begin{proposition}
[Informal]\label{prop:prop1_un}
Assume that parameters $\tau$, $\sigma$, $\beta$ and $\mu$ are chosen appropriately, and let $\spadesuit:= \|\pi(\bar{\pi}_{k+1},\bar{p}(\bar{\pi}_{k+1}))-\pi(\bar{\pi}_{k+1},\bar{p}_{k}^{+}(\bar{\pi}_{k+1}))\|^{2}$. 
Then, for any $k>0$, we have 
\begin{equation*}
\begin{split}
& \Phi_{k}-\Phi_{k+1} \geq\frac{r_{1}}{32}\|\pi_{k+1}-\pi_{k}\|^{2}+\frac{r_{2}}{15}\|p_{k}-p_{k}^{+}(\bar{\pi}_{k},\bar{p}_{k})\|^{2} \\
&\, +\frac{r_{1}}{5\beta}\|\bar{\pi}_{k}-\bar{\pi}_{k+1}\|^{2} +\frac{r_{2}}{4\mu}\|\bar{p}_{k}^{+}(\bar{\pi}_{k+1})-\bar{p}_{k}\|^{2}   - 4 r_1 \beta \cdot {\spadesuit}.
\end{split}
\end{equation*}
\end{proposition}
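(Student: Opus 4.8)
The plan is to follow the Lyapunov-descent template of \citet{zheng2023universal}, adapted to the projected primal-dual iterates of \cref{alg:dsgda}. Decompose $\Phi_k$ into its four nonnegative ``gap'' blocks plus the constant $\underline{\chi}$: the primal gap $P_k:=\chi(\pi_k,p_k,\bar{\pi}_k,\bar{p}_k)-\varphi_{\pi}(p_k,\bar{\pi}_k,\bar{p}_k)$, the dual gap $D_k:=\varphi_{\pi,p}(\bar{\pi}_k,\bar{p}_k)-\varphi_{\pi}(p_k,\bar{\pi}_k,\bar{p}_k)$, the $\bar{p}$-gap $E_k:=\varphi_{\pi,p,\bar{p}}(\bar{\pi}_k)-\varphi_{\pi,p}(\bar{\pi}_k,\bar{p}_k)$, and the $\bar{\pi}$-gap $F_k:=\varphi_{\pi,p,\bar{p}}(\bar{\pi}_k)-\underline{\chi}$, so that $\Phi_k-\Phi_{k+1}=(P_k-P_{k+1})+(D_k-D_{k+1})+(E_k-E_{k+1})+(F_k-F_{k+1})$; I would lower-bound the four differences one at a time and add them. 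Before that, I would record the structural facts that make each block tractable: with the parameter choice forcing $r_1>\ell_\pi$ and $r_2>\ell_p$, \cref{lem:smoothmdp} makes $\chi(\cdot,p,\bar{\pi},\bar{p})$ $(r_1-\ell_\pi)$-strongly convex and $\chi(\pi,\cdot,\bar{\pi},\bar{p})$ $(r_2-\ell_p)$-strongly concave, so the maps $\pi(p,\bar{\pi},\bar{p})$, $\pi(\bar{\pi},\bar{p})$, $\bar{p}(\bar{\pi})$ are single-valued, and the two-sided Lipschitz-gradient bound (item iv) of \cref{lem:smoothmdp}) makes these argmin/argmax maps Lipschitz in all of their arguments. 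This last property is essentially the only RMDP-specific input; everything else is the general min-max machinery, which is why this statement closely parallels the corresponding descent lemma in \citet{zheng2023universal}.

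For the primal block, I would apply the one-step analysis of projected gradient descent on the smooth $(r_1-\ell_\pi)$-strongly convex function $\chi(\cdot,p_k,\bar{\pi}_k,\bar{p}_k)$: with $\tau$ small enough, the step $\pi_k\to\pi_{k+1}$ decreases $\chi(\cdot,p_k,\bar{\pi}_k,\bar{p}_k)$ by $\Theta(r_1)\|\pi_{k+1}-\pi_k\|^2$ and contracts the primal gap, yielding $P_k-P_{k+1}\ge\frac{r_1}{32}\|\pi_{k+1}-\pi_k\|^2$ minus error terms charged to the ensuing moves in $p$, $\bar{\pi}$, $\bar{p}$; since $\chi$ is quadratic in $\bar{\pi},\bar{p}$ and $\ell_p$-smooth in $p$ and the relevant argmin map is Lipschitz, these errors are $O(r_1\|\bar{\pi}_{k+1}-\bar{\pi}_k\|^2)$, $O(r_2\|\bar{p}_{k+1}-\bar{p}_k\|^2)$ and $O(\|p_{k+1}-p_k\|^2)$, all absorbable later. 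For the dual block, $\varphi_{\pi}(\cdot,\bar{\pi},\bar{p})=\min_\pi\chi$ is $(r_2-\ell_p)$-strongly concave in $p$ (a minimum over $\pi$ of functions strongly concave in $p$), and by Danskin $\nabla_p\varphi_{\pi}(p_k,\bar{\pi}_k,\bar{p}_k)=\nabla_p\chi(\pi(p_k,\bar{\pi}_k,\bar{p}_k),p_k,\bar{\pi}_k,\bar{p}_k)$, so the ``ideal'' ascent step on $\varphi_\pi$ from $p_k$ is precisely $p_k^+(\bar{\pi}_k,\bar{p}_k)$ and it shrinks $D_k$ by $\Theta(\|p_k-p_k^+(\bar{\pi}_k,\bar{p}_k)\|^2)$; the only discrepancy is that the algorithm evaluates the gradient at $\pi_{k+1}$ instead of at $\pi(p_k,\bar{\pi}_k,\bar{p}_k)$, and by item iv) of \cref{lem:smoothmdp} together with the strong convexity of the primal problem this costs only an $O(\|\pi_{k+1}-\pi_k\|^2)$ term, which is the origin of the $\frac{r_2}{15}\|p_k-p_k^+(\bar{\pi}_k,\bar{p}_k)\|^2$ term in the statement.

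The two auxiliary blocks I would handle by reading $\bar{\pi}_{k+1}=\bar{\pi}_k+\beta(\pi_{k+1}-\bar{\pi}_k)$ and $\bar{p}_{k+1}=\bar{p}_k+\mu(p_{k+1}-\bar{p}_k)$ as inexact, projection-free gradient steps on the smooth value functions $\bar{\pi}\mapsto\varphi_{\pi,p,\bar{p}}(\bar{\pi})$ and $\bar{p}\mapsto\varphi_{\pi,p}(\bar{\pi}_{k+1},\bar{p})$, whose smoothness and envelope gradients come from Danskin-type arguments on $\chi$ (strongly convex in $\pi$, concave in $p$). For the $\bar{\pi}$-block the exact negative-gradient direction of $\varphi_{\pi,p,\bar{p}}$ at $\bar{\pi}_k$ is $r_1\big(\pi(\bar{\pi}_k,\bar{p}(\bar{\pi}_k))-\bar{\pi}_k\big)$, whereas the realized direction is $r_1(\pi_{k+1}-\bar{\pi}_k)$; routing the discrepancy through the intermediate point $\pi(\bar{\pi}_{k+1},\bar{p}_k^+(\bar{\pi}_{k+1}))$ and using Lipschitzness of $\pi(\cdot,\cdot)$ and $\bar{p}(\cdot)$, the genuinely descending part contributes $\frac{r_1}{5\beta}\|\bar{\pi}_{k+1}-\bar{\pi}_k\|^2$ plus terms absorbed into $P_k$, $D_k$ and $\|\pi_{k+1}-\pi_k\|^2$, while the residual mismatch, which is exactly the lag of $\bar{p}_k$ behind $\bar{p}(\bar{\pi}_{k+1})$, is the quantity $\spadesuit$ and yields the $-4r_1\beta\cdot\spadesuit$ term. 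The $\bar{p}$-block is symmetric, its genuinely ascending part giving $\frac{r_2}{4\mu}\|\bar{p}_k^+(\bar{\pi}_{k+1})-\bar{p}_k\|^2$.

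Summing the four inequalities, I would fix the stepsizes in the order $\tau,\sigma$ (small enough that the primal and dual descent quadratics dominate the cross-errors they generate) and then $\beta,\mu$ (small enough that the auxiliary variables move slowly, so every leftover cross-term, each carrying a factor $\tau$, $\sigma$, $\beta$ or $\mu$, is absorbed into one of the four positive quadratics already secured); this is what ``parameters chosen appropriately'' means and it produces exactly the stated coefficients. The main obstacle is precisely this bookkeeping: tracking how each block's one-step change depends, through the Lipschitz argmin/argmax maps, on the moves in the other three variables, and deciding which positive quadratic each cross-term should be charged to so that nothing is left uncovered. The $\spadesuit$ term is deliberately left on the right-hand side; it is a genuine artifact of the nonconvex-nonconcave max side and is disposed of afterwards in \cref{prop:upperbound_neg} through the gradient-dominance property \eqref{eq:partial_grad_dom_p} of the dual function.
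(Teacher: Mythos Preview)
Your plan is essentially the same as the paper's: both follow the Lyapunov template of \citet{zheng2023universal}, use the strong convexity/concavity of $\chi$ induced by the regularizers together with the Lipschitz argmin/argmax maps (which hinge on item~iv) of \cref{lem:smoothmdp}), and trace the $\spadesuit$ term to the mismatch between the realized $\bar{\pi}$-direction $\pi_{k+1}-\bar{\pi}_k$ and the ideal direction $\pi(\bar{\pi}_{k+1},\bar{p}(\bar{\pi}_{k+1}))-\bar{\pi}_{k+1}$, routed through $\pi(\bar{\pi}_{k+1},\bar{p}_k^+(\bar{\pi}_{k+1}))$. The only organizational difference is that the paper tracks the change of each constituent function ($\chi$, $\varphi_\pi$, $\varphi_{\pi,p,\bar{p}}$) separately via three lemmas and then combines them---so $\varphi_{\pi,p}$ never appears, since it cancels in your $D_k+E_k$---rather than bounding your four gap blocks one by one; this is an equivalent regrouping and your parameter-tuning order matches the paper's.
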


Although \cref{prop:prop1_un} explicitly characterizes the difference of the Lyapunov function between two consecutive iterates, the presence of the negative term of $\spadesuit$ still prevents the subsequent analysis of the decreasing nature of $\Phi$. In \citet{zheng2023universal}, they further bound the negative term of $\spadesuit$ by the assumption of the global K\L{} property or the concavity of the dual function, however, which is absent in our RMDPs setting. To overcome this challenge, we provide an alternative analysis starting from the gradient dominance property of the dual function, which complements the results in \citet{zheng2023universal}, and may be of independent interest. We give the following lemma for preparation.
\begin{lemma}\label{lem:p_grad_moreau}
    The Moreau envelope function $\varphi_{p}(\pi,\bar{\pi},\bar{p}):= \max_{p\in \mcal P}\chi(\pi,\bar{\pi},p,\bar{p})$ of $J_\rho(\pi, p)$ satisfies the gradient dominance property with respect to $\bar{p}$, i.e., there exists a constant $C_{\varphi p}>0$ such that $\max_{\bar{p}} \varphi_{p}(\pi,\bar{\pi},\bar{p})-\varphi_{p}(\pi,\bar{\pi},\bar{p})\leq C_{\varphi p}\norm{\nabla_{\bar{p}} \varphi_{p}(\pi, \bar{\pi}, \bar{p})}$ for $\forall \pi \in \Pi, \forall p \in \mcal P, \forall \bar{\pi}$.
\end{lemma}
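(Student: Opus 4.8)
The plan is to reduce the claim about the Moreau envelope $\varphi_p(\pi,\bar\pi,\cdot)$ to the partial gradient dominance property of $J_\rho(\pi,p)$ with respect to $p$ recorded in \cref{eq:partial_grad_dom_p}, and then invoke \cref{thm:moreau_grad_dom} applied to the concave side. Concretely, fix $\pi$ and $\bar\pi$ and define $g(p) := -J_\rho(\pi,p) + \tfrac{r_1}{2}\norm{\pi-\bar\pi}^2$ as a function of $p$ over $\mcal P$. By \cref{lem:smoothmdp}\ref{item:ii)}, $J_\rho(\pi,\cdot)$ is $L_p$-Lipschitz and $\ell_p$-smooth, so $g$ is $L_p$-Lipschitz and $\ell_p$-weakly convex on $\mcal P$. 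Moreover, \cref{eq:partial_grad_dom_p} states exactly that $g$ satisfies a gradient dominance property on $\mcal P$: $g(p) - \min_{p'\in\mcal P} g(p') = \phi(\pi) - J_\rho(\pi,p) \le \bar D_p\, \dist(0, \nabla_p J_\rho(\pi,p) - \mcal N_{\mcal P}(p)) = \bar D_p\,\dist(0,\partial g(p) + \mcal N_{\mcal P}(p))$, where the last equality uses that $\partial g(p) = \{-\nabla_p J_\rho(\pi,p)\}$ and $-\mcal N_{\mcal P}(p) = \mcal N_{\mcal P}(p)$ as a set (normal cones are symmetric under negation only for subspaces — so more carefully, the correct statement is $\dist(0, \partial g(p) + \mcal N_{\mcal P}(p)) = \dist(0, -\nabla_p J_\rho(\pi,p) + \mcal N_{\mcal P}(p))$, which is what appears in \cref{eq:partial_grad_dom_p} after the sign bookkeeping).

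Next I would apply \cref{thm:moreau_grad_dom} with $f = g$, $\mcal X = \mcal P$, weak-convexity constant $\ell_f = \ell_p$, gradient-dominance constant $C = \bar D_p$, and Moreau parameter $2r = r_2$, which requires $r_2/2 > \ell_p$; this is exactly the kind of parameter condition already imposed on $r_2$ to make $\chi$ concave in $p$, so it is available. The conclusion is that the Moreau envelope $g_{r_2}(\bar p) := \inf_{p'\in\mcal P}\{g(p') + \tfrac{r_2}{2}\norm{p'-\bar p}^2\}$ satisfies a gradient dominance property on $\mcal P$ with some constant $C_{\varphi p} > 0$ depending only on $\bar D_p$, $\ell_p$, $L_p$ and $r_2$. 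It remains to translate $g_{r_2}$ back into $\varphi_p$: by definition $\varphi_p(\pi,\bar\pi,\bar p) = \max_{p\in\mcal P}\chi(\pi,p,\bar\pi,\bar p) = \max_{p\in\mcal P}\{J_\rho(\pi,p) - \tfrac{r_2}{2}\norm{p-\bar p}^2\} + \tfrac{r_1}{2}\norm{\pi-\bar\pi}^2 = -g_{r_2}(\bar p) + \tfrac{r_1}{2}\norm{\pi-\bar\pi}^2 \cdot(\text{constant shift})$ — more precisely $\varphi_p(\pi,\bar\pi,\bar p) = -\,\inf_{p'}\{-J_\rho(\pi,p') + \tfrac{r_2}{2}\norm{p'-\bar p}^2\} = r_1\norm{\pi-\bar\pi}^2/2 - g_{r_2}(\bar p) + r_1\norm{\pi-\bar\pi}^2/2$; let me just say $\varphi_p(\pi,\bar\pi,\cdot)$ equals an additive-constant shift of $-g_{r_2}(\cdot)$. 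Since $g_{r_2}$ is differentiable (a standard property of Moreau envelopes of weakly convex functions when $r_2/2>\ell_p$), so is $\varphi_p$ in $\bar p$, with $\nabla_{\bar p}\varphi_p = -\nabla g_{r_2}$. The gradient-dominance inequality $\max_{p'}g_{r_2}(p') - g_{r_2}(\bar p) \le C_{\varphi p}\norm{\nabla g_{r_2}(\bar p)}$ then becomes, after negating and using $\max_{\bar p}\varphi_p = -\min_{\bar p}g_{r_2} + \text{const} = -\max_{p'} (\text{?})$... I would be careful here: applying \cref{thm:moreau_grad_dom} to $g$ gives control of $g_{r_2}(\bar p) - \min g_{r_2}$; negation turns $\min g_{r_2}$ into $\max(-g_{r_2})$, i.e., $\max_{\bar p}\varphi_p(\pi,\bar\pi,\bar p) - \varphi_p(\pi,\bar\pi,\bar p) \le C_{\varphi p}\norm{\nabla_{\bar p}\varphi_p(\pi,\bar\pi,\bar p)}$, which is precisely the claimed inequality.

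The main obstacle I anticipate is not any deep argument but rather the \emph{sign and normal-cone bookkeeping} needed to verify that the hypotheses of \cref{thm:moreau_grad_dom} hold for the maximization side when it is phrased as a minimization of $g=-J_\rho+\text{const}$: one must check that the "$-\mcal N_{\mcal P}(p)$" appearing in \cref{eq:partial_grad_dom_p} is consistent with the "$+\mcal N_{\mcal X}(x)$" in the statement of \cref{thm:moreau_grad_dom} (they agree because $\dist(0,S+\mcal N) = \dist(0,-S-\mcal N) = \dist(0,-S+\mcal N)$ for the distance-to-zero of a set $S$ shifted by a cone whose negation is again a cone — though normal cones of convex sets are not subspaces, the distance $\dist(0,\cdot)$ is invariant under $v\mapsto -v$, so this is fine). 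A secondary point is confirming the parameter compatibility $r_2 > 2\ell_p$ is implied by (or can be assumed alongside) the conditions already placed on $r_2$ elsewhere in the algorithm's analysis; I would state this as an explicit prerequisite if it is not automatic. Beyond this, the argument is a direct composition of \cref{eq:partial_grad_dom_p} with \cref{thm:moreau_grad_dom} and the elementary identity relating $\varphi_p$ to a Moreau envelope, with no genuinely hard estimate.
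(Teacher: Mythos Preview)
Your proposal is correct and follows essentially the same route as the paper: the paper's own justification is simply that the lemma ``follows from \cref{thm:moreau_grad_dom} and $J_{\rho}(\pi,p)$ is gradient-dominated in $p$ (see \cref{eq:partial_grad_dom_p}),'' which is precisely the reduction you describe---define $g=-J_\rho(\pi,\cdot)$, verify Lipschitzness, weak convexity, and gradient dominance from \cref{lem:smoothmdp} and \cref{eq:partial_grad_dom_p}, apply \cref{thm:moreau_grad_dom}, and translate back via $\varphi_p(\pi,\bar\pi,\cdot)=\text{const}-g_{r_2}(\cdot)$. Your caution about the sign/normal-cone bookkeeping is well placed but resolves exactly as you note (since $\dist(0,S)=\dist(0,-S)$), and the parameter requirement $r_2>2\ell_p$ is indeed the kind of condition already imposed on $r_2$ in the algorithm's analysis.
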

\cref{lem:p_grad_moreau} follows from \cref{thm:moreau_grad_dom} and $J_{\rho}(\pi, p)$ is gradient-dominated in $p$ (see \cref{eq:partial_grad_dom_p}). Armed with~\cref{lem:p_grad_moreau}, we give the upper bound of term $\spadesuit$ in the next proposition.

\begin{proposition}[Informal]\label{prop:upperbound_neg}
With the above definitions, there exists a positive constant $\omega$ such that $ \spadesuit\leq \omega \norm{\bar{p}_+^k(\bar{\pi}_{k+1})-\bar{p}_k}$.
\end{proposition}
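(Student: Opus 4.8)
\textbf{Proof plan for Proposition~\ref{prop:upperbound_neg}.} The goal is to bound $\spadesuit = \|\pi(\bar{\pi}_{k+1},\bar{p}(\bar{\pi}_{k+1}))-\pi(\bar{\pi}_{k+1},\bar{p}_{k}^{+}(\bar{\pi}_{k+1}))\|^{2}$ by a constant multiple of $\|\bar{p}_{k}^{+}(\bar{\pi}_{k+1})-\bar{p}_{k}\|$. The plan is to go through three reductions, from the difference of inner minimizers $\pi(\cdot,\cdot)$, to a difference of the inner maximizers $\bar{p}(\cdot)$, and finally to the quantity $\|\bar p_k^+(\bar\pi_{k+1})-\bar p_k\|$ that appears with a positive sign in \cref{prop:prop1_un}.

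\emph{Step 1: stability of the inner minimizer $\pi(\bar\pi,\bar p)$ in $\bar p$.} Since $r_1,r_2$ are chosen so that $\chi(\pi,p,\bar\pi,\bar p)$ is strongly convex in $\pi$ (modulus at least $r_1-\ell_\pi>0$) and concave in $p$, the function $\varphi_{\pi,p}(\bar\pi,\bar p)=\min_\pi\max_p\chi$ has a well-defined saddle point, and $\pi(\bar\pi,\bar p)=\argmin_\pi\max_p\chi(\pi,p,\bar\pi,\bar p)$ is Lipschitz in $(\bar\pi,\bar p)$. I would make this quantitative: holding $\bar\pi=\bar\pi_{k+1}$ fixed, the map $\bar p\mapsto\pi(\bar\pi_{k+1},\bar p)$ is $L$-Lipschitz for some $L$ depending only on $r_1,r_2,\ell_\pi,\ell_p,\ell_{\pi p},\ell_{p\pi}$ (all finite by \cref{lem:smoothmdp}), using the implicit-function/variational-inequality stability of strongly-monotone saddle problems. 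Hence $\spadesuit\le L^2\|\bar p(\bar\pi_{k+1})-\bar p_k^+(\bar\pi_{k+1})\|^2$.

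\emph{Step 2: relate $\|\bar p(\bar\pi_{k+1})-\bar p_k^+(\bar\pi_{k+1})\|$ to the gradient of the dual Moreau envelope.} Here $\bar p(\bar\pi)=\argmax_{\bar p}\varphi_{\pi,p}(\bar\pi,\bar p)=\argmax_{\bar p}\varphi_p(\pi,\bar\pi,\bar p)$ is the exact maximizer of the (strongly concave in $\bar p$, once $r_2$ is dominated appropriately) Moreau-envelope-type function $\varphi_p$, while $\bar p_k^+(\bar\pi_{k+1})=\bar p_k+\mu(p(\pi(\bar\pi_{k+1},\bar p_k),\bar\pi_{k+1},\bar p_k)-\bar p_k)$ is one averaging step from $\bar p_k$. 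By strong concavity of $\varphi_p$ in $\bar p$, $\|\bar p(\bar\pi_{k+1})-\bar p_k^+(\bar\pi_{k+1})\|\le \tfrac{1}{\mathrm{(str.~concavity~mod.)}}\|\nabla_{\bar p}\varphi_p(\pi,\bar\pi_{k+1},\bar p_k^+(\bar\pi_{k+1}))\|$. Now invoke \cref{lem:p_grad_moreau}: the gradient-dominance property $\max_{\bar p}\varphi_p-\varphi_p\le C_{\varphi p}\|\nabla_{\bar p}\varphi_p\|$ together with strong concavity (which gives the reverse quadratic-growth bound $\max_{\bar p}\varphi_p-\varphi_p\ge c\|\bar p(\bar\pi)-\bar p\|^2$) pins $\|\bar p(\bar\pi_{k+1})-\bar p_k^+(\bar\pi_{k+1})\|$ to a linear function of this gradient norm, and more usefully lets me trade the squared distance in $\spadesuit$ for a first power of a distance later.

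\emph{Step 3: express the gradient of $\varphi_p$ at $\bar p_k^+(\bar\pi_{k+1})$ in terms of $\|\bar p_k^+(\bar\pi_{k+1})-\bar p_k\|$ and close the loop.} The averaging update is, up to the factor $\mu$, a proximal/gradient step for $\varphi_p$ in $\bar p$: $\bar p_k^+(\bar\pi_{k+1})-\bar p_k=\mu(p(\pi(\bar\pi_{k+1},\bar p_k),\bar\pi_{k+1},\bar p_k)-\bar p_k)$, and $p(\pi(\bar\pi_{k+1},\bar p_k),\bar\pi_{k+1},\bar p_k)-\bar p_k$ is proportional to $\nabla_{\bar p}\varphi_p(\pi,\bar\pi_{k+1},\bar p_k)$ (the envelope theorem: the gradient of the Moreau-type function in the proximal center equals $r_2$ times the displacement to the argmin). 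Combining this identity with the Lipschitzness of $\nabla_{\bar p}\varphi_p$ (inherited from the smoothness bounds in \cref{lem:smoothmdp}) to move the evaluation point from $\bar p_k$ to $\bar p_k^+(\bar\pi_{k+1})$, I get $\|\nabla_{\bar p}\varphi_p(\pi,\bar\pi_{k+1},\bar p_k^+(\bar\pi_{k+1}))\|\le c_1\|\bar p_k^+(\bar\pi_{k+1})-\bar p_k\|$. Chaining Steps 1--3 then yields $\spadesuit\le L^2\cdot c_2^2\|\bar p_k^+(\bar\pi_{k+1})-\bar p_k\|^2\le \omega\|\bar p_k^+(\bar\pi_{k+1})-\bar p_k\|$, where the last step uses that all iterates live in the compact set $\Pi\times\mcal P$ so that $\|\bar p_k^+(\bar\pi_{k+1})-\bar p_k\|$ is bounded, absorbing one power into the constant $\omega$; it is precisely the passage from a square to a first power that makes this bound usable against the positive $\tfrac{r_2}{4\mu}\|\bar p_k^+(\bar\pi_{k+1})-\bar p_k\|^2$ term only after one more summation argument, but here the statement only needs the single-power bound.

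\textbf{Main obstacle.} The delicate point is Step~2--3: \cref{lem:p_grad_moreau} only gives gradient dominance (a one-sided, first-power inequality), not strong concavity of $\varphi_p$ in $\bar p$ outright — I need to verify that the choice of $r_2$ relative to $\ell_p$ indeed makes $\varphi_p(\pi,\bar\pi,\cdot)$ strongly concave (or at least concave with a quadratic-growth/error-bound property) uniformly in $(\pi,\bar\pi)$, so that distance to the maximizer is genuinely controlled by the gradient norm. Handling the fact that the gradient is evaluated at $\bar p_k^+(\bar\pi_{k+1})$ rather than at $\bar p_k$, and that $\pi$ inside $\varphi_p$ is itself the moving inner minimizer, requires carefully tracking which variables are frozen at each stage; this bookkeeping, together with establishing the uniform Lipschitz modulus $L$ in Step~1 from the two-sided smoothness of \cref{lem:smoothmdp}, is where the real work lies.
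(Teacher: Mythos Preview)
Your plan has a real gap at the point you yourself flag as the main obstacle, and it does not close. In Step~2 you need strong concavity (or at least a quadratic-growth/error-bound property) of $\varphi_p(\pi,\bar\pi,\cdot)$ in $\bar p$ to convert $\|\bar p(\bar\pi_{k+1})-\bar p_k^+(\bar\pi_{k+1})\|$ into a gradient norm. But $\varphi_p(\pi,\bar\pi,\bar p)=\max_{p\in\mcal P}\{J_\rho(\pi,p)-\tfrac{r_2}{2}\|p-\bar p\|^2\}+\tfrac{r_1}{2}\|\pi-\bar\pi\|^2$ is exactly the Moreau envelope of the $\ell_p$-weakly convex function $-J_\rho(\pi,\cdot)$, so it is concave and smooth in $\bar p$ but \emph{not} strongly concave, and gradient dominance alone (which is what \cref{lem:p_grad_moreau} supplies) gives no quadratic growth. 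Consequently your chain $\spadesuit\le L^2\|\bar p(\bar\pi_{k+1})-\bar p_k^+(\bar\pi_{k+1})\|^2\le\cdots$ stalls: you cannot control the distance to the maximizer $\bar p(\bar\pi_{k+1})$ without the missing error bound.

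The paper sidesteps this entirely by working in \emph{function-value} space rather than in $\bar p$-distance. It never bounds $\|\bar p(\bar\pi_{k+1})-\bar p_k^+(\bar\pi_{k+1})\|$. Instead it uses that $\varphi_p(\cdot,\bar\pi,\bar p)$ is $(r_1-\ell_{\pi p})$-strongly convex in $\pi$, so
\[
\spadesuit \;\le\; \tfrac{2}{r_1-\ell_{\pi p}}\Bigl(\varphi_p\bigl(\pi(\bar\pi_{k+1},\bar p_k^+(\bar\pi_{k+1})),\bar\pi_{k+1},\bar p(\bar\pi_{k+1})\bigr)-\varphi_p\bigl(\pi(\bar\pi_{k+1},\bar p(\bar\pi_{k+1})),\bar\pi_{k+1},\bar p(\bar\pi_{k+1})\bigr)\Bigr),
\]
and the right-hand side is at most $\max_{\bar p}\varphi_p(\pi',\bar\pi_{k+1},\bar p)-\varphi_p(\pi',\bar\pi_{k+1},\bar p_k^+(\bar\pi_{k+1}))$ with $\pi'=\pi(\bar\pi_{k+1},\bar p_k^+(\bar\pi_{k+1}))$. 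Now \cref{lem:p_grad_moreau} bounds this function gap by $C_{\varphi p}\|\nabla_{\bar p}\varphi_p(\pi',\bar\pi_{k+1},\bar p_k^+(\bar\pi_{k+1}))\|$, a \emph{first power}. Finally the envelope identity $\nabla_{\bar p}\varphi_p=r_2(p(\pi',\bar\pi,\bar p)-\bar p)$, the update rule $\bar p_k^+(\bar\pi_{k+1})=\bar p_k+\mu(p(\bar\pi_{k+1},\bar p_k)-\bar p_k)$, and the Lipschitz bound on $p(\pi,\bar\pi,\cdot)$ give the gradient norm $\le r_2\bigl(\tfrac{1-\mu}{\mu}+\kappa_5\bigr)\|\bar p_k^+(\bar\pi_{k+1})-\bar p_k\|$. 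The square-to-first-power passage thus happens \emph{structurally}, through ``strong convexity in $\pi$ gives quadratic lower bound, gradient dominance in $\bar p$ gives linear upper bound,'' with no appeal to strong concavity in $\bar p$ or to compactness.
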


Equipped with \cref{prop:prop1_un} and \cref{prop:upperbound_neg}, we further establish the convergence result of SRPG.

\begin{theorem}\label{thm:stationary_result}
    Suppose that $\beta = \mcal O(K^{-1/2})$. Then for any $K>0$, we have: 1. There exists a positive integer $k_1 < K$ such that $(\pi_{k_1+1},p_{k_1+1})$ is an $\mcal O\brbra{(D_{\Pi}^{1/2} + D_{\mcal P}^{1/2})/K^{1/4}}$-game stationary point for problem \cref{eq:J_obj}; 2. There exists another positive integer $k_2<K$ such that $\bar{\pi}_{k_2+1}$ is an $\mcal O\brbra{(D_{\Pi}^{1/2} + D_{\mcal P}^{1/2})/K^{1/4}}$-optimization stationary point for problem \cref{eq:J_obj}. Here $D_{\Pi}:=\max_{\pi_1 \in \Pi, \pi_2\in \Pi}\norm{\pi_1 - \pi_2}$ and $D_{\mcal P}:=\max_{p_1 \in \mcal P, p_2\in \mcal P}\norm{p_1 - p_2}$ are the diameter of set $\Pi$ and $\mcal P$, respectively. 
\end{theorem}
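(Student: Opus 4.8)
The plan is to telescope the descent inequality from Proposition 4.1 after the negative $\spadesuit$ term has been absorbed using Proposition 4.2. First I would combine the two results: substituting the bound $\spadesuit \le \omega\|\bar{p}^+_k(\bar{\pi}_{k+1}) - \bar{p}_k\|$ into Proposition 4.1, and using Young's inequality on the cross term $4r_1\beta\omega\|\bar{p}^+_k(\bar{\pi}_{k+1}) - \bar{p}_k\|$ (splitting it as $\le \tfrac{r_2}{8\mu}\|\bar{p}^+_k(\bar{\pi}_{k+1}) - \bar{p}_k\|^2 + C\beta^2\mu\,\omega^2 r_1^2/r_2$), the $\|\bar{p}^+_k(\bar{\pi}_{k+1}) - \bar{p}_k\|^2$ term can be reabsorbed into the $\tfrac{r_2}{4\mu}\|\bar{p}^+_k(\bar{\pi}_{k+1}) - \bar{p}_k\|^2$ coefficient, leaving a residual $O(\beta^2)$ additive error. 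This yields a clean inequality of the form
\begin{equation*}
\Phi_k - \Phi_{k+1} \ge c_1 \|\pi_{k+1} - \pi_k\|^2 + c_2 \|p_k - p^+_k(\bar{\pi}_k,\bar{p}_k)\|^2 + \frac{c_3}{\beta}\|\bar{\pi}_k - \bar{\pi}_{k+1}\|^2 + \frac{c_4}{\mu}\|\bar{p}^+_k(\bar{\pi}_{k+1}) - \bar{p}_k\|^2 - C_5\beta^2
\end{equation*}
for positive constants $c_1,\dots,c_4,C_5$ depending only on the problem data and stepsizes.

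Next I would telescope this from $k=0$ to $K-1$. Since $\Phi$ is bounded below (each of its constituent gaps is nonnegative by the min/max definitions, and $\underline\chi$ is bounded below by $\min J_\rho - \tfrac{r_2}{2}D_{\mathcal P}^2$, say, giving $\Phi_k - \Phi_K \le \Phi_0 - \Phi^{\mathrm{low}}$), the telescoped sum gives
\begin{equation*}
\sum_{k=0}^{K-1}\Big( c_1\|\pi_{k+1}-\pi_k\|^2 + c_2\|p_k - p^+_k(\bar\pi_k,\bar p_k)\|^2 + \tfrac{c_3}{\beta}\|\bar\pi_k - \bar\pi_{k+1}\|^2 + \tfrac{c_4}{\mu}\|\bar p^+_k(\bar\pi_{k+1}) - \bar p_k\|^2\Big) \le \Phi_0 - \Phi^{\mathrm{low}} + C_5\beta^2 K.
\end{equation*}
Here $\Phi_0 - \Phi^{\mathrm{low}} = O(r_1 D_\Pi^2 + r_2 D_{\mathcal P}^2)$ since at initialization $\pi_0 = \bar\pi_0$, $p_0 = \bar p_0$ so the auxiliary gaps vanish and only the regularization and value-function ranges contribute. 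With the choice $\beta = \Theta(K^{-1/2})$ (and $\mu$ a constant or similarly scaled), the right-hand side is $O(D_\Pi^2 + D_{\mathcal P}^2 + K\cdot K^{-1} ) = O(D_\Pi^2 + D_{\mathcal P}^2)$. Dividing by $K$ and picking the best index, there exists $k_1 < K$ with $\|\pi_{k_1+1}-\pi_{k_1}\|^2 + \|p_{k_1} - p^+_{k_1}(\bar\pi_{k_1},\bar p_{k_1})\|^2 + \tfrac1\beta\|\bar\pi_{k_1}-\bar\pi_{k_1+1}\|^2 = O((D_\Pi^2 + D_{\mathcal P}^2)/K)$, i.e. these quantities are $O((D_\Pi + D_{\mathcal P})/\sqrt K)$ — not yet the claimed $K^{-1/4}$ rate.

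The final step, and the one I expect to be the main technical obstacle, is translating these "movement" quantities into the two claimed stationarity measures. For the game stationary point: $\dist(\zerobf, \nabla_\pi J_\rho(\pi_{k_1+1},p_{k_1+1}) + \mathcal N_\Pi(\pi_{k_1+1}))$ must be controlled by the projection-step optimality condition, which gives $\nabla_\pi\chi(\pi_{k_1},p_{k_1},\bar\pi_{k_1},\bar p_{k_1}) + \tfrac1\tau(\pi_{k_1+1}-\pi_{k_1}) \in -\mathcal N_\Pi(\pi_{k_1+1})$; one then replaces $\nabla_\pi\chi$ by $\nabla_\pi J_\rho$ (the regularizer contributes $r_1\|\pi_{k_1}-\bar\pi_{k_1}\|$, which needs its own bound) and uses the $\ell_{\pi p}$-Lipschitz gradient from Lemma 3.1 iv) to pass from $(\pi_{k_1},p_{k_1})$ to $(\pi_{k_1+1},p_{k_1+1})$, picking up $\|\pi_{k_1+1}-\pi_{k_1}\|$ and $\|p_{k_1+1}-p_{k_1}\|$ terms. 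Crucially, $\|\pi_{k_1}-\bar\pi_{k_1}\|$ and $\|p_{k_1}-p_{k_1}^+(\cdots)\|$-type residuals are controlled by the iterates' proximity to the various inner argmin/argmax (the quantities $\|\pi_{k+1}-\pi(\bar\pi_k,\bar p_k)\|$, etc.), and establishing that $\|\pi_{k_1}-\bar\pi_{k_1}\| = O(\|\bar\pi_{k_1}-\bar\pi_{k_1+1}\|/\beta)$ or similar — i.e. that the descent quantities genuinely dominate the stationarity gap — is where the $K^{-1/4}$ (rather than $K^{-1/2}$) rate enters: one of the residual terms in the translation is only bounded by the square root of a summable quantity, forcing $\sqrt{\sqrt{1/K}} = K^{-1/4}$. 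For the optimization stationary point, I would invoke the standard fact (Davis–Drusvyatskiy; cf. the discussion after Theorem 3.1) that $\|\nabla\phi_{2\ell_\pi}(\bar\pi_{k_2+1})\|$ and hence $\|\prox_{\phi,r_1}(\bar\pi_{k_2+1}) - \bar\pi_{k_2+1}\|$ is controlled by $\|\bar\pi_{k_2}-\bar\pi_{k_2+1}\|/\beta$ plus the game-stationarity residual, using that the averaging step $\bar\pi_{k+1} = \bar\pi_k + \beta(\pi_{k+1}-\bar\pi_k)$ is an approximate proximal-point step on $\phi$; choosing $k_2$ to minimize the relevant sum gives the same $K^{-1/4}$ order. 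Throughout, I would keep the $D_\Pi, D_{\mathcal P}$ dependence explicit by tracking the initialization gap $\Phi_0 - \Phi^{\mathrm{low}}$ and the diameters appearing in the Lipschitz/boundedness estimates, which accounts for the $(D_\Pi^{1/2}+D_{\mathcal P}^{1/2})$ prefactor in the statement.
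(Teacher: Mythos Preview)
Your approach via Young's inequality to absorb the $\spadesuit$ term is valid and actually more direct than the paper's argument, which instead proceeds by a case split: either there exists some $k$ at which $4r_1\beta\,\spadesuit$ exceeds half of the maximum positive term (in which case Proposition~4.2 forces $\|\bar p^+_k(\bar\pi_{k+1})-\bar p_k\|\le O(\beta)$ directly, and all other residuals are then bounded in terms of $\beta$), or the negative term is dominated at every iteration (in which case one telescopes cleanly with no residual at all). Your route trades this case analysis for an additive $O(\beta^2)$ error per step, which after summing contributes $O(\beta^2 K)=O(1)$ under $\beta=K^{-1/2}$---the same bottom line. One caveat worth checking: the constant $\omega$ in Proposition~4.2 carries a hidden $1/\mu$ factor (see the explicit $\omega_0$ in the appendix), so your residual is really $O(\beta^2/\mu)$; since $\mu$ is a fixed constant this is harmless, but it does not go without saying.

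Two places where your sketch is looser than the paper. First, $\Phi_0-\Phi^{\mathrm{low}}=O(r_1 D_\Pi^2+r_2 D_{\mathcal P}^2)$ is coarser than needed; the paper uses the Lipschitz continuity of $J_\rho$ from Lemma~3.1 to get $\Phi_0-\underline{\chi}=O(L_\pi D_\Pi + L_p D_{\mathcal P})$, and this linear bound is what yields the $D_\Pi^{1/2}+D_{\mathcal P}^{1/2}$ prefactor after the final square root. Second, and more substantively, your translation to the \emph{optimization} stationary point is not just Davis--Drusvyatskiy plus the averaging step. The difficulty is that $\prox_{\phi,r_1}(\bar\pi_{k+1})=\pi^*(\bar\pi_{k+1})$ is defined through $\max_p J_\rho$, whereas the iterates are close only to $\pi(\bar\pi_{k+1},\bar p_k)$, the minimizer of the doubly regularized $\chi$. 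Bridging the two requires a second gradient-dominance consequence, stated alongside Proposition~4.2 in the appendix: $\|\pi^*(\bar\pi)-\pi(\bar\pi,\bar p)\|^2\le\omega_1\|\bar p-p(\bar\pi,\bar p)\|$. This square-root bound, together with the $\|\bar\pi_k-\bar\pi_{k+1}\|/\beta$ term you correctly flagged, is precisely what produces the $K^{-1/4}$ rate for the optimization measure; your sketch anticipates the phenomenon but does not name this ingredient.
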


\begin{remark}
While we mainly focus on the RMDP setting, \cref{thm:stationary_result} holds for the general nonconvex-nonconcave min-max optimization problem $\min_{x\in \mcal X}\max_{y\in \mcal Y} f(x,y)$ as long as the dual function $\max_{y\in \mcal Y} f(x,y)$ satisfies the gradient dominance property, that is, there exists a positive constant $C$ such that $\max_{y'\in \mcal Y}f(x,y')-f(x,y)\leq C\dist(0,-\nabla_y f(x,y)+\mcal N_{\mcal Y}(y))$ for all $y \in \mathcal{Y}$. 
This marks a novel finding for min-max optimization. 
\end{remark}

\begin{remark}
    In general, the $\vep$-game stationary point is not the same as the $\vep$-optimization stationary point. If $(\pi,p)$ is a $\vep$-game stationary point, then it is an $\mcal O(\vep^{1/2})$-optimal stationary point \cite{zheng2023universal}. However, \cref{thm:stationary_result} suggests that, when the last iterate $\pi_{k+1}$ is only an $\mcal O(\vep^{1/2})$-optimization stationary point, the exponential average point $\bar{\pi}_{k+1}$ is an $\vep$-optimization stationary point.
\end{remark}

We have established so far the convergence result of SRPG. In the rest of the subsection, we strengthen the result and further derive the global convergence result of SRPG. 
Before proceeding, we provide the definitions of $\vep$-global optimal solution and $(\vep, \delta)$-global optimal solution as follows.

\begin{definition}
    A solution $\pi$ is an $\vep$-global optimal solution for problem \cref{eq:J_obj} if $\phi(\pi)-\phi(\pi^*)\leq \vep$. Moreover, a solution $\pi$ is an $(\vep, \delta)$-global optimal solution if $\dist(\pi,\Pi^*_{\ell_{\pi}}(\vep))\leq \delta$, where $\Pi_{\ell_{\pi}}^{*}(\vep):=\{\pi \mid\phi_{2\ell_{\pi}}(\pi)-\phi(\pi^*)<\vep\}$.
\end{definition}
\begin{remark}
    The $(\vep, \delta)$-global optimal solution is crucial for the analysis in \cref{sec:reg_exp}, and we define it here for compactness. We emphasize that an $(\vep, 0 )$-global optimal solution is not an $\vep$-global optimal solution, since there exists a gap between $\phi_{2\ell_{\pi}}(\cdot)$ and $\phi(\cdot)$.
\end{remark}
With this preparation, we state our main global convergence result of SRPG in the next theorem.
\begin{theorem}\label{thm:global_opt}
Under the same setting of \cref{thm:stationary_result}, for the sequence $\cbra{\bar{\pi}_k}_{k=1}^{K}$ generated by {\srpg}, there exists a positive integer $k<K$ such that $\bar{\pi}_{k+1}$ is a $\mcal O\brbra{(D_{\Pi}^{1/2} + D_{\mcal P}^{1/2})/K^{1/4}}$-global optimal solution.
\end{theorem}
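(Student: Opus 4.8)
The plan is to derive \cref{thm:global_opt} as a short corollary of \cref{thm:stationary_result} and \cref{thm:Moreau_smooth_grad_dom}, the only new ingredient being the standard identity relating a proximal residual to the gradient of a Moreau envelope. From part 2 of \cref{thm:stationary_result} we already obtain an index $k<K$ for which $\bar{\pi}_{k+1}$ is an $\mcal O\brbra{(D_{\Pi}^{1/2}+D_{\mcal P}^{1/2})/K^{1/4}}$-optimization stationary point, i.e.\ $\norm{\prox_{\phi(\cdot),r_1}(\bar{\pi}_{k+1})-\bar{\pi}_{k+1}}\le\vep$ with $\vep=\mcal O\brbra{(D_{\Pi}^{1/2}+D_{\mcal P}^{1/2})/K^{1/4}}$ and $r_1>\ell_{\pi}$. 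All that remains is to convert this small proximal residual into a small value gap $\phi(\bar{\pi}_{k+1})-\phi(\pi^{*})$, using the \emph{same} index $k$.

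First I would recall that, since $\Pi$ is convex, $\phi+\iota_{\Pi}$ is $\ell_{\pi}$-weakly convex by \cref{lem:smoothmdp} \ref{item:iii)}; hence $\pi'\mapsto\phi(\pi')+\ell_{\pi}\norm{\pi-\pi'}^{2}$ is strongly convex over $\Pi$, its unique minimizer is precisely $\prox_{\phi(\cdot),2\ell_{\pi}}(\pi)$, and the envelope $\phi_{2\ell_{\pi}}$ is continuously differentiable with $\nabla\phi_{2\ell_{\pi}}(\pi)=2\ell_{\pi}\brbra{\pi-\prox_{\phi(\cdot),2\ell_{\pi}}(\pi)}$ (the usual Danskin/Moreau-envelope formula for weakly convex functions). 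Therefore $\norm{\nabla\phi_{2\ell_{\pi}}(\bar{\pi}_{k+1})}=2\ell_{\pi}\norm{\prox_{\phi(\cdot),2\ell_{\pi}}(\bar{\pi}_{k+1})-\bar{\pi}_{k+1}}$. Taking $r_{1}=2\ell_{\pi}$ as the admissible parameter in the definition of the optimization stationary point (or, for a general $r_{1}>\ell_{\pi}$, invoking the elementary fact that proximal residuals at two parameters both exceeding the weak-convexity modulus differ only by a multiplicative constant depending on $r_{1},\ell_{\pi}$), we get $\norm{\nabla\phi_{2\ell_{\pi}}(\bar{\pi}_{k+1})}\le 2\ell_{\pi}\,\vep$ up to an absolute constant.

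Then I would simply apply the right-hand inequality of \cref{thm:Moreau_smooth_grad_dom}, namely $\phi(\pi)\le C_{\ell_{\pi}}\norm{\nabla\phi_{2\ell_{\pi}}(\pi)}+\phi(\pi^{*})$, at $\pi=\bar{\pi}_{k+1}$, to conclude
\[
\phi(\bar{\pi}_{k+1})-\phi(\pi^{*})\;\le\;C_{\ell_{\pi}}\norm{\nabla\phi_{2\ell_{\pi}}(\bar{\pi}_{k+1})}\;\le\;2\ell_{\pi}C_{\ell_{\pi}}\cdot\mcal O\brbra{(D_{\Pi}^{1/2}+D_{\mcal P}^{1/2})/K^{1/4}}.
\]
Since $\ell_{\pi}=2\gamma A/(1-\gamma)^{3}$ and $C_{\ell_{\pi}}$ are fixed constants (the latter depending only on $\gamma$, $A$ and the distribution mismatch coefficient $D$ of \cref{assu:irreduc}), the right-hand side is $\mcal O\brbra{(D_{\Pi}^{1/2}+D_{\mcal P}^{1/2})/K^{1/4}}$, and by definition $\bar{\pi}_{k+1}$ is then an $\mcal O\brbra{(D_{\Pi}^{1/2}+D_{\mcal P}^{1/2})/K^{1/4}}$-global optimal solution, which is the claim.

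The bulk of the difficulty is already carried by the two invoked theorems, so there is no heavy new computation here; the only point requiring care is the reconciliation of the proximal parameter $r_{1}$ used to define an optimization stationary point with the parameter $2\ell_{\pi}$ appearing in $\phi_{2\ell_{\pi}}$, together with verifying that the envelope stays differentiable under the convex constraint $\pi\in\Pi$ — once these bookkeeping issues are settled, the argument is immediate.
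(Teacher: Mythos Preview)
Your proposal is correct and follows essentially the same route as the paper's own proof: invoke part 2 of \cref{thm:stationary_result} to obtain a small proximal residual at some $\bar{\pi}_{k+1}$, identify this residual with $\norm{\nabla\phi_{2\ell_{\pi}}(\bar{\pi}_{k+1})}$ via the Moreau-envelope gradient formula, and then plug into \cref{thm:Moreau_smooth_grad_dom}. If anything, you are slightly more careful than the paper in flagging the bookkeeping between the prox parameter $r_1$ and the envelope parameter $2\ell_{\pi}$, which the paper's proof glosses over by implicitly taking $r_1=2\ell_{\pi}$.
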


\subsection{\label{sec:reg_exp}Convergence analysis: a deeper dive}
We provide a more precise analysis of the convergence rate of {\srpg}. More specifically, we show that, when the algorithm proceeds and the sequence $\{\bar{\pi}_k\}$ approaches to the optimal solution, the convergence rate of SRPG gradually slows down and degenerates to $\mcal{O}(1/\vep^{4})$ stated in \cref{thm:global_opt}. Although this observation does not enhance the iteration complexity with respect to $\vep$, it offers a noteworthy explanation for the phenomena observed later in our numerical experiments in \cref{sec:num}.

To begin the analysis, we introduce the regional exponential growth property as follows.
\begin{lemma}[Regional Exponential Growth]\label{lem:regional_exp_grow}
Under the same setting of \cref{thm:Moreau_smooth_grad_dom}, for $\ell_{\pi}$-weakly convex function $\phi(\pi)$, we have
\begin{equation*}
\phi(\pi)-\phi^{*}\geq\zeta\exp\brbra{\dist(\pi,\Pi_{\ell_{\pi}}^{*}(\zeta))/C_{\ell_{\pi}}}, \forall \pi\in\Pi\setminus\Pi_{\ell_{\pi}}^{*}(\zeta), 
\end{equation*}
where $\zeta>0$ is any given constant.
\end{lemma}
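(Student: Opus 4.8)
The plan is to bootstrap the linear-type gradient dominance inequality of \cref{thm:Moreau_smooth_grad_dom} into an exponential lower bound by an integration/ODE-style argument along a path that moves away from the optimal set $\Pi^*_{\ell_\pi}(\zeta)$. Fix $\zeta>0$ and consider any $\pi\in\Pi\setminus\Pi^*_{\ell_\pi}(\zeta)$. The key object is the projected point $\wh\pi:=\proj_{\Pi^*_{\ell_\pi}(\zeta)}(\pi)$ (this set is closed since $\phi_{2\ell_\pi}$ is continuous, and convex enough for the argument, or else we work with a minimizing sequence), together with the Moreau envelope $\phi_{2\ell_\pi}$. First I would record two facts already available: (i) $\phi(\pi^*)\le\phi_{2\ell_\pi}(\pi)\le\phi(\pi)$ for all $\pi$, so on the sublevel boundary $\phi_{2\ell_\pi}$ is squeezed between $\phi(\pi^*)$ and $\phi(\pi)$; and (ii) the gradient-dominance-like inequality $\phi_{2\ell_\pi}(\pi)-\phi(\pi^*)\le C_{\ell_\pi}\|\nabla\phi_{2\ell_\pi}(\pi)\|$ from \cref{thm:Moreau_smooth_grad_dom}, where $\phi_{2\ell_\pi}$ is $C^1$ with a Lipschitz gradient (standard for Moreau envelopes of weakly convex functions). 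Since $\phi(\pi)\ge\phi_{2\ell_\pi}(\pi)$, it suffices to prove the exponential bound with $\phi$ replaced by $\phi_{2\ell_\pi}$, i.e. $\phi_{2\ell_\pi}(\pi)-\phi(\pi^*)\ge\zeta\exp(\dist(\pi,\Pi^*_{\ell_\pi}(\zeta))/C_{\ell_\pi})$.

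Next I would run the standard Kurdyka--Łojasiewicz-to-error-bound mechanism in reverse. Consider the gradient flow (or, equivalently, a unit-speed path of steepest ascent) $\gamma:[0,T]\to\Pi$ with $\gamma(0)$ on the boundary $\partial\Pi^*_{\ell_\pi}(\zeta)$ — so $\phi_{2\ell_\pi}(\gamma(0))-\phi(\pi^*)=\zeta$ by continuity — and $\gamma(T)=\pi$, parametrized so that $\|\dot\gamma(t)\|=1$. Writing $g(t):=\phi_{2\ell_\pi}(\gamma(t))-\phi(\pi^*)$, along the steepest-ascent direction one has $\tfrac{d}{dt}g(t)=\|\nabla\phi_{2\ell_\pi}(\gamma(t))\|\ge g(t)/C_{\ell_\pi}$, where the inequality is exactly \cref{thm:Moreau_smooth_grad_dom}. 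Grönwall's inequality then gives $g(T)\ge g(0)\exp(T/C_{\ell_\pi})=\zeta\exp(T/C_{\ell_\pi})$. Choosing $T=\dist(\pi,\Pi^*_{\ell_\pi}(\zeta))$ (achieved by letting $\gamma$ run along the segment, or the appropriate flow line, realizing the distance — in the worst case one shows $T\ge\dist(\pi,\Pi^*_{\ell_\pi}(\zeta))$ because the flow must at least traverse that distance to escape the set, which only strengthens the bound) yields the claim. A cleaner discrete variant, avoiding flows: take $\pi_0=\wh\pi$, $\pi_j=\pi_{j-1}+h\,\nabla\phi_{2\ell_\pi}(\pi_{j-1})/\|\nabla\phi_{2\ell_\pi}(\pi_{j-1})\|$ with small $h$, observe $\phi_{2\ell_\pi}(\pi_j)-\phi_{2\ell_\pi}(\pi_{j-1})\approx h\|\nabla\phi_{2\ell_\pi}(\pi_{j-1})\|\ge h(\phi_{2\ell_\pi}(\pi_{j-1})-\phi(\pi^*))/C_{\ell_\pi}$, so the sublevel gap grows by a factor $(1+h/C_{\ell_\pi})$ each step, and after $\approx\dist/h$ steps it is at least $\zeta(1+h/C_{\ell_\pi})^{\dist/h}\to\zeta\exp(\dist/C_{\ell_\pi})$.

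The main obstacle I anticipate is making the path argument rigorous when the iterate must stay in the constrained set $\Pi$: the gradient flow of $\phi_{2\ell_\pi}$ may leave $\Pi$, so one must use the projected gradient flow (or a Clarke-subgradient curve constrained to $\Pi$) and verify that \cref{thm:Moreau_smooth_grad_dom}'s inequality — which involves $\|\nabla\phi_{2\ell_\pi}(\pi)\|$ without a normal-cone correction because $\phi_{2\ell_\pi}$ is the envelope taken over $\Pi$ — remains the right quantity controlling the ascent rate along such a constrained curve. A secondary technical point is that $\Pi^*_{\ell_\pi}(\zeta)=\{\pi:\phi_{2\ell_\pi}(\pi)-\phi(\pi^*)<\zeta\}$ is open, so ``$\gamma(0)$ on its boundary with value exactly $\zeta$'' needs the continuity of $\phi_{2\ell_\pi}$ plus an intermediate-value argument, and one should handle the degenerate case where $\nabla\phi_{2\ell_\pi}$ vanishes somewhere on the path — but \cref{thm:Moreau_smooth_grad_dom} forces $\|\nabla\phi_{2\ell_\pi}(\pi)\|\ge(\phi_{2\ell_\pi}(\pi)-\phi(\pi^*))/C_{\ell_\pi}>0$ outside $\Pi^*_{\ell_\pi}(\zeta)$, so this cannot happen on the relevant portion of the curve. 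Everything else is the routine Grönwall/telescoping computation, which I would not expand in detail.
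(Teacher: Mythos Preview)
Your overall strategy—pass to the smooth envelope $\phi_{2\ell_\pi}$, invoke $\phi_{2\ell_\pi}-\phi^*\le C_{\ell_\pi}\|\nabla\phi_{2\ell_\pi}\|$, integrate a differential inequality along a gradient-flow curve, and bound arc length below by the distance—is exactly what the paper does. But you run the flow in the wrong direction. You want a steepest-ascent curve starting on $\partial\Pi^*_{\ell_\pi}(\zeta)$ and \emph{terminating} at the given $\pi$; a gradient trajectory, however, is determined by its initial point, and there is no reason an ascent launched from the boundary (or from $\wh\pi$, as in your discrete variant) ever passes through $\pi$. Your discrete iteration makes the gap explicit: it yields a lower bound on $\phi_{2\ell_\pi}$ at whatever point $\pi_N$ the ascent produces, not at $\pi$. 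The straight-line segment from $\wh\pi$ does reach $\pi$, but along it you only have $g'(t)=\langle\nabla\phi_{2\ell_\pi},\dot\gamma\rangle\le\|\nabla\phi_{2\ell_\pi}\|$, which is the wrong inequality for a Gr\"onwall \emph{lower} bound on $g$.

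The paper's fix is simply to reverse the flow: run gradient \emph{descent} from $\pi$. Concretely, it works with $g(\pi)=\ln\bigl(\phi_{2\ell_\pi}(\pi)-\phi^*\bigr)$, for which $\|\nabla g\|\ge 1/C_{\ell_\pi}$ outside $\Pi^*_{\ell_\pi}(\zeta)$; since $g$ is bounded below by $\ln\zeta$ there, the descent $\dot\pi=-\nabla g$ starting at $\pi$ reaches the boundary in finite time $T$, and then
\[
g(\pi)-\ln\zeta=\int_0^T\|\nabla g(\pi(t))\|^2\,dt\ \ge\ C_{\ell_\pi}^{-1}\int_0^T\|\nabla g(\pi(t))\|\,dt\ \ge\ C_{\ell_\pi}^{-1}\dist\bigl(\pi,\Pi^*_{\ell_\pi}(\zeta)\bigr),
\]
after which exponentiating and using $\phi\ge\phi_{2\ell_\pi}$ finishes. (Your direct Gr\"onwall on $\phi_{2\ell_\pi}-\phi^*$ would work equally well, once you descend from $\pi$ instead of ascending toward it.) As for your constraint worry: $\phi_{2\ell_\pi}$, being an infimum over $\pi'\in\Pi$, is $C^1$ on the whole ambient space, so the flow can be run unconstrained; only the final inequality $\phi(\pi)\ge\phi_{2\ell_\pi}(\pi)$ uses $\pi\in\Pi$.
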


\cref{lem:regional_exp_grow} suggests that within region $\Pi\setminus \Pi_{\ell_{\pi}}^{*}(\zeta)$, the function value of $\phi(\cdot)$ exhibits the exponential growth property, in contrast to the general quadratic growth property guaranteed by the P\L{} condition \cite{necoara2019linear}. We highlight that this region is not the optimal solution set to $\min_{\pi \in \Pi} \phi(\pi)$, but is instead associated with the Moreau envelope function $\phi_{2\ell_{\pi}}(\cdot)$ and a pre-determined constant $\zeta$. In fact, we can prove that $ \Pi^*(\zeta) \subseteq \Pi_{\ell_{\pi}}^{*}(\zeta)  $, where $\Pi^*(\zeta):=\cbra{\pi\mid \phi(\pi)-\phi^* \leq \zeta}$. After taking $\zeta = \vep$, we obtain the following proposition:
\begin{proposition}
\label{prop:region_growth}
    Under the same setting of \cref{thm:Moreau_smooth_grad_dom}, for the sequence $\cbra{\bar{\pi}_k}$ generated by {\srpg} and $\bar{\pi}_k\in \Pi\setminus \Pi_{\ell_{\pi}}^{*}(\vep)$, we have $  \dist(\bar{\pi}_k, \Pi^*_{\ell_{\pi}}(\vep)) = \mcal O\brbra{ \log\brbra{(D_{\Pi}^{1/2} + D_{\mcal P}^{1/2}) k^{-0.25}\vep^{-1}}}$. 
\end{proposition}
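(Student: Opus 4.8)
The plan is to combine the regional exponential growth estimate of \cref{lem:regional_exp_grow} with the global convergence rate of \cref{thm:global_opt}, reducing everything to a single logarithmic manipulation. Fix an iterate $\bar{\pi}_k$ produced by {\srpg} and assume $\bar{\pi}_k\in\Pi\setminus\Pi_{\ell_{\pi}}^{*}(\vep)$. Since $\Pi_{\ell_{\pi}}^{*}(\vep)=\{\pi\mid\phi_{2\ell_{\pi}}(\pi)-\phi(\pi^{*})<\vep\}$ and $\phi(\pi)\ge\phi_{2\ell_{\pi}}(\pi)$ for every $\pi$, membership outside this set already forces $\phi(\bar{\pi}_k)-\phi^{*}\ge\phi_{2\ell_{\pi}}(\bar{\pi}_k)-\phi^{*}\ge\vep>0$, so every logarithm appearing below is well defined and nonnegative.

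\textbf{Step 1: reduce to bounding the optimality gap.} Instantiating \cref{lem:regional_exp_grow} with the free constant $\zeta=\vep$ gives
\[
\phi(\bar{\pi}_k)-\phi^{*}\ \ge\ \vep\,\exp\!\brbra{\dist(\bar{\pi}_k,\Pi_{\ell_{\pi}}^{*}(\vep))/C_{\ell_{\pi}}},
\]
so taking logarithms and rearranging yields
\[
\dist(\bar{\pi}_k,\Pi_{\ell_{\pi}}^{*}(\vep))\ \le\ C_{\ell_{\pi}}\,\log\!\brbra{\brbra{\phi(\bar{\pi}_k)-\phi^{*}}\big/\vep}.
\]
Hence the claim follows once we exhibit an upper bound on the optimality gap $\phi(\bar{\pi}_k)-\phi^{*}$ that decays like $(D_{\Pi}^{1/2}+D_{\mcal P}^{1/2})k^{-1/4}$.

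\textbf{Step 2: supply the convergence-rate bound.} Here I invoke the machinery of \cref{sec:general_con}. Applying \cref{thm:stationary_result} with the horizon equal to $k$ shows that the output iterate (the best of the first $k$, which is what we take $\bar{\pi}_k$ to denote) is an $\mcal O\brbra{(D_{\Pi}^{1/2}+D_{\mcal P}^{1/2})k^{-1/4}}$-optimization stationary point, i.e.\ $\norm{\prox_{\phi,r_1}(\bar{\pi}_k)-\bar{\pi}_k}=\mcal O\brbra{(D_{\Pi}^{1/2}+D_{\mcal P}^{1/2})k^{-1/4}}$ for $r_1=2\ell_{\pi}$. Using the standard Moreau identity $\nabla\phi_{2\ell_{\pi}}(\pi)=2\ell_{\pi}\brbra{\pi-\prox_{\phi,2\ell_{\pi}}(\pi)}$ — so that this optimization-stationarity measure equals $\norm{\nabla\phi_{2\ell_{\pi}}(\pi)}$ up to the fixed constant $2\ell_{\pi}$ — together with the gradient-dominance-like inequality $\phi(\pi)-\phi(\pi^{*})\le C_{\ell_{\pi}}\norm{\nabla\phi_{2\ell_{\pi}}(\pi)}$ of \cref{thm:Moreau_smooth_grad_dom}, I get $\phi(\bar{\pi}_k)-\phi^{*}=\mcal O\brbra{(D_{\Pi}^{1/2}+D_{\mcal P}^{1/2})k^{-1/4}}$ (equivalently this is just the statement that $\bar{\pi}_k$ is an $\mcal O\brbra{(D_{\Pi}^{1/2}+D_{\mcal P}^{1/2})k^{-1/4}}$-global optimal solution, \cref{thm:global_opt}). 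Substituting into the Step 1 display and absorbing the constant $C_{\ell_{\pi}}$ into $\mcal O(\cdot)$ produces exactly $\dist(\bar{\pi}_k,\Pi_{\ell_{\pi}}^{*}(\vep))=\mcal O\brbra{\log\brbra{(D_{\Pi}^{1/2}+D_{\mcal P}^{1/2})k^{-0.25}\vep^{-1}}}$.

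\textbf{Main obstacle.} The routine parts are the logarithmic rearrangement and the $\prox$-to-Moreau-gradient bookkeeping (pinning down $r_1=2\ell_{\pi}$ and tracking the multiplicative constants $\ell_{\pi}$ and $C_{\ell_{\pi}}$). The one point that genuinely needs care is the consistency of the two regimes invoked: the decaying bound $\phi(\bar{\pi}_k)-\phi^{*}=\mcal O\brbra{(D_{\Pi}^{1/2}+D_{\mcal P}^{1/2})k^{-1/4}}$ is only informative while it exceeds the lower bound $\vep$ enforced by $\bar{\pi}_k\notin\Pi_{\ell_{\pi}}^{*}(\vep)$, i.e.\ for $k=\mcal O\brbra{(D_{\Pi}+D_{\mcal P})^{2}/\vep^{4}}$. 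This is precisely the pre-convergence phase in which the proposition is meant to apply, and it is what makes the distance-to-optimum shrink only logarithmically in $k$ there — the analytic counterpart of the faster-than-$\mcal O(1/\vep^{4})$ initial decrease seen in the experiments of \cref{sec:num}. I would also state explicitly at the outset that "$\bar{\pi}_k$" refers to the output iterate guaranteed by \cref{thm:global_opt}, so that the convergence-rate estimate is legitimately attached to it.
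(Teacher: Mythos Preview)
Your proposal is correct and follows essentially the same route as the paper: instantiate \cref{lem:regional_exp_grow} at $\zeta=\vep$, combine with the global-optimality bound $\phi(\bar{\pi}_k)-\phi^*=\mcal O\brbra{(D_{\Pi}^{1/2}+D_{\mcal P}^{1/2})k^{-1/4}}$ from \cref{thm:global_opt}, and take logarithms. Your Step~2 even spells out the passage from \cref{thm:stationary_result} to \cref{thm:global_opt} via the Moreau identity and \cref{thm:Moreau_smooth_grad_dom}, which is exactly how the paper proves \cref{thm:global_opt} in the appendix; the paper's own proof of \cref{prop:region_growth} is just the two-line combination you describe.
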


\begin{remark}
\cref{prop:region_growth} implies the following interesting observation that the iteration complexity to obtain any $(\vep, \delta)$-optimal solution can be bounded by $\mcal O\brbra{(D_{\Pi}^{2} + D_{\mcal P}^{2})\vep^{-4}\exp(-4\delta)}$. When $\delta$ approaches $0$, the rate degenerates to $\mcal O(1/\vep^{4})$, which is the same as we proved in \cref{thm:global_opt}. For a comprehensive illustration, one may refer to \cref{fig:convergence_rate_change}. The iteration sequence within the white area indicates that $\delta > 0$. A larger $\delta$ corresponds to a faster convergence performance. Upon entering the small blue region, \cref{lem:regional_exp_grow} and \cref{prop:region_growth} do not hold anymore, leading to a worse convergence rate $\mcal O(1/\vep^{4})$. Ultimately, the sequence reaches the orange dashed region, signifying the attainment of an $\vep$-optimal solution.
\end{remark}
\begin{remark}
An average convergence rate for $(\vep, 0)$ global optimal solution can be estimated by 
\begin{equation*}
\frac{(D_{\Pi}^{2} + D_{\mcal P}^{2})}{\tilde{D}_\Pi\cdot\vep^{4}}\int_{0}^{\tilde{D}_\Pi}\exp(-4\delta)d\delta
\leq \frac{D_{\Pi}^{2} + D_{\mcal P}^{2}}{4\tilde{D}_\Pi\cdot \vep^{4}},
\end{equation*}
where $\tilde{D}_{\Pi}$ is the distance between initial point $\pi_0$ and set $\Pi_{\ell_{\pi}}^*(\vep)$. This finding indicates that an unfavorable initial point (where $\tilde{D}_{\Pi}$ is large) may not significantly impact the algorithm's performance. 
\end{remark}

\begin{figure}[h]
\centering

\tikzset{every picture/.style={line width=0.75pt}} 

\begin{tikzpicture}[x=0.75pt,y=0.75pt,yscale=-0.6,xscale=0.65]

\draw  [fill={rgb, 255:red, 74; green, 144; blue, 226 }  ,fill opacity=1 ] (402.71,155.64) .. controls (441.43,177.29) and (528.43,116.29) .. (549.43,150.29) .. controls (570.43,184.29) and (554.71,244.64) .. (506.71,252.64) .. controls (458.71,260.64) and (377.71,250.64) .. (400.71,234.64) .. controls (423.71,218.64) and (364,134) .. (402.71,155.64) -- cycle ;
\draw  [fill={rgb, 255:red, 74; green, 144; blue, 226 }  ,fill opacity=1 ] (166.43,121.29) .. controls (143.43,92.29) and (345,101) .. (322.71,127.64) .. controls (300.43,154.29) and (288.43,232.29) .. (294.71,249.64) .. controls (301,267) and (161.43,217.29) .. (157.43,229.29) .. controls (153.43,241.29) and (189.43,150.29) .. (166.43,121.29) -- cycle ;
\draw   (109,172) .. controls (109,101.31) and (215.2,44) .. (346.21,44) .. controls (477.22,44) and (583.43,101.31) .. (583.43,172) .. controls (583.43,242.69) and (477.22,300) .. (346.21,300) .. controls (215.2,300) and (109,242.69) .. (109,172) -- cycle ;
\draw    (514.43,83.29) .. controls (454.16,83.57) and (334.63,96.16) .. (287.14,173.12) ;
\draw [shift={(286.43,174.29)}, rotate = 301.07] [color={rgb, 255:red, 0; green, 0; blue, 0 }  ][line width=0.75]    (10.93,-3.29) .. controls (6.95,-1.4) and (3.31,-0.3) .. (0,0) .. controls (3.31,0.3) and (6.95,1.4) .. (10.93,3.29)   ;
\draw    (514.43,83.29) .. controls (501.26,84.21) and (505.23,82.42) .. (492.11,84) ;
\draw [shift={(490.14,84.25)}, rotate = 352.65] [color={rgb, 255:red, 0; green, 0; blue, 0 }  ][line width=0.75]    (10.93,-3.29) .. controls (6.95,-1.4) and (3.31,-0.3) .. (0,0) .. controls (3.31,0.3) and (6.95,1.4) .. (10.93,3.29)   ;
\draw    (514.43,83.29) .. controls (503.46,84.05) and (492.98,83.58) .. (481.35,84.36) .. controls (478.99,84.52) and (476.59,84.73) .. (474.12,85.01) ;
\draw [shift={(472.14,85.25)}, rotate = 352.65] [color={rgb, 255:red, 0; green, 0; blue, 0 }  ][line width=0.75]    (10.93,-3.29) .. controls (6.95,-1.4) and (3.31,-0.3) .. (0,0) .. controls (3.31,0.3) and (6.95,1.4) .. (10.93,3.29)   ;
\draw    (514.43,83.29) .. controls (503.46,84.05) and (492.98,83.58) .. (481.35,84.36) .. controls (474.71,84.8) and (466.5,85.71) .. (460.1,86.49) .. controls (456.8,86.89) and (453.99,87.25) .. (452.1,87.49) ;
\draw [shift={(450.14,87.75)}, rotate = 352.65] [color={rgb, 255:red, 0; green, 0; blue, 0 }  ][line width=0.75]    (10.93,-3.29) .. controls (6.95,-1.4) and (3.31,-0.3) .. (0,0) .. controls (3.31,0.3) and (6.95,1.4) .. (10.93,3.29)   ;
\draw    (514.43,83.29) .. controls (503.46,84.05) and (492.98,83.58) .. (481.35,84.36) .. controls (469.72,85.14) and (466.5,85.71) .. (460.1,86.49) .. controls (454.4,87.18) and (434.32,90.41) .. (427.51,91.82) ;
\draw [shift={(425.64,92.25)}, rotate = 344.05] [color={rgb, 255:red, 0; green, 0; blue, 0 }  ][line width=0.75]    (10.93,-3.29) .. controls (6.95,-1.4) and (3.31,-0.3) .. (0,0) .. controls (3.31,0.3) and (6.95,1.4) .. (10.93,3.29)   ;
\draw    (514.43,83.29) .. controls (503.46,84.05) and (492.98,83.58) .. (481.35,84.36) .. controls (469.72,85.14) and (466.5,85.71) .. (460.1,86.49) .. controls (453.69,87.26) and (444.07,88.86) .. (435.34,90.37) .. controls (427.7,91.7) and (401.29,98.43) .. (393.93,100.38) ;
\draw [shift={(392.06,100.89)}, rotate = 342.04] [color={rgb, 255:red, 0; green, 0; blue, 0 }  ][line width=0.75]    (10.93,-3.29) .. controls (6.95,-1.4) and (3.31,-0.3) .. (0,0) .. controls (3.31,0.3) and (6.95,1.4) .. (10.93,3.29)   ;
\draw    (514.43,83.29) .. controls (503.46,84.05) and (492.98,83.58) .. (481.35,84.36) .. controls (469.72,85.14) and (466.5,85.71) .. (460.1,86.49) .. controls (453.69,87.26) and (444.07,88.86) .. (435.34,90.37) .. controls (426.6,91.89) and (415.13,96.08) .. (402.72,98.11) .. controls (391.79,99.89) and (361.64,112.2) .. (354.04,115.39) ;
\draw [shift={(352.3,116.13)}, rotate = 333.43] [color={rgb, 255:red, 0; green, 0; blue, 0 }  ][line width=0.75]    (10.93,-3.29) .. controls (6.95,-1.4) and (3.31,-0.3) .. (0,0) .. controls (3.31,0.3) and (6.95,1.4) .. (10.93,3.29)   ;
\draw    (514.43,83.29) .. controls (503.46,84.05) and (492.98,83.58) .. (481.35,84.36) .. controls (469.72,85.14) and (466.5,85.71) .. (460.1,86.49) .. controls (453.69,87.26) and (444.07,88.86) .. (435.34,90.37) .. controls (426.6,91.89) and (415.13,96.08) .. (402.72,98.11) .. controls (390.3,100.13) and (374.62,107.05) .. (362.47,111.93) .. controls (350.88,116.6) and (328.64,129.94) .. (317.92,138.02) ;
\draw [shift={(316.49,139.13)}, rotate = 321.73] [color={rgb, 255:red, 0; green, 0; blue, 0 }  ][line width=0.75]    (10.93,-3.29) .. controls (6.95,-1.4) and (3.31,-0.3) .. (0,0) .. controls (3.31,0.3) and (6.95,1.4) .. (10.93,3.29)   ;
\draw  [fill={rgb, 255:red, 245; green, 166; blue, 35 }  ,fill opacity=1 ] (235.86,148.64) -- (254.04,172.76) -- (294.69,176.63) -- (265.27,195.4) -- (272.22,221.91) -- (235.86,209.39) -- (199.5,221.91) -- (206.44,195.4) -- (177.03,176.63) -- (217.68,172.76) -- cycle ;
\draw  [fill={rgb, 255:red, 245; green, 166; blue, 35 }  ,fill opacity=1 ] (505.71,168.64) -- (516.88,189.78) -- (541.85,193.17) -- (523.78,209.63) -- (528.05,232.86) -- (505.71,221.89) -- (483.38,232.86) -- (487.64,209.63) -- (469.57,193.17) -- (494.55,189.78) -- cycle ;

\draw (415,182) node [anchor=north west][inner sep=0.75pt]  [font=\small]  {$\Pi _{\ell _{\pi }}^{*}( \varepsilon )$};
\draw (484.55,186.78) node [anchor=north west][inner sep=0.75pt]    {$\Pi ^{*}( \varepsilon )$};
\draw (341,73) node [anchor=north west][inner sep=0.75pt]    {$\Pi $};
\draw (228,114) node [anchor=north west][inner sep=0.75pt]    {$\Pi _{\ell _{\pi }}^{*}( \varepsilon )$};
\draw (211,179) node [anchor=north west][inner sep=0.75pt]    {$\Pi ^{*}( \varepsilon )$};

\end{tikzpicture}
\caption{\label{fig:convergence_rate_change}Illustration of the changing convergence rate: a denser arrow implies a faster convergence rate.}
\end{figure}
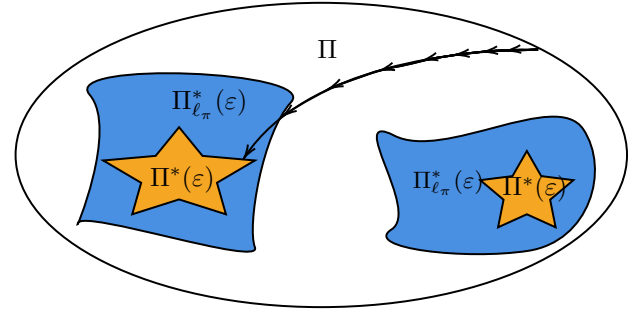

\ifthenelse{\boolean{ICML}}{
\begin{figure*}[htb]
\begin{centering}
\begin{minipage}[t]{0.4\columnwidth}%
\includegraphics[width=4.3cm]{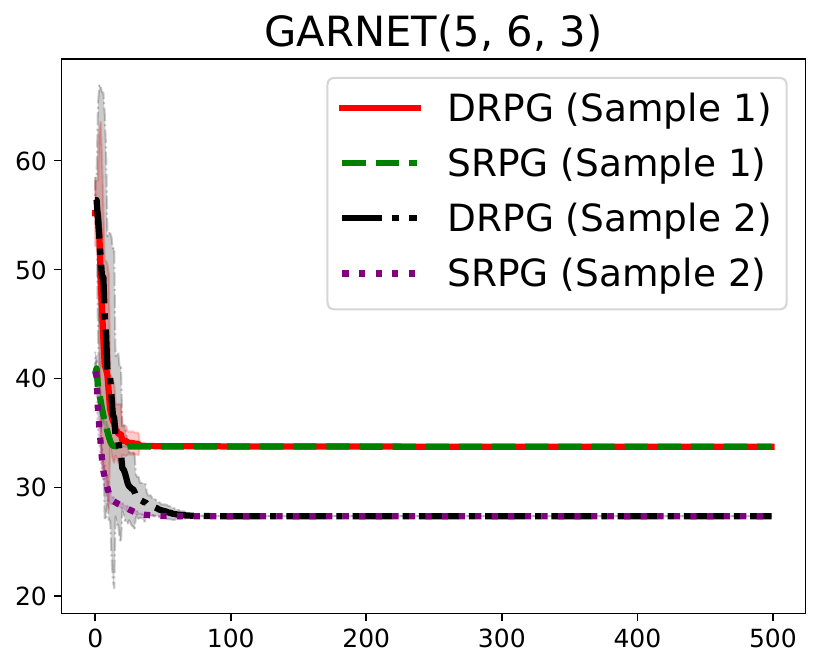}%
\end{minipage}\hfill
\begin{minipage}[t]{0.4\columnwidth}%
\includegraphics[width=4.3cm]{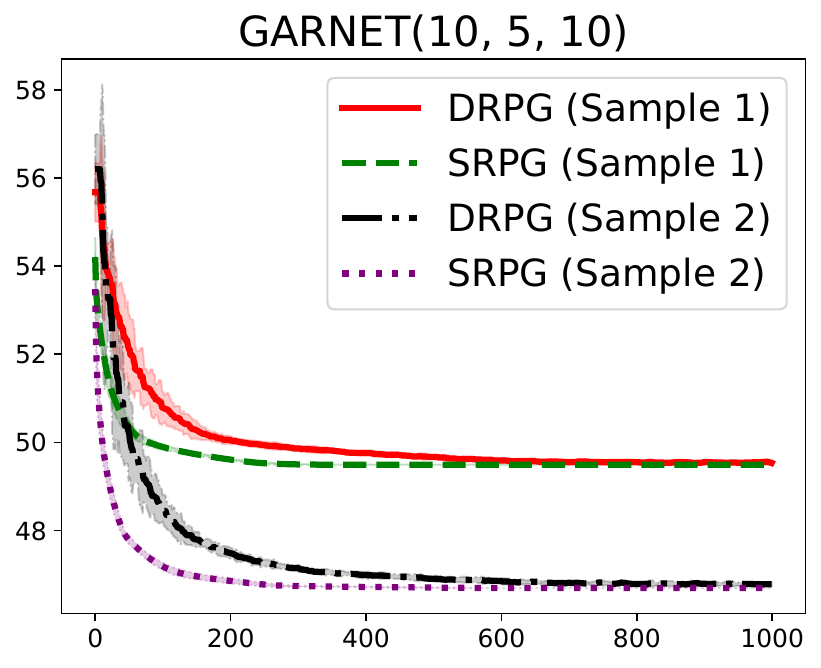}%
\end{minipage}\hfill
\begin{minipage}[t]{0.4\columnwidth}%
\includegraphics[width=4.3cm]{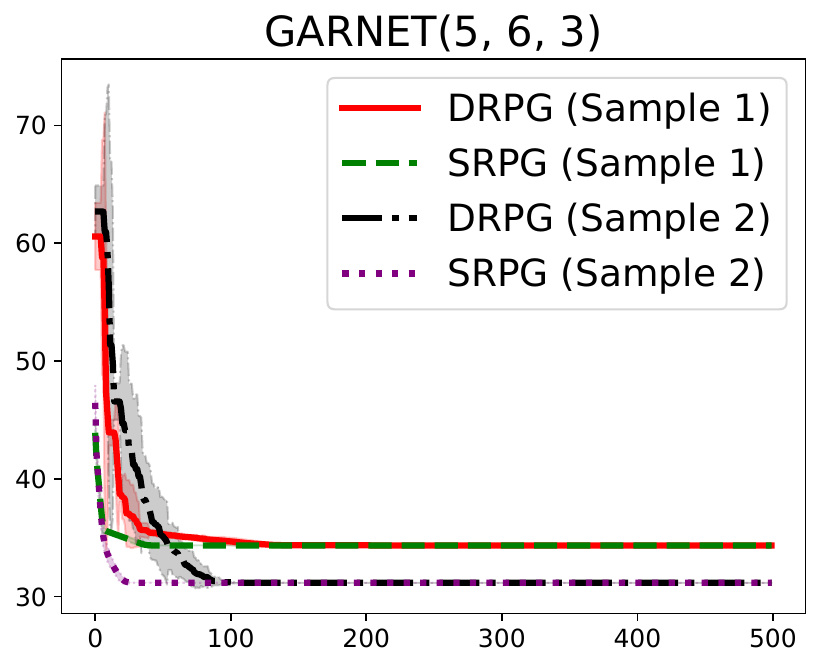}%
\end{minipage}\hfill
\begin{minipage}[t]{0.4\columnwidth}%
\includegraphics[width=4.3cm]{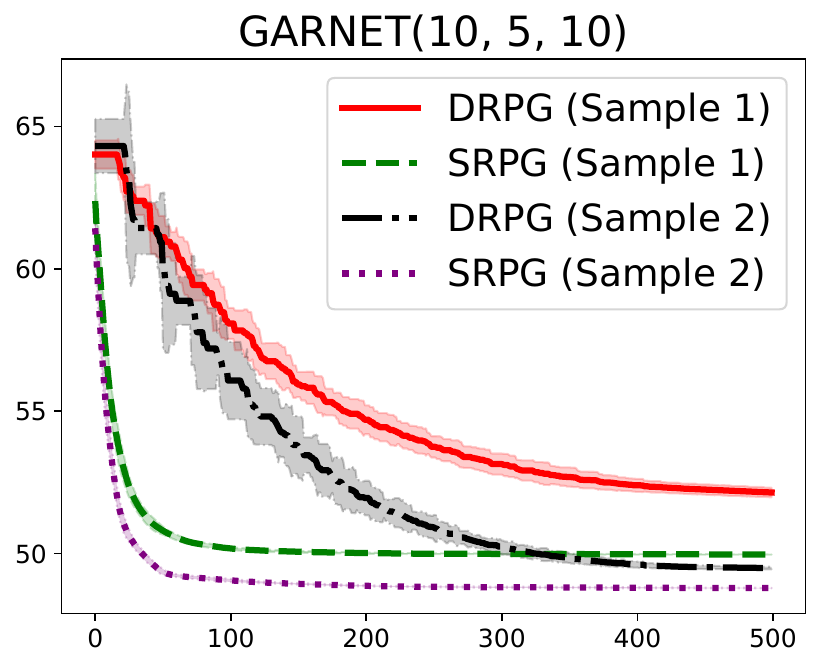}%
\end{minipage}\hfill
\end{centering}
\caption{\label{fig:garnet}The global convergence behavior of SRPG and DRPG on the {\garnet} MDPs. The solid curves depict the average value of $\phi(\pi_k)$, represented by $y$-axis, while the shaded areas correspond to its $95\%$ confidence interval. The $x$-axis represents the total number of updates on $\pi$ and $p$. The left two figures are the results for $s$-rectangular ambiguity set, and the right two figures are the results for $(s, a)$-rectangular ambiguity set. We remark that each sample in each figure corresponds to a different task.}
\end{figure*}
}

\ifthenelse{\boolean{Arxiv}}{
\begin{figure*}[htb]
\begin{centering}
\begin{minipage}[t]{0.2\columnwidth}%
\includegraphics[width=4.3cm]{fig/Srect_garnet_5_6_3.pdf}%
\end{minipage}\hfill
\begin{minipage}[t]{0.2\columnwidth}%
\includegraphics[width=4.3cm]{fig/Srect_garnet_10_5_10.pdf}%
\end{minipage}\hfill
\begin{minipage}[t]{0.2\columnwidth}%
\includegraphics[width=4.3cm]{fig/SArect_garnet_5_6_3.pdf}%
\end{minipage}\hfill
\begin{minipage}[t]{0.2\columnwidth}%
\includegraphics[width=4.3cm]{fig/SArect_garnet_10_5_10.pdf}%
\end{minipage}\hfill
\end{centering}
\caption{\label{fig:garnet}The global convergence behavior of SRPG and DRPG on the {\garnet} MDPs. The solid curves depict the average value of $\phi(\pi_k)$, represented by $y$-axis, while the shaded areas correspond to its $95\%$ confidence interval. The $x$-axis represents the total number of updates on $\pi$ and $p$. The left two figures are the results for $s$-rectangular ambiguity set, and the right two figures are the results for $(s, a)$-rectangular ambiguity set. We remark that each sample in each figure corresponds to a different task.}
\end{figure*}
}

\section{\label{sec:num}Numerical Experiments}

We conduct several experiments to investigate the performance of  SRPG compared with DRPG~\citep{wang2023policy}. In particular, we consider two different problems, including GARNET MDPs and an inventory management problem. To measure the performance of SRPG and DRPG, we use the worst-case expected return for a given $\pi_k$, i.e., $\phi(\pi_k)$. 

We apply the projected gradient method (PGM) to solve the inner maximization problem in DRPG. Following the setup of \citet{wang2023policy}, we terminate PGM when the relative error between two iterations is smaller than $10^{-4}$ or the iteration number reaches $200$. Note that PGM is also used to evaluate $\phi(\pi_k)$ for SRPG and each projection operator needed in the two algorithms. To solve the quadratic subproblems in PGM, we resort to the state-of-the-art commercial solver GUROBI \cite{gurobi}. We provide the code in \href{https://github.com/zhenweilin/srpg}{this link}.

\subsection{{\garnet} MDPs}

In the first experiment, we consider the {\garnet} MDPs, which are introduced by \citet{archibald1995generation} and widely recognized as a key benchmark in RMDPs \cite{wang2023policy, li2024policy}. 
Specifically, a standard {\garnet} MDP, denoted by {\garnet}$(S, A, b)$, consists of three different parameters. 
Here, $S$ and $A$ indicate the finite numbers of states and actions, respectively.
The branching factor $b$ determines the number of states that are reachable from any given state-action pair in one transition.

\textbf{Problem setup.} We randomly generate the nominal transition kernel $\bar{p}$ according to two different GARNET MDPs: {\garnet}$(5,6,3)$ and {\garnet}$(10,5,10)$. We let the discount factor $\gamma=0.95$, and sample the cost $c_{sas'}$ i.i.d. from the uniform distribution supported on $[0,5]$. Each element in the initial state distribution $\rho \in \Delta^{S}$ is sampled from $[0, 5]$ uniformly, then we project $\rho$ into the probability space.
For each $\bar{p}$, we construct both $L_1$-normed $s$- and $(s,a)$-rectangular ambiguity sets. The $s$-rectangular ambiguity set is defined by $\mcal P=\times_{s\in [S]}\mcal P_s$, where $\mcal P_s$ is
\begin{equation*}
    \mcal P_s:=\Bcbra{(p_{s1},\ldots,p_{sA} )\in (\Delta^S)^A\mid \sum_{a\in \mcal A}\norm{p_{sa} - \bar{p}_{sa}}_1 \leq \kappa_{s}}.
\end{equation*}
Similarly, the $(s,a)$-rectangular ambiguity set is formulated as $\mcal P=\times_{s\in [S],a\in [A]}\mcal P_{sa}$, with each $\mcal P_{sa}$ being 
\[\mcal P_{sa}:=\bcbra{p_{sa}\in \Delta^A \mid \norm{p_{sa} - \bar{p}_{sa}}_1 \leq \kappa_{sa}}.
\]
Here, $\kappa_s$ and $\kappa_{sa}$  control the uncertainty included in the ambiguity sets, and are sampled uniformly from $[0.1, 0.5]$. For each generated $\bar{p}$ and each ambiguity set, we conduct two independent experiments, which brings $8$ different tasks.

\textbf{Experiment configuration.} Throughout all the tasks, we tune SRPG as follows. We choose the primal stepsize $\tau$ and dual stepsize $\sigma$ from $\bcbra{0.01, 0.05, 0.1}$. We also choose the extrapolation parameters $\beta$ and $\mu$ from $\bcbra{0.01, 0.05, 0.1, 0.2, 0.4}$ for ${\srpg}$. For DRPG, we also tune its primal and dual stepsize from $\bcbra{0.01, 0.05, 0.1}$. To facilitate a fair comparison between the two algorithms, we run each algorithm with an identical total number of updates on $\pi$ and $p$, and choose the number from $\{500, 1000\}$. For each task, we conduct $10$ random experiments with different initial points, and record the average values of $\phi(\pi_k)$.

\textbf{Comparison.} \cref{fig:garnet} shows the global convergence behavior of two algorithms by plotting the average value of the sequence $\{\phi(\pi_k)\}$ against the total number of updates on $\pi$ and $p$. It demonstrates that our {\srpg} converges consistently and significantly faster than DRPG over all $8$ tasks, which indicates the superior performance and robustness of \srpg. Generally speaking, both algorithms exhibit a significant decrease in the value of $\phi(\pi_k)$ within the initial several iterations. However, as the algorithms proceed, the convergence rate slows down notably. This validates the regional exponential growth property discussed in \cref{sec:reg_exp}, which states that when the iterate is near the optimal solution, the convergence rate will gradually become $ \mathcal{O}(1/\vep^{4})$. Furthermore, DRPG displays an oscillation pattern, especially in the last figure. The underlying reason of this phenomenon could be attributed to the policy $\pi$ not being updated within the inner loop of DRPG, resulting in significant changes to the dual variable during the update of the primal variable $\pi$. This also highlights the advantage of our single-loop algorithm. The convergence behavior of SRPG is smoother, implying a steady and consistent improvement without significant fluctuations.

\subsection{Inventory management problem}

Our second experiment considers the inventory management problem, in which a retailer engages in the ordering, storage, and sale of a single product over an infinite time horizon~\citep{porteus2002foundations,ho2018fast}. It can be formulated as an RMDP as follows. 
The inventory levels and order quantities correspond to the states
and actions of the RMDP in any given time step, respectively.
The distribution of demand, which is unknown to the retailer, gives the transition kernel. Moreover, an item held in inventory will incur a deterministic per time step holding cost. We aim to find a policy that minimizes the worst-case total cost.

Because the numerical results in the last subsection have already demonstrated the superior performance of \srpg{} over DRPG in the tabular setting, we now go beyond the assumption of $s$-rectangular ambiguity set and test the performance of two algorithms in the parameterization setting. This is more suitable and practical for the large-scale RMDP. Specifically, we first test with the parameterization of transition kernel, which is also employed by \citet{wang2023policy}, and then test with parameterization of both policy and transition kernel. The detailed parameterization methods are provided in \cref{sec:appendix_num}.

\textbf{Experiment configuration.}  Similar to the experiments on {\garnet} MDPs, we tune SRPG by choosing the primal stepsize $\tau$ and dual stepsize $\sigma$ from $\bcbra{0.01, 0.05, 0.1}$ and selecting the extrapolation parameters $\beta$ and $\mu$ from $\bcbra{0.1, 0.2, 0.3}$. For DRPG, we also tune its primal and dual stepsize from $\bcbra{0.01, 0.05, 0.1}$. When testing with transition kernel parameterization, we run the algorithms with an identical total number of updates on both primal and dual variables and conduct ten random experiments with different initial points. We use the same performance measure as in the last subsection. When considering parameterization on both policy and transition kernel, we run DRPG on each instance with 15,000 total updates, recording the time and the obtained objective value. Then, we run \srpg{} and record the time required to obtain the same objective value as DRPG.
\begin{figure}[h]
    \centering
    \includegraphics[width = 0.45\textwidth]{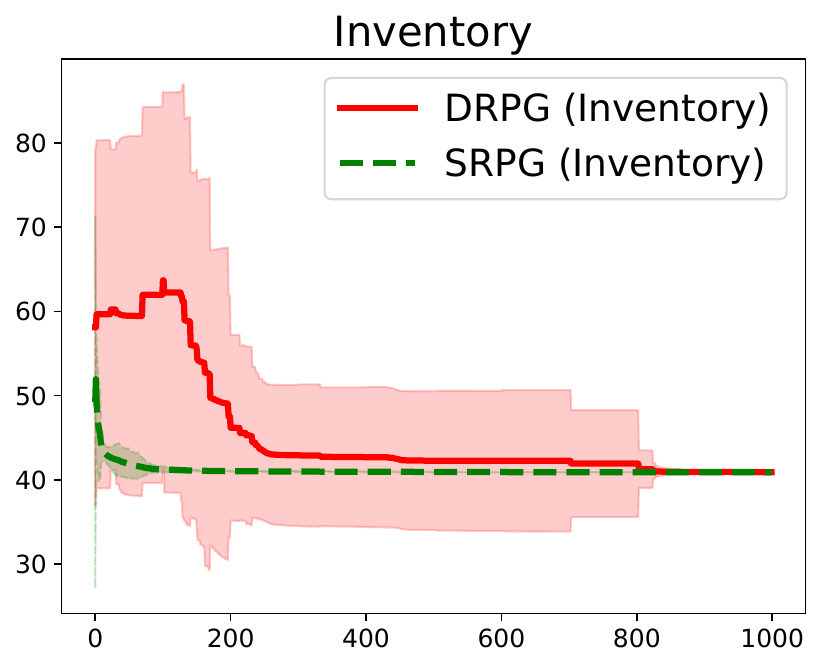}
    \caption{The global convergence behavior SRPG and DRPG on the inventory management problem. The figure is plotted in the same way as \cref{fig:garnet}.}
    \label{fig:inventory}
\end{figure}

\textbf{Discussion on computational cost for projection operator.} Let the computational cost for computing the projection operator of primal variable $\pi$ be $M_{\pi}$, and the one of dual variable $p$ be $M_p$. Then, for SRPG, the total computational cost for the projection operator is $\mathcal{O}(\epsilon^{-4}(M_\pi+M_p))$. For DRPG, it is a double-loop algorithm and needs to compute the projection operator $\mathcal{O}(\epsilon^{-2}_{k})$ for the $k$-th outer iteration. It at least needs the computational cost of $\mathcal{O}(\epsilon^{-4}(M_\pi+M_p\epsilon^{-2}))$ when we take $\epsilon_k = \epsilon$ for all $k$. Therefore, DRPG is much more computationally expensive than ours in terms of projection operator, as well as gradient computation.

\textbf{Comparison.} When only parameterizing the transition kernel, \cref{fig:inventory} shows the average values of sequence $\{\phi(\pi_k)\}$ for the two algorithms. It can be observed that {\srpg} demonstrates superior robustness compared to DRPG, and converges faster. Despite minor fluctuations in the initial phase, {\srpg} quickly stabilizes and converges. In contrast, DRPG oscillates and faces formidable challenges in converging to the optimal policy. When parameterizing both policy and transition kernel, \cref{tab:both-para} demonstrates that {\srpg} significantly decreases the time needed to find a solution when compared to DRPG, and thus more efficient. This is also consistent with the theoretical analysis. 

\begin{table}[h]
    \centering
    \begin{tabular}{ccrr}
    \hline
    $S$ & $A$ & DRPG  & \srpg \\
    \hline
    200 & 20 &	9,377.126	&\textbf{1,517.999} \\
    300	& 20 &12,758.126&	\textbf{4,719.497} \\
    500	& 20&	35,108.616	&\textbf{3,737.307} \\
    200	& 30&	7,605.747&	\textbf{1,764.330} \\
    300	& 30&	16,725.184&	\textbf{5,852.938} \\
    200	& 50	&12,910.183&	\textbf{5,351.032} \\
    300	& 50&	32,342.525&	\textbf{2,114.342} \\
    \hline
    \end{tabular}
    \caption{Comparison of DRPG and \srpg{} on both parameterization on policy and transition kernel. The smaller figure indicates less runtime and is bolded.}
    \label{tab:both-para}
\end{table}
\section{Conclusion}
This paper introduces the first single-loop robust policy gradient method (SRPG) for RMDPs. We provide its convergence analysis, demonstrating its ability to converge to the globally optimal policy with complexity $\mcal O(1/\vep^{4})$. Additionally, we unveil the regional exponential growth property and leverage it for a more precise convergence analysis of SRPG. Our numerical experiments showcase that SRPG exhibits faster and more stable convergence behavior when compared to its double-loop counterpart. For future work, a promising direction is to incorporate the mirror descent method into our sing-loop framework and attempt to obtain a better convergence rate for solving RMDPs. It is also interesting to capture the noise gradient and function approximation in the analysis, which is more suitable for practical applications.

\section*{Acknowledgement}
The authors are grateful to the Area Chairs and the anonymous reviewers for their constructive comments. This research is partially supported by the Major Program of National Natural Science Foundation of China (Grant 72394360, 72394364).

\section*{Impact Statement}
This paper presents work whose goal is to advance the field of Machine learning. There are many potential societal consequences of our work, but none of which we feel must be specifically highlighted here. 

\bibliography{ref}
\bibliographystyle{icml2024}


\ifthenelse{\boolean{ICML}}{
\doparttoc
\faketableofcontents
\part{}
\appendix
\onecolumn
\addcontentsline{toc}{section}{Appendix}
\part{Appendix} 
\parttoc
}

\ifthenelse{\boolean{Arxiv}}{
\clearpage
\part{Appendix} 
\insertseparator
\startcontents[appendix] 
\printcontents[appendix]{}{1}{}
\insertseparator
}

\section*{Structure of the Appendix}
The appendix is organized as follows. \cref{sec:app_notations} introduces some definitions and notations essential for understanding the proof process. It also includes more discussion about the difference between gradient dominance and K\L{} property. \cref{sec:auxiliary_lemmas} presents some crucial results that underpin our convergence analysis. Detailed proofs for convergence results are thoroughly outlined in~\cref{sec:appen_convergence_ana}, and we provide an illustration in \cref{fig:enter-label}. Furthermore, \cref{sec:appendix_num} offers more extensive details on our experiments.
\ifthenelse{\boolean{ICML}}{
\begin{figure}[H]
    \centering
    \includegraphics[width = 15.6cm, height = 10.3cm]{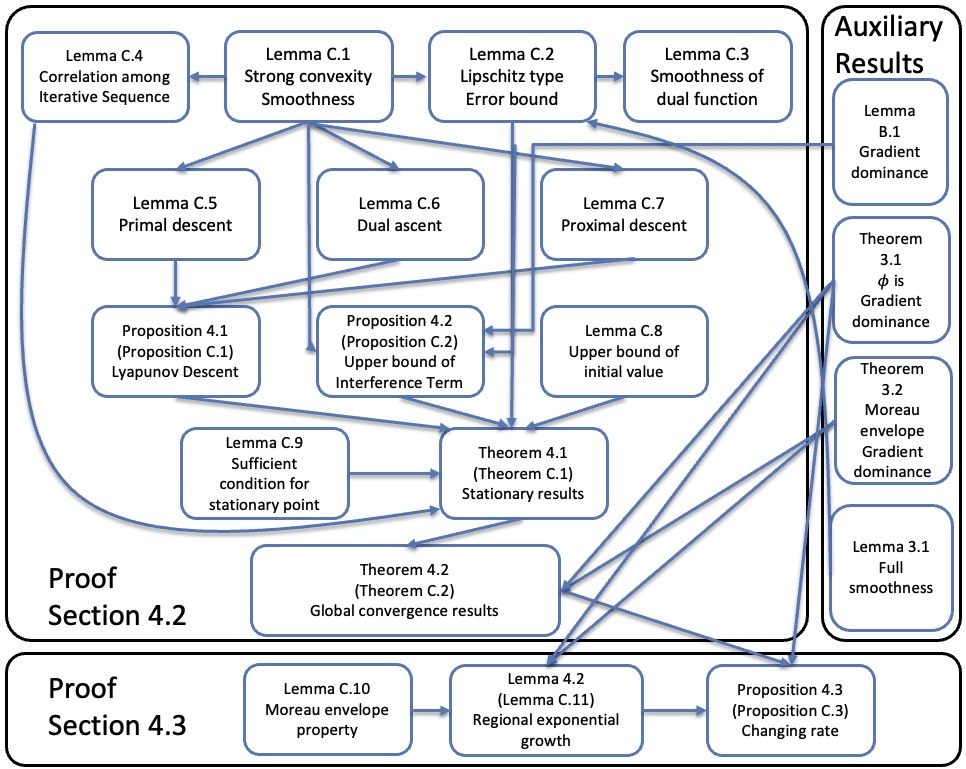}
    \caption{Flowchart of proof process.}
    \label{fig:enter-label}
\end{figure}
}

\ifthenelse{\boolean{Arxiv}}{
\begin{figure}[H]
    \centering
    \includegraphics[width = 15.6cm, height = 8.5cm]{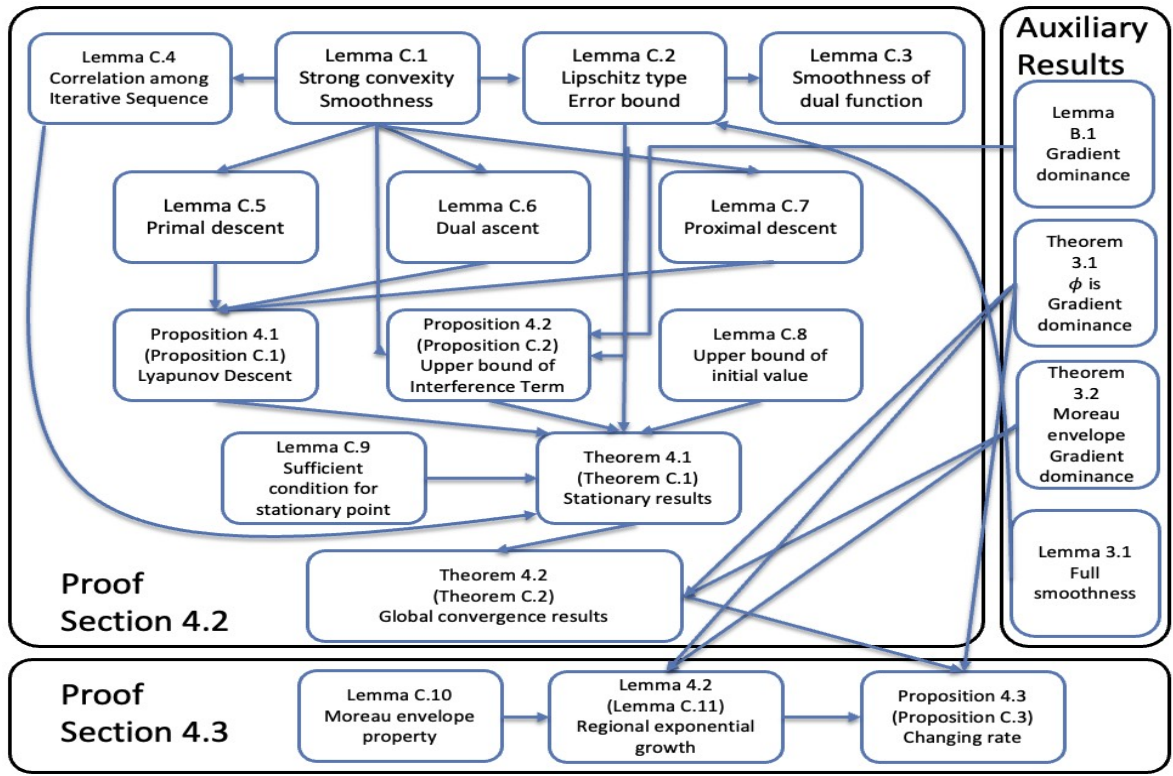}
    \caption{Flowchart of proof process.}
    \label{fig:enter-label}
\end{figure}
}

\clearpage
\section{\label{sec:app_notations} Preliminaries and Notations}
We first give some definitions used in this paper, and list some useful notations in~\cref{tab:Notations}.

\begin{definition}[$L$-Lipschitz Function]
    A function $f$ is called $L$-Lipschitz with respect to norm $\|\cdot \|$ if we have
    \begin{equation*}
        |f(x) - f(y)| \leq L \|x-y\| \quad \forall x, y \in \mathbb{R}^n.
    \end{equation*}
\end{definition}

\begin{definition}[$L$-smooth Function]
    A function $f$ is called $L$-smooth if it is continuously differentiable and its gradient is Lipschitz continuous with Lipschitz constant $L$, 
    \begin{equation*}
        \|\nabla f(x) - \nabla f(y) \|_2 \leq L\|x - y\|_2 \quad \forall x, y \in \mathbb{R}^n.
    \end{equation*}
\end{definition}

\begin{definition}[Weakly Convex Function]
A function $f$ is called weakly convex if there exists $\rho \geq 0$ such that $f(x)+\frac{\rho}{2}\|x\|^2$ is a convex function.
\end{definition}

\begin{definition}[Normal Cone]
Let $C \subset \mathbb{R}^n$ be a convex set with $\bar{x} \in C$. The normal cone to $C$ at $\bar{x}$ is
\begin{equation*}
\mathcal{N}_{C}(\bar{x}):=\left\{v \in \mathbb{R}^n \mid\langle v, x-\bar{x}\rangle \leq 0, \forall x \in C\right\}.
\end{equation*}
\end{definition}

\begin{definition}[Proximal Operator]
    Let $ r > 0$, then the proximal operator of a function $f$ is given by
    \begin{equation*}
        \text{prox}_{f, r}(x) = \arg\min_{y} \left\{ f(y) + \frac{r \|x-y\|^2}{2}\right\} .
    \end{equation*}
\end{definition}

\begin{definition}[Moreau envelope]
The Moreau envelope of a function $f$ is given by
\begin{equation*}
f_\mu(x)=\inf _y\left\{f(y)+\frac{\mu}{2}\|x-y\|_2^2\right\}.
\end{equation*}
\end{definition}

\begin{definition}[Bergman Divergence]
Let $\psi: \Omega \rightarrow \mathbb{R}$ be a strictly convex and continuously differentiable function defined on a closed convex set $\Omega$. Then the Bregman divergence is defined as
\begin{equation*}
\mathcal{D}(x, y)=\psi(x)-\psi(y)-\langle\nabla \psi(y), x-y\rangle, \quad \forall x, y \in \Omega .
\end{equation*}
\end{definition}

\begin{definition}[Bregman Envelope]
    The Bregman envelope of a proper lower semi-continuous convex function $f$ is defined as 
    $$f_{\mathcal{D},\lambda}(\bar{x}):=\inf_{x\in X}\{f(x)+\lambda \mathcal{D}(x,\bar{x})\}.$$
    where $\mathcal{D}(x,\bar{x})$ is the Bergman divergence.
\end{definition}

\begin{table}[htb]
\begin{centering}
\renewcommand{\arraystretch}{1.3}
\begin{tabular}{|l|l|l|}
\hline 
Notation & Meaning & Remark\tabularnewline
\hline 
\hline 
$\chi(\pi,p,\bar{\pi},\bar{p})$ & $J_{\rho}(\pi,p)+\frac{r_{1}}{2}\norm{\pi-\bar{\pi}}^{2}-\frac{r_{2}}{2}\norm{p-\bar{p}}^{2}$  &\ \ -\tabularnewline
\hline 
$\varphi_{\pi}(p,\bar{\pi},\bar{p})$ & $\min_{\pi\in\Pi}\chi(\pi,p,\bar{\pi},\bar{p})$ & $\pi(p,\bar{\pi},\bar{p})\in\argmin_{\pi\in\Pi}\chi(\pi,p,\bar{\pi},\bar{p})$\tabularnewline
\hline 
$\varphi_{p}(\pi,\bar{\pi},\bar{p})$ & $\max_{p\in \mcal P}\chi(\pi,p,\bar{\pi},\bar{p})$ & $p(\pi,\bar{\pi},\bar{p})\in\argmax{}_{p\in\mcal P}\chi(\pi,p,\bar{\pi},\bar{p})$\tabularnewline
\hline 
$\varphi_{\pi,p}(\bar{\pi},\bar{p})$ & $\min_{\pi\in\Pi}\max_{p\in\mcal P}\chi(\pi,p,\bar{\pi},\bar{p})$  &\ \ -\tabularnewline
\hline 
$\varphi_{\pi,p}(\bar{\pi},\bar{p})$ & $\min_{\pi\in\Pi}\varphi_{p}(\pi,\bar{\pi},\bar{p})$ & $\pi(\bar{\pi},\bar{p})=\pi\brbra{p(\bar{\pi},\bar{p}),\bar{\pi},\bar{p}}$\tabularnewline
\hline 
$\varphi_{\pi,p}(\bar{\pi},\bar{p})$ & $\max_{p\in\mcal P}\varphi_{\pi}(p,\bar{\pi},\bar{p})$ & $p(\bar{\pi},\bar{p})=p\brbra{\pi(\bar{\pi},\bar{p}),\bar{\pi},\bar{p}}$\tabularnewline
\hline 
$\varphi_{\pi,p,\bar{\pi}}(\bar{p})$ & $\min_{\bar{\pi}}\varphi_{\pi,p}(\bar{\pi},\bar{p})$ & $\bar{\pi}(\bar{p})\in\argmin_{\bar{\pi}\in\mbb R^{\abs{\mcal S}\times\abs{\mcal A}}}\varphi_{\pi,p}(\bar{\pi},\bar{p})$\tabularnewline
\hline 
$\varphi_{\pi,p,\bar{p}}(\bar{\pi})$ & $\max_{\bar{p}}\varphi_{\pi,p}(\bar{\pi},\bar{p})$ & $\bar{p}(\bar{\pi})\in\argmax_{\bar{p}}\varphi_{\pi,p}(\bar{\pi},\bar{p})$\tabularnewline
\hline 
$p^{+}(\bar{\pi},\bar{p})$ & $\proj_{\mcal P}(p+\sigma\nabla_{p}\chi(\pi(p,\bar{\pi},\bar{p}),p,\bar{\pi},\bar{p}))$  &\ \ -\tabularnewline
\hline 
$\bar{p}^{+}(\bar{\pi})$ & $\bar{p}+\mu\brbra{p(\pi(\bar{\pi},\bar{p}),\bar{\pi},\bar{p})-\bar{p}}$  &\ \ -\tabularnewline
\hline 
$\psi_{p}(\pi;\bar{\pi})$ & $\max_{p\in\mcal P}J_{\rho}(\pi,p)+\frac{r_{1}}{2}\norm{\pi-\bar{\pi}}^{2}$  &\ \ -\tabularnewline
\hline 
\multirow{2}{*}{$\psi(\pi,p,\bar{\pi})$} & \multirow{2}{*}{$J_{\rho}(\pi,p)+\frac{r_{1}}{2}\norm{\pi-\bar{\pi}}^{2}$} & $\pi(p,\bar{\pi};r_{1})\in\argmin_{\pi\in\Pi}\psi(\pi,p,\bar{\pi})$\tabularnewline
\cline{3-3} 
 &  & $\pi^{*}(\bar{\pi})\in\argmin_{\pi\in\Pi}\psi_{p}(\pi;\bar{\pi})$\tabularnewline
\hline 
$p^{+}(\bar{p})$ & $\proj_{\mcal P}(p+\sigma\nabla_{p}J_{\rho}(\pi(p,\bar{\pi};r_{1}),p))$  &\ \ -\tabularnewline
\hline 
\end{tabular}
\par\end{centering}
\caption{\label{tab:Notations}Notations.}
\end{table}

\subsection*{Difference between gradient dominance property and K\L{} property}

We now discuss the difference between gradient dominance property and K\L{} property. We recall the definition of K\L{} property.

\begin{definition}[K\L{} property~\cite{yu2022kurdyka}] We say that a proper closed function $h:\mcal X \to \mbb R\cup \cbra{\infty}$ satisfies the K\L{} property at $\hat{x}\in \dom \partial h$ if there are $c\in(0, \infty]$, a neighborhood $V$ of $\hat{x}$ and a continuous concave function $\Omega:[0, c)\to [0, \infty)$ with $\Omega(0) = 0$ such that 
\begin{itemize}
    \item $\Omega$ is continuously differentiable on $(0, c)$ with $\Omega'>0$ on $(0,c)$;
    \item For any $x\in V$ with $h(\hat{x})<h(x)<h(\hat{x}) + c,$ it holds that 
    \begin{equation}\label{eq:gradient_descent}
        \Omega'(h(x)-h(\hat{x}))\dist(0,\partial h(x))\geq 1.
    \end{equation}
\end{itemize}
\end{definition}

When we take $\Omega(z)=\bar{c}z^{1-\theta}$ for some $\bar{c}>0$ and $\theta\in[0,1)$. Then~\eqref{eq:gradient_descent} can be written as
\begin{equation*}
    \bar{c}(1-\theta) (h(x)-h(\hat{x}))^{-\theta}\dist(0,\partial h(x))\geq 1.
\end{equation*}
One may view gradient dominance property as a special case of K\L{} property with $\theta = 1$. However, this will make the first condition and \cref{eq:gradient_descent} invalid. Hence, it is challenging to make K\L{} property reduce to the gradient dominance property. This also implies that some interesting properties induced by K\L{} property fail in the context of gradient dominance property. Whereas in this paper, we combine the gradient dominance property with the weakly convex function, and derive that the Moreau envelope of a weakly convex and gradient-dominated function is also gradient-dominated.

\section{Auxillary Results and Proof of Results in~\cref{sec:properties_rmdp}\label{sec:auxiliary_lemmas}}
\begin{lemma}
[Lemma 4.3 and E.2 in~\citet{wang2023policy}]\label{lem:gradient_dominance}We
have two excellent properties:
\end{lemma}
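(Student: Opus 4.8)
The two displayed properties are the $\pi$-side and $p$-side ``gradient domination'' inequalities for the direct tabular parameterization, one for the minimizing player and one for the maximizing player, and the plan is to derive each from a performance-difference identity together with an elementary normal-cone estimate. For the first inequality, fix $p\in\mcal P$ and view $\pi\mapsto J_{\rho}(\pi,p)$ as the objective of an ordinary MDP with kernel $p$. First I would record the closed form $[\nabla_{\pi}J_{\rho}(\pi,p)]_{sa}=\tfrac{1}{1-\gamma}d_{\rho}^{\pi,p}(s)q_{sa}^{\pi,p}$, then apply the performance difference lemma between $\pi$ and $\pi^{*}(p)\in\argmin_{\pi'\in\Pi}J_{\rho}(\pi',p)$ in the form $J_{\rho}(\pi,p)-\Psi(p)=\tfrac{1}{1-\gamma}\sum_{s}d_{\rho}^{\pi^{*}(p),p}(s)\inner{\pi_{s}-\pi^{*}(p)_{s}}{q_{s}^{\pi,p}}$, where every summand is nonnegative. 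Changing the reference measure from $d_{\rho}^{\pi^{*}(p),p}$ to $d_{\rho}^{\pi,p}$ costs the factor $D$ of \cref{assu:irreduc} (finite precisely because that assumption covers $\pi^{*}(p)$), which turns the identity into $J_{\rho}(\pi,p)-\Psi(p)\le\tfrac{D}{1-\gamma}\max_{\bar\pi\in\Pi}\inner{\pi-\bar\pi}{\nabla_{\pi}J_{\rho}(\pi,p)}$. The last step is the normal-cone bound: for any $v\in\mcal N_{\Pi}(\pi)$ and $\bar\pi\in\Pi$ we have $\inner{\pi-\bar\pi}{v}\ge0$, so $\inner{\pi-\bar\pi}{\nabla_{\pi}J_{\rho}(\pi,p)}\le\inner{\pi-\bar\pi}{\nabla_{\pi}J_{\rho}(\pi,p)+v}\le\diam(\Pi)\norm{\nabla_{\pi}J_{\rho}(\pi,p)+v}$; minimizing over $v$ gives $\le\diam(\Pi)\dist(0,\nabla_{\pi}J_{\rho}(\pi,p)+\mcal N_{\Pi}(\pi))$, and since $\Pi=(\Delta^{A})^{S}$ has finite diameter, $\bar D_{\pi}:=D\diam(\Pi)/(1-\gamma)$ works.

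For the second inequality I would use \cref{assu:s-rectangular} to recast $\phi(\pi)=\max_{p\in\mcal P}J_{\rho}(\pi,p)$ as the optimal value of an MDP ``controlled by nature'': the states are $\mcal S$, nature's action at state $s$ is the block $p_{s}\in\mcal P_{s}$, the one-step cost is $\sum_{a}\pi_{s}(a)\sum_{s'}p_{sas'}c_{sas'}$ and the induced transition is $\sum_{a}\pi_{s}(a)p_{sas'}$, so nature maximizes the same discounted cost; here the factorization $\mcal P=\mcal P_{1}\times\cdots\times\mcal P_{S}$ is exactly what makes a single-state deviation feasible and the reduction valid. Then the plan mirrors the first part: the value of this MDP at a fixed kernel $p$ is $v^{\pi,p}$ and its state occupancy equals $d_{\rho}^{\pi,p}$; a performance-difference identity between $p$ and the maximizer $p^{*}(\pi)$ (again with finite mismatch coefficient by \cref{assu:irreduc}), plus the observation that nature's ``action-value'' is affine in the block $p_{s}$ (because both cost and transition are linear in $p_{s}$), lets one identify nature's per-state advantage with the block $[\nabla_{p}J_{\rho}(\pi,p)]_{s}$ up to the $d_{\rho}^{\pi,p}(s)$ weights; a change of measure costs $D$ and the same normal-cone estimate (with the sign of the cone flipped because nature ascends) converts $\max_{\bar p\in\mcal P}\inner{\bar p-p}{\nabla_{p}J_{\rho}(\pi,p)}$ into $\diam(\mcal P)\dist(0,\nabla_{p}J_{\rho}(\pi,p)-\mcal N_{\mcal P}(p))$. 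Collecting constants yields $\bar D_{p}:=D\diam(\mcal P)/(1-\gamma)$.

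The routine parts are the two gradient formulas, the performance difference lemma, and the normal-cone/diameter inequality; the hard part will be the $p$-side reduction, i.e., checking rigorously that under $s$-rectangularity the map $p\mapsto J_{\rho}(\pi,p)$ is the value function of a genuine (continuous-action) MDP so that (i) the maximum $\phi(\pi)$ is attained and equals that MDP's optimal value, (ii) the advantage decomposes state-by-state, and (iii) affinity in $p_{s}$ makes the cross term telescope against $\nabla_{p}J_{\rho}(\pi,p)$. Since the statement is quoted verbatim from Lemma~4.3 and Lemma~E.2 of \citet{wang2023policy} (the first being the robust analogue of the gradient-domination lemma of \citet{agarwal2021theory}), I would either cite those results directly or, for a self-contained treatment, carry out the adversarial-MDP argument sketched above.
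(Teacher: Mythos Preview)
Your proposal is correct and matches the standard route: the paper itself does not prove this lemma but simply cites Lemmas~4.3 and~E.2 of \citet{wang2023policy}, and the argument you sketch---performance difference lemma, change of occupancy measure via the mismatch coefficient $D/(1-\gamma)$, and the normal-cone monotonicity $\inner{\pi-\bar\pi}{\varkappa}\ge0$ for $\varkappa\in\mcal N_{\Pi}(\pi)$---is exactly the proof in those references (with the $p$-side handled, as you say, by the $s$-rectangular reduction to an adversarial MDP). Your final diameter step to reach the $\dist(\cdot)$ form is precisely the content of the paper's Remark immediately following the lemma.
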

\begin{enumerate}
\item For any fixed $p\in\cal P$, $J_{\rho}(\pi,p)$ satisfies the following
condition: 
\begin{equation}
J_{\rho}(\pi,p)-J_{\rho}(\pi^{*}(p),p)\leq\bar{D}_{\gamma}\max_{\bar{\pi}\in\Pi}\inner{\pi-\bar{\pi}}{\nabla_{\pi}J_{\rho}(\pi,p)}\leq\bar{D}_{\gamma}\max_{\bar{\pi}\in\Pi}\inner{\pi-\bar{\pi}}{\nabla_{\pi}J_{\rho}(\pi,p)+\varkappa},\forall\varkappa\in\mcal N_{\Pi}(\pi),
\end{equation}
 where $J_{\rho}(\pi^{*}(p),p):=\min_{\pi\in\Pi}J_{\rho}(\pi,p)$
and $\mcal N_{\Pi}(\pi):=\{\varkappa\mid\inner{\varkappa}{\bar{\pi}-\pi}\leq0,\forall\bar{\pi}\in\Pi\}$. 
\item Assume the ambiguity set is $s$ rectangular. Then for any $p \in \cal P$, we have 
\begin{equation}
J_{\rho}(\pi,p^{*}(\pi))-J_{\rho}(\pi,p)\leq\bar{D}_{\gamma}\max_{\bar{p}\in\mcal P}\inner{\bar{p}-p}{\nabla_{p}J_{\rho}(\pi,p)}\leq\bar{D}_{\gamma}\max_{\bar{p}\in\mcal P}\inner{\bar{p}-p}{\nabla_{p}J_{\rho}(\pi,p)-\varkappa},\forall\varkappa\in\mcal N_{\mcal P}(p),
\end{equation}
 where $J_{\rho}(\pi,p^{*}(\pi)):=\max_{p\in\mcal P}J_{\rho}(\pi,p)$,
$\bar{D}_{\gamma}=D(1-\gamma)^{-1}$ and $\mcal N_{\mcal P}(p):=\{\varkappa\mid\inner{\varkappa}{\bar{p}-p}\leq0,\forall\bar{p}\in\mcal P\}$.
\end{enumerate}
\begin{remark}
The above~\cref{lem:gradient_dominance} implies the following
two inequalities
\begin{align}
J_{\rho}(\pi,p^{*}(\pi))-J_{\rho}(\pi,p) & \leq\bar{D}_{p}\dist\brbra{\nabla_{p}J_{\rho}(\pi,p)-\mcal N_{\mcal P}(p)}\label{eq:appen_p_graddom}\\
J_{\rho}(\pi,p)-J_{\rho}(\pi^{*}(p),p) & \leq\bar{D}_{\pi}\dist\brbra{\nabla_{\pi}J_{\rho}(\pi,p)+\mcal N_{\Pi}(\pi)}\label{eq:appen_pi_graddom}
\end{align}
\end{remark}


\subsection*{Proof of~\cref{lem:smoothmdp}}


Now, we prepare to give the proof of~\cref{lem:smoothmdp}.

\begin{proof}
Item~\ref{item:i)}, Item~\ref{item:ii)} and Item~\ref{item:iii)}
can be found in Lemmas 3.1, 4.1, 4.2 in~\citet{wang2023policy}. Now,
we give a proof of Item~\ref{item:iv)}. Define 
\begin{equation*}
p_{ss'}^{\pi}(1)=\sum_{a\in\mcal A}\pi_{sa}p_{sas'},\ \ p_{ss'}^{\pi}(t-1)p_{s's''}^{\pi}(1)=p_{ss''}^{\pi}(t),\ \ q_{sa}^{\pi,p}=\sum_{s'\in\mcal S}p_{sas'}c_{sas'}+\gamma p_{sas'}(v_{s'}^{\pi,p})
\end{equation*}
Then we first give some middle results
\begin{equation*}
\begin{split}\frac{\partial p_{ss'}^{\pi}(1)}{\partial p_{sas'}}=\pi_{sa},\ \  & \frac{\partial p_{ss'}^{\pi}(1)}{\partial p_{s''as'}}\mid_{s\neq s''}=0,\ \ \frac{\partial q_{sa}^{\pi,p}}{\partial p_{\hat{s}\hat{a}\hat{s}'}}\mid_{(s,a)\neq(\hat{s},\hat{a})}=\gamma\sum_{s'\in\mcal S}p_{sas'}\frac{\partial v_{s'}^{\pi,p}}{\partial p_{\hat{s}\hat{a}\hat{s}'}},\\
\frac{\partial q_{sa}^{\pi,p}}{\partial p_{\hat{s}\hat{a}\hat{s}'}}\mid_{(s,a)=(\hat{s},\hat{a})} & =c_{sa\hat{s}'}+\gamma\sum_{s'\in\mcal S}\frac{\partial p_{sas'}}{\partial p_{\hat{s}\hat{a}\hat{s}'}}v_{s'}^{\pi,p}+\gamma\sum_{s'\in\mcal S}p_{sas'}\frac{\partial v_{s'}^{\pi,p}}{\partial p_{\hat{s}\hat{a}\hat{s}'}} =c_{sa\hat{s}'}+\gamma v_{\hat{s}'}^{\pi,p}+\gamma\sum_{s'\in\mcal S}p_{sas'}\frac{\partial v_{s'}^{\pi,p}}{\partial p_{\hat{s}\hat{a}\hat{s}'}},
\end{split}
\end{equation*}
which will be used in the following process, repeatly. Since $J_{\rho}(\pi,p)$
is $\ell_{\pi},\ell_{p}$-smooth w.r.t. $\pi$ and $p$, respectively.
Hence, if all there exist constant $C$ and $U$ such that $\abs{\partial v_{s_{1}}^{\pi,p}/(\partial\pi_{s_{2}a_{1}}\partial p_{s_{3}a_{2}s})}\leq C<\infty$ and $\left\|\frac{\partial J_{\rho}(\pi, p)}{\partial \pi \partial p}\right\|\leq U<\infty$
hold for any $s_{1},s_{2},s_{3},a_{2},a_{3},s$ then we complete
our proof of Item~\ref{item:iv)}. Now, we
give the details of how to prove the existence of $U$. 

The first order derivatives are shown as follows: 
\begin{equation}
\begin{split}
        \Bsbra{\frac{\partial v_{\hat{s}}^{\pi,p}}{\partial\pi_{sa}}}_{\hat{s}\neq s}  &=\gamma\sum_{s'\in\mcal S}p_{\hat{s}s'}^{\pi}(1)\frac{\partial v_{s'}^{\pi,p}}{\partial\pi_{sa}}\label{eq:appen_derivative1}\\
        \Bsbra{\frac{\partial v_{\hat{s}}^{\pi,p}}{\partial\pi_{sa}}}_{\hat{s}=s} &=q_{sa}^{\pi,p}+\gamma\sum_{s'\in\mcal S}p_{ss'}^{\pi,p}(1)(\frac{\partial v_{s'}^{\pi,p}}{\partial\pi_{sa}})\\
        \Bsbra{\frac{\partial v_{\hat{s}}^{\pi,p}}{\partial p_{sas'}}}_{\hat{s}\neq s}  &=\gamma\sum_{\hat{s}'\in\mcal S}p_{\hat{s}\hat{s}'}^{\pi}(1)\frac{\partial v_{\hat{s}'}^{\pi,p}}{\partial p_{sas'}}\\
        \Bsbra{\frac{\partial v_{\hat{s}}^{\pi,p}}{\partial p_{sas'}}}_{\hat{s}=s} &=\gamma\sum_{\hat{s}'}p_{s\hat{s}'}^{\pi}(1)\frac{\partial v_{\hat{s}'}^{\pi,p}}{\partial p_{sas'}}+\pi_{sa}\brbra{c_{sas'}+\gamma v_{s'}^{\pi,p}}.
\end{split}
\end{equation}
Consider $\partial v_{s_{1}}^{\pi,p}/(\partial\pi_{s_{2}a_{1}}\partial p_{s_{3}a_{2}s})$,
for simplicity, we abbreviate it as $\partial(s_{1},s_{2},a_{2},s_{3},a_{3})$:
\begin{equation}
\begin{aligned}
\partial(s_{1},s_{1},a_{2},s_{1},a_{2})&=c_{s_{1}a_{2}s}+\gamma v_{s}^{\pi,p}+\gamma\sum_{s'\in\mcal S}p_{s_{1}a_{2}s'}\frac{\partial v_{s'}^{\pi,p}}{\partial p_{s_{1}a_{2}s}} \\
& \ \ +\gamma\sum_{s'\in\mcal S}\Brbra{\pi_{s_{1}a_{2}}\cdot\frac{\partial v_{s'}^{\pi,p}}{\partial\pi_{s_{1}a_{2}}}+p_{s_{1}s'}^{\pi,p}(1)\frac{\partial v_{s'}^{\pi,p}}{\partial\pi_{s_{1}a_{2}}\partial p_{s_{1}a_{2}s}}}\\
\end{aligned}
\end{equation}
\begin{equation}
    \partial(s_{1},s_{1},a_{2},s_{1},a_{3})  =\gamma\sum_{s'\in\mcal S}p_{s_{1}a_{2}s'}\frac{\partial v_{s'}^{\pi,p}}{\partial p_{s_{1}a_{3}s}}+\gamma\sum_{s'\in\mcal S}\Brbra{\pi_{s_{1}a_{3}}\cdot\frac{\partial v_{s'}^{\pi,p}}{\partial\pi_{s_{1}a_{2}}}+p_{s_{1}s'}^{\pi,p}(1)\partial(s',s_{1},a_{2},s_{1},a_{3})}\label{eq:appen_derivate5}
\end{equation}
\begin{equation}
    \partial(s_{1},s_{2},a_{2},s_{2},a_{2})  =\gamma\sum_{s'\in\mcal S}\Brbra{p_{s_{1}s'}^{\pi}(1)\partial(s',s_{2},a_{2},s_{2},a_{2})}
\end{equation}
\begin{equation}
\partial(s_{1},s_{2},a_{2},s_{2},a_{3}) =\gamma\sum_{s'\in\mcal S}\Brbra{p_{s_{1}s'}^{\pi}(1)\partial(s',s_{2},a_{2},s_{2},a_{3})}    
\end{equation}
\begin{equation}
    \partial(s_{1},s_{2},a_{2},s_{1},a_{2})  =\gamma\sum_{s'\in\mcal S}\Brbra{\pi_{s_{1}a_{2}}\cdot\frac{\partial v_{s'}^{\pi,p}}{\partial\pi_{s_{2}a_{2}}}+p_{s_{1}s'}^{\pi}(1)\partial(s',s_{2},a_{2},s_{1},a_{2})}
\end{equation}
\begin{equation}
\partial(s_{1},s_{2},a_{2},s_{1},a_{3}) = \gamma\sum_{s'\in\mcal S}\Brbra{\pi_{s_{1}a_{2}}\cdot\frac{\partial v_{s'}^{\pi,p}}{\partial\pi_{s_{2}a_{2}}}+p_{s_{1}s'}^{\pi}(1)\partial(s',s_{2},a_{2},s_{1},a_{3})}    
\end{equation}
\begin{align}
\partial(s_{1},s_{1},a_{2},s_{3},a_{2}) & =\gamma\sum_{s'\in\mcal S}p_{s_{1}a_{2}s'}\frac{\partial v_{s'}^{\pi,p}}{\partial p_{s_{3}a_{2}s}}+\gamma\sum_{s'\in\mcal S}\Brbra{p_{s_{1}s'}^{\pi}(1)\partial(s',s_{1},a_{2},s_{3},a_{2})}\\
\partial(s_{1},s_{1},a_{2},s_{3},a_{3}) & =\gamma\sum_{s'\in\mcal S}p_{s_{1}a_{2}s'}\frac{\partial v_{s'}^{\pi,p}}{\partial p_{s_{3}a_{3}s}}+\gamma\sum_{s'\in\mcal S}\Brbra{p_{s_{1}s'}^{\pi}(1)\partial(s',s_{1},a_{2},s_{3},a_{3})}\\
\partial(s_{1},s_{2},a_{2},s_{3},a_{2}) & =\gamma\sum_{s'\in\mcal S}\Brbra{p_{s_{1}s'}^{\pi}(1)\partial(s',s_{2},a_{2},s_{3},a_{2})}\\
\partial(s_{1},s_{2},a_{2},s_{3},a_{3}) & =\gamma\sum_{s'\in\mcal S}\Brbra{p_{s_{1}s'}^{\pi}(1)\partial(s',s_{2},a_{2},s_{3},a_{3})}\label{eq:appen_derivative14}
\end{align}

Now, based on the above results, we proceed as follows:
\begin{equation*}
\begin{split} & \partial(s_{1},s_{1},a_{2},s_{1},a_{2})-(c_{s_{1}a_{2}s}+\gamma v_{s}^{\pi,p})\\
 & =\gamma\sum_{s'\in\mcal S}p_{s_{1}a_{2}s'}\frac{\partial v_{s'}^{\pi,p}}{\partial p_{s_{1}a_{2}s}}\\
 & \ \ +\gamma\sum_{s'\in\mcal S}\Brbra{\pi_{s_{1}a_{2}}\cdot\frac{\partial v_{s'}^{\pi,p}}{\partial\pi_{s_{1}a_{2}}}+p_{s_{1}s'}^{\pi,p}(1)\frac{\partial v_{s'}^{\pi,p}}{\partial\pi_{s_{1}a_{2}}\partial p_{s_{1}a_{2}s}}}\\
     & ={\gamma\sum_{s'\neq s_{1}}p_{s_{1}a_{2}s'}\cdot\gamma\sum_{s''\in\mcal S}p_{s's''}^{\pi}(1)\frac{\partial v_{s''}^{\pi,p}}{\partial p_{s_{1}a_{2}s}}}+\gamma p_{s_{1}a_{2}s_{1}}\Brbra{{\gamma\sum_{s''}p_{s's''}^{\pi}(1)\frac{\partial v_{s''}^{\pi,p}}{\partial p_{s_{1}a_{2}s}}}+\pi_{s_{1}a_{2}}\brbra{c_{s_{1}a_{2}s}+\gamma v_{s}^{\pi,p}}}\\
 & \ \ +\gamma\Brbra{\pi_{s_{1}a_{2}}\cdot\frac{\partial v_{s_{1}}^{\pi,p}}{\partial\pi_{s_{1}a_{2}}}+p_{s_{1}s_{1}}^{\pi,p}(1)\frac{\partial v_{s_{1}}^{\pi,p}}{\partial\pi_{s_{1}a_{2}}\partial p_{s_{1}a_{2}s}}}+\gamma\sum_{s'\neq s_{1}}\Brbra{\pi_{s_{1}a_{2}}\cdot\frac{\partial v_{s'}^{\pi,p}}{\partial\pi_{s_{1}a_{2}}}+p_{s_{1}s'}^{\pi,p}(1)\frac{\partial v_{s'}^{\pi,p}}{\partial\pi_{s_{1}a_{2}}\partial p_{s_{1}a_{2}s}}}\\
 &\aeq {\gamma\pi_{s_{1}a_{2}}\cdot q_{s_{1}a_{2}}^{\pi,p}+\gamma p_{s_{1}a_{2}s_{1}}\pi_{s_{1}a_{2}}\brbra{c_{s_{1}a_{2}s}+\gamma v_{s}^{\pi,p}}+\gamma p_{s_{1}s_{1}}^{\pi,p}(1)\brbra{c_{s_{1}a_{2}s}+\gamma v_{s}^{\pi,p}}}\\
 & \ \ {+\gamma^{2}\sum_{s',s''}\Brbra{\pi_{s_{1}a_{2}}p_{s's''}^{\pi,p}(1)\frac{\partial v_{s''}^{\pi,p}}{\partial\pi_{s_{1}a_{2}}}+p_{s_{1}a_{2}s'}p_{s's''}^{\pi}(1)\frac{\partial v_{s''}^{\pi,p}}{\partial p_{s_{1}a_{2}s}}}}\\
 & \ \ +\gamma^{2}p_{s_{1}s_{1}}^{\pi,p}(1)\sum_{s'}\Brbra{p_{s_{1}a_{2}s'}\frac{\partial v_{s'}^{\pi,p}}{\partial p_{s_{1}a_{2}s}}+\brbra{\pi_{s_{1}a_{2}}\cdot\frac{\partial v_{s'}^{\pi,p}}{\partial\pi_{s_{1}a_{2}}}}}\\
 & \ \ +\gamma^{2}\sum_{s'}p_{s_{1}s'}^{\pi,p}(2)\Brbra{\frac{\partial v_{s''}^{\pi,p}}{\partial\pi_{s_{1}a_{2}}\partial p_{s_{1}a_{2}s}}},
\end{split}
\end{equation*}
where $(a)$ holds by~\eqref{eq:appen_derivative1} and~\eqref{eq:appen_derivative14}.

Since $J_{\rho}(\pi,p)$ is $L_{\pi},L_{p}$ continuous,
then there exist two constants $c_{p},c_{\pi}$ such that
\begin{equation*}
\abs{\pi_{s_{1}a_{2}}p_{s's''}^{\pi,p}(1)\frac{\partial v_{s''}^{\pi,p}}{\partial\pi_{s_{1}a_{2}}}}\leq c_{p},\ \ \abs{p_{s_{1}a_{2}s'}p_{s's''}^{\pi}(1)\frac{\partial v_{s''}^{\pi,p}}{\partial p_{s_{1}a_{2}s}}}\leq c_{\pi},\forall s'',s_{1},a_{2}.
\end{equation*}
Since $c_{s_{t}a_{t}s_{t+1}}\in[0,1]$, then we have
\begin{align}
\abs{v_{s}^{\pi,p}}=\mbb E_{\pi,p}\Bsbra{\sum_{t=0}^{\infty}\gamma^{t}c_{s_{t}a_{t}s_{t+1}}\mid s_{0}=s} & \leq\frac{1}{1-\gamma},\label{eq:appen_vup}\\
\abs{q_{sa}^{\pi,p}}=\Babs{\mbb E_{\pi,p}\bsbra{\sum_{t=0}^{\infty}\gamma^{t}c_{s_{t}a_{t}s_{t+1}}\mid s_{0}=s,a_{0}=a}} & \leq\frac{1}{1-\gamma}.\label{eq:appen_qup}
\end{align}
Now we have
\begin{equation*}
\begin{split} & \Babs{\partial(s_{1},s_{1},a_{2},s_{1},a_{2})}=c_{s_{1}a_{2}s}+\gamma v_{s}^{\pi,p}+\gamma\abs{h(1)}\\
 & \leq c_{s_{1}a_{2}s}+\gamma v_{s}^{\pi,p}+\gamma\brbra{\frac{3}{1-\gamma}}+\gamma^{2}\abs S^{2}\brbra{c_{p}+c_{\pi}}+\gamma^{2}\abs{h(2)}\\
 & =c_{s_{1}a_{2}s}+\gamma v_{s}^{\pi,p}+\gamma C_{1}+\gamma^{2}C_{2}+\gamma^{2}\abs{h(2)}\leq c_{s_{1}a_{2}s}+\gamma v_{s}^{\pi,p}+\gamma^{2}(C_{2}+C_{1})+\gamma^{2}\abs{h(2)},
\end{split}
\end{equation*}
where $h(t)=p_{s_{1}s_{1}}^{\pi,p}(t-1)\sum_{s'}\Brbra{p_{s_{1}a_{2}s'}\frac{\partial v_{s'}^{\pi,p}}{\partial p_{s_{1}a_{2}s}}+\brbra{\pi_{s_{1}a_{2}}\cdot\frac{\partial v_{s'}^{\pi,p}}{\partial\pi_{s_{1}a_{2}}}}}+\sum_{s'}p_{s_{1}s'}^{\pi,p}(t)\Brbra{\frac{\partial v_{s''}^{\pi,p}}{\partial\pi_{s_{1}a_{2}}\partial p_{s_{1}a_{2}s}}}$,
$C_{1}=\frac{3}{1-\gamma}$ and $C_{2}=\abs{\mcal S}^{2}\brbra{c_{p}+c_{\pi}}$,
and $(a)$ holds by~\eqref{eq:appen_qup} and~\eqref{eq:appen_vup}, $c_{s_{t}a_{t}s_{t+1}}\leq1$,
$p_{s_{1}s_{1}}^{\pi,p}(1)\leq1$ and $\pi_{s_{1}a_{2}}\leq1$. Hence,
we conclude that
\begin{equation*}
\Babs{\partial(s_{1},s_{1},a_{2},s_{1},a_{2})}\leq c_{s_{1}a_{2}s}+\gamma v_{s}^{\pi,p}+(C_{2}+C_{1})\sum_{t=0}^{\infty}\gamma^{t}\leq\frac{1+C_{1}+C_{2}}{1-\gamma}.
\end{equation*}
Similar results hold for the remaining 9 cases, then we have there
exists a constant $C<\infty$ such that 
\begin{equation*}
\Babs{\partial(s_{1},s_{1},a_{2},s_{1},a_{2})}\leq C,\forall s_{1},s_{2},s_{3}\in\mcal S,\forall a_{2},a_{3}\in\mcal A.
\end{equation*}
By the equivalence of the matrix norm, we have there exists a constant
$U$ such that
\begin{equation*}
\left\|\frac{\partial J_{\rho}(\pi,p)}{\partial\pi\partial p}\right\|\leq U<\infty.
\end{equation*}
Here the $\norm{\cdot}$ is the spectral norm.
\end{proof}

\subsection*{Proof of~\cref{thm:Moreau_smooth_grad_dom}}
\begin{theorem}
\label{thm:append_Moreau_smooth_grad_dom}
Let $\pi^{*}$ be a global optimal policy for RMDPs. Denote $C_{\ell_{\pi}} =\frac{D\sqrt{S A}}{1-\gamma}+\frac{L_{\pi}}{2\ell_{\pi}},$
where $D:=\sup_{\pi\in\Pi,p\in\mcal P}\norm{d_{\rho}^{\pi,p}/\rho}<\infty$.
Then for any policy $\pi\in\Pi$, we have
\begin{equation}\label{eq:append_gradient-like-property}
\phi_{2\ell_{\pi}}(\pi) \leq \phi(\pi)\leq C_{\ell_{\pi}} \norm{\nabla\phi_{2\ell_{\pi}}(\pi)}+\phi(\pi^{*}),
\end{equation}
where $\phi_{2\ell_{\pi}}(\pi):=\inf_{\pi'\in\Pi}\{\phi(\pi')+\ell_{\pi}\norm{\pi-\pi'}^{2}\}$.
\end{theorem}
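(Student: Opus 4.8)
The plan is to establish the two inequalities separately; the left one, $\phi_{2\ell_{\pi}}(\pi)\le\phi(\pi)$, is immediate by taking $\pi'=\pi$ in the infimum that defines $\phi_{2\ell_{\pi}}$, so essentially all of the work lies in the right inequality.

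First I would introduce the proximal point $\wh\pi:=\argmin_{\pi'\in\Pi}\{\phi(\pi')+\ell_{\pi}\norm{\pi'-\pi}^{2}\}$, which is well defined and unique because $\phi$ is $\ell_{\pi}$-weakly convex (\cref{lem:smoothmdp}), so the subproblem is $\ell_{\pi}$-strongly convex, and recall the standard Moreau-envelope calculus for weakly convex functions \citep{davis2019stochastic}: $\phi_{2\ell_{\pi}}$ is differentiable with $\nabla\phi_{2\ell_{\pi}}(\pi)=2\ell_{\pi}(\pi-\wh\pi)$, hence $\norm{\pi-\wh\pi}=\tfrac{1}{2\ell_{\pi}}\norm{\nabla\phi_{2\ell_{\pi}}(\pi)}$, and the first-order optimality of $\wh\pi$ yields the near-stationarity estimate $\dist(0,\partial\phi(\wh\pi)+\mcal N_{\Pi}(\wh\pi))\le\norm{\nabla\phi_{2\ell_{\pi}}(\pi)}$. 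I would then split $\phi(\pi)-\phi(\pi^{*})=\bigl[\phi(\pi)-\phi(\wh\pi)\bigr]+\bigl[\phi(\wh\pi)-\phi(\pi^{*})\bigr]$ and bound the first bracket by $L_{\pi}\norm{\pi-\wh\pi}=\tfrac{L_{\pi}}{2\ell_{\pi}}\norm{\nabla\phi_{2\ell_{\pi}}(\pi)}$ using the $L_{\pi}$-Lipschitzness of $\phi$ (\cref{lem:smoothmdp}); this produces exactly the $\tfrac{L_{\pi}}{2\ell_{\pi}}$ summand of $C_{\ell_{\pi}}$.

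The heart of the proof is the second bracket. Choose a worst-case kernel $\wh p\in\argmax_{p\in\mcal P}J_{\rho}(\wh\pi,p)$, so $\phi(\wh\pi)=J_{\rho}(\wh\pi,\wh p)$. Since $\phi(\pi^{*})=\max_{p}J_{\rho}(\pi^{*},p)\ge J_{\rho}(\pi^{*},\wh p)\ge\min_{\pi'\in\Pi}J_{\rho}(\pi',\wh p)$, the gradient-dominance property of the \emph{non-robust} value function $J_{\rho}(\cdot,\wh p)$ in $\pi$ (part~1 of \cref{lem:gradient_dominance}, with constant $\bar D_{\gamma}=D/(1-\gamma)$) gives
\[
\phi(\wh\pi)-\phi(\pi^{*})\ \le\ J_{\rho}(\wh\pi,\wh p)-\min_{\pi'\in\Pi}J_{\rho}(\pi',\wh p)\ \le\ \bar D_{\gamma}\,\max_{\bar\pi\in\Pi}\langle\wh\pi-\bar\pi,\,\nabla_{\pi}J_{\rho}(\wh\pi,\wh p)\rangle.
\]
It then remains to bound this gradient-mapping quantity by a multiple of $\norm{\nabla\phi_{2\ell_{\pi}}(\pi)}$: because $\wh p$ is a maximizer, $\nabla_{\pi}J_{\rho}(\wh\pi,\wh p)$ lies in $\partial\phi(\wh\pi)$ (Danskin's theorem, as $\phi$ is a pointwise max of the smooth maps $J_{\rho}(\cdot,p)$), and for any $g$ one has $\max_{\bar\pi\in\Pi}\langle\wh\pi-\bar\pi,g\rangle\le\diam(\Pi)\,\dist(0,g+\mcal N_{\Pi}(\wh\pi))$; combining this with the near-stationarity estimate and the diameter bounds for $\Pi$ and $\mcal P$ yields the coefficient $\tfrac{D\sqrt{SA}}{1-\gamma}$, and adding the two brackets gives $\phi(\pi)-\phi(\pi^{*})\le C_{\ell_{\pi}}\norm{\nabla\phi_{2\ell_{\pi}}(\pi)}$.

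I expect the main obstacle to be precisely this last conversion. The difficulty is that $\phi=\max_{p}J_{\rho}(\cdot,p)$ is nonsmooth, so $\partial\phi(\wh\pi)=\conv\{\nabla_{\pi}J_{\rho}(\wh\pi,p):p\in\argmax_{p'}J_{\rho}(\wh\pi,p')\}$, and the proximal near-stationarity only certifies that \emph{some} convex combination of these active gradients is nearly normal to $\Pi$ at $\wh\pi$, whereas the gradient-dominance inequality above is stated in terms of the \emph{particular} worst-case gradient $\nabla_{\pi}J_{\rho}(\wh\pi,\wh p)$. Closing this gap is where the real care is needed: one can either select the worst-case kernel compatibly with the active convex combination — e.g. through a Sion-type minimax argument on the support function of the convex set $\partial\phi(\wh\pi)$ — or control the discrepancy between distinct active gradients via the joint Lipschitz-gradient bound of \cref{lem:smoothmdp}, which bounds $\norm{\nabla_{\pi}J_{\rho}(\wh\pi,p)-\nabla_{\pi}J_{\rho}(\wh\pi,p')}$ in terms of $\norm{p-p'}\le\diam(\mcal P)$ and is the natural source of the extra $\sqrt{SA}$ factor. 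The remaining ingredients — the Moreau calculus, the Lipschitz estimate, and the invocation of the existing gradient-dominance lemma — are routine.
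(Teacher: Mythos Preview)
The paper does not give its own proof of this statement; it simply defers to \citet{wang2023policy} and \citet{li2024policy}. Your outline matches the argument in those references: introduce the proximal point $\wh\pi$, split $\phi(\pi)-\phi(\pi^*)$ as $[\phi(\pi)-\phi(\wh\pi)]+[\phi(\wh\pi)-\phi(\pi^*)]$, control the first bracket by the $L_\pi$-Lipschitzness of $\phi$, and control the second by gradient dominance of $J_\rho(\cdot,\wh p)$ at $\wh\pi$ for a worst-case kernel $\wh p$.

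On the obstacle you identify, the correct resolution --- and the one used in the references --- is the first option you list, not the second. Because $\mcal P$ is convex and compact and $J_\rho(\pi',p)+\ell_\pi\norm{\pi'-\pi}^2$ is strongly convex in $\pi'$, Sion's minimax theorem gives a saddle point $(\wh\pi,\wh p)$ of $\min_{\pi'\in\Pi}\max_{p\in\mcal P}\{J_\rho(\pi',p)+\ell_\pi\norm{\pi'-\pi}^2\}$; in particular $\wh\pi$ minimizes $J_\rho(\cdot,\wh p)+\ell_\pi\norm{\cdot-\pi}^2$ over $\Pi$, so the first-order condition reads $0\in\nabla_\pi J_\rho(\wh\pi,\wh p)+2\ell_\pi(\wh\pi-\pi)+\mcal N_\Pi(\wh\pi)$ directly, with no convex-combination ambiguity. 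Plugging this $\varkappa$ into \cref{lem:gradient_dominance} and using $\max_{\bar\pi\in\Pi}\norm{\wh\pi-\bar\pi}\le\sqrt{SA}$ yields the $\tfrac{D\sqrt{SA}}{1-\gamma}$ coefficient. Your alternative route via the joint Lipschitz-gradient bound of \cref{lem:smoothmdp} would not recover this constant: the $\sqrt{SA}$ factor comes from $\diam(\Pi)$, not from $\diam(\mcal P)$ or from $\ell_{\pi p}$, so that path should be discarded.
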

\begin{proof}
    Proof can be found in~\citet{wang2023policy} or~\citet{li2024policy}.
\end{proof}
\begin{remark}
    Mismatch coefficient $D<\infty$ can be guaranteed by~\cref{assu:irreduc}.
\end{remark}

\subsection*{Proof of~\cref{thm:moreau_grad_dom}}
\begin{proof}
Let $\tilde{x}(x)\in\argmin_{z\in\mcal X}f(z)+r\norm{z-x}^{2}$. It
follows from the definition of $f_{2r}(x)$ and $\min_{x\in\mcal X}f(x)=\min_{x\in\mcal X}f_{2r}(x)=f^{*}$
that
\begin{equation}
\begin{split} & f_{2r}(x)-f^{*}\\
 & =\inf_{z\in\mcal X}f(z)+r\norm{z-x}^{2}-f^{*}\\
 & \leq f(x)-f^{*}=f(x)-f(\tilde{x}(x))+f(\tilde{x}(x))-f^{*}\\
 & \aleq L_{f}\norm{\tilde{x}(x)-x}+C\dist\brbra{\partial f(\tilde{x}(x))+\mcal N_{\mcal X}(\tilde{x}(x))},
\end{split}
\label{eq:appen_temp1}
\end{equation}
where $(a)$ holds by $f(x)$ is $L_{f}$ continuity and the gradient
dominance property of $f$. By the definition of $f_{2r}(x)$, we
have
\begin{equation}
0\in\partial f(\tilde{x}(x))+2r(\tilde{x}(x)-x)+\mcal N_{\mcal X}(\tilde{x}(x)).\label{eq:appen_temp2-1}
\end{equation}
Furthermore, by the Moreau envelope property, we have
\begin{equation}
\nabla f_{2r}(x)\in\partial f(\tilde{x}(x))+\mcal N_{\mcal X}(\tilde{x}(x)),\ \ \nabla f_{2r}(x)=2r(\tilde{x}(x)-x).\label{eq:appen_temp3}
\end{equation}
Combining~\eqref{eq:appen_temp1},~\eqref{eq:appen_temp2-1} and~\eqref{eq:appen_temp3} yields
\begin{equation*}
f_{2r}(x)-f^{*}\leq \Brbra{\frac{L_{f}}{2r}+C}\norm{\nabla f_{2r}(x)}.
\end{equation*}
\end{proof}

\section{Convergence Analysis\label{sec:appen_convergence_ana}}
\subsection{Proof of Results in Section~\ref{sec:general_con}}
\begin{lemma}\label{lem:descent_lemma}For any $\pi,\pi'\in\Pi$,
$p,p'\in\mathcal{P}$, $\bar{\pi}$
and $\bar{p}$, we have
\[
\begin{aligned} & \frac{r_{1}-\ell_{\pi p}}{2}\|\pi-\pi'\|^{2}\leq\chi(\pi^{\prime},p,\bar{\pi},\bar{p})-\chi(\pi,p,\bar{\pi},\bar{p})-\langle\nabla_{\pi}\chi(\pi,p,\bar{\pi},\bar{p}),\pi'-\pi \rangle\leq\frac{\ell_{\pi p}+r_{1}}{2}\|\pi-\pi'\|^{2},\\
  &\ \ -\frac{\ell_{p\pi}+r_{2}}{2}\|p-p'\|^{2}\leq\chi(\pi,p^{\prime},\bar{\pi},\bar{p})-\chi(\pi,p,\bar{\pi},\bar{p})-\langle\nabla_{p}\chi(\pi,p,\bar{\pi},\bar{p}),p'-p\rangle\leq\frac{\ell_{p\pi}-r_{2}}{2}\|p-p'\|^{2}.
\end{aligned}
\]
\end{lemma}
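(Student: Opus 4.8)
The idea is to split $\chi$ into the smooth term $J_\rho$ plus two exact quadratic penalties, and then apply the standard two-sided descent inequality for functions with Lipschitz gradient. First I would record the elementary fact that if $g$ is differentiable with $L$-Lipschitz gradient on a convex set, then for all $x,y$ in that set
\[
-\frac{L}{2}\norm{y-x}^2\le g(y)-g(x)-\inner{\nabla g(x)}{y-x}\le\frac{L}{2}\norm{y-x}^2,
\]
which follows by writing $g(y)-g(x)-\inner{\nabla g(x)}{y-x}=\int_0^1\inner{\nabla g(x+t(y-x))-\nabla g(x)}{y-x}\,\mathrm{d}t$ and bounding each integrand by $\pm tL\norm{y-x}^2$ via Cauchy--Schwarz; convexity of the domain is what guarantees that the segment $x+t(y-x)$ stays where $\nabla g$ is Lipschitz, which holds here since $\Pi$ and $\mcal P$ are convex.

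Next I would specialize the cross-Lipschitz bounds of \cref{lem:smoothmdp} (item iv)): putting $p'=p$ in $\norm{\nabla_\pi J_\rho(\pi,p)-\nabla_\pi J_\rho(\pi',p')}\le\ell_{\pi p}(\norm{\pi-\pi'}+\norm{p-p'})$ shows that $\pi\mapsto J_\rho(\pi,p)$ has $\ell_{\pi p}$-Lipschitz gradient for each fixed $p$; putting $\pi'=\pi$ in the companion bound shows that $p\mapsto J_\rho(\pi,p)$ has $\ell_{p\pi}$-Lipschitz gradient for each fixed $\pi$. Hence the displayed two-sided estimate applies to $g(\cdot)=J_\rho(\cdot,p)$ with $L=\ell_{\pi p}$ and to $g(\cdot)=J_\rho(\pi,\cdot)$ with $L=\ell_{p\pi}$.

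Finally I would combine these with the exact second-order expansion of the penalty terms. For the $\pi$-direction, fixing $p,\bar\pi,\bar p$ and writing $\chi(\pi,p,\bar\pi,\bar p)=J_\rho(\pi,p)+\frac{r_1}{2}\norm{\pi-\bar\pi}^2+\kappa$ with $\kappa$ constant in $\pi$, the identity $\frac{r_1}{2}\norm{\pi'-\bar\pi}^2-\frac{r_1}{2}\norm{\pi-\bar\pi}^2-\inner{r_1(\pi-\bar\pi)}{\pi'-\pi}=\frac{r_1}{2}\norm{\pi'-\pi}^2$ holds exactly, so adding it to the two-sided estimate for $J_\rho(\cdot,p)$ yields the first chain of inequalities. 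For the $p$-direction, $\chi(\pi,p,\bar\pi,\bar p)=J_\rho(\pi,p)-\frac{r_2}{2}\norm{p-\bar p}^2+\kappa'$ with $\kappa'$ constant in $p$, and the concave quadratic contributes exactly $-\frac{r_2}{2}\norm{p'-p}^2$ to the remainder, so adding to the two-sided estimate for $J_\rho(\pi,\cdot)$ gives the second chain. There is essentially no obstacle: the only non-routine point is recognizing that the jointly stated condition in \cref{lem:smoothmdp} yields coordinatewise smoothness of $J_\rho$ with the stated constants, and the only point requiring care is the sign of the $p$-penalty in $\chi$, which is exactly why the upper bound on the second line reads $\frac{\ell_{p\pi}-r_2}{2}\norm{p-p'}^2$ rather than $\frac{\ell_{p\pi}+r_2}{2}\norm{p-p'}^2$.
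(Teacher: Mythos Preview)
Your proposal is correct and follows essentially the same approach as the paper's proof: decompose $\chi$ into $J_\rho$ plus the quadratic penalties, apply the two-sided descent lemma to $J_\rho$ using the coordinatewise Lipschitz-gradient constants $\ell_{\pi p}$ and $\ell_{p\pi}$ extracted from item iv) of \cref{lem:smoothmdp}, and compute the quadratic contributions exactly via the identity you wrote. The paper's proof is slightly terser (it states the two-sided $J_\rho$ bounds directly and then does the same algebraic reduction), but the substance is identical.
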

\begin{proof}
Since $J_{\rho}(\pi,p)$ is $\ell_{\pi p}$, $\ell_{p\pi}$ smooth,
respectively, we have

\begin{equation}
\begin{aligned}  &\ \ -\frac{\ell_{\pi p}}{2}\|\pi-\pi'\|^{2}\leq J_{\rho}(\pi',p)-J_{\rho}(\pi,p)-\langle\nabla_{\pi}J_{\rho}(\pi,p),\pi'-\pi \rangle\leq\frac{\ell_{\pi p}}{2}\|\pi-\pi'\|^{2},\\
  &\ \ -\frac{\ell_{p\pi}}{2}\|p-p'\|^{2}\leq J_{\rho}(\pi,p')-J_{\rho}(\pi,p)-\langle\nabla_{p}J_{\rho}(\pi,p),p'-p\rangle\leq\frac{\ell_{p\pi}}{2}\|p-p'\|^{2}.
\end{aligned}
\label{eq:appen_Lsmooth}
\end{equation}

Hence, consider function $\chi(\pi,p,\bar{\pi},\bar{p})$, we know
that
\begin{equation}
\begin{aligned} & \chi(\pi^{\prime},p,\bar{\pi},\bar{p})-\chi(\pi,p,\bar{\pi},\bar{p})-\langle\nabla_{\pi}\chi(\pi,p,\bar{\pi},\bar{p}),\pi'-\pi\rangle\\
=\  & J_{\rho}(\pi',p)-J_{\rho}(\pi,p)-\langle\nabla_{\pi}J_{\rho}(\pi,p)+r_{1}(\pi-\bar{\pi}),\pi'-\pi \rangle+\frac{r_{1}}{2}\|\pi'-\bar{\pi}\|^{2}-\frac{r_{1}}{2}\|\pi-\bar{\pi}\|^{2}\\
=\  & J_{\rho}(\pi',p)-J_{\rho}(\pi,p)-\langle\nabla_{\pi}J_{\rho}(\pi,p),\pi'-\pi\rangle+\frac{r_{1}}{2}\|\pi'-\pi\|^{2}
\end{aligned}
\label{eq:appen_1}
\end{equation}
and similarly 
\begin{equation}
\begin{aligned} & \chi(\pi,p^{\prime},\bar{\pi},\bar{p})-\chi(\pi,p,\bar{\pi},\bar{p})-\langle\nabla_{p}\chi(\pi,p,\bar{\pi},\bar{p}),p'-p\rangle\\
=\  & J_{\rho}(\pi,p')-J_{\rho}(\pi,p)-\langle\nabla_{p}J_{\rho}(\pi,p)-r_{2}(p-\bar{p}),p'-p\rangle-\frac{r_{2}}{2}\|p'-\bar{p}\|^{2}+\frac{r_{2}}{2}\|p-\bar{p}\|^{2}\\
=\  & J_{\rho}(\pi,p')-J_{\rho}(\pi,p)-\langle\nabla_{p}J_{\rho}(\pi,p),p'-p\rangle-\frac{r_{2}}{2}\|p'-p\|^{2}.
\end{aligned}
\label{eq:appen_2}
\end{equation}
Combing \eqref{eq:appen_Lsmooth}, \eqref{eq:appen_1} and \eqref{eq:appen_2}, we directly
obtain the desired results. 
\end{proof}
\begin{lemma}
[Lemma 2 in~\citet{zheng2023universal}]\label{lem:lem2_un}Suppose
that $r_{2}>(\frac{\ell_{p\pi}}{r_{1}-\ell_{\pi p}}+2)\ell_{p\pi}$,
then for any $\pi,\pi'\in\Pi$, $p,p'\in\mathcal{P}$, $\bar{\pi},\bar{\pi}'\in\mathbb{R}^{|\mathcal{S}|\times|\mathcal{A}|}$
and $p,p'\in\mbb R^{|\mathcal{S}|}$. Then the following inequalities
hold:
\end{lemma}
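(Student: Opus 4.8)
The plan is to obtain every asserted inequality from a single structural fact: under the hypothesis $r_{2}>(\tfrac{\ell_{p\pi}}{r_{1}-\ell_{\pi p}}+2)\ell_{p\pi}$ --- which in particular forces $r_{1}>\ell_{\pi p}$ and $r_{2}>\ell_{p\pi}$ --- the regularized function $\chi(\pi,p,\bar{\pi},\bar{p})$ is $(r_{1}-\ell_{\pi p})$-strongly convex in $\pi$ and $(r_{2}-\ell_{p\pi})$-strongly concave in $p$, uniformly in the remaining variables. Both facts are immediate from \cref{lem:descent_lemma}: reading off its first display gives strong convexity in $\pi$ (using $r_{1}>\ell_{\pi p}$), and reading off its second display gives strong concavity in $p$ (using $r_{2}>\ell_{p\pi}$). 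From here, each inequality in the statement follows by repeatedly applying the standard template \emph{``strong convexity $+$ cross-Lipschitz gradient $\Rightarrow$ Lipschitz solution map''} along the tower of auxiliary value functions catalogued in \cref{tab:Notations1}.

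Concretely, I would proceed in four steps. First, the inner minimizer: writing the first-order condition $0\in\nabla_{\pi}\chi+\mcal N_{\Pi}$ at two parameter pairs, pairing the difference with $\pi(p_{1},\bar{\pi}_{1},\bar{p})-\pi(p_{2},\bar{\pi}_{2},\bar{p})$, and using strong convexity on one side and the cross-Lipschitz gradient bound of \cref{lem:smoothmdp} on the other, I would get $\|\pi(p_{1},\bar{\pi}_{1},\bar{p})-\pi(p_{2},\bar{\pi}_{2},\bar{p})\|\le\frac{\ell_{\pi p}\|p_{1}-p_{2}\|+r_{1}\|\bar{\pi}_{1}-\bar{\pi}_{2}\|}{r_{1}-\ell_{\pi p}}$, hence in particular $\|\pi(p_{1},\cdot)-\pi(p_{2},\cdot)\|\le\tfrac{\ell_{\pi p}}{r_{1}-\ell_{\pi p}}\|p_{1}-p_{2}\|$. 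Second, the partially minimized function: since $\chi$ is strongly convex in $\pi$ its minimizer is unique, so by Danskin's theorem $\varphi_{\pi}(p,\bar{\pi},\bar{p})$ is differentiable in $p$ with $\nabla_{p}\varphi_{\pi}=\nabla_{p}\chi$ evaluated at the minimizer; combining this with step one and \cref{lem:smoothmdp} shows that the ``$J$-part'' of $\nabla_{p}\varphi_{\pi}$ is $\ell_{p\pi}(\tfrac{\ell_{\pi p}}{r_{1}-\ell_{\pi p}}+1)$-Lipschitz in $p$, so $\varphi_{\pi}$ has a Lipschitz gradient in $p$ and, crucially, remains strongly concave in $p$ with modulus at least $r_{2}-\ell_{p\pi}(\tfrac{\ell_{\pi p}}{r_{1}-\ell_{\pi p}}+1)\ge\ell_{p\pi}>0$ by the hypothesis on $r_{2}$. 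Third, the outer solution maps: rerunning step one with $\varphi_{\pi}$ (now strongly concave in $p$) replacing $\chi$ yields Lipschitz continuity of $p(\bar{\pi},\bar{p})$ and of the saddle policy $\pi(\bar{\pi},\bar{p})$ with respect to $(\bar{\pi},\bar{p})$, with explicit constants assembled from $r_{1},r_{2},\ell_{\pi p},\ell_{p\pi}$. Fourth, assembly: the claimed bounds on differences of $\varphi_{\pi}$, $\pi(\cdot)$, $p^{+}(\cdot)$, $\bar{p}^{+}(\cdot)$, etc., follow by composing these Lipschitz estimates with Lipschitz continuity of $J_{\rho}$ (\cref{lem:smoothmdp}) and nonexpansiveness of $\proj_{\Pi}$ and $\proj_{\mcal P}$, closing with the triangle inequality.

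The main obstacle I anticipate is step two --- specifically the bookkeeping that converts the hypothesis $r_{2}>(\tfrac{\ell_{p\pi}}{r_{1}-\ell_{\pi p}}+2)\ell_{p\pi}$ into a \emph{uniform, strictly positive} residual strong-concavity modulus for $\varphi_{\pi}$ (and, if the chain $\varphi_{\pi,p},\varphi_{\pi,p,\bar{p}},\ldots$ must be iterated further, for those as well), so that every solution map built on top of them is genuinely Lipschitz. The ``$+2$'' rather than ``$+1$'' in the hypothesis is exactly the slack needed: once the effective smoothness $\ell_{p\pi}(\tfrac{\ell_{\pi p}}{r_{1}-\ell_{\pi p}}+1)$ of the $J$-part of $\nabla_{p}\varphi_{\pi}$ is absorbed, a margin of $\ell_{p\pi}$ of strong concavity survives, which is what prevents the chain from degenerating. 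Everything else --- Danskin differentiation, monotonicity of subdifferentials of convex functions, nonexpansive projections onto $\Pi$ and $\mcal P$ --- is routine convex analysis, so the real content is tracking the constants carefully through \cref{tab:Notations1}.
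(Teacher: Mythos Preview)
Your approach is correct but differs from the paper's. The paper does not use first-order optimality conditions; instead, for each item it writes down four function-value inequalities from \cref{lem:descent_lemma} (two strong-convexity/concavity bounds and two smoothness bounds, evaluated at the two parameter values), sums them, and solves the resulting quadratic in the ratio $\Lambda=\|\text{solution difference}\|/\|\text{parameter difference}\|$ to extract the stated $\kappa_i$. Your variational-inequality route is cleaner and in fact yields tighter constants (e.g.\ your Item~\ref{enu:1} gives $\tfrac{\ell_{\pi p}}{r_1-\ell_{\pi p}}$ versus the paper's $\kappa_1=\tfrac{\ell_{p\pi}}{r_1-\ell_{\pi p}}+1$), though the downstream parameter choices in \cref{prop:prop1_un} are calibrated to the paper's specific $\kappa_i$'s, so you would need to either loosen your bounds to match or re-tune those later constants. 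Two small corrections: your ``main obstacle'' is not one --- the paper never computes a residual strong-concavity modulus for $\varphi_\pi$ via Danskin, but simply uses that a pointwise infimum of functions each $(r_2-\ell_{p\pi})$-strongly concave in $p$ is itself $(r_2-\ell_{p\pi})$-strongly concave, so only $r_1>\ell_{\pi p}$ and $r_2>\ell_{p\pi}$ are actually invoked in this proof; and your step four mentions $p^{+}$ and $\bar p^{+}$, which belong to \cref{lem:lem4_un}, not to the seven items of this lemma.
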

\begin{enumerate}
\item \label{enu:1}$\|\pi(p^{\prime},\bar{\pi},\bar{p})-\pi(p,\bar{\pi},\bar{p})\|\leq\kappa_{1}\|p'-p\|$,
\item \label{enu:2}$\ensuremath{\|\pi(p,\bar{\pi}',\bar{p})-\pi(p,\bar{\pi},\bar{p})\|\leq\kappa_{2}\|\bar{\pi}-\bar{\pi}'\|}$,
\item \label{enu:3}$\ensuremath{\|\pi(\bar{\pi}',\bar{p})-\pi(\bar{\pi},\bar{p})\|\leq\kappa_{2}\|\bar{\pi}-\bar{\pi}'\|}$,
\item \label{enu:4}$\ensuremath{\|p(\bar{\pi},\bar{p})-p(\bar{\pi}',\bar{p})\|\leq\kappa_{3}\|\bar{\pi}-\bar{\pi}'\|}$,
\item \label{enu:5}$\|p(\pi,\bar{\pi},\bar{p})-p(\pi',\bar{\pi},\bar{p})\|\leq\kappa_{4}\|\pi-\pi'\|$
\item \label{enu:6}$\ensuremath{\|p(\pi,\bar{\pi},\bar{p})-p(\pi,\bar{\pi},\bar{p}')\|\leq\kappa_{5}\|\bar{p}-\bar{p}'\|}$
\item \label{enu:7}$\ensuremath{\|p(\bar{\pi},\bar{p})-p(\bar{\pi},\bar{p}')\|\leq\kappa_{5}\|\bar{p}-\bar{p}'\|}$,
\end{enumerate}
where $\kappa_{1}=\frac{\ell_{p\pi}+r_{1}-\ell_{\pi p}}{r_{1}-\ell_{\pi p}}$,
$\kappa_{2}=\frac{r_{1}}{r_{1}-\ell_{\pi p}}$, $\kappa_{3}=\frac{r_{1}\kappa_{1}}{r_{2}-\ell_{p\pi}}+\frac{\kappa_{2}}{\kappa_{1}}$,
$\kappa_{4}=\frac{\ell_{\pi p}+r_{2}-\ell_{p\pi}}{r_{2}-\ell_{p\pi}}$,
and $\kappa_{5}=\frac{r_{2}}{r_{2}-\ell_{p\pi}}$.
\begin{proof}
Item~\ref{enu:1}: It follows from~\cref{lem:descent_lemma}
that
\begin{align}
\chi(\pi(p,\bar{\pi},\bar{p}),p^{\prime},\bar{\pi},\bar{p})-\chi(\pi(p^{\prime},\bar{\pi},\bar{p}),p^{\prime},\bar{\pi},\bar{p})  & \geq\frac{r_{1}-\ell_{\pi p}}{2}\|\pi(p,\bar{\pi},\bar{p})-\pi(p^{\prime},\bar{\pi},\bar{p})\|^{2},\label{eq:appen_lem2_un_1}\\
\chi(\pi(p,\bar{\pi},\bar{p}),p^{\prime},\bar{\pi},\bar{p})-\chi(\pi(p,\bar{\pi},\bar{p}),p,\bar{\pi},\bar{p}) & \leq\langle\nabla_{p}\chi(\pi(p,\bar{\pi},\bar{p}),p,\bar{\pi},\bar{p}),p'-p\rangle+\frac{\ell_{p\pi}-r_{2}}{2}\|p-p'\|^{2},\label{eq:appen_lem2_un_2}\\
\chi(\pi(p^{\prime},\bar{\pi},\bar{p}),p,\bar{\pi},\bar{p})-\chi(\pi(p^{\prime},\bar{\pi},\bar{p}),p^{\prime},\bar{\pi},\bar{p}) & \leq\langle\nabla_{p}\chi(\pi(p^{\prime},\bar{\pi},\bar{p}),p,\bar{\pi},\bar{p}),p-p'\rangle+\frac{\ell_{p\pi}+r_{2}}{2}\|p-p'\|^{2},\label{eq:appen_lem2_un_3}\\
\chi(\pi(p,\bar{\pi},\bar{p}), p, \bar{\pi},\bar{p})-\chi(\pi(p^{\prime},\bar{\pi},\bar{p}),p, \bar{\pi},\bar{p}) & \leq\frac{\ell_{\pi p}-r_{1}}{2}\|\pi(p,\bar{\pi},\bar{p})-\pi(p^{\prime},\bar{\pi},\bar{p})\|^{2}.\label{eq:appen_lem2_un_4}
\end{align}

Summing up~\eqref{eq:appen_lem2_un_2} and~\eqref{eq:appen_lem2_un_3} and combining
it with~\eqref{eq:appen_lem2_un_1} yields
\begin{equation}
\begin{split} & (r_{1}-\ell_{\pi p})\|\pi(p,\bar{\pi},\bar{p})-\pi(p^{\prime},\bar{\pi},\bar{p})\|^{2}\\
 & \leq\langle\nabla_{p}\chi(\pi(p,\bar{\pi},\bar{p}),p,\bar{\pi},\bar{p})-\nabla_{p}\chi(\pi(p^{\prime},\bar{\pi},\bar{p}),p,\bar{\pi},\bar{p}),p'-p\rangle+\ell_{p\pi}\|p-p'\|^{2}\\
 & =\inner{\brbra{\nabla_{p}J(\pi(p,\bar{\pi},\bar{p}),p)-r_{2}\brbra{p-\bar{p}}}-\brbra{\nabla_{p}J(\pi(p^{\prime},\bar{\pi},\bar{p}),p)-r_{2}\brbra{p-\bar{p}}}}{p'-p}+\ell_{p\pi}\|p-p'\|^{2}\\
 & \aleq\ell_{p\pi}\|\pi(p^{\prime},\bar{\pi},\bar{p})-\pi(p,\bar{\pi},\bar{p})\|\|p'-p\|+\ell_{p\pi}\|p-p'\|^{2},
\end{split}
\label{eq:appen_mid1}
\end{equation}
where $(a)$ holds by Cauchy Schwarz inequality and Item~\ref{item:iv)} in~\cref{lem:smoothmdp}.
Let $\Lambda:=\|\pi(p^{\prime},\bar{\pi},\bar{p})-\pi(p,\bar{\pi},\bar{p})\|/\norm{p'-p}$.
Then~\eqref{eq:appen_mid1} can be rewritten as 
\begin{align}
\Lambda^{2} & \leq\frac{\ell_{p\pi}}{r_{1}-\ell_{\pi p}}+\frac{\ell_{p\pi}}{r_{1}-\ell_{\pi p}}\Lambda\aleq\frac{\ell_{p\pi}}{r_{1}-\ell_{\pi p}}+\frac{1}{2}\Brbra{\frac{\ell_{p\pi}}{r_{1}-\ell_{\pi p}}}^{2}+\frac{1}{2}\Lambda^{2},\label{eq:appen_lambda1}\\
\Rightarrow\Lambda & \leq\sqrt{\Brbra{\frac{\ell_{p\pi}}{r_{1}-\ell_{\pi p}}}^{2}+2\Brbra{\frac{\ell_{p\pi}}{r_{1}-\ell_{\pi p}}}}\leq\frac{\ell_{p\pi}}{r_{1}-\ell_{\pi p}}+1:=\kappa_{1}.\nonumber 
\end{align}
where $(a)$ holds by $\inner{\abf}{\bbf}\leq(\norm{\abf}^{2}+\norm{\bbf}^{2})/2$.

Item~\ref{enu:2} and Item~\ref{enu:3}: It follows from from~\cref{lem:descent_lemma}
that
\begin{equation}
\begin{split}\chi(\pi(p,\bar{\pi},\bar{p}),p,\bar{\pi}',\bar{p})-\chi(\pi(p,\bar{\pi}',\bar{p}),p,\bar{\pi}',\bar{p})  & \geq\frac{r_{1}-\ell_{\pi p}}{2}\|\pi(p,\bar{\pi},\bar{p})-\pi(p,\bar{\pi}',\bar{p})\|^{2},\\
\chi(\pi(p,\bar{\pi},\bar{p}),p,\bar{\pi},\bar{p})-\chi(\pi(p,\bar{\pi}',\bar{p}),p,\bar{\pi},\bar{p}) & \leq\frac{\ell_{\pi p}-r_{1}}{2}\|\pi(p,\bar{\pi},\bar{p})-\pi(p,\bar{\pi}',\bar{p})\|^{2}.
\end{split}
\label{eq:appen_temp4}
\end{equation}
By the definition of $\chi$, we have
\begin{equation}
\begin{split}\chi(\pi(p,\bar{\pi},\bar{p}),p,\bar{\pi}',\bar{p})-\chi(\pi(p,\bar{\pi},\bar{p}),p,\bar{\pi},\bar{p}) & =\frac{r_{1}}{2}\langle\bar{\pi}'+\bar{\pi}-2\pi(p,\bar{\pi},\bar{p}),\bar{\pi}'-\bar{\pi}\rangle,\\
\chi(\pi(p,\bar{\pi}',\bar{p}),p,\bar{\pi},\bar{p})-\chi(\pi(p,\bar{\pi}',\bar{p}),p,\bar{\pi}',\bar{p}) & =\frac{r_{1}}{2}\langle\bar{\pi}+\bar{\pi}'-2\pi(p,\bar{\pi}',\bar{p}),\bar{\pi}-\bar{\pi}'\rangle.
\end{split}
\label{eq:appen_temp5}
\end{equation}
Putting~\eqref{eq:appen_temp4} and~\eqref{eq:appen_temp5} together and using
the Cauchy-Schwarz inequality, we obtain
\[
\begin{split} & (r_{1}-\ell_{\pi p})\|\pi(p,\bar{\pi},\bar{p})-\pi(p,\bar{\pi}',\bar{p})\|^{2}\\
 & \leq r_{1}\langle\pi(p,\bar{\pi}',\bar{p})-\pi(p,\bar{\pi},\bar{p}),\bar{\pi}'-\bar{\pi}\rangle\\
 & \leq r_{1}\|\pi(p,\bar{\pi}',\bar{p})-\pi(p,\bar{\pi},\bar{p})\|\|\bar{\pi}'-\bar{\pi}\|,
\end{split}
\]
which completes the proof of Item~\ref{enu:2}. Since $\varphi_{p}(\pi,\bar{\pi},\bar{p})$
is $\brbra{r_{1}-\ell_{\pi p}}-$strongly convex in $\pi$, then we
have
\begin{equation}
\begin{split}\varphi_{p}(\pi(\bar{\pi},\bar{p}),\bar{\pi}',\bar{p})-\varphi_{p}(\pi(\bar{\pi}',\bar{p}),\bar{\pi}',\bar{p})  & \geq\frac{r_{1}-\ell_{\pi p}}{2}\norm{\pi(\bar{\pi},\bar{p})-\pi(\bar{\pi}',\bar{p})}^{2}\\
\varphi_{p}(\pi(\bar{\pi},\bar{p}),\bar{\pi},\bar{p})-\varphi_{p}(\pi(\bar{\pi}',\bar{p}),\bar{\pi},\bar{p}) & \leq\frac{\ell_{\pi p}-r_{1}}{2}\norm{\pi(\bar{\pi},\bar{p})-\pi(\bar{\pi}',\bar{p})}^{2}
\end{split}
\label{eq:appen_temp6}
\end{equation}
By the definition of $\varphi_{p}$, we have
\begin{equation}
\begin{split}\varphi_{p}(\pi(\bar{\pi},\bar{p}),\bar{\pi}',\bar{p})-\varphi_{p}(\pi(\bar{\pi},\bar{p}),\bar{\pi},\bar{p}) & =\frac{r_{1}}{2}\inner{\bar{\pi}+\bar{\pi}'-2\pi(\bar{\pi},\bar{p})}{\bar{\pi}'-\bar{\pi}}\\
\varphi_{p}(\pi(\bar{\pi}',\bar{p}),\bar{\pi},\bar{p})-\varphi_{p}(\pi(\bar{\pi}',\bar{p}),\bar{\pi}',\bar{p}) & =\frac{r_{1}}{2}\inner{\bar{\pi}+\bar{\pi}'-2\pi(\bar{\pi}',\bar{p})}{\bar{\pi}-\bar{\pi}'}.
\end{split}
\label{eq:appen_temp7}
\end{equation}
Putting~\eqref{eq:appen_temp6} and~\eqref{eq:appen_temp7} together and using
the Cauchy-Schwarz inequality, we obtain the result Item~\ref{enu:3}. 

Item~\ref{enu:4}: Since $-\varphi_{\pi}(p,\bar{\pi},\bar{p})$ is
$(r_{2}-\ell_{p\pi})$ strongly convex w.r.t. $p$, then we have 
\begin{equation}
\begin{split}\varphi_{\pi}(p(\bar{\pi},\bar{p}),\bar{\pi},\bar{p})-\varphi_{\pi}(p(\bar{\pi}',\bar{p}),\bar{\pi},\bar{p})  & \geq\frac{r_{2}-\ell_{p\pi}}{2}\|p(\bar{\pi},\bar{p})-p(\bar{\pi}',\bar{p})\|^{2},\\
\varphi_{\pi}(p(\bar{\pi}',\bar{p}),\bar{\pi}',\bar{p})-\varphi_{\pi}(p(\bar{\pi},\bar{p}),\bar{\pi}',\bar{p})  & \geq\frac{r_{2}-\ell_{p\pi}}{2}\|p(\bar{\pi}',\bar{p})-p(\bar{\pi},\bar{p})\|^{2},
\end{split}
\label{eq:appen_temp8}
\end{equation}
By the definition of $\varphi_{\pi}$, we have
\begin{equation}
\begin{split} & \varphi_{\pi}(p(\bar{\pi},\bar{p}),\bar{\pi},\bar{p})-\varphi_{\pi}(p(\bar{\pi},\bar{p}),\bar{\pi}',\bar{p})\\
 & =\chi(\pi(p(\bar{\pi},\bar{p}),\bar{\pi},\bar{p}),p(\bar{\pi},\bar{p}),\bar{\pi},\bar{p})-\chi(\pi(p(\bar{\pi},\bar{p}),\bar{\pi}',\bar{p}),p(\bar{\pi},\bar{p}),\bar{\pi}',\bar{p})\\
 & \leq\chi(\pi(p(\bar{\pi},\bar{p}),\bar{\pi}',\bar{p}),p(\bar{\pi},\bar{p}),\bar{\pi},\bar{p})-\chi(\pi(p(\bar{\pi},\bar{p}),\bar{\pi}',\bar{p}),p(\bar{\pi},\bar{p}),\bar{\pi}',\bar{p})\\
 & =\frac{r_{1}}{2}\langle\bar{\pi}+\bar{\pi}'-2\pi(p(\bar{\pi},\bar{p}),\bar{\pi}',\bar{p}),\bar{\pi}-\bar{\pi}'\rangle
\end{split}
\label{eq:appen_temp9}
\end{equation}
and 
\begin{equation}
\begin{split} & \varphi_{\pi}(p(\bar{\pi}',\bar{p}),\bar{\pi}',\bar{p})-\varphi_{\pi}(p(\bar{\pi}',\bar{p}),\bar{\pi},\bar{p})\\
 & =\chi(\pi(p(\bar{\pi}',\bar{p}),\bar{\pi}',\bar{p}),p(\bar{\pi}',\bar{p}),\bar{\pi}',\bar{p})-\chi(\pi(p(\bar{\pi}',\bar{p}),\bar{\pi},\bar{p}),p(\bar{\pi}',\bar{p}),\bar{\pi},\bar{p})\\
 & \leq\chi(\pi(p(\bar{\pi}',\bar{p}),\bar{\pi},\bar{p}),p(\bar{\pi}',\bar{p}),\bar{\pi}',\bar{p})-\chi(\pi(p(\bar{\pi}',\bar{p}),\bar{\pi},\bar{p}),p(\bar{\pi}',\bar{p}),\bar{\pi},\bar{p})\\
 & =\frac{r_{1}}{2}\langle\bar{\pi}'+\bar{\pi}-2x(p(\bar{\pi}',\bar{p}),\bar{\pi},\bar{p}),\bar{\pi}'-\bar{\pi}\rangle.
\end{split}
\label{eq:appen_temp10}
\end{equation}

Putting~\eqref{eq:appen_temp8},~\eqref{eq:appen_temp9} and~\eqref{eq:appen_temp10}
together yields
\begin{equation}
\begin{split} & (r_{2}-\ell_{p\pi})\norm{p(\bar{\pi},\bar{p})-p(\bar{\pi}',\bar{p})}^{2}\\
 & \leq r_{1}\langle\pi(p(\bar{\pi}',\bar{p}),\bar{\pi},\bar{p})-\pi(p(\bar{\pi},\bar{p}),\bar{\pi}',\bar{p}),\bar{\pi}-\bar{\pi}'\rangle\\
 & \aleq r_{1}\norm{\bar{\pi}-\bar{\pi}'}\brbra{\kappa_{1}\norm{\pi(p(\bar{\pi}',\bar{p})-\pi(p(\bar{\pi},\bar{p})}+\kappa_{2}\norm{\bar{\pi}-\bar{\pi}'}}\\
 & =r_{1}\kappa_{1}\norm{\bar{\pi}-\bar{\pi}'}\norm{\pi(p(\bar{\pi}',\bar{p})-\pi(p(\bar{\pi},\bar{p})}+r_{1}\kappa_{2}\norm{\bar{\pi}-\bar{\pi}'}^{2}
\end{split}
\label{eq:appen_temp11}
\end{equation}
where $(a)$ holds by Item~\ref{enu:1} and Item~\ref{enu:2}. Let
$\Lambda=\norm{p(\bar{\pi},\bar{p})-p(\bar{\pi}',\bar{p})}/\norm{\bar{\pi}-\bar{\pi}'}$
here, then~\eqref{eq:appen_temp11} implies 
\begin{equation}
\begin{split} & \Lambda^{2}\leq\frac{r_{1}\kappa_{1}}{r_{2}-\ell_{p\pi}}\Lambda+\frac{r_{1}\kappa_{2}}{r_{2}-\ell_{p\pi}}\leq\frac{1}{2}\Brbra{\frac{r_{1}\kappa_{1}}{r_{2}-\ell_{p\pi}}}^{2}+\frac{1}{2}\Lambda^{2}+\frac{r_{1}\kappa_{2}}{r_{2}-\ell_{p\pi}}\\
\Rightarrow & \Lambda^{2}\leq\Brbra{\frac{r_{1}\kappa_{1}}{r_{2}-\ell_{p\pi}}}^{2}+\frac{2r_{1}\kappa_{2}}{r_{2}-\ell_{p\pi}}\leq\Brbra{\frac{r_{1}\kappa_{1}}{r_{2}-\ell_{p\pi}}+\frac{\kappa_{2}}{\kappa_{1}}}^{2}=:\kappa_{3}^{2},
\end{split}
\label{eq:appen_lambda2}
\end{equation}
which complete our proof of Item~\ref{enu:4}.

Item~\ref{enu:5}. It follows from $\chi(\cdot,p,\cdot,\cdot)$ is
$(r_{2}-\ell_{p\pi})-$strongly concave that
\begin{equation*}
\begin{split}\chi(\pi,p(\pi,\bar{\pi},\bar{p}),\bar{\pi},\bar{p})-\chi(\pi,p(\pi',\bar{\pi},\bar{p}),\bar{\pi},\bar{p})  & \geq\frac{r_{2}-\ell_{p\pi}}{2}\|p(\pi,\bar{\pi},\bar{p})-p(\pi',\bar{\pi},\bar{p})\|^{2},\\
\chi(\pi',p(\pi,\bar{\pi},\bar{p}),\bar{\pi},\bar{p})-\chi(\pi',p(\pi',\bar{\pi},\bar{p}),\bar{\pi},\bar{p}) & \leq\frac{\ell_{p\pi}-r_{2}}{2}\|p(\pi,\bar{\pi},\bar{p})-p(\pi',\bar{\pi},\bar{p})\|^{2},\\
\chi(\pi,p(\pi,\bar{\pi},\bar{p}),\bar{\pi},\bar{p})-\chi(\pi',p(\pi,\bar{\pi},\bar{p}),\bar{\pi},\bar{p}) & \leq\langle\nabla_{\pi}\chi(\pi',p(\pi,\bar{\pi},\bar{p}),\bar{\pi},\bar{p}),\pi-\pi'\rangle+\frac{\ell_{\pi p}+r_{1}}{2}\|\pi-\pi'\|^{2},\\
\chi(\pi',p(\pi',\bar{\pi},\bar{p}),\bar{\pi},\bar{p})-\chi(\pi,p(\pi',\bar{\pi},\bar{p}),\bar{\pi},\bar{p}) & \leq\langle\nabla_{\pi}\chi(\pi',p(\pi',\bar{\pi},\bar{p}),\bar{\pi},\bar{p}),x'-x\rangle+\frac{\ell_{\pi p}-r_{1}}{2}\|\pi-\pi'\|^{2}.
\end{split}
\end{equation*}
Summing them up, we derive that 
\[
(r_{2}-\ell_{p\pi})\|p(\pi,\bar{\pi},\bar{p})-p(\pi',\bar{\pi},\bar{p})\|^{2}\leq\ell_{\pi p}\|\pi-\pi'\|^{2}+\ell_{\pi p}\|\pi-\pi'\|\|p(\pi,\bar{\pi},\bar{p})-p(\pi',\bar{\pi},\bar{p})\|.
\]
Let $\Lambda=\norm{p(\pi,\bar{\pi},\bar{p})-p(\pi',\bar{\pi},\bar{p})}/\norm{\pi-\pi'}$,
then we have
\begin{equation*}
\Lambda^{2}\leq\frac{\ell_{\pi p}}{r_{2}-\ell_{p\pi}}+\frac{\ell_{\pi p}}{r_{2}-\ell_{p\pi}}\Lambda\overset{(a)}{\Raw}\Lambda\leq\frac{\ell_{\pi p}+r_{2}-\ell_{p\pi}}{r_{2}-\ell_{p\pi}}=:\kappa_{4},
\end{equation*}
where $(a)$ holds by similary argument in~\eqref{eq:appen_lambda1} and~\eqref{eq:appen_lambda2}.

Item~\ref{enu:6} and Item~\ref{enu:7}: By the definition of $\chi$
and the $(r_{2}-\ell_{p\pi})$ strongly conevx of $\chi(\cdot,p,\cdot,\cdot)$
w.r.t. $p$, we have
\begin{equation*}
\begin{split}\chi(\pi,p(\pi,\bar{\pi},\bar{p}),\bar{\pi},\bar{p})-\chi(\pi,p(\pi,\bar{\pi},\bar{p}),\bar{\pi},\bar{p}') & = \frac{r_{2}}{2}\langle\bar{p}'+\bar{p}-2p(\pi,\bar{\pi},\bar{p}),\bar{p}'-\bar{p}\rangle,\\
\chi(\pi,p(\pi,\bar{\pi},\bar{p}'),\bar{\pi},\bar{p}')-\chi(\pi,p(\pi,\bar{\pi},\bar{p}'),\bar{\pi},\bar{p}) & = \frac{r_{2}}{2}\langle v+\bar{p}'-2p(\pi,\bar{\pi},\bar{p}'),\bar{p}-\bar{p}'\rangle,\\
\chi(\pi,p(\pi,\bar{\pi},\bar{p}),\bar{\pi},\bar{p})-\chi(\pi,p(\pi,\bar{\pi},\bar{p}'),\bar{\pi},\bar{p}) & \geq\frac{r_{2}-\ell_{p\pi}}{2}\|p(\pi,\bar{\pi},\bar{p})-p(\pi,\bar{\pi},\bar{p}')\|^{2},\\
\chi(\pi,p(\pi,\bar{\pi},\bar{p}),\bar{\pi},\bar{p}')-\chi(\pi,p(\pi,\bar{\pi},\bar{p}'),\bar{\pi},\bar{p}') & \leq\frac{\ell_{p\pi}-r_{2}}{2}\|p(\pi,\bar{\pi},\bar{p})-p(\pi,\bar{\pi},\bar{p}')\|^{2}.
\end{split}
\end{equation*}
Armed with these inequalities and Cauchy Schwarz inequality, we conclude
\begin{equation*}
(r_{2}-\ell_{p\pi})\|p(\pi,\bar{\pi},\bar{p})-p(\pi,\bar{\pi},\bar{p}')\|^{2}\leq r_{2}\|p(\pi,\bar{\pi},\bar{p}')-p(\pi,\bar{\pi},\bar{p})\|\|\bar{p}-\bar{p}'\|,
\end{equation*}
which complete our proof of Item~\ref{enu:6}. By the definition
of $\varphi_{\pi}$ and similar argument, we can obtain result Item~\ref{enu:7}.
\end{proof}
\begin{lemma}
[Lemma 3 in~\citet{zheng2023universal}]The function $\varphi_{\pi}(p,\cdot,\cdot)$
is continuously differentiable with the gradient $\nabla_{p}\varphi_{\pi}(p,\bar{\pi},\bar{p})=\nabla_{p}\chi(\pi(p,\bar{\pi},\bar{p}),p,\bar{\pi},\bar{p})$
and 
\begin{equation*}
\|\nabla_{p}\varphi_{\pi}(p,\bar{\pi},\bar{p})-\nabla_{p}\varphi_{\pi}(p^{\prime},\bar{\pi},\bar{p})\|\leq L_{\phi\pi}\|p-p'\|,
\end{equation*}
where $L_{\phi\pi}:=\ell_{p\pi}\kappa_{1}+\ell_{p\pi}+r_{2}$.
\end{lemma}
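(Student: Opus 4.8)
The plan is to treat $\varphi_{\pi}(\cdot,\bar{\pi},\bar{p})$ as the optimal-value function of a strongly convex parametric minimization, obtain differentiability by a Danskin-type (envelope) argument, and then deduce the gradient-Lipschitz bound by chaining the cross-smoothness of $J_{\rho}$ (item~\ref{item:iv)} of \cref{lem:smoothmdp}) with the minimizer-stability estimate (item~\ref{enu:1} of \cref{lem:lem2_un}).

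First I would record that, under the standing requirement $r_{1}>\ell_{\pi p}$, the left inequality of \cref{lem:descent_lemma} says precisely that $\pi\mapsto\chi(\pi,p,\bar{\pi},\bar{p})$ is $(r_{1}-\ell_{\pi p})$-strongly convex on the nonempty compact convex set $\Pi$; hence for every $p$ the inner minimizer $\pi(p,\bar{\pi},\bar{p})$ exists and is unique. Since $\chi$ is jointly continuous and $\nabla_{p}\chi(\pi,p,\bar{\pi},\bar{p})=\nabla_{p}J_{\rho}(\pi,p)-r_{2}(p-\bar{p})$ is jointly continuous in $(\pi,p)$, Danskin's theorem with a unique argmin applies and yields that $\varphi_{\pi}(\cdot,\bar{\pi},\bar{p})$ is continuously differentiable with $\nabla_{p}\varphi_{\pi}(p,\bar{\pi},\bar{p})=\nabla_{p}\chi(\pi(p,\bar{\pi},\bar{p}),p,\bar{\pi},\bar{p})$. (Alternatively, one can argue from scratch: combine the first-order optimality inequality for $\pi(p,\bar{\pi},\bar{p})$ with the bound $\|\pi(p,\bar{\pi},\bar{p})-\pi(p',\bar{\pi},\bar{p})\|\le\kappa_{1}\|p-p'\|$ from item~\ref{enu:1} of \cref{lem:lem2_un} to sandwich $\varphi_{\pi}(p',\bar{\pi},\bar{p})-\varphi_{\pi}(p,\bar{\pi},\bar{p})$ between $\langle\nabla_{p}\chi(\pi(p,\bar{\pi},\bar{p}),p,\bar{\pi},\bar{p}),p'-p\rangle$ and the same quantity plus $o(\|p'-p\|)$ terms.)

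Second I would establish the Lipschitz estimate. Writing $\pi_{p}:=\pi(p,\bar{\pi},\bar{p})$ and using the gradient formula, the $-r_{2}\bar{p}$ terms cancel, giving
\[
\nabla_{p}\varphi_{\pi}(p,\bar{\pi},\bar{p})-\nabla_{p}\varphi_{\pi}(p',\bar{\pi},\bar{p})=\big(\nabla_{p}J_{\rho}(\pi_{p},p)-\nabla_{p}J_{\rho}(\pi_{p'},p')\big)-r_{2}(p-p').
\]
Then the triangle inequality, item~\ref{item:iv)} of \cref{lem:smoothmdp}, and item~\ref{enu:1} of \cref{lem:lem2_un} give
\[
\|\nabla_{p}\varphi_{\pi}(p,\bar{\pi},\bar{p})-\nabla_{p}\varphi_{\pi}(p',\bar{\pi},\bar{p})\|\le\ell_{p\pi}\big(\|\pi_{p}-\pi_{p'}\|+\|p-p'\|\big)+r_{2}\|p-p'\|\le(\ell_{p\pi}\kappa_{1}+\ell_{p\pi}+r_{2})\|p-p'\|,
\]
which is exactly $L_{\phi\pi}\|p-p'\|$.

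The only delicate point is the differentiability claim: one must verify the hypotheses of Danskin's theorem — compactness of $\Pi$, joint continuity of $\chi$ and of $\nabla_{p}\chi$, and uniqueness of the inner minimizer coming from strong convexity (where $r_{1}>\ell_{\pi p}$ enters). Once that is in place, the Lipschitz bound is a one-line chaining of the paper's sharpened cross-Hessian estimate (item~\ref{item:iv)} of \cref{lem:smoothmdp}) with the minimizer stability in item~\ref{enu:1} of \cref{lem:lem2_un}.
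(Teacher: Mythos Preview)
Your proposal is correct and follows essentially the same approach as the paper: both invoke Danskin's theorem for the differentiability and gradient formula, then chain item~\ref{item:iv)} of \cref{lem:smoothmdp} with item~\ref{enu:1} of \cref{lem:lem2_un} to obtain the Lipschitz constant $L_{\phi\pi}=\ell_{p\pi}\kappa_{1}+\ell_{p\pi}+r_{2}$. The only cosmetic difference is that the paper splits the $\chi$-gradient difference by first varying $\pi$ at fixed $p$ and then varying $p$ at fixed $\pi$, whereas you first separate the $-r_{2}(p-\bar{p})$ quadratic piece and apply the joint Lipschitz bound directly to $\nabla_{p}J_{\rho}$; both routes are equivalent and use the same ingredients.
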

\begin{proof}
Using Danskin's theorem, we know that $\varphi_{\pi}(\cdot,\bar{\pi},\bar{p})$
is differentiable with $\nabla_{p}\varphi_{\pi}(p,\bar{\pi},\bar{p})=\nabla_{p}\chi(\pi(p,\bar{\pi},\bar{p}),p,\bar{\pi},\bar{p})$.
Also, it follows from~\cref{lem:smoothmdp} that
\begin{equation*}
\begin{split} & \|\nabla_{p}\varphi_{\pi}(p,\bar{\pi},\bar{p})-\nabla_{p}\varphi_{\pi}(p^{\prime},\bar{\pi},\bar{p})\|\\
 & =\|\nabla_{p}\chi(\pi(p,\bar{\pi},\bar{p}),\bar{\pi},\bar{p})-\nabla_{p}\chi(\pi(p^{\prime},\bar{\pi},\bar{p}),p^{\prime},\bar{\pi},\bar{p})\|\\
 & \leq\|\nabla_{p}\chi(\pi(p,\bar{\pi},\bar{p}),\bar{\pi},\bar{p})-\nabla_{p}\chi(\pi(p^{\prime},\bar{\pi},\bar{p}),\bar{\pi},\bar{p})\|\\
 & \ \ +\|\nabla_{p}\chi(\pi(p^{\prime},\bar{\pi},\bar{p}),\bar{\pi},\bar{p})-\nabla_{p}\chi(\pi(p^{\prime},\bar{\pi},\bar{p}),p^{\prime},\bar{\pi},\bar{p})\|\\
 & \leq\ell_{p\pi}\|\pi(p^{\prime},\bar{\pi},\bar{p})-\pi(p,\bar{\pi},\bar{p})\|+(\ell_{p\pi}+r_{2})\|p-p'\|\\
 & \aleq(\ell_{p\pi}\kappa_{1}+\ell_{p\pi}+r_{2})\|p-p'\|=L_{\phi\pi}\|p-p'\|,
\end{split}
\end{equation*}
where $(a)$ holds by Item~\ref{enu:1} in~\cref{lem:lem2_un}.
\end{proof}
\begin{lemma}
[Lemma 4 in~\citet{zheng2023universal}]\label{lem:lem4_un}For any
$k\geq0$, the following inequlities holds

\begin{enumerate}[label=\alph*)]

\item \label{item:a)}$\|\pi_{k+1}-\pi(p_{k},\bar{\pi}_{k},\bar{p}_{k})\|\leq\kappa_{6}\|\pi_{k+1}-\pi_{k}\|$

\item \label{item:b)}$\|p_{k+1}-p(\pi_{k+1},\bar{\pi}_{k},\bar{p}_{k})\|\leq\kappa_{7}\|p_{k+1}-p_{k}\|$

\item \label{item:c)}$\|p(\bar{\pi}_{k},\bar{p}_{k})-p_{k}\|\leq\kappa_{8}\|p_{k}-p_{k}^{+}(\bar{\pi}_{k},\bar{p}_{k})\|$

\item \label{item:d)}$\|p_{k+1}-p_{k}^{+}(\bar{\pi}_{k},\bar{p}_{k})\|\leq\ell_{p\pi}\sigma\kappa_{6}\|\pi_{k}-\pi_{k+1}\|$,

\end{enumerate}where $\kappa_{6}=(2\tau r_{1}+1)/(\tau r_{1}-\tau\ell_{\pi p})$,
$\kappa_{7}=(2\sigma r_{2}+1)/(\sigma r_{2}-\sigma\ell_{p\pi})$,
and $\kappa_{8}=(1+\sigma L_{\phi\pi})/(\sigma(r_{2}-\ell_{p\pi}))$.
\end{lemma}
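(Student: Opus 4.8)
The plan is to treat parts a)--c) through one reusable estimate about projected gradient steps, and then derive d) as an immediate corollary of a). The reusable estimate I would first record is the following: if $g$ is $m$-strongly convex and $L$-smooth on a closed convex set $C$, $x^{+}=\proj_{C}(x-\eta\nabla g(x))$, and $x^{*}=\argmin_{z\in C}g(z)$, then $\norm{x^{+}-x^{*}}\le\frac{1+\eta L}{\eta m}\norm{x^{+}-x}$. This follows quickly: projection optimality of $x^{+}$ gives $\frac{1}{\eta}(x-x^{+})-\nabla g(x)\in\mcal N_{C}(x^{+})$, so $v:=\nabla g(x^{+})-\nabla g(x)+\frac{1}{\eta}(x-x^{+})$ lies in $\nabla g(x^{+})+\mcal N_{C}(x^{+})$ with $\norm{v}\le(L+\frac{1}{\eta})\norm{x^{+}-x}$, while the standard error bound for $m$-strongly convex minimization gives $\dist(0,\nabla g(x^{+})+\mcal N_{C}(x^{+}))\ge m\norm{x^{+}-x^{*}}$; combining the two yields the claim.

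Next I would instantiate this estimate three times, using the standing conditions $r_{1}>\ell_{\pi p}$, $r_{2}>\ell_{p\pi}$. For a): take $g=\chi(\cdot,p_{k},\bar{\pi}_{k},\bar{p}_{k})$ on $C=\Pi$ with $\eta=\tau$, which by \cref{lem:descent_lemma} is $(r_{1}-\ell_{\pi p})$-strongly convex and $(r_{1}+\ell_{\pi p})$-smooth; here $x^{+}=\pi_{k+1}$, $x=\pi_{k}$, $x^{*}=\pi(p_{k},\bar{\pi}_{k},\bar{p}_{k})$, and since $\ell_{\pi p}<r_{1}$ we bound $\frac{1+\tau(r_{1}+\ell_{\pi p})}{\tau(r_{1}-\ell_{\pi p})}\le\kappa_{6}$. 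For b): take $g=-\chi(\pi_{k+1},\cdot,\bar{\pi}_{k},\bar{p}_{k})$ on $C=\mcal P$ with $\eta=\sigma$, which is $(r_{2}-\ell_{p\pi})$-strongly convex and $(r_{2}+\ell_{p\pi})$-smooth; with $x^{+}=p_{k+1}$, $x=p_{k}$, $x^{*}=p(\pi_{k+1},\bar{\pi}_{k},\bar{p}_{k})$ and $\ell_{p\pi}<r_{2}$ we obtain $\kappa_{7}$. For c): take $g=-\varphi_{\pi}(\cdot,\bar{\pi}_{k},\bar{p}_{k})$ on $C=\mcal P$ with $\eta=\sigma$; here I must first observe that $\varphi_{\pi}(\cdot,\bar{\pi}_{k},\bar{p}_{k})=\min_{\pi}\chi(\pi,\cdot,\bar{\pi}_{k},\bar{p}_{k})$, being a pointwise minimum of functions uniformly $(r_{2}-\ell_{p\pi})$-strongly concave in $p$, is itself $(r_{2}-\ell_{p\pi})$-strongly concave; that by Danskin's theorem $\nabla_{p}\varphi_{\pi}(p,\bar{\pi}_{k},\bar{p}_{k})=\nabla_{p}\chi(\pi(p,\bar{\pi}_{k},\bar{p}_{k}),p,\bar{\pi}_{k},\bar{p}_{k})$, so $p^{+}(\bar{\pi}_{k},\bar{p}_{k})$ is exactly the projected-gradient step of $g$ at $p_{k}$ and this gradient is $L_{\phi\pi}$-Lipschitz by the lemma immediately preceding the present one; and that the exact maximizer of $\varphi_{\pi}(\cdot,\bar{\pi}_{k},\bar{p}_{k})$ over $\mcal P$ is $p(\bar{\pi}_{k},\bar{p}_{k})$, the $p$-component of the saddle point of $\chi(\cdot,\cdot,\bar{\pi}_{k},\bar{p}_{k})$. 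With $m=r_{2}-\ell_{p\pi}$ and $L=L_{\phi\pi}$ the reusable estimate yields precisely $\kappa_{8}$.

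Finally, for d) I would use non-expansiveness of $\proj_{\mcal P}$ together with a) and the cross-Lipschitz bound of \cref{lem:smoothmdp}. Writing $\nabla_{p}\chi(\pi,p,\bar{\pi},\bar{p})=\nabla_{p}J_{\rho}(\pi,p)-r_{2}(p-\bar{p})$, the quadratic terms in $p_{k+1}=\proj_{\mcal P}(p_{k}+\sigma\nabla_{p}\chi(\pi_{k+1},p_{k},\bar{\pi}_{k},\bar{p}_{k}))$ and in $p^{+}_{k}(\bar{\pi}_{k},\bar{p}_{k})=\proj_{\mcal P}(p_{k}+\sigma\nabla_{p}\chi(\pi(p_{k},\bar{\pi}_{k},\bar{p}_{k}),p_{k},\bar{\pi}_{k},\bar{p}_{k}))$ cancel, so
\begin{align*}
\norm{p_{k+1}-p^{+}_{k}(\bar{\pi}_{k},\bar{p}_{k})} &\le\sigma\norm{\nabla_{p}J_{\rho}(\pi_{k+1},p_{k})-\nabla_{p}J_{\rho}(\pi(p_{k},\bar{\pi}_{k},\bar{p}_{k}),p_{k})}\\
&\le\sigma\ell_{p\pi}\norm{\pi_{k+1}-\pi(p_{k},\bar{\pi}_{k},\bar{p}_{k})}\le\sigma\ell_{p\pi}\kappa_{6}\norm{\pi_{k+1}-\pi_{k}},
\end{align*}
where the second inequality is item~\ref{item:iv)} of \cref{lem:smoothmdp} and the third is part a).

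The hard part will be part c): one must be careful that $\varphi_{\pi}$ genuinely inherits strong concavity from the inner minimization, that Danskin's theorem applies so that $p^{+}(\bar{\pi}_{k},\bar{p}_{k})$ really is a projected-gradient step on $\varphi_{\pi}$ (reusing the already-derived Lipschitz constant $L_{\phi\pi}$), and that the correct reference point is the saddle-point maximizer $p(\bar{\pi}_{k},\bar{p}_{k})$ rather than one of the neighbouring objects in the notation table. Parts a), b) and d) are then routine bookkeeping.
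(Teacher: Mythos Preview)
Your plan is essentially the paper's: it too reduces a)--c) to an error bound for a projected gradient step on a strongly convex, smooth problem (citing Theorem~3.1 in Pang, 1987) and derives d) exactly as you do. One genuine slip, though: your reusable estimate bounds $\|x^{+}-x^{*}\|$, while item~c) asks for $\|p(\bar{\pi}_{k},\bar{p}_{k})-p_{k}\|$, i.e.\ $\|x-x^{*}\|$ with $x=p_{k}$, $x^{+}=p_{k}^{+}(\bar{\pi}_{k},\bar{p}_{k})$, $x^{*}=p(\bar{\pi}_{k},\bar{p}_{k})$. As written your estimate gives only $\|p_{k}^{+}-p(\bar{\pi}_{k},\bar{p}_{k})\|\le\kappa_{8}\|p_{k}-p_{k}^{+}\|$, and a triangle inequality would leave you with $\kappa_{8}+1$ rather than the stated $\kappa_{8}$.

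The paper's (Pang-type) bound is the companion inequality $\|x-x^{*}\|\le\frac{1+\eta L}{\eta m}\|x-x^{+}\|$, which holds with the \emph{same} constant: from $\tfrac{1}{\eta}(x-x^{+})-\nabla g(x)\in\mcal N_{C}(x^{+})$ and $-\nabla g(x^{*})\in\mcal N_{C}(x^{*})$, monotonicity of $\mcal N_{C}$ together with $m$-strong monotonicity and $L$-Lipschitzness of $\nabla g$ gives $\bigl(\tfrac{1}{\eta}+L\bigr)\|x-x^{+}\|\cdot\|x-x^{*}\|\ge m\|x-x^{*}\|^{2}$. Use this version for c). For a) and b), where the lemma asks for $\|x^{+}-x^{*}\|$, your version is actually more direct than the paper's (the paper first bounds $\|x-x^{*}\|$ and then adds a triangle inequality, which is why the numerator is $2\tau r_{1}+1$ rather than $1+\tau(r_{1}+\ell_{\pi p})$). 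Item~d) matches the paper verbatim.
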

\begin{proof}
Item~\ref{item:a)}: It follows from~\cref{lem:descent_lemma}
that $\chi(\pi,p,\bar{\pi},\bar{p})$ is $(r_{1}-\ell_{\pi p})$ strongly
convex and $(r_{1}+\ell_{\pi p})$ smooth w.r.t. $\pi$ on $\Pi$
for any $p\in\mcal P,\bar{\pi}\in\mbb R^{\abs{\mcal S}\times\abs{\mcal A}},\bar{p}\in\mbb R^{\abs{\mcal S}\times\abs{\mcal A}\times\abs{\mcal S}}$.
Adopting the proof in Theorem 3.1 in~\citet{pang1987posteriori},
we have
\begin{equation*}
\|\pi_{k}-\pi(p_{k},\bar{\pi}_{k},\bar{p}_{k})\|\leq\frac{\tau\ell_{\pi p}+\tau\cdot r_{1}+1}{\tau\cdot r_{1}-\tau\ell_{\pi p}}\|\pi_{k+1}-\pi_{k}\|,
\end{equation*}
which implies that
\[
\|\pi_{k+1}-\pi(p_{k},\bar{\pi}_{k},\bar{p}_{k})\|\leq\|\pi_{k+1}-\pi_{k}\|+\|\pi_{k}-\pi(p_{k},\bar{\pi}_{k},\bar{p}_{k})\|\leq\frac{2\tau\cdot r_{1}+1}{\tau\cdot r_{1}-\tau\ell_{\pi p}}\|\pi_{k+1}-\pi_{k}\|.
\]

Item~\ref{item:b)}: Similar to Item~\ref{item:a)}, Item~\ref{item:b)}
holds by $-\chi(\pi,p,\bar{\pi},\bar{p})$ is $(r_{2}-\ell_{p\pi})$-strongly
convex and $(r_{2}+\ell_{p\pi})$ smooth w.r.t. $p$.

Item~\ref{item:c)}: Similar to Item~\ref{item:a)}, Item~\ref{item:c)}
holds by $-\varphi_{\pi}(p,\bar{\pi},\bar{p})$ is $(r_{2}-\ell_{p\pi})$-strongly
convex and $L_{\phi\pi}$ smooth w.r.t. $p$.

Item~\ref{item:d)}: It follows from the non-expansivity of projection
operator that
\begin{equation*}
\begin{split} & \|p_{k+1}-p_{k}^{+}(\bar{\pi}_{k},\bar{p}_{k})\|\\
=\  & \|\proj_{\mcal P}(p_{k}+\sigma\nabla_{p}F(\pi_{k+1},p_{k},\bar{\pi}_{k},\bar{p}_{k}))-\proj_{\mcal P}(p_{k}+\sigma\nabla_{p}\chi(\pi(p_{k},\bar{\pi}_{k},\bar{p}_{k}),p_{k},\bar{\pi}_{k},\bar{p}_{k}))\|\\
\leq\  & \sigma\|\nabla_{p}F(\pi_{k+1},p_{k},\bar{\pi}_{k},\bar{p}_{k})-\nabla_{p}\chi(\pi(p_{k},\bar{\pi}_{k},\bar{p}_{k}),p_{k},\bar{\pi}_{k},\bar{p}_{k})\|\\
\leq\  & \sigma\ell_{p\pi}\|\pi_{k+1}-\pi(p_{k},\bar{\pi}_{k},\bar{p}_{k})\|\leq\ell_{p\pi}\sigma\kappa_{6}\|\pi_{k}-\pi_{k+1}\|.
\end{split}
\end{equation*}
\end{proof}
\begin{lemma}
[Lemma 5 in~\citet{zheng2023universal}]\label{lem:lem5_un}For any
$k\geq0$, the following inequality holds:
\begin{equation*}
\begin{split} & \chi(\pi_{k},p_{k},\bar{\pi}_{k},\bar{p}_{k})\\
  & \geq\chi(\pi_{k+1},p_{k+1},\bar{\pi}_{k+1},\bar{p}_{k+1})+\left(\frac{1}{\tau}-\frac{\ell_{\pi p}+r_{1}}{2}\right)\|\pi_{k+1}-\pi_{k}\|^{2}\\
 & \ \ +\langle\nabla_{p}F(\pi_{k+1},p_{k},\bar{\pi}_{k},\bar{p}_{k}),p_{k}-p_{k+1}\rangle+\frac{r_{2}-\ell_{p\pi}}{2}\|p_{k+1}-p_{k}\|^{2}\\
 & \ \ +\frac{2-\beta}{2\beta}r_{1}\|\bar{\pi}_{k+1}-\bar{\pi}_{k}\|^{2}+\frac{\mu-2}{2\mu}r_{2}\|\bar{p}_{k+1}-\bar{p}_{k}\|^{2}.
\end{split}
\end{equation*}
\end{lemma}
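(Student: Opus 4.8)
The plan is to prove this sufficient-decrease estimate by telescoping the one-step change of $\chi$ across the four coordinate updates of \srpg{}, bounding each piece in turn. Writing the difference $\chi(\pi_k,p_k,\bar\pi_k,\bar p_k)-\chi(\pi_{k+1},p_{k+1},\bar\pi_{k+1},\bar p_{k+1})$ as the sum of (I) the change in $\pi$ alone (from $(\pi_k;p_k,\bar\pi_k,\bar p_k)$ to $(\pi_{k+1};p_k,\bar\pi_k,\bar p_k)$), (II) the change in $p$ alone, (III) the change in $\bar\pi$ alone, and (IV) the change in $\bar p$ alone, in that order, it suffices to lower-bound each of I--IV by the corresponding term on the right-hand side of the claimed inequality, with $F$ identified as the regularized objective $\chi$ (so that $\nabla_p F(\pi_{k+1},p_k,\bar\pi_k,\bar p_k)=\nabla_p\chi(\pi_{k+1},p_k,\bar\pi_k,\bar p_k)$, consistent with the $p$-update of \srpg{}).

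For term (I) I would use that $\pi_{k+1}=\proj_\Pi(\pi_k-\tau\nabla_\pi\chi(\pi_k,p_k,\bar\pi_k,\bar p_k))$. The variational characterization of the projection, tested at the point $\pi_k\in\Pi$, gives $\langle\nabla_\pi\chi(\pi_k,p_k,\bar\pi_k,\bar p_k),\pi_k-\pi_{k+1}\rangle\ge\tfrac1\tau\|\pi_{k+1}-\pi_k\|^2$; combining this with the upper (the $\ell_{\pi p}+r_1$) side of \cref{lem:descent_lemma} applied to $\chi(\cdot,p_k,\bar\pi_k,\bar p_k)$ at $\pi=\pi_k$, $\pi'=\pi_{k+1}$ yields $\mathrm{I}\ge\big(\tfrac1\tau-\tfrac{\ell_{\pi p}+r_1}{2}\big)\|\pi_{k+1}-\pi_k\|^2$. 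For term (II) no optimality condition is needed, since the claimed inequality retains the gradient term explicitly: the upper (the $\ell_{p\pi}-r_2$) side of \cref{lem:descent_lemma} applied to $\chi(\pi_{k+1},\cdot,\bar\pi_k,\bar p_k)$ at $p=p_k$, $p'=p_{k+1}$ rearranges directly to $\mathrm{II}\ge\langle\nabla_p\chi(\pi_{k+1},p_k,\bar\pi_k,\bar p_k),p_k-p_{k+1}\rangle+\tfrac{r_2-\ell_{p\pi}}{2}\|p_{k+1}-p_k\|^2$.

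Terms (III) and (IV) are pure quadratics. Since $\chi$ depends on $\bar\pi$ only through $\tfrac{r_1}{2}\|\pi_{k+1}-\bar\pi\|^2$ and the averaging update gives $\bar\pi_{k+1}=(1-\beta)\bar\pi_k+\beta\pi_{k+1}$, hence $\pi_{k+1}-\bar\pi_{k+1}=(1-\beta)(\pi_{k+1}-\bar\pi_k)$ and $\bar\pi_{k+1}-\bar\pi_k=\beta(\pi_{k+1}-\bar\pi_k)$, substitution gives $\mathrm{III}=\tfrac{r_1}{2}\big(1-(1-\beta)^2\big)\|\pi_{k+1}-\bar\pi_k\|^2=\tfrac{(2-\beta)r_1}{2\beta}\|\bar\pi_{k+1}-\bar\pi_k\|^2$; the identical computation with $-\tfrac{r_2}{2}\|p_{k+1}-\bar p\|^2$ and $\bar p_{k+1}=(1-\mu)\bar p_k+\mu p_{k+1}$ gives $\mathrm{IV}=\tfrac{r_2}{2}\big((1-\mu)^2-1\big)\|p_{k+1}-\bar p_k\|^2=\tfrac{(\mu-2)r_2}{2\mu}\|\bar p_{k+1}-\bar p_k\|^2$ (negative, as expected for the dual auxiliary variable). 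Summing I--IV and moving $\chi(\pi_{k+1},p_{k+1},\bar\pi_{k+1},\bar p_{k+1})$ to the right-hand side gives exactly the stated bound. There is no genuine obstacle here: the lemma is a bookkeeping step reused from~\citet{zheng2023universal}, and the only care required is getting the projection variational inequality and the signs in the averaging identities right.
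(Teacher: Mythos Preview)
Your proposal is correct and follows essentially the same approach as the paper: the same four-term telescoping in the order $\pi\to p\to\bar\pi\to\bar p$, the projection variational inequality plus $(\ell_{\pi p}+r_1)$-smoothness for term I, the $(\ell_{p\pi}-r_2)$-upper bound from \cref{lem:descent_lemma} for term II, and the exact quadratic identities via the averaging updates for terms III and IV. Your identification of $F$ with $\chi$ is also right (the paper's $F$ is a notational slip for $\chi$).
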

\begin{proof}
We split the target as follows:
\[
\begin{split} & \chi(\pi_{k},p_{k},\bar{\pi}_{k},\bar{p}_{k})-\chi(\pi_{k+1},p_{k+1},\bar{\pi}_{k+1},\bar{p}_{k+1})\\
 & =\underbrace{\chi(\pi_{k},p_{k},\bar{\pi}_{k},\bar{p}_{k})-\chi(\pi_{k+1},p_{k},\bar{\pi}_{k},\bar{p}_{k})}_{\rone}+\underbrace{\chi(\pi_{k+1},p_{k},\bar{\pi}_{k},\bar{p}_{k})-\chi(\pi_{k+1},p_{k+1},\bar{\pi}_{k},\bar{p}_{k})}_{\text{\ensuremath{\rtwo}}}\\
 & \ \ +\underbrace{\chi(\pi_{k+1},p_{k+1},z^{t},\bar{p}_{k})-\chi(\pi_{k+1},p_{k+1},\bar{\pi}_{k+1},\bar{p}_{k})}_{\rthree}+\underbrace{\chi(\pi_{k+1},p_{k+1},\bar{\pi}_{k+1},\bar{p}_{k})-\chi(\pi_{k+1},p_{k+1},\bar{\pi}_{k+1},\bar{p}_{k+1})}_{\rfour}.
\end{split}
\]

Term $\rone$: By the optimality condition $\langle\pi_{k}-\tau\nabla_{\pi}\chi(\pi_{k},p_{k},\bar{\pi}_{k},\bar{p}_{k})-\pi_{k+1},\pi_{k}-\pi_{k+1}\rangle\leq0$
and $\chi$ is $\ell_{\pi p}+r_{1}$ smooth w.r.t. $\pi$, we have
\[
\begin{split}\chi(\pi_{k},p_{k},\bar{\pi}_{k},\bar{p}_{k})-\chi(\pi_{k+1},p_{k},\bar{\pi}_{k},\bar{p}_{k})  & \geq\langle\nabla_{\pi}\chi(\pi_{k},p_{k},\bar{\pi}_{k},\bar{p}_{k}),\pi_{k}-\pi_{k+1}\rangle-\frac{\ell_{\pi p}+r_{1}}{2}\|\pi_{k+1}-\pi_{k}\|^{2}\\
  & \geq\left(\frac{1}{\tau}-\frac{\ell_{\pi p}+r_{1}}{2}\right)\|\pi_{k+1}-\pi_{k}\|^{2}.
\end{split}
\]

Term $\rtwo$: By $\chi$ is $r_{2}-\ell_{p\pi}$ strongly convex
w.r.t. $p$, we have
\[
\chi(\pi_{k+1},p_{k},\bar{\pi}_{k},\bar{p}_{k})-\chi(\pi_{k+1},p_{k+1},\bar{\pi}_{k},\bar{p}_{k})\geq\langle\nabla_{p}\chi(\pi_{k+1},p_{k},\bar{\pi}_{k},\bar{p}_{k}),p_{k}-p_{k+1}\rangle+\frac{r_{2}-\ell_{p\pi}}{2}\|p_{k+1}-p_{k}\|^{2}.
\]

Term $\rthree$: It follows from $\bar{\pi}_{k+1}=\bar{\pi}_{k}+\beta(\pi_{k+1}-\bar{\pi}_{k})$
that
\[
\chi(\pi_{k+1},p_{k+1},\bar{\pi}_{k},\bar{p}_{k})-\chi(\pi_{k+1},p_{k+1},\bar{\pi}_{k+1},\bar{p}_{k})=\frac{2-\beta}{2\beta}r_{1}\|\bar{\pi}_{k+1}-\bar{\pi}_{k}\|^{2}.
\]

Term $\rfour$: By $\bar{p}_{k+1}=\bar{p}_{k}+\mu(p_{k+1}-\bar{p}_{k})$,
we have
\[
\chi(\pi_{k+1},p_{k+1},\bar{\pi}_{k+1},\bar{p}_{k})-\chi(\pi_{k+1},p_{k+1},\bar{\pi}_{k+1},\bar{p}_{k+1})=\frac{\mu-2}{2\mu}r_{2}\|\bar{p}_{k+1}-\bar{p}_{k}\|^{2}.
\]

Combining all the above bounds leads to the conclusion.
\end{proof}
\begin{lemma}
[Lemma 6 in~\cite{zheng2023universal}]\label{lem:lem6_un}For any
$k\geq0$, the following inequality holds:
\begin{equation*}
\begin{split}\varphi_{\pi}(p_{k+1},\bar{\pi}_{k+1},\bar{p}_{k+1}) & \geq\varphi_{\pi}(p_{k},\bar{\pi}_{k},\bar{p}_{k})+\frac{(2-\mu)r_{2}}{2\mu}\|\bar{p}_{k+1}-\bar{p}_{k}\|^{2}\\
  &\ \ +\frac{r_{1}}{2}\langle\bar{\pi}_{k+1}+\bar{\pi}_{k}-2\pi(p_{k+1},\bar{\pi}_{k+1},\bar{p}_{k}),\bar{\pi}_{k+1}-\bar{\pi}_{k}\rangle\\
  &\ \ +\langle\nabla_{p}\chi(\pi(p_{k},\bar{\pi}_{k},\bar{p}_{k}),p_{k},\bar{\pi}_{k},\bar{p}_{k}),p_{k+1}-p_{k}\rangle-\frac{L_{\phi\pi}}{2}\|p_{k+1}-p_{k}\|^{2}.
\end{split}
\end{equation*}
\end{lemma}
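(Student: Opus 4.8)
The plan is to write $\varphi_{\pi}(p_{k+1},\bar{\pi}_{k+1},\bar{p}_{k+1})-\varphi_{\pi}(p_{k},\bar{\pi}_{k},\bar{p}_{k})$ as a telescoping sum in which only one of the three arguments changes at a time, and then bound each increment by the matching quantity on the right-hand side of the claimed inequality:
\[
\underbrace{\varphi_{\pi}(p_{k+1},\bar{\pi}_{k},\bar{p}_{k})-\varphi_{\pi}(p_{k},\bar{\pi}_{k},\bar{p}_{k})}_{\rone}+\underbrace{\varphi_{\pi}(p_{k+1},\bar{\pi}_{k+1},\bar{p}_{k})-\varphi_{\pi}(p_{k+1},\bar{\pi}_{k},\bar{p}_{k})}_{\rtwo}+\underbrace{\varphi_{\pi}(p_{k+1},\bar{\pi}_{k+1},\bar{p}_{k+1})-\varphi_{\pi}(p_{k+1},\bar{\pi}_{k+1},\bar{p}_{k})}_{\rthree}.
\]

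For $\rone$, I would invoke the preceding lemma (an application of Danskin's theorem), which states that $\varphi_{\pi}(\cdot,\bar{\pi}_{k},\bar{p}_{k})$ is differentiable with $\nabla_{p}\varphi_{\pi}(p,\bar{\pi}_{k},\bar{p}_{k})=\nabla_{p}\chi(\pi(p,\bar{\pi}_{k},\bar{p}_{k}),p,\bar{\pi}_{k},\bar{p}_{k})$ and has an $L_{\phi\pi}$-Lipschitz gradient. The lower-bound half of the descent inequality for functions with Lipschitz gradient then gives $\rone\ge\langle\nabla_{p}\chi(\pi(p_{k},\bar{\pi}_{k},\bar{p}_{k}),p_{k},\bar{\pi}_{k},\bar{p}_{k}),p_{k+1}-p_{k}\rangle-\tfrac{L_{\phi\pi}}{2}\|p_{k+1}-p_{k}\|^{2}$, which are exactly the last two terms of the lemma.

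For $\rtwo$, I would use suboptimality: letting $\pi^{+}:=\pi(p_{k+1},\bar{\pi}_{k+1},\bar{p}_{k})$, we have $\varphi_{\pi}(p_{k+1},\bar{\pi}_{k+1},\bar{p}_{k})=\chi(\pi^{+},p_{k+1},\bar{\pi}_{k+1},\bar{p}_{k})$ and $\varphi_{\pi}(p_{k+1},\bar{\pi}_{k},\bar{p}_{k})\le\chi(\pi^{+},p_{k+1},\bar{\pi}_{k},\bar{p}_{k})$, so $\rtwo\ge\chi(\pi^{+},p_{k+1},\bar{\pi}_{k+1},\bar{p}_{k})-\chi(\pi^{+},p_{k+1},\bar{\pi}_{k},\bar{p}_{k})=\tfrac{r_{1}}{2}\big(\|\pi^{+}-\bar{\pi}_{k+1}\|^{2}-\|\pi^{+}-\bar{\pi}_{k}\|^{2}\big)=\tfrac{r_{1}}{2}\langle\bar{\pi}_{k+1}+\bar{\pi}_{k}-2\pi^{+},\bar{\pi}_{k+1}-\bar{\pi}_{k}\rangle$, since only the $\bar{\pi}$-regularizer depends on $\bar{\pi}$; this is the middle term. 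For $\rthree$, note that $\chi(\pi,p,\bar{\pi},\bar{p}_{k+1})-\chi(\pi,p,\bar{\pi},\bar{p}_{k})=-\tfrac{r_{2}}{2}\big(\|p-\bar{p}_{k+1}\|^{2}-\|p-\bar{p}_{k}\|^{2}\big)$ is independent of $\pi$, so the minimization over $\pi$ commutes with this shift and $\rthree=-\tfrac{r_{2}}{2}\big(\|p_{k+1}-\bar{p}_{k+1}\|^{2}-\|p_{k+1}-\bar{p}_{k}\|^{2}\big)$ exactly; substituting the averaging update $\bar{p}_{k+1}=\bar{p}_{k}+\mu(p_{k+1}-\bar{p}_{k})$, i.e. $p_{k+1}-\bar{p}_{k}=\mu^{-1}(\bar{p}_{k+1}-\bar{p}_{k})$ and $p_{k+1}-\bar{p}_{k+1}=\tfrac{1-\mu}{\mu}(\bar{p}_{k+1}-\bar{p}_{k})$, a direct computation yields $\rthree=\tfrac{(2-\mu)r_{2}}{2\mu}\|\bar{p}_{k+1}-\bar{p}_{k}\|^{2}$. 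Adding $\rone+\rtwo+\rthree$ finishes the proof.

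The main obstacle is purely careful bookkeeping: keeping the inequality directions consistent in the two suboptimality steps (always replacing the minimum by its value at a fixed point on the side where the sign works out) and correctly substituting the averaging step into the $\bar{p}$-quadratics; beyond that, everything reduces to the descent inequality for Lipschitz-gradient functions and the differentiability/smoothness facts already recorded above (together with \cref{lem:smoothmdp} and \cref{lem:descent_lemma}).
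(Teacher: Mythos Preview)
Your proposal is correct and follows essentially the same approach as the paper: the same three-term telescoping decomposition (merely ordered differently), the descent inequality from the $L_{\phi\pi}$-smoothness of $\varphi_{\pi}(\cdot,\bar{\pi},\bar{p})$ for the $p$-increment, the suboptimality argument at $\pi^{+}=\pi(p_{k+1},\bar{\pi}_{k+1},\bar{p}_{k})$ for the $\bar{\pi}$-increment, and the exact computation via the averaging rule $\bar{p}_{k+1}=\bar{p}_{k}+\mu(p_{k+1}-\bar{p}_{k})$ for the $\bar{p}$-increment.
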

\begin{proof}
We split the target as follows:
\begin{equation*}
\begin{split} & \varphi_{\pi}(p_{k+1},\bar{\pi}_{k+1},\bar{p}_{k+1})-\varphi_{\pi}(p_{k},\bar{\pi}_{k},\bar{p}_{k})\\
 & =\underbrace{\varphi_{\pi}(p_{k+1},\bar{\pi}_{k+1},\bar{p}_{k+1})-\varphi_{\pi}(p_{k+1},\bar{\pi}_{k+1},\bar{p}_{k})}_{\textrm{I}}+\underbrace{\varphi_{\pi}(p_{k+1},\bar{\pi}_{k+1},\bar{p}_{k})-\varphi_{\pi}(p_{k+1},\bar{\pi}_{k},\bar{p}_{k})}_{\textrm{II}}\\
 & \ \ +\underbrace{\varphi_{\pi}(p_{k+1},\bar{\pi}_{k},\bar{p}_{k})-d(y^{t},\bar{\pi}_{k},\bar{p}_{k})}_{\textrm{III}}.
\end{split}
\end{equation*}

Term $\rone$: By the update rule of $\bar{p}_{k+1}$, we have
\[
\begin{aligned}\textrm{I} & =\frac{r_{2}}{2}\left(\|p_{k+1}-\bar{p}_{k}\|^{2}-\|p_{k+1}-\bar{p}_{k+1}\|^{2}\right)=\frac{(2-\mu)r_{2}}{2\mu}\|\bar{p}_{k+1}-\bar{p}_{k}\|^{2}.\end{aligned}
\]

Term $\rtwo:$
\[
\begin{aligned}\textrm{II} & =\chi(\pi(p_{k+1},\bar{\pi}_{k+1},\bar{p}_{k}),p_{k+1},\bar{\pi}_{k+1},\bar{p}_{k})-\chi(\pi(p_{k+1},\bar{\pi}_{k},\bar{p}_{k}),p_{k+1},\bar{\pi}_{k},\bar{p}_{k})\\
  & \geq\chi(\pi(p_{k+1},\bar{\pi}_{k+1},\bar{p}_{k}),p_{k+1},\bar{\pi}_{k+1},\bar{p}_{k})-\chi(\pi(p_{k+1},\bar{\pi}_{k+1},\bar{p}_{k}),p_{k+1},\bar{\pi}_{k},\bar{p}_{k})\\
 & =\frac{r_{1}}{2}\langle\bar{\pi}_{k+1}+\bar{\pi}_{k}-2\pi(p_{k+1},\bar{\pi}_{k+1},\bar{p}_{k}),\bar{\pi}_{k+1}-\bar{\pi}_{k}\rangle.
\end{aligned}
\]

Term $\rthree:$ It follows from $\varphi_{\pi}$ is $L_{\phi\pi}$-smooth
w.r.t. $p$ that
\[
\begin{aligned}\textrm{III} & =\langle\nabla_{p}\chi(\pi(p_{k},\bar{\pi}_{k},\bar{p}_{k}),p_{k},\bar{\pi}_{k},\bar{p}_{k}),p_{k+1}-p_{k}\rangle-\frac{L_{\phi\pi}}{2}\|p_{k+1}-p_{k}\|^{2}.\end{aligned}
\]

Combining the above inequalities finishes the proof.
\end{proof}
\begin{lemma}
[Lemma 7 in~\citet{zheng2023universal}]\label{lem:lem7_un}For all
$k\geq0$, the following inequality holds:
\[
\begin{aligned}q(\bar{\pi}_{k})\geq q(\bar{\pi}_{k+1})+\frac{r_{1}}{2}\langle\bar{\pi}_{k}+\bar{\pi}_{k+1}-2\pi(\bar{\pi}_{k},v(\bar{\pi}_{k+1})),\bar{\pi}_{k}-\bar{\pi}_{k+1}\rangle.\end{aligned}
\]
\end{lemma}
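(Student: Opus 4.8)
The plan is to reproduce, in the purely-auxiliary-variable setting, the argument used for ``Term~II'' in the proof of \cref{lem:lem6_un}, exploiting that $\bar\pi$ enters $\chi$ only through the decoupled quadratic $\tfrac{r_1}{2}\|\pi-\bar\pi\|^2$. Write $q:=\varphi_{\pi,p,\bar p}$, i.e.\ $q(\bar\pi)=\max_{\bar p}\varphi_{\pi,p}(\bar\pi,\bar p)=\varphi_{\pi,p}(\bar\pi,\bar p(\bar\pi))$, and $v:=\bar p(\cdot)$ for its maximizer; set $\hat p:=v(\bar\pi_{k+1})$ and $\hat\pi:=\pi(\bar\pi_k,\hat p)=\pi(\bar\pi_k,v(\bar\pi_{k+1}))$. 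First I would reduce to a fixed second argument: by suboptimality of $\hat p$ in the maximization defining $q(\bar\pi_k)$ we have $q(\bar\pi_k)\ge\varphi_{\pi,p}(\bar\pi_k,\hat p)$, while $q(\bar\pi_{k+1})=\varphi_{\pi,p}(\bar\pi_{k+1},\hat p)$ by definition of $v$; hence it suffices to show $\varphi_{\pi,p}(\bar\pi_k,\hat p)-\varphi_{\pi,p}(\bar\pi_{k+1},\hat p)\ge \tfrac{r_1}{2}\langle\bar\pi_k+\bar\pi_{k+1}-2\hat\pi,\,\bar\pi_k-\bar\pi_{k+1}\rangle$.

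For that, I would upper-bound the minimax $\varphi_{\pi,p}(\bar\pi_{k+1},\hat p)=\min_{\pi}\max_{p}\chi(\pi,p,\bar\pi_{k+1},\hat p)$ by inserting the trial point $\hat\pi$ into the inner maximum. Since $\chi(\pi,p,\bar\pi,\bar p)=J_\rho(\pi,p)+\tfrac{r_1}{2}\|\pi-\bar\pi\|^2-\tfrac{r_2}{2}\|p-\bar p\|^2$, the change from $\bar\pi_{k+1}$ to $\bar\pi_k$ shifts $\chi(\hat\pi,p,\cdot,\hat p)$ by a quantity independent of $p$, so $\max_p\chi(\hat\pi,p,\bar\pi_{k+1},\hat p)=\max_p\chi(\hat\pi,p,\bar\pi_k,\hat p)+\tfrac{r_1}{2}\|\hat\pi-\bar\pi_{k+1}\|^2-\tfrac{r_1}{2}\|\hat\pi-\bar\pi_k\|^2$; and because $\hat\pi$ is by construction the minimizer of $\pi\mapsto\max_p\chi(\pi,p,\bar\pi_k,\hat p)$, the first term on the right is exactly $\varphi_{\pi,p}(\bar\pi_k,\hat p)$. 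Rearranging yields $\varphi_{\pi,p}(\bar\pi_k,\hat p)-\varphi_{\pi,p}(\bar\pi_{k+1},\hat p)\ge\tfrac{r_1}{2}\big(\|\hat\pi-\bar\pi_k\|^2-\|\hat\pi-\bar\pi_{k+1}\|^2\big)$, and the polarization identity $\|a\|^2-\|b\|^2=\langle a+b,\,a-b\rangle$ with $a=\hat\pi-\bar\pi_k$, $b=\hat\pi-\bar\pi_{k+1}$ rewrites the right-hand side as $\tfrac{r_1}{2}\langle\bar\pi_k+\bar\pi_{k+1}-2\hat\pi,\,\bar\pi_k-\bar\pi_{k+1}\rangle$. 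Chaining this with the first step gives the lemma.

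I do not expect a genuine obstacle: the proof uses only the definitions of $\varphi_{\pi,p}$, of the inner primal response $\pi(\cdot,\cdot)$, and of $\bar p(\cdot)$, together with the explicit quadratic form of $\chi$ and an elementary identity --- no Danskin-type differentiability or strong-convexity estimate is needed. The one thing to be careful about is the indexing of the auxiliary pairs: the correct trial point is $\pi(\bar\pi_k,v(\bar\pi_{k+1}))$, the optimal primal response for the \emph{old} $\bar\pi$ but the \emph{new} $\bar p$, which is exactly what makes the $J_\rho$- and $r_2$-terms cancel in $\max_p\chi(\hat\pi,p,\bar\pi_{k+1},\hat p)-\max_p\chi(\hat\pi,p,\bar\pi_k,\hat p)$ and leaves only the $r_1$-quadratics; any other choice (e.g.\ $\pi(\bar\pi_{k+1},\cdot)$) destroys the telescoping structure the Lyapunov analysis relies on. One should also track the directions of the two suboptimality inequalities, which only combine in the stated direction after the rearrangement above.
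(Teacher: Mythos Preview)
Your proposal is correct and follows essentially the same route as the paper: both arguments first replace $\bar p(\bar\pi_k)$ by $\hat p=\bar p(\bar\pi_{k+1})$ via suboptimality in the outer $\max_{\bar p}$, then replace $\pi(\bar\pi_{k+1},\hat p)$ by $\hat\pi=\pi(\bar\pi_k,\hat p)$ via suboptimality in the inner $\min_\pi$, and finally reduce to the exact quadratic identity in $\bar\pi$. The only cosmetic difference is that the paper fixes a specific $p^*=p(\hat\pi,\bar\pi_{k+1},\hat p)$ and lower-bounds $\varphi_p(\hat\pi,\bar\pi_k,\hat p)\ge\chi(\hat\pi,p^*,\bar\pi_k,\hat p)$, whereas you observe (slightly more cleanly) that the $\bar\pi$-shift is $p$-independent so the $\max_p$'s differ by exactly the $r_1$-quadratic; the two are equivalent here.
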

\begin{proof}
From Sion's minimax theorem~\cite{sion1958general}, we have
\[
\begin{aligned}\varphi_{\pi,p,\bar{p}}(\bar{\pi}) & = \max_{\bar{p}}\varphi_{\pi,p}(\bar{\pi},\bar{p})=\max_{\bar{p}}\min_{\pi\in\Pi}\max_{p\in\mathcal{P}}\chi(\pi,p,\bar{\pi},\bar{p})\\
 & = \max_{\bar{p}}\varphi_{p}(\pi(\bar{\pi},\bar{p}),\bar{\pi},\bar{p})=\max_{\bar{p}}\chi(\pi(p(\bar{\pi},\bar{p}),\bar{\pi},\bar{p}),p(\bar{\pi},\bar{p}),\bar{\pi},\bar{p}).
\end{aligned}
\]
Thus, we have
\begin{equation*}
\begin{split} & \varphi_{\pi,p,\bar{p}}(\bar{\pi}_{k})-\varphi_{\pi,p,\bar{p}}(\bar{\pi}_{k+1})\\
 & = \varphi_{p}(\pi(\bar{\pi}_{k},\bar{p}(\bar{\pi}_{k})),\bar{\pi}_{k},\bar{p}(\bar{\pi}_{k}))-\varphi_{p}(\pi(\bar{\pi}_{k+1},\bar{p}(\bar{\pi}_{k+1})),\bar{\pi}_{k+1},\bar{p}(\bar{\pi}_{k+1}))\\
 & \geq\varphi_{p}(\pi(\bar{\pi}_{k},\bar{p}(\bar{\pi}_{k+1})),\bar{\pi}_{k},\bar{p}(\bar{\pi}_{k+1}))-\varphi_{p}(\pi(\bar{\pi}_{k+1},\bar{p}(\bar{\pi}_{k+1})),\bar{\pi}_{k+1},\bar{p}(\bar{\pi}_{k+1}))\\
 & \geq\varphi_{p}(\pi(\bar{\pi}_{k},\bar{p}(\bar{\pi}_{k+1})),\bar{\pi}_{k},\bar{p}(\bar{\pi}_{k+1}))-\varphi_{p}(\pi(\bar{\pi}_{k},\bar{p}(\bar{\pi}_{k+1})),\bar{\pi}_{k+1},\bar{p}(\bar{\pi}_{k+1}))\\
 & \geq\chi(\pi(\bar{\pi}_{k},\bar{p}(\bar{\pi}_{k+1})),p(\pi(\bar{\pi}_{k},\bar{p}(\bar{\pi}_{k+1})),\bar{\pi}_{k+1},\bar{p}(\bar{\pi}_{k+1})),\bar{\pi}_{k},\bar{p}(\bar{\pi}_{k+1}))\\
  &\ \ -\chi(\pi(\bar{\pi}_{k},\bar{p}(\bar{\pi}_{k+1})),p(\pi(\bar{\pi}_{k},\bar{p}(\bar{\pi}_{k+1})),\bar{\pi}_{k+1},\bar{p}(\bar{\pi}_{k+1})),\bar{\pi}_{k+1},\bar{p}(\bar{\pi}_{k+1}))\\
 & = \frac{r_{1}}{2}\langle\bar{\pi}_{k}+\bar{\pi}_{k+1}-2\pi(\bar{\pi}_{k},\bar{p}(\bar{\pi}_{k+1})),\bar{\pi}_{k}-\bar{\pi}_{k+1}\rangle.
\end{split}
\end{equation*}
\end{proof}
\subsection*{Proof of~\cref{prop:prop1_un}}
\begin{proposition}
[Proposition 1 in~\citet{zheng2023universal}]\label{prop:appen_:prop1_un}Define
a Lyapunov function 
\begin{align*}
\Phi(\pi,p,\bar{\pi},\bar{p}) & :=\underbrace{\chi(\pi,p,\bar{\pi},\bar{p})-\varphi_{\pi}(p,\bar{\pi},\bar{p})}_{\text{Primal descent}}+\underbrace{\varphi_{\pi,p}(\bar{\pi},\bar{p})-\varphi_{\pi}(p,\bar{\pi},\bar{p})}_{\text{Dual ascent}}\\
 & \ \ +\underbrace{\varphi_{\pi,p,\bar{p}}(\bar{\pi})-\varphi_{\pi,p}(\bar{\pi},\bar{p})}_{\text{Proximal ascent}}+\underbrace{\varphi_{\pi,p,\bar{p}}(\bar{\pi})-\underline{\chi}}_{\text{Proximal descent}}+\underline{\chi},\nonumber 
\end{align*}
and let parameters satisfy the following properties:
\begin{align*}
r_{1}  & \geq2L,\ \ r_{2}\geq2\lambda L,\ \ \ell_{p\pi}=\lambda\ell_{\pi p}=\lambda L\\
0 & <\tau\leq\min\left\{ \frac{4}{3(L+r_{1})},\frac{1}{6\lambda L}\right\} ,0<\sigma\leq\min\left\{ \frac{2}{3\lambda L\kappa_{6}^{2}},\frac{1}{6L_{\phi\pi}},\frac{1}{5\lambda\sqrt{\lambda+5}L}\right\} ,\\
0 & <\beta\leq\min\left\{ \frac{24r_{1}}{360r_{1}+5r_{1}^{2}\lambda+(2\lambda L+5r_{1})^{2}},\frac{\sigma\lambda^{2}L^{2}}{384r_{1}(\lambda+5)(\lambda+1)^{2}}\right\} ,\\
0 & <\mu\leq\min\left\{ {\frac{(\lambda+5)}{2(\lambda+5)+\lambda^{2}L^{2}},\frac{(\lambda+5)}{\sigma\lambda^{2}L^{2}}},\frac{\sigma\lambda^{2}L^{2}}{64r_{2}(\lambda+5)}\right\} .
\end{align*}
Then for any $k>0$, 
\begin{equation}
\begin{split}\Phi_{k}-\Phi_{k+1} & \geq\frac{r_{1}}{32}\|\pi_{k+1}-\pi_{k}\|^{2}+\frac{r_{2}}{15}\|p_{k}-p_{k}^{+}(\bar{\pi}_{k},\bar{p}_{k})\|^{2}+\frac{r_{1}}{5\beta}\|\bar{\pi}_{k}-\bar{\pi}_{k+1}\|^{2}\\
  &\ \ +\frac{r_{2}}{4\mu}\|\bar{p}_{k}^{+}(\bar{\pi}_{k+1})-\bar{p}_{k}\|^{2}-4r_{1}\beta\|\pi(\bar{\pi}_{k+1},\bar{p}(\bar{\pi}_{k+1}))-\pi(\bar{\pi}_{k+1},\bar{p}_{k}^{+}(\bar{\pi}_{k+1}))\|^{2}
\end{split}
\label{eq:appen_suff_dec}
\end{equation}
where $\kappa_{6}\coloneqq(2\tau\cdot r_{1}+1)/(c(r_{1}-L))$.
\end{proposition}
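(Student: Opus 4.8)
The plan is to follow the Lyapunov-descent argument of \citet{zheng2023universal} (their Proposition~1), whose hypotheses are met here because of the two-sided cross-Lipschitz-gradient property of $J_\rho$ established in \cref{lem:smoothmdp} item~\ref{item:iv)}. Once $\nabla J_\rho$ is Lipschitz jointly in $(\pi,p)$, the regularized function $\chi$ is $(r_1-\ell_{\pi p})$-strongly convex and $(r_1+\ell_{\pi p})$-smooth in $\pi$, and $(r_2-\ell_{p\pi})$-strongly concave and $(r_2+\ell_{p\pi})$-smooth in $p$ (\cref{lem:descent_lemma}); moreover the auxiliary solution maps $\pi(\cdot)$, $p(\cdot)$ inherit the Lipschitz estimates $\kappa_1,\dots,\kappa_8$ of \cref{lem:lem2_un}, and $\varphi_\pi(p,\bar\pi,\bar p)$ has $L_{\phi\pi}$-Lipschitz gradient in $p$. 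These are exactly the structural facts the analysis uses, so the proof is essentially a bookkeeping exercise once item~\ref{item:iv)} is in hand.

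First I would split $\Phi_k-\Phi_{k+1}$ along the four telescoping blocks of $\Phi$ --- primal descent $\chi-\varphi_\pi$, dual ascent $\varphi_{\pi,p}-\varphi_\pi$, proximal ascent $\varphi_{\pi,p,\bar p}-\varphi_{\pi,p}$, and proximal descent $\varphi_{\pi,p,\bar p}-\underline\chi$ --- observing that the constant $\underline\chi$ cancels. For the change in $\chi$ I would invoke \cref{lem:lem5_un}, which lower-bounds $\chi_k-\chi_{k+1}$ by $\bigl(\tfrac1\tau-\tfrac{\ell_{\pi p}+r_1}{2}\bigr)\|\pi_{k+1}-\pi_k\|^2+\tfrac{r_2-\ell_{p\pi}}{2}\|p_{k+1}-p_k\|^2+\tfrac{2-\beta}{2\beta}r_1\|\bar\pi_{k+1}-\bar\pi_k\|^2+\tfrac{\mu-2}{2\mu}r_2\|\bar p_{k+1}-\bar p_k\|^2$ plus an inner product $\langle\nabla_p\chi(\pi_{k+1},p_k,\bar\pi_k,\bar p_k),\,p_k-p_{k+1}\rangle$. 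For the change in $\varphi_\pi$ I would use \cref{lem:lem6_un}, whose inner-product term $\langle\nabla_p\chi(\pi(p_k,\bar\pi_k,\bar p_k),p_k,\bar\pi_k,\bar p_k),\,p_{k+1}-p_k\rangle$ combines with the one above, leaving only a residual controlled by $\ell_{p\pi}\|\pi_{k+1}-\pi(p_k,\bar\pi_k,\bar p_k)\|\,\|p_{k+1}-p_k\|$, which I would absorb via \cref{lem:lem4_un} items~\ref{item:a)}/\ref{item:b)} and Young's inequality $\langle a,b\rangle\le\tfrac12\|a\|^2+\tfrac12\|b\|^2$. The dual-ascent block is handled by the strong convexity of $\varphi_p$ in $\pi$ together with the update rule $\bar\pi_{k+1}=\bar\pi_k+\beta(\pi_{k+1}-\bar\pi_k)$, and the proximal-ascent block by \cref{lem:lem7_un}.

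Next I would convert the auxiliary-variable inner products of the form $\langle\bar\pi_{k+1}+\bar\pi_k-2\pi(\cdot),\bar\pi_{k+1}-\bar\pi_k\rangle$ and the analogous $\bar p$-term into the squared quantities $\|\bar\pi_{k+1}-\bar\pi_k\|^2$ and $\|\bar p_k^+(\bar\pi_{k+1})-\bar p_k\|^2$. The key device is to add and subtract the saddle-type points $\pi(\bar\pi_{k+1},\bar p_k^+(\bar\pi_{k+1}))$ and $\pi(\bar\pi_{k+1},\bar p(\bar\pi_{k+1}))$, bound the induced perturbations using the Lipschitz constants of \cref{lem:lem2_un}, and use the averaging rules $\bar\pi_{k+1}=\bar\pi_k+\beta(\pi_{k+1}-\bar\pi_k)$, $\bar p_{k+1}=\bar p_k+\mu(p_{k+1}-\bar p_k)$ and the relation $\|p_{k+1}-p_k^+(\bar\pi_k,\bar p_k)\|\le\ell_{p\pi}\sigma\kappa_6\|\pi_{k+1}-\pi_k\|$ from \cref{lem:lem4_un} item~\ref{item:d)}. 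The stipulated ranges for $\tau,\sigma,\beta,\mu$ are precisely what makes every Young-split close with room to spare, so that after collecting terms each negative contribution (the $\mu-2$ term, the $\sigma$-scaled smoothness residuals, and the $\beta$-scaled perturbations) is dominated by a fraction of the positive quadratics, yielding the clean coefficients $\tfrac{r_1}{32},\tfrac{r_2}{15},\tfrac{r_1}{5\beta},\tfrac{r_2}{4\mu}$. The single term that cannot be absorbed --- it measures how far the one-step map $\bar p_k\mapsto\bar p_k^+(\bar\pi_{k+1})$ moves relative to the true maximizer $\bar p(\bar\pi_{k+1})$ --- is left explicit as $-4r_1\beta\,\spadesuit$ and is dealt with afterwards in \cref{prop:upperbound_neg}.

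The main obstacle is purely the bookkeeping: tracking how $\kappa_1,\dots,\kappa_8$, $L_{\phi\pi}$, $\ell_{\pi p}$ and $\ell_{p\pi}$ propagate through the chain of add-and-subtract steps, and then verifying that the displayed parameter conditions indeed make all the Young-inequality coefficients fit. Nothing is conceptually new beyond \cref{lem:smoothmdp} item~\ref{item:iv)}, so I would present the argument by citing \cref{lem:descent_lemma,lem:lem2_un,lem:lem4_un,lem:lem5_un,lem:lem6_un,lem:lem7_un} and carrying out the final collection of terms under the stated parameter ranges, mirroring \citet{zheng2023universal}.
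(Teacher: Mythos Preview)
Your proposal is correct and follows essentially the same route as the paper: combine \cref{lem:lem5_un,lem:lem6_un,lem:lem7_un} to get a master descent inequality, bound the residual inner products via the cross-Lipschitz property and the error bounds of \cref{lem:lem4_un}, split the remaining $\bar\pi$-cross term with Young's inequality (introducing the free parameter $\kappa=2\beta$), then upper-bound the resulting ``term IV'' by inserting the intermediate points $\pi(\bar\pi_{k+1},\bar p_k^+(\bar\pi_{k+1}))$ and $p(\bar\pi_{k+1},\bar p_k^+(\bar\pi_{k+1}))$ and invoking \cref{lem:lem2_un}; finally verify the coefficient inequalities under the stated parameter ranges. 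One small organizational point: the $\varphi_{\pi,p}$ contributions actually cancel in $\Phi$, leaving $\Phi=\chi-2\varphi_\pi+2\varphi_{\pi,p,\bar p}$, so there is no separate ``dual-ascent block'' to handle via strong convexity---the factor~2 in front of \cref{lem:lem6_un} and \cref{lem:lem7_un} is what accounts for it, and that is why the combined inequality in the paper carries a factor $2$ on the $\langle\nabla_p\chi(\pi(p_k,\cdot),\cdot),p_{k+1}-p_k\rangle$ and $r_1\langle\cdot,\bar\pi_{k+1}-\bar\pi_k\rangle$ terms.
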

\begin{proof}
It follows from~\cref{lem:lem5_un},~\cref{lem:lem6_un},~\cref{lem:lem7_un}
that
\begin{equation*}
\begin{split} & \Phi(\pi_{k},p_{k},\bar{\pi}_{k},\bar{p}_{k})\\
  & \geq\Phi(\pi_{k+1},p_{k+1},\bar{\pi}_{k+1},\bar{p}_{k+1})+\left(\frac{1}{\tau}-\frac{\ell_{\pi p}+r_{1}}{2}\right)\|\pi_{k+1}-\pi_{k}\|^{2}\\
 & \ +\left(\frac{r_{2}-\ell_{p\pi}}{2}-L_{\phi\pi}\right)\|p_{k+1}-p_{k}\|^{2}+\frac{(2-\beta)r_{1}}{2\beta}\|\bar{\pi}_{k+1}-\bar{\pi}_{k}\|^{2}\\
 & \ +\frac{(2-\mu)r_{2}}{2\mu}\|\bar{p}_{k+1}-\bar{p}_{k}\|^{2}+\underbrace{\langle\nabla_{p}\chi(\pi_{k+1},p_{k},\bar{\pi}_{k},\bar{p}_{k}),p_{k+1}-p_{k}\rangle}_{\textrm{I}}\\
 & \ +\underbrace{2\langle\nabla_{p}\chi(\pi(p_{k},\bar{\pi}_{k},\bar{p}_{k}),p_{k},\bar{\pi}_{k},\bar{p}_{k})-\nabla_{p}\chi(\pi_{k+1},p_{k},\bar{\pi}_{k},\bar{p}_{k}),p_{k+1}-p_{k}\rangle}_{\textrm{II}}\\
 & \ +\underbrace{2r_{1}\langle\pi(\bar{\pi}_{k},\bar{p}(\bar{\pi}_{k+1}))-\pi(p_{k+1},\bar{\pi}_{k+1},\bar{p}_{k}),\bar{\pi}_{k+1}-\bar{\pi}_{k}\rangle}_{\textrm{III}}.
\end{split}
\end{equation*}

Term $\rone$: using the projection update of $p_{k+1}$, we have
\[
\langle\nabla_{p}\chi(\pi_{k+1},p_{k},\bar{\pi}_{k},\bar{p}_{k}),p_{k+1}-p_{k}\rangle\geq\frac{1}{\sigma}\|p_{k}-p_{k+1}\|^{2}.
\]

Term $\rtwo$: Due to the Lipschitz gradient property and error bound
Item~\ref{item:a)} in~\cref{lem:lem4_un}: 
\begin{equation*}
\begin{split} & 2\langle\nabla_{p}\chi(\pi(p_{k},\bar{\pi}_{k},\bar{p}_{k}),p_{k},\bar{\pi}_{k},\bar{p}_{k})-\nabla_{p}\chi(\pi_{k+1},p_{k},\bar{\pi}_{k},\bar{p}_{k}),p_{k+1}-p_{k}\rangle\\
 & \geq-2\ell_{p\pi}\|\pi(p_{k},\bar{\pi}_{k},\bar{p}_{k})-\pi_{k+1}\|\|p_{k+1}-p_{k}\|\\
 & \geq-\ell_{p\pi}\kappa_{6}^{2}\|p_{k+1}-p_{k}\|^{2}-\ell_{p\pi}\kappa_{6}^{-2}\|\pi(p_{k},\bar{\pi}_{k},\bar{p}_{k})-\pi_{k+1}\|^{2}\\
 & \geq-\ell_{p\pi}\kappa_{6}^{2}\|p_{k+1}-p_{k}\|^{2}-\ell_{p\pi}\|\pi_{k+1}-\pi_{k}\|^{2}.
\end{split}
\end{equation*}

Term $\rthree$: for any $\kappa>0$, note that
\[
\begin{split} & 2r_{1}\langle\pi(\bar{\pi}_{k},\bar{p}(\bar{\pi}_{k+1}))-\pi(p_{k+1},\bar{\pi}_{k+1},\bar{p}_{k}),\bar{\pi}_{k+1}-\bar{\pi}_{k}\rangle\\
 & = 2r_{1}\langle\pi(\bar{\pi}_{k},\bar{p}(\bar{\pi}_{k+1}))-\pi(\bar{\pi}_{k+1},\bar{p}(\bar{\pi}_{k+1})),\bar{\pi}_{k+1}-\bar{\pi}_{k}\rangle+2r_{1}\langle\pi(\bar{\pi}_{k+1},\bar{p}(\bar{\pi}_{k+1}))-\pi(p_{k+1},\bar{\pi}_{k+1},\bar{p}_{k}),\bar{\pi}_{k+1}-\bar{\pi}_{k}\rangle\\
 & \geq-2r_{1}\kappa_{2}\|\bar{\pi}_{k+1}-\bar{\pi}_{k}\|^{2}-\frac{r_{1}}{\kappa}\|\bar{\pi}_{k+1}-\bar{\pi}_{k}\|^{2}-r_{1}\kappa\|\pi(\bar{\pi}_{k+1},\bar{p}(\bar{\pi}_{k+1}))-\pi(p_{k+1},\bar{\pi}_{k+1},\bar{p}_{k})\|^{2},
\end{split}
\]
where the inequality holds by Cauchy-Schwarz inequality, Young's inequality
and Item~\ref{enu:3} in~\cref{lem:lem2_un}. Hence, 
\begin{equation}
\begin{split} & \Phi(\pi_{k},p_{k},\bar{\pi}_{k},\bar{p}_{k})-\Phi(\pi_{k+1},p_{k+1},\bar{\pi}_{k+1},\bar{p}_{k+1})\\
  & \geq\left(\frac{1}{\tau}-\frac{\ell_{\pi p}+r_{1}}{2}-\ell_{p\pi}\right)\|\pi_{k+1}-\pi_{k}\|^{2}+\left(\frac{1}{\sigma}+\frac{r_{2}-\ell_{p\pi}}{2}-L_{\phi\pi}-\ell_{p\pi}\kappa_{6}^{2}\right)\|p_{k+1}-p_{k}\|^{2}\\
 & \ \ +r_{1}\left(\frac{2-\beta}{2\beta}-2\kappa_{2}-\frac{1}{\kappa}\right)\|\bar{\pi}_{k+1}-\bar{\pi}_{k}\|^{2}+\frac{(2-\mu)r_{2}}{2\mu}\|\bar{p}_{k+1}-\bar{p}_{k}\|^{2}\\
 & \ \ -r_{1}\kappa\underbrace{\|\pi(\bar{\pi}_{k+1},\bar{p}(\bar{\pi}_{k+1}))-\pi(p_{k+1},\bar{\pi}_{k+1},\bar{p}_{k})\|^{2}}_{\textrm{IV}}.
\end{split}
\label{eq:appen_descentPhi}
\end{equation}
Now, we give a upper bound of $\rfour$ as follows. Note that
\begin{equation}
\begin{split} & \|\pi(\bar{\pi}_{k+1},\bar{p}(\bar{\pi}_{k+1}))-\pi(p_{k+1},\bar{\pi}_{k+1},\bar{p}_{k})\|^{2}\\
 & \leq 2\|\pi(\bar{\pi}_{k+1},\bar{p}(\bar{\pi}_{k+1}))-\pi(\bar{\pi}_{k+1},\bar{p}_{k}^{+}(\bar{\pi}_{k+1}))\|^{2}+2\|\pi(\bar{\pi}_{k+1},\bar{p}_{k}^{+}(\bar{\pi}_{k+1}))-\pi(p_{k+1},\bar{\pi}_{k+1},\bar{p}_{k})\|^{2}\\
 & \aleq 2\|\pi(\bar{\pi}_{k+1},\bar{p}(\bar{\pi}_{k+1}))-\pi(\bar{\pi}_{k+1},\bar{p}_{k}^{+}(\bar{\pi}_{k+1}))\|^{2}+2\kappa_{1}^{2}\|p_{k+1}-p(\bar{\pi}_{k+1},\bar{p}_{k}^{+}(\bar{\pi}_{k+1}))\|^{2},
\end{split}
\label{eq:appen_IV_up1}
\end{equation}
where $(a)$ holds by Item~\ref{enu:1} in~\cref{lem:lem2_un},
and 
\begin{equation}
\begin{split} & \|p_{k+1}-p(\bar{\pi}_{k+1},\bar{p}_{k}^{+}(\bar{\pi}_{k+1}))\|^{2}\\
 &  \leq 3\|p_{k+1}-p(\bar{\pi}_{k},\bar{p}_{k})\|^{2}+3\|p(\bar{\pi}_{k},\bar{p}_{k})-p(\bar{\pi}_{k+1},\bar{p}_{k})\|^{2}+3\|p(\bar{\pi}_{k+1},\bar{p}_{k})-p(\bar{\pi}_{k+1},\bar{p}_{k}^{+}(\bar{\pi}_{k+1}))\|^{2}\\
& \bleq  6\ell_{p\pi}^{2}\sigma^{2}\kappa_{6}^{2}\|\pi_{k}-\pi_{k+1}\|^{2}+6(\kappa_{8}+1)^{2}\|p_{k}-p_{k}^{+}(\bar{\pi}_{k},\bar{p}_{k})\|^{2}+3\kappa_{3}^{2}\|\bar{\pi}_{k}-\bar{\pi}_{k+1}\|^{2}+3\kappa_{5}^{2}\|\bar{p}_{k}-\bar{p}_{+}^{t}(\bar{\pi}_{k+1})\|^{2},
\end{split}
\label{eq:appen_IV_up2}
\end{equation}
where $(b)$ follows from Item~\ref{enu:4} and Item~\ref{enu:7}
in~\cref{lem:lem2_un} and Item~\ref{item:c)} and Item~\ref{item:d)}
in~\cref{lem:lem4_un}. Putting~\eqref{eq:appen_IV_up1} and~\eqref{eq:appen_IV_up2}
together, we have 
\begin{equation}
\begin{split} & \|\pi(\bar{\pi}_{k+1},\bar{p}(\bar{\pi}_{k+1}))-\pi(p_{k+1},\bar{\pi}_{k+1},\bar{p}_{k})\|^{2}\\
 & \leq2\|\pi(\bar{\pi}_{k+1},\bar{p}(\bar{\pi}_{k+1}))-\pi(\bar{\pi}_{k+1},\bar{p}_{k}^{+}(\bar{\pi}_{k+1}))\|^{2}+12\ell_{p\pi}^{2}\sigma^{2}\kappa_{1}^{2}\kappa_{6}^{2}\|\pi_{k}-\pi_{k+1}\|^{2}\\
 & \ \ +12\kappa_{1}^{2}(\kappa_{8}+1)^{2}\|p_{k}-p_{k}^{+}(\bar{\pi}_{k},\bar{p}_{k})\|^{2}+6\kappa_{1}^{2}\kappa_{3}^{2}\|\bar{\pi}_{k}-\bar{\pi}_{k+1}\|^{2}+6\kappa_{1}^{2}\kappa_{5}^{2}\|\bar{p}_{k}-\bar{p}_{k}^{+}(\bar{\pi}_{k+1})\|^{2}.
\end{split}
\label{eq:appen_IV_up3}
\end{equation}
Combining~\eqref{eq:appen_descentPhi} and~\eqref{eq:appen_IV_up3} and letting
$z_{1}\coloneqq\frac{1}{\tau}-\frac{\ell_{\pi p}+r_{1}}{2}-\ell_{p\pi}$,
$z_{2}\coloneqq\frac{1}{\sigma}+\frac{r_{2}-\ell_{p\pi}}{2}-L_{\phi\pi}-\ell_{p\pi}\kappa_{6}^{2}$,
and $z_{3}\coloneqq r_{1}\left(\frac{2-\beta}{2\beta}-2\kappa_{2}-\frac{1}{\kappa}\right)$,
yields
\begin{equation}
\begin{split} & \Phi(\pi_{k},p_{k},\bar{\pi}_{k},\bar{p}_{k})-\Phi(\pi_{k+1},p_{k+1},\bar{\pi}_{k+1},\bar{p}_{k+1})\\
  & \geq z_{1}\|\pi_{k+1}-\pi_{k}\|^{2}+z_{2}\|p_{k+1}-p_{k}\|^{2}+z_{3}\|\bar{\pi}_{k+1}-\bar{\pi}_{k}\|^{2}+\frac{(2-\mu)r_{2}}{2\mu}\|\bar{p}_{k+1}-\bar{p}_{k}\|^{2}\\
 & \ \ -12r_{1}\kappa\ell_{p\pi}^{2}\sigma^{2}\kappa_{1}^{2}\kappa_{6}^{2}\|\pi_{k}-\pi_{k+1}\|^{2}-12r_{1}\kappa\kappa_{1}^{2}(\kappa_{8}+1)^{2}\|p_{k}-p_{k}^{+}(\bar{\pi}_{k},\bar{p}_{k})\|^{2}-6r_{1}\kappa\kappa_{1}^{2}\kappa_{3}^{2}\|\bar{\pi}_{k}-\bar{\pi}_{k+1}\|^{2}\\
 & \ \ -6r_{1}\kappa\kappa_{1}^{2}\kappa_{5}^{2}\|\bar{p}_{k}-\bar{p}_{k}^{+}(\bar{\pi}_{k+1})\|^{2}-2r_{1}\kappa\|\pi(\bar{\pi}_{k+1},\bar{p}(\bar{\pi}_{k+1}))-\pi(\bar{\pi}_{k+1},\bar{p}_{k}^{+}(\bar{\pi}_{k+1}))\|^{2}\\
 & =\left(z_{1}-12r_{1}\kappa\ell_{p\pi}^{2}\sigma^{2}\kappa_{1}^{2}\kappa_{6}^{2}\right)\|\pi_{k}-\pi_{k+1}\|^{2}+z_{2}\|p_{k+1}-p_{k}\|^{2}+\left(z_{3}-6r_{1}\kappa\kappa_{1}^{2}\kappa_{3}^{2}\right)\|\bar{\pi}_{k+1}-\bar{\pi}_{k}\|^{2}\\
 & \ \ +\frac{(2-\mu)r_{2}}{2\mu}\|\bar{p}_{k+1}-\bar{p}_{k}\|^{2}-6r_{1}\kappa\kappa_{1}^{2}\kappa_{5}^{2}\|\bar{p}_{k}-\bar{p}_{k}^{+}(\bar{\pi}_{k+1})\|^{2}-12r_{1}\kappa\kappa_{1}^{2}(\kappa_{8}+1)^{2}\|p_{k}-p_{k}^{+}(\bar{\pi}_{k},\bar{p}_{k})\|^{2}\\
 & \ \ -2r_{1}\kappa\|\pi(\bar{\pi}_{k+1},\bar{p}(\bar{\pi}_{k+1}))-\pi(\bar{\pi}_{k+1},\bar{p}_{k}^{+}(\bar{\pi}_{k+1}))\|^{2}.
\end{split}
\label{eq:appen_phi_descent2}
\end{equation}
Note that, by the Item~\ref{item:d)} in~\cref{lem:lem4_un},
we have
\begin{equation}
\begin{split}\|p_{k+1}-p_{k}\|^{2}\geq\  & \frac{1}{2}\|p_{k}-p_{k}^{+}(\bar{\pi}_{k},\bar{p}_{k})\|^{2}-\|p_{k+1}-p_{k}^{+}(\bar{\pi}_{k},\bar{p}_{k})\|^{2}\\
\geq\  & \frac{1}{2}\|p_{k}-p_{k}^{+}(\bar{\pi}_{k},\bar{p}_{k})\|^{2}-\ell_{p\pi}^{2}\sigma^{2}\kappa_{6}^{2}\|\pi_{k}-\pi_{k+1}\|^{2}.
\end{split}
\label{eq:appen_mid2}
\end{equation}
Similarly, we provide a lower bound for $\|\bar{p}_{k+1}-\bar{p}_{k}\|^{2}$
as follows:
\begin{equation}
\begin{split}\|\bar{p}_{k+1}-\bar{p}_{k}\|^{2}  & \geq\frac{1}{2}\|\bar{p}_{k}-\bar{p}_{k}^{+}(\bar{\pi}_{k+1})\|^{2}-\|\bar{p}_{k+1}-\bar{p}_{k}^{+}(\bar{\pi}_{k+1})\|^{2}\\
  & \geq\frac{1}{2}\|\bar{p}_{k}-\bar{p}_{k}^{+}(\bar{\pi}_{k+1})\|^{2}-\mu^{2}\Big(4\ell_{p\pi}^{2}\sigma^{2}\kappa_{6}^{2}\|\pi_{k}-\pi_{k+1}\|^{2}+4(\kappa_{8}+1)^{2}\|p_{k}-p_{k}^{+}(\bar{\pi}_{k},\bar{p}_{k})\|^{2}\\
 & \ \ +2\kappa_{3}^{2}\|\bar{\pi}_{k}-\bar{\pi}_{k+1}\|^{2}\Big).
\end{split}
\label{eq:appen_mid2-1}
\end{equation}
Recalling that $\bar{p}_{k}^{+}(\bar{\pi}_{k+1})=\bar{p}_{k}+\mu(p(\bar{\pi}_{k+1},\bar{p}_{k})-\bar{p}_{k})$,
the last inequality can be obtained by using Item~\ref{enu:4} in
Lemma~\ref{lem:lem2_un} and Item~\ref{item:c)} and Item~\ref{item:d)}
in~\cref{lem:lem5_un}, i.e.,
\[
\begin{split} & \|\bar{p}_{k+1}-\bar{p}_{k}^{+}(\bar{\pi}_{k+1})\|^{2}=\mu^{2}\|p_{k+1}-p(\bar{\pi}_{k+1},\bar{p}_{k})\|^{2}\\
& \leq\   \mu^{2}\left(2\|p_{k+1}-p(\bar{\pi}_{k},\bar{p}_{k})\|^{2}+2\|p(\bar{\pi}_{k},\bar{p}_{k})-p(\bar{\pi}_{k+1},\bar{p}_{k})\|^{2}\right)\\
 &  \leq\ \mu^{2}\left(4\ell_{p\pi}^{2}\sigma^{2}\kappa_{6}^{2}\|\pi_{k}-\pi_{k+1}\|^{2}+4(\kappa_{8}+1)^{2}\|p_{k}-p_{k}^{+}(\bar{\pi}_{k},\bar{p}_{k})\|^{2}+2\kappa_{3}^{2}\|\bar{\pi}_{k}-\bar{\pi}_{k+1}\|^{2}\right).
\end{split}
\]
Substituting~\eqref{eq:appen_mid2} and~\eqref{eq:appen_mid2-1} to~\eqref{eq:appen_phi_descent2}
yields
\begin{equation*}
\begin{split} & \Phi(\pi_{k},p_{k},\bar{\pi}_{k},\bar{p}_{k})-\Phi(\pi_{k+1},p_{k+1},\bar{\pi}_{k+1},\bar{p}_{k+1})\\
  & \geq\underbrace{\left(z_{1}-(12r_{1}\kappa\kappa_{1}^{2}+s_{2}+2\mu(2-\mu)r_{2})\ell_{p\pi}^{2}\sigma^{2}\kappa_{6}^{2}\right)}_{\rthree}\|\pi_{k+1}-\pi_{k}\|^{2}\\
 & \ \ +\underbrace{\left(\frac{z_{2}}{2}-(12r_{1}\kappa\kappa_{1}^{2}+2\mu(2-\mu)r_{2})(1+\kappa_{8})^{2}\right)}_{\rtwo}\|p_{k}-p_{k}^{+}(\bar{\pi}_{k},\bar{p}_{k})\|^{2}\\
 & \ \ +\underbrace{\left(z_{3}-(\mu(2-\mu)r_{2}+6r_{1}\kappa\kappa_{1}^{2})\kappa_{3}^{2}\right)}_{\rone}\|\bar{\pi}_{k}-\bar{\pi}_{k+1}\|^{2}+\left(\frac{(2-\mu)r_{2}}{4\mu}-6r_{1}\kappa\kappa_{1}^{2}\kappa_{5}^{2}\right)\|\bar{p}_{k}-\bar{p}_{k}^{+}(\bar{\pi}_{k+1})\|^{2}\\
 & \ \ -2r_{1}\kappa\|\pi(\bar{\pi}_{k+1},\bar{p}(\bar{\pi}_{k+1}))-\pi(\bar{\pi}_{k+1},\bar{p}_{k}^{+}(\bar{\pi}_{k+1}))\|^{2}.
\end{split}
\end{equation*}

Now, we give the upper bound of $\rone,\rtwo,\rthree$, separately.

Term $\rone$: Based on the parameters setting, we have
\begin{align*}
r_{1}\geq2\ell_{\pi p}\Rightarrow\kappa_{1}=\frac{\ell_{p\pi}}{r_{1}-\ell_{\pi p}}+1\leq\frac{\ell_{p\pi}}{\ell_{\pi p}}+1=\lambda+1, & \text{and }\kappa_{2}=\frac{r_{1}}{r_{1}-\ell_{\pi p}}\leq2,\\
r_{2}\geq2\ell_{p\pi}\Rightarrow\kappa_{3}=\frac{r_{1}\kappa_{1}}{r_{2}-\ell_{p\pi}}+\frac{\kappa_{2}}{\kappa_{1}}\leq\frac{r_{1}(\lambda+1)}{\ell_{p\pi}}+2, & \text{and }\kappa_{5}=\frac{r_{2}}{r_{2}-\ell_{p\pi}}\leq2.
\end{align*}
Then we obtain:
\begin{equation*}
\begin{split}\frac{(2-\mu)r_{2}}{4\mu}-6r_{1}\kappa\kappa_{1}^{2}\kappa_{5}^{2} & \ageq\frac{r_{2}}{2\mu}-\frac{r_{2}}{4}-48(\lambda+1)^{2}r_{1}\beta\bgeq\frac{r_{2}}{\mu}\left(\frac{1}{2}-\frac{\mu}{4}-\frac{\ell_{p\pi}^{2}\sigma\mu}{8(\lambda+5)}\right)\cgeq\frac{r_{2}}{4\mu},\end{split}
\end{equation*}
where $(a)$ holds by $\kappa_{2},\kappa_{5}\leq2,\kappa=2\beta$
and $\kappa_{1}\leq\lambda+1$, $(b)$ holds by the definition of
$\beta$ and $\ell_{p\pi}=\lambda L$ and $(c)$ follows from the
defintion of $\mu$ and $\mu\leq\frac{1}{2}$. Similarly, we have
\begin{equation*}
\begin{split} & \mu(2-\mu)r_{2}+6r_{1}\kappa\kappa_{1}^{2}\aleq2r_{2}\mu+12r_{1}\beta(\lambda+1)^{2}\bleq\frac{\sigma\ell_{p\pi}^{2}}{16(\lambda+5)}\\
\Rightarrow\rone & =z_{3}-\left(\mu(2-\mu)r_{2}+6r_{1}\kappa\kappa_{1}^{2}\right)\kappa_{3}^{2}=r_{1}\left(\frac{2-\beta}{2\beta}-2\kappa_{2}-\frac{1}{\kappa}\right)-\left(\mu(2-\mu)r_{2}+6r_{1}\kappa\kappa_{1}^{2}\right)\kappa_{3}^{2}\\
 & \cgeq\ r_{1}\left(\frac{1}{\beta}-\frac{1}{2}-4-\frac{1}{2\beta}\right)-\frac{\ell_{p\pi}^{2}}{16(\lambda+5)}\left(\frac{r_{1}^{2}(\lambda+1)^{2}}{\ell_{p\pi}^{2}}+4+\frac{4r_{1}(\lambda+1)}{\ell_{p\pi}}\right)\\
 & \dgeq\ \frac{r_{1}}{\beta}\left(\frac{1}{2}-\left(\frac{9}{2}+\frac{r_{1}(\lambda+1)}{16}+\frac{\ell_{p\pi}^{2}}{20r_{1}}+\frac{\ell_{p\pi}}{4}\right)\beta\right)\fgeq\frac{r_{1}}{5\beta},
\end{split}
\end{equation*}
where $(a)$ holds by $\kappa=2\beta$ and $\kappa_{1}\leq\lambda+1$,
$(b)$ follows from $\mu\leq(\sigma\lambda^{2}L^{2})/(64r_{2}(\lambda+5))$,
$(c)$ is by $\kappa_{2}\leq2$, $(b)$ and the definition of $\kappa_{3}$
and $(d),(f)$ follow from the definition of $\beta$.

Term $\rtwo$: Based on the parameters setting, we have
\begin{align*}
\tau^{-1}\geq\max\left\{ \frac{3}{4}(\ell_{\pi p}+r_{1}),6\ell_{p\pi}\right\} \Rightarrow z_{1}\geq\frac{1}{6\tau}, & \text{and }\kappa_{6}=\frac{2r_{1}+\frac{1}{\tau}}{r_{1}-\ell_{\pi p}}\geq2+\frac{1}{\tau\cdot r_{1}}\geq2+\frac{\frac{3}{4}(\ell_{\pi p}+r_{1})}{r_{1}}\geq\frac{11}{4}\\
\frac{1}{\sigma}\geq\max\left\{ \frac{3}{2}\ell_{p\pi}\kappa_{6}^{2},6L_{\phi\pi},5\sqrt{\lambda+5}\ell_{p\pi}\right\} \Rightarrow z_{2}  & \geq\frac{1}{6\sigma}+\frac{r_{2}-\ell_{p\pi}}{2}\text{and }\kappa_{8}=\frac{\frac{1}{\sigma}+L_{\phi\pi}}{r_{2}-\ell_{p\pi}}\leq\frac{2}{\sigma r_{2}}+\lambda+4
\end{align*}
Then we have: 
\begin{equation*}
\begin{split}\rtwo & =\frac{z_{2}}{2}-\left(12r_{1}\kappa\kappa_{1}^{2}+2\mu(2-\mu)r_{2}\right)(1+\kappa_{8})^{2}\\
  & \geq\frac{1}{12\sigma}+\frac{r_{2}-\ell_{p\pi}}{4}-\frac{\sigma\ell_{p\pi}^{2}}{8(\lambda+5)}\left(\frac{4}{\sigma^{2}r_{2}^{2}}+(\lambda+5)^{2}+\frac{4(\lambda+5)}{\sigma r_{2}}\right)\\
  & \geq\frac{1}{12\sigma}+\frac{\ell_{p\pi}}{4}-\frac{1}{8(\lambda+5)\sigma}-\frac{(\lambda+5)\sigma\ell_{p\pi}^{2}}{8}-\frac{\ell_{p\pi}}{4}\\
  & \geq\frac{1}{60\sigma}-\frac{(\lambda+5)\sigma\ell_{p\pi}^{2}}{8}\geq\frac{1}{90\sigma}\geq\frac{r_{2}}{15}
\end{split}
\end{equation*}

Term $\rthree$: 
\begin{equation*}
\begin{split}\rthree & =z_{1}-(12r_{1}\kappa\kappa_{1}^{2}+s_{2}+2\mu(2-\mu)r_{2})\ell_{p\pi}^{2}\sigma^{2}\kappa_{6}^{2}\geq\frac{1}{6\tau}-\frac{\sigma^{3}\ell_{p\pi}^{4}\kappa_{6}^{2}}{8(\lambda+5)}-\frac{2\ell_{p\pi}}{3}\\
  & \geq\frac{1}{6\tau}-\frac{13\ell_{p\pi}}{2500}-\frac{2\ell_{p\pi}}{3}\geq\frac{1}{6\tau}-\frac{7\ell_{p\pi}}{10}\geq\frac{1}{24c}\geq\frac{r_{1}}{32},
\end{split}
\end{equation*}
where the first inequality is due to $2/(3\sigma)\geq\ell_{p\pi}\kappa_{6}^{2}$
and 
\[
\begin{aligned}s_{2} & \leq\frac{1}{\sigma}+\frac{r_{2}-\ell_{p\pi}}{2}-\ell_{p\pi}\kappa_{6}^{2}-L_{\phi\pi}=\frac{1}{\sigma}-\frac{r_{2}+\ell_{p\pi}}{2}-\ell_{p\pi}\kappa_{6}^{2}-(\kappa_{1}+1)\ell_{p\pi}\leq\frac{1}{\sigma}.\end{aligned}
\]
Putting together all the pieces, we get 
\[
\begin{aligned} & \Phi(\pi_{k},p_{k},\bar{\pi}_{k},\bar{p}_{k})-\Phi(\pi_{k+1},p_{k+1},\bar{\pi}_{k+1},\bar{p}_{k+1})\\
 & \geq\frac{r_{1}}{32}\|\pi_{k+1}-\pi_{k}\|^{2}+\frac{r_{2}}{15}\|p_{k}-p_{k}^{+}(\bar{\pi}_{k},\bar{p}_{k})\|^{2}+\frac{r_{1}}{5\beta}\|\bar{\pi}_{k}-\bar{\pi}_{k+1}\|^{2}+\frac{r_{2}}{4\mu}\|\bar{p}_{k}^{+}(\bar{\pi}_{k+1})-\bar{p}_{k}\|^{2}\\
  &\ \ -4r_{1}\beta\|\pi(\bar{\pi}_{k+1},\bar{p}(\bar{\pi}_{k+1}))-\pi(\bar{\pi}_{k+1},\bar{p}_{k}^{+}(\bar{\pi}_{k+1}))\|^{2}.
\end{aligned}
\]
\end{proof}
\subsection*{Proof of~\cref{prop:upperbound_neg}}
\begin{proposition}
\label{prop:appen_:prop2}Under the setting of~\cref{prop:prop1_un},
then we have
\begin{equation}
\norm{\pi(\bar{\pi}_{k+1},\bar{p}_{k}^{+}(\bar{\pi}_{k+1}))-\pi(\bar{\pi}_{k+1},\bar{p}(\bar{\pi}_{k+1}))}^{2}\leq\omega_{0}\norm{\bar{p}_{k}^{+}(\bar{\pi}_{k+1})-\bar{p}_{k}},\label{eq:appen_prop_ineq1}
\end{equation}
where $\omega_{0}:=\frac{2}{(r_{1}-L)\tau}\left(\frac{r_{2}(1-\mu)}{\mu}+\frac{r_{2}^{2}}{r_{2}-\lambda L}\right)$.
Moreover, we have
\begin{equation}
\|\pi^{*}(\bar{\pi})-\pi(\bar{\pi},\bar{p})\|^{2}\leq\omega_{1}\|\bar{p}-p(\bar{\pi},\bar{p})\|,\label{eq:appen_prop_ineq2}
\end{equation}
where $\omega_{1}:=\frac{2r_{2}}{\tau(r_{1}-\ell_{\pi p})}$.
\end{proposition}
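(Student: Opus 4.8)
The plan is to establish both inequalities through one mechanism — strong convexity of the inner $\pi$-subproblem together with a \emph{linearization} of the resulting ``Moreau gap'' via the gradient dominance of $J_\rho$ in $p$ — and then to derive \eqref{eq:appen_prop_ineq1} from \eqref{eq:appen_prop_ineq2}, so I would prove \eqref{eq:appen_prop_ineq2} first. Write $h_\pi(\bar p):=\max_{p\in\mcal P}\{J_\rho(\pi,p)-\tfrac{r_2}{2}\|p-\bar p\|^2\}$, so that $\varphi_p(\pi,\bar\pi,\bar p)=h_\pi(\bar p)+\tfrac{r_1}{2}\|\pi-\bar\pi\|^2$, $\psi_p(\pi;\bar\pi)=\phi(\pi)+\tfrac{r_1}{2}\|\pi-\bar\pi\|^2$, and hence $\psi_p(\pi;\bar\pi)-\varphi_p(\pi,\bar\pi,\bar p)=\phi(\pi)-h_\pi(\bar p)\ge0$ for every $\pi$. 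By \cref{lem:descent_lemma} and \cref{lem:smoothmdp}, both $\psi_p(\cdot;\bar\pi)$ and $\varphi_p(\cdot,\bar\pi,\bar p)$ are $(r_1-\ell_{\pi p})$-strongly convex on $\Pi$; applying the strong-convexity lower bound at the minimizers $\pi_1:=\pi^*(\bar\pi)$ and $\pi_2:=\pi(\bar\pi,\bar p)$ and summing the two inequalities gives
\[
(r_1-\ell_{\pi p})\|\pi_1-\pi_2\|^2\le\bigl[\phi(\pi_2)-h_{\pi_2}(\bar p)\bigr]-\bigl[\phi(\pi_1)-h_{\pi_1}(\bar p)\bigr]\le\phi(\pi_2)-h_{\pi_2}(\bar p),
\]
where the last step discards the nonnegative bracket $\phi(\pi_1)-h_{\pi_1}(\bar p)$.

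Next I would linearize $\phi(\pi_2)-h_{\pi_2}(\bar p)$. Since $\pi_2=\pi(\bar\pi,\bar p)$, the inner maximizer defining $h_{\pi_2}(\bar p)$ is $\hat p:=p(\pi_2,\bar\pi,\bar p)=p(\bar\pi,\bar p)$, so $\phi(\pi_2)-h_{\pi_2}(\bar p)=\bigl[J_\rho(\pi_2,p^*(\pi_2))-J_\rho(\pi_2,\hat p)\bigr]+\tfrac{r_2}{2}\|\hat p-\bar p\|^2$. The first-order optimality of $\hat p$ for $\max_{p\in\mcal P}\{J_\rho(\pi_2,p)-\tfrac{r_2}{2}\|p-\bar p\|^2\}$ reads $\nabla_pJ_\rho(\pi_2,\hat p)-r_2(\hat p-\bar p)\in\mcal N_{\mcal P}(\hat p)$, whence $\dist\brbra{\nabla_pJ_\rho(\pi_2,\hat p)-\mcal N_{\mcal P}(\hat p)}\le r_2\|\hat p-\bar p\|$; combined with the gradient dominance \eqref{eq:appen_p_graddom} this bounds the first bracket by $\bar D_p r_2\|\hat p-\bar p\|$. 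The residual term is linearized by testing optimality of $\hat p$ against $\bar p\in\mcal P$ and using the $L_p$-Lipschitzness of $J_\rho(\pi_2,\cdot)$, giving $\tfrac{r_2}{2}\|\hat p-\bar p\|^2\le L_p\|\hat p-\bar p\|$. Thus $\phi(\pi_2)-h_{\pi_2}(\bar p)\le(\bar D_p r_2+L_p)\|\hat p-\bar p\|$, and after bounding $\bar D_p r_2+L_p$ by $2r_2/\tau$ using the parameter relations of \cref{prop:appen_:prop1_un} (which make $L$, hence $r_1\ge 2L$, large enough and $\tau$ small enough) one obtains exactly \eqref{eq:appen_prop_ineq2} with $\omega_1=\tfrac{2r_2}{\tau(r_1-\ell_{\pi p})}$.

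To pass from \eqref{eq:appen_prop_ineq2} to \eqref{eq:appen_prop_ineq1}, note that by Danskin's theorem $\nabla_{\bar p}\varphi_{\pi,p}(\bar\pi,\bar p)=r_2\bigl(p(\bar\pi,\bar p)-\bar p\bigr)$, so the maximizer $\bar p(\bar\pi)$ satisfies the fixed point $\bar p(\bar\pi)=p(\bar\pi,\bar p(\bar\pi))$; feeding $\bar p=\bar p(\bar\pi)$ into \eqref{eq:appen_prop_ineq2} forces $\pi(\bar\pi,\bar p(\bar\pi))=\pi^*(\bar\pi)$. Hence the left-hand side of \eqref{eq:appen_prop_ineq1} equals $\|\pi(\bar\pi_{k+1},\bar p_k^+(\bar\pi_{k+1}))-\pi^*(\bar\pi_{k+1})\|^2$, which by \eqref{eq:appen_prop_ineq2} is at most $\omega_1\|\bar p_k^+(\bar\pi_{k+1})-p(\bar\pi_{k+1},\bar p_k^+(\bar\pi_{k+1}))\|$. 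Finally, writing $\bar p_k^+(\bar\pi_{k+1})=\bar p_k+\mu\bigl(p(\bar\pi_{k+1},\bar p_k)-\bar p_k\bigr)$ and using the $\kappa_5$-Lipschitzness of $\bar p\mapsto p(\bar\pi,\bar p)$ (Item~\ref{enu:7} of \cref{lem:lem2_un}, $\kappa_5=r_2/(r_2-\ell_{p\pi})$) gives $\|\bar p_k^+(\bar\pi_{k+1})-p(\bar\pi_{k+1},\bar p_k^+(\bar\pi_{k+1}))\|\le\bigl(\tfrac{1-\mu}{\mu}+\kappa_5\bigr)\|\bar p_k^+(\bar\pi_{k+1})-\bar p_k\|$, and multiplying by $\omega_1$ reproduces $\omega_0$.

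The main obstacle is the linearization step: a naive estimate only yields a \emph{quadratic} control $\tfrac{r_2}{2}\|\hat p-\bar p\|^2$ of the Moreau gap, whereas the statement needs a \emph{linear} bound in $\|\bar p-p(\bar\pi,\bar p)\|$; overcoming this requires simultaneously invoking the gradient dominance of $J_\rho$ in $p$ \emph{and} the first-order optimality of the inner maximizer, and matching the explicit constant $2r_2/\tau$ needs some care with the parameter regime fixed in \cref{prop:appen_:prop1_un}. A secondary prerequisite is that \eqref{eq:appen_prop_ineq2} be established before it is used to identify $\pi(\bar\pi,\bar p(\bar\pi))$ with $\pi^*(\bar\pi)$ in the last step; the Danskin/envelope computation that produces the fixed-point relation $\bar p(\bar\pi)=p(\bar\pi,\bar p(\bar\pi))$ should be routine.
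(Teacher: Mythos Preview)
Your proposal is correct and takes a genuinely different route from the paper, especially for \eqref{eq:appen_prop_ineq1}.

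For \eqref{eq:appen_prop_ineq2}, both arguments rest on the same two ingredients: the $(r_1-\ell_{\pi p})$-strong convexity of the $\pi$-subproblem and the gradient dominance of $J_\rho$ in $p$ combined with the optimality condition $\nabla_pJ_\rho(\pi_2,\hat p)-r_2(\hat p-\bar p)\in\mcal N_{\mcal P}(\hat p)$. The paper avoids the residual $\tfrac{r_2}{2}\|\hat p-\bar p\|^2$ altogether by a weak-duality step: it lower-bounds $\psi_p(\pi^*(\bar\pi);\bar\pi)$ directly by $J_\rho(\pi(\bar\pi,\bar p),p(\bar\pi,\bar p))+\tfrac{r_1}{2}\|\pi(\bar\pi,\bar p)-\bar\pi\|^2$, using that $\pi(p,\bar\pi,\bar p)=\pi(p,\bar\pi;r_1)$. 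Your ``sum two strong convexities'' decomposition is sharper (factor $r_1-\ell_{\pi p}$ instead of $(r_1-\ell_{\pi p})/2$) but leaves that quadratic term, which you then linearize by testing $\bar p\in\mcal P$ and invoking $L_p$-Lipschitzness. Both are valid; the paper's route is slightly cleaner in that it does not need $\bar p\in\mcal P$ nor the Lipschitz bound.

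For \eqref{eq:appen_prop_ineq1}, the paper proves it independently of \eqref{eq:appen_prop_ineq2}: it invokes \cref{lem:p_grad_moreau} (gradient dominance of the Moreau envelope $\varphi_p$ in $\bar p$, itself a consequence of \cref{thm:moreau_grad_dom}) to bound the gap $\max_{\bar p}\varphi_p-\varphi_p$ by $C_{\varphi p}\|\nabla_{\bar p}\varphi_p\|$, and then controls that gradient exactly as you do in your final step. Your reduction of \eqref{eq:appen_prop_ineq1} to \eqref{eq:appen_prop_ineq2} via the fixed-point identity $\bar p(\bar\pi)=p(\bar\pi,\bar p(\bar\pi))$---hence $\pi(\bar\pi,\bar p(\bar\pi))=\pi^*(\bar\pi)$---bypasses \cref{lem:p_grad_moreau} entirely, which is an elegant simplification; and as you note, $\omega_1\cdot\bigl(\tfrac{1-\mu}{\mu}+\kappa_5\bigr)=\omega_0$ exactly. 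One small caveat: your handling of the constant ``$\bar D_p r_2+L_p\le 2r_2/\tau$'' is hand-waved, but the paper's own proof yields $\tfrac{2\bar D_p r_2}{r_1-\ell_{\pi p}}$ rather than the stated $\omega_1$, so the mismatch is already present in the paper and is not a defect of your argument.
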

\begin{proof}
Inequality~\eqref{eq:appen_prop_ineq1}: Since $\varphi_{p}(\pi,\cdot,\cdot)$
is $(r_{1}-\ell_{\pi p})$-strongly convex, we have
\begin{equation}
\begin{split} & \max_{\bar{p}}\varphi_{p}(\pi(\bar{\pi}_{k+1},\bar{p}_{k}^{+}(\bar{\pi}_{k+1})),\bar{\pi}_{k+1},\bar{p})-\varphi_{p}(\pi(\bar{\pi}_{k+1},\bar{p}(\bar{\pi}_{k+1})),\bar{\pi}_{k+1},\bar{p}(\bar{\pi}_{k+1}))\\
\geq\  & \varphi_{p}(\pi(\bar{\pi}_{k+1},\bar{p}_{k}^{+}(\bar{\pi}_{k+1})),\bar{\pi}_{k+1},\bar{p}(\bar{\pi}_{k+1}))-\varphi_{p}(\pi(\bar{\pi}_{k+1},\bar{p}(\bar{\pi}_{k+1})),\bar{\pi}_{k+1},\bar{p}(\bar{\pi}_{k+1}))\\
\geq\  & \frac{r_{1}-\ell_{\pi p}}{2}\|\pi(\bar{\pi}_{k+1},\bar{p}_{k}^{+}(\bar{\pi}_{k+1}))-\pi(\bar{\pi}_{k+1},\bar{p}(\bar{\pi}_{k+1}))\|^{2}.
\end{split}
\label{eq:appen_key1}
\end{equation}
 Since $J_{\rho}(\cdot,p)$ is $\ell_{p\pi}$-smooth, namely $-J_{\rho}(\cdot,p)$
is $\ell_{p\pi}$-weakly convex w.r.t. $p$ and $-J_{\rho}(\cdot,p)$
is gradient dominace (\cref{lem:gradient_dominance}).
By~\cref{lem:p_grad_moreau}, there exists a constant $C_{\varphi p}>0$
such that
\begin{equation}
\begin{split} & \max_{\bar{p}}\varphi_{p}(\pi(\bar{\pi}_{k+1},\bar{p}_{k}^{+}(\bar{\pi}_{k+1})),\bar{\pi}_{k+1},\bar{p})-\varphi_{p}(\pi(\bar{\pi}_{k+1},\bar{p}(\bar{\pi}_{k+1})),\bar{\pi}_{k+1},\bar{p}(\bar{\pi}_{k+1}))\\
\leq\  & \max_{\bar{p}}\varphi_{p}(\pi(\bar{\pi}_{k+1},\bar{p}_{k}^{+}(\bar{\pi}_{k+1})),\bar{\pi}_{k+1},\bar{p})-\varphi_{p}(\pi(\bar{\pi}_{k+1},\bar{p}_{k}^{+}(\bar{\pi}_{k+1})),\bar{\pi}_{k+1},\bar{p}_{k}^{+}(\bar{\pi}_{k+1}))\\
\leq\  & C_{\varphi p}\|\nabla_{\bar{p}}\varphi_{p}(\pi(\bar{\pi}_{k+1},\bar{p}_{k}^{+}(\bar{\pi}_{k+1})),\bar{\pi}_{k+1},\bar{p}_{k}^{+}(\bar{\pi}_{k+1}))\|.
\end{split}
\label{eq:appen_key2}
\end{equation}
Next, we further bound the right-hand part.
\begin{equation}
\begin{split} & \|\nabla_{\bar{p}}\varphi_{p}(\pi(\bar{\pi}_{k+1},\bar{p}_{k}^{+}(\bar{\pi}_{k+1})),\bar{\pi}_{k+1},\bar{p}_{k}^{+}(\bar{\pi}_{k+1}))\|\\
& =\  r_{2}\|\bar{p}_{k}^{+}(\bar{\pi}_{k+1})-p(\pi(\bar{\pi}_{k+1},\bar{p}_{k}^{+}(\bar{\pi}_{k+1})),\bar{\pi}_{k+1},\bar{p}_{k}^{+}(\bar{\pi}_{k+1}))\|\\
& =\  r_{2}\|(1-\mu)\bar{p}_{k}+\mu p(\bar{\pi}_{k+1},\bar{p}_{k})-p(\bar{\pi}_{k+1},\bar{p}_{k}^{+}(\bar{\pi}_{k+1}))\|\\
&\leq\   r_{2}(1-\mu)\|\bar{p}_{k}-p(\bar{\pi}_{k+1},\bar{p}_{k})\|+r_{2}\|p(\bar{\pi}_{k+1},\bar{p}_{k})-p(\bar{\pi}_{k+1},\bar{p}_{k}^{+}(\bar{\pi}_{k+1}))\|\\
& \aleq\  r_{2}\left(\frac{1-\mu}{\mu}+\kappa_{5}\right)\|\bar{p}_{k}^{+}(\bar{\pi}_{k+1})-\bar{p}_{k}\|,
\end{split}
\label{eq:appen_key3}
\end{equation}
where $(a)$ holds by Item~\ref{enu:6} in~\cref{lem:lem2_un}
and the definition of $\bar{p}_{k}^{+}(\bar{\pi}_{k+1})$. Combing~\eqref{eq:appen_key1},~\eqref{eq:appen_key2},
and~\eqref{eq:appen_key3}, we have
\begin{equation*}
\|\pi(\bar{\pi}_{k+1},\bar{p}_{k}^{+}(\bar{\pi}_{k+1}))-\pi(\bar{\pi}_{k+1},\bar{p}(\bar{\pi}_{k+1}))\|^{2}\leq\frac{2}{(r_{1}-\ell_{\pi p})C_{\varphi p}}\left(\frac{r_{2}(1-\mu)}{\mu}+r_{2}\kappa_{5}\right)\|\bar{p}_{k}^{+}(\bar{\pi}_{k+1})-\bar{p}_{k}\|.
\end{equation*}

Inequality~\eqref{eq:appen_prop_ineq2}: Since $\phi_{p}(\pi;\bar{\pi})$
is $(r_{1}-\ell_{\pi p})$-strongly convex w.r.t. $\pi$, we have
\begin{equation*}
\psi_{p}(\pi(\bar{\pi},\bar{p});\bar{\pi})-\psi_{p}(\pi^{*}(\bar{\pi});\bar{\pi})\geq\frac{r_{1}-\ell_{\pi p}}{2}\norm{\pi^{*}(\bar{\pi})-\pi(\bar{\pi},\bar{p})}^{2}.
\end{equation*}
On the other hand, we have
\begin{equation*}
\begin{split} & \psi_{p}(\pi(\bar{\pi},\bar{p});\bar{\pi})-\psi_{p}(\pi^{*}(\bar{\pi});\bar{\pi})\\
 & =\psi_{p}(\pi(\bar{\pi},\bar{p});\bar{\pi})-\brbra{\max_{p\in\mcal P}J_{\rho}(\pi^{*}(\bar{\pi}),p)+\frac{r_{1}}{2}\norm{\pi^{*}(\bar{\pi})-\bar{\pi}}^{2}}\\
 & \aleq\psi_{p}(\pi(\bar{\pi},\bar{p});\bar{\pi})-\brbra{J_{\rho}(\pi(\bar{\pi},\bar{p}),p(\bar{\pi},p))+\frac{r_{1}}{2}\norm{\pi(\bar{\pi},\bar{p})-\bar{\pi}}^{2}}\\
 & =\max_{p\in\mcal P}J_{\rho}(\pi(\bar{\pi},\bar{p}),p)-J_{\rho}(\pi(\bar{\pi},\bar{p}),p(\bar{\pi},p)),
\end{split}
\end{equation*}
where $(a)$ holds by the definition of $\pi(\bar{\pi},\bar{p})$
and $\pi^{*}(\bar{\pi})$. It follows from~\cref{lem:gradient_dominance}
that
\begin{equation*}
\max_{p\in\mcal P}J_{\rho}(\pi(\bar{\pi},\bar{p}),p)-J_{\rho}(\pi(\bar{\pi},\bar{p}),p(\bar{\pi},\bar{p}))\leq\bar{D}_{p}\dist\brbra{\nabla_{p}J_{\rho}(\pi,p(\bar{\pi},\bar{p}))-\mcal N_{\mcal P}(p(\bar{\pi},\bar{p}))}\aleq\bar{D}_{p}\norm{r_{2}(\bar{p}-p(\bar{\pi},\bar{p}))},
\end{equation*}
where $(a)$ holds by $p(\bar{\pi},\bar{p})\in\argmax_{p\in\mcal P}J_{\rho}(\pi(\bar{\pi},\bar{p}),p)-\frac{r_{1}}{2}\norm{p-\bar{p}}^{2}$.
\end{proof}

\begin{lemma}
    For Lyapunov function $\Phi$, we have
    \begin{equation}\label{eq:upper_bound_Lya}
\Phi(\pi_{0},p_{0},\bar{\pi}_{0},\bar{p}_{0})-\underline{\chi}\leq L_{\pi}\norm{\pi_{0}-\pi^{*}}+L_{p}\norm{p_{0}-p^{*}}+L_{\pi}\norm{\pi(\bar{\pi}_{0})-\pi(p_{0},\bar{\pi}_{0},\bar{p}_{0})}+L_{p}\norm{p(\bar{\pi}_{0})-p_{0}}.
\end{equation}
\end{lemma}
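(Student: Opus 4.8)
The plan is to split $\Phi_0-\underline{\chi}$, with $\Phi_0:=\Phi(\pi_0,p_0,\bar{\pi}_0,\bar{p}_0)$, along the four ``gap'' blocks that make up the definition of the Lyapunov function and bound each one by a first-order term. Directly from the definition of $\Phi$,
\begin{align*}
\Phi_0-\underline{\chi}
&= \big[\chi(\pi_0,p_0,\bar{\pi}_0,\bar{p}_0)-\varphi_{\pi}(p_0,\bar{\pi}_0,\bar{p}_0)\big]
 + \big[\varphi_{\pi,p}(\bar{\pi}_0,\bar{p}_0)-\varphi_{\pi}(p_0,\bar{\pi}_0,\bar{p}_0)\big]\\
&\quad + \big[\varphi_{\pi,p,\bar{p}}(\bar{\pi}_0)-\varphi_{\pi,p}(\bar{\pi}_0,\bar{p}_0)\big]
 + \big[\varphi_{\pi,p,\bar{p}}(\bar{\pi}_0)-\underline{\chi}\big]
=: T_1+T_2+T_3+T_4,
\end{align*}
and every $T_i\ge 0$, since each $\varphi$-function is a value of a $\min$ or a $\max$ over one of its arguments. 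Two facts drive the bounds: the initialization $\pi_0=\bar{\pi}_0$ and $p_0=\bar{p}_0$, which annihilates every quadratic regularizer at the base point (in particular $\chi(\pi_0,p_0,\bar{\pi}_0,\bar{p}_0)=J_{\rho}(\pi_0,p_0)$), and \cref{lem:smoothmdp}, which provides the $L_{\pi}$-Lipschitz dependence of $J_{\rho}$ and of $\phi$ on $\pi$ and the $L_{p}$-Lipschitz dependence of $J_{\rho}$ on $p$.

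Each $T_i$ is then controlled by the same move: it is a difference of two ``optimal-value'' functions of $\chi$ (or of $\varphi_p,\psi_p$), so I substitute into the inner $\min$ (resp.\ $\max$) a convenient \emph{feasible} candidate whose quadratic penalty vanishes — the base iterate $\pi_0$ or $p_0$ — so that only a bare $J_{\rho}$-difference survives and the Lipschitz bounds apply. For example, $\varphi_{\pi}(p_0,\bar{\pi}_0,\bar{p}_0)=\chi(\pi(p_0,\bar{\pi}_0,\bar{p}_0),p_0,\bar{\pi}_0,\bar{p}_0)\ge J_{\rho}(\pi(p_0,\bar{\pi}_0,\bar{p}_0),p_0)$ gives $T_1\le J_{\rho}(\pi_0,p_0)-J_{\rho}(\pi(p_0,\bar{\pi}_0,\bar{p}_0),p_0)\le L_{\pi}\|\pi_0-\pi(p_0,\bar{\pi}_0,\bar{p}_0)\|$; writing $\varphi_{\pi,p}(\bar{\pi}_0,\bar{p}_0)=\varphi_{\pi}(p(\bar{\pi}_0,\bar{p}_0),\bar{\pi}_0,\bar{p}_0)$ (Sion's minimax theorem on the convex--concave regularized $\chi$) and cancelling the $\pi$-penalty against optimality of $\pi(p_0,\bar{\pi}_0,\bar{p}_0)$ gives $T_2\le J_{\rho}(\pi(p_0,\bar{\pi}_0,\bar{p}_0),p(\bar{\pi}_0,\bar{p}_0))-J_{\rho}(\pi(p_0,\bar{\pi}_0,\bar{p}_0),p_0)\le L_{p}\|p(\bar{\pi}_0,\bar{p}_0)-p_0\|$; $T_3$ is handled the same way after writing $\varphi_{\pi,p}(\bar{\pi}_0,\bar{p}_0)=\min_{\pi}\varphi_p(\pi,\bar{\pi}_0,\bar{p}_0)$ and feeding its minimizer into the $\min$ inside $\varphi_{\pi,p,\bar{p}}(\bar{\pi}_0)$, which leaves a bound of $L_{\pi}\|\pi_0-\pi^*\|$-type together with $L_{p}$ times a distance in $\mathcal{P}$.

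The crux is $T_4$. I would first simplify $\varphi_{\pi,p,\bar{p}}(\bar{\pi})$: repeating the Sion manipulation used in the proof of \cref{lem:lem7_un} (swapping the innermost $\min_\pi\max_p$, legitimate because $r_1,r_2$ are chosen so that $\chi$ is convex in $\pi$ and concave in $p$) gives $\varphi_{\pi,p,\bar{p}}(\bar{\pi})=\max_{p\in\mathcal{P}}\min_{\pi\in\Pi}\{J_{\rho}(\pi,p)+\tfrac{r_1}{2}\|\pi-\bar{\pi}\|^2\}$, hence $\varphi_{\pi,p,\bar{p}}(\bar{\pi})\le\phi(\bar{\pi})$ for $\bar{\pi}\in\Pi$ (take $\pi=\bar{\pi}$ in the inner $\min$). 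Minimizing over the free variable $\bar{\pi}$ and lower-bounding $\underline{\chi}$ by inserting $\bar{\pi}=\pi^{*}$, $\pi=\pi^{*}$, $p=p^{*}(\pi^{*})$ into the nested problem, I would pin $\underline{\chi}$ down (to $\phi(\pi^{*})$, modulo whatever duality gap the regularization leaves — this is the delicate step) and conclude $T_4\le\phi(\pi_0)-\underline{\chi}\le L_{\pi}\|\pi_0-\pi^{*}\|$ using the Lipschitzness of $\phi$ from \cref{lem:smoothmdp}. Adding the four bounds and identifying $\pi(\bar{\pi}_0)$, $p(\bar{\pi}_0)$ with the optimizers listed in \cref{tab:Notations} (and $p^{*}$ with the worst-case kernel) reproduces the claimed inequality. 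The main obstacle is precisely this $\underline{\chi}$ step: untangling a nested $\min_{\bar{\pi}}\max_{\bar{p}}\min_{\pi}\max_{p}$ of a genuinely nonconvex--nonconcave objective, in which only the innermost min--max collapses via Sion and the outer minimization over $\bar{\pi}$ has to be resolved through the global optimality of $\pi^{*}$ for $\phi$; a secondary nuisance is the sign bookkeeping of the leftover quadratic penalties, which is exactly why every substitution is made at a base iterate (penalty $=0$) or at $\pi^{*}$ (controlled by $\phi(\pi)\ge\phi(\pi^{*})$).
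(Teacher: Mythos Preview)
Your overall idea (use $\pi_0=\bar\pi_0$, $p_0=\bar p_0$ to kill the quadratic penalties and reduce every gap to a Lipschitz bound on $J_\rho$) is exactly the engine the paper uses, but your decomposition into $T_1+T_2+T_3+T_4$ is harder than necessary and leads you into two genuine difficulties that the paper avoids.

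First, the paper observes that in $\Phi$ the term $\varphi_{\pi,p}(\bar\pi,\bar p)$ appears once with a $+$ and once with a $-$ and therefore \emph{cancels}. Hence
\[
\Phi_0-\underline{\chi}
=\bigl[\chi(\pi_0,p_0,\bar\pi_0,\bar p_0)-\underline{\chi}\bigr]
+2\bigl[\varphi_{\pi,p,\bar p}(\bar\pi_0)-\varphi_{\pi}(p_0,\bar\pi_0,\bar p_0)\bigr],
\]
so only two pieces need to be bounded, not four. The second bracket is then handled by writing both quantities explicitly as $\chi$-values (at $(\pi(\bar\pi_0),p(\bar\pi_0),\bar\pi_0,\bar p(\bar\pi_0))$ and at $(\pi(p_0,\bar\pi_0,\bar p_0),p_0,\bar\pi_0,\bar p_0)$), telescoping in $\pi$ and $p$, and using the Lipschitz constants $L_\pi,L_p$; the leftover $\bar p$-difference has the favourable sign because $\bar p_0=p_0$. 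This is exactly where the third and fourth terms in \eqref{eq:upper_bound_Lya} come from. Your $T_1,T_2$ bounds are correct, but they produce $\|\pi_0-\pi(p_0,\bar\pi_0,\bar p_0)\|$ and $\|p(\bar\pi_0,\bar p_0)-p_0\|$, not the $\|\pi(\bar\pi_0)-\pi(p_0,\bar\pi_0,\bar p_0)\|$ and $\|p(\bar\pi_0)-p_0\|$ that appear in the lemma; the latter arise naturally from the paper's telescoping argument.

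Second, and more importantly, your handling of $T_4$ relies on proving $\underline{\chi}=\phi(\pi^*)$, i.e.\ strong duality for the underlying nonconvex--nonconcave RMDP. The paper sidesteps this entirely: it takes $(\pi^*,p^*,\bar\pi^*,\bar p^*)$ to be the optimizers of the \emph{nested regularized} problem defining $\underline{\chi}$, and uses first-order optimality of the unconstrained outer variables to get $\bar\pi^*=\pi^*$ and $\bar p^*=p^*$, whence $\underline{\chi}=\chi(\pi^*,p^*,\bar\pi^*,\bar p^*)=J_\rho(\pi^*,p^*)$. The bound $\chi_0-\underline{\chi}=J_\rho(\pi_0,p_0)-J_\rho(\pi^*,p^*)\le L_\pi\|\pi_0-\pi^*\|+L_p\|p_0-p^*\|$ then follows from Lipschitzness alone, with no duality argument. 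Your $T_3$ bound is also not justified: feeding $\pi(\bar\pi_0,\bar p_0)$ into the inner minimum of $\varphi_{\pi,p,\bar p}(\bar\pi_0)$ leaves $\phi(\pi(\bar\pi_0,\bar p_0))-J_\rho(\pi(\bar\pi_0,\bar p_0),p(\bar\pi_0,\bar p_0))$ plus a positive quadratic in $p$, which is not of the form you claim. The cancellation trick removes $T_3$ from the picture altogether.
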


\begin{proof}
Since $\Phi(\pi_{0},p_{0},\bar{\pi}_{0},\bar{p}_{0})-\underline{\chi}=\chi(\pi_{0},p_{0},\bar{\pi}_{0},\bar{p}_{0})-2\varphi_{\pi}(p_{0},\bar{\pi}_{0},\bar{p}_{0})+2\varphi_{\pi,p,\bar{p}}(\bar{\pi}_{0})-\chi(\pi^{*},p^{*},\bar{\pi}^{*},\bar{p}^{*})$,
then we have
\begin{equation*}
\begin{split} & \Phi(\pi_{0},p_{0},\bar{\pi}_{0},\bar{p}_{0})-\underline{\chi}\\
 & =\chi(\pi_{0},p_{0},\bar{\pi}_{0},\bar{p}_{0})-\chi(\pi^{*},p^{*},\bar{\pi}^{*},\bar{p}^{*})\\
 & \ \ +2\brbra{\varphi_{\pi,p,\bar{p}}(\bar{\pi}_{0})-\varphi_{\pi}(p_{0},\bar{\pi}_{0},\bar{p}_{0})}\\
 & =J_{\rho}(\pi_{0},p_{0})-J_{\rho}(\pi^{*},p^{*})+\frac{r_{1}}{2}\brbra{\norm{\pi_{0}-\bar{\pi}_{0}}^{2}-\norm{\pi^{*}-\bar{\pi}^{*}}^{2}}\\
 & \ \ -\frac{r_{2}}{2}\brbra{\norm{p_{0}-\bar{p}_{0}}^{2}-\norm{p^{*}-\bar{p}^{*}}^{2}}\\
 & \ \ +2\Brbra{\chi\brbra{\pi(\bar{\pi}_{0}),p(\bar{\pi}_{0}),\bar{\pi}_{0},\bar{p}(\bar{\pi}_{0})}-\chi\brbra{\pi(p_{0},\bar{\pi}_{0},\bar{p}_{0}),p_{0},\bar{\pi}_{0},\bar{p}_{0}}}
\end{split}
\end{equation*}
It is easy to know that $\pi^{*}=\bar{\pi}^{*},p^{*}=\bar{p}^{*}$
since $\bar{\pi}^{*}=\argmin_{\bar{\pi}}J_{\rho}(\pi^{*},p^{*})+\frac{r_{1}}{2}\norm{\pi^{*}-\bar{\pi}}^{2}$
and $\bar{p}^{*}=\argmax_{\bar{p}}J_{\rho}(\pi^{*},p^{*})-\frac{r_{2}}{2}\norm{p^{*}-\bar{p}^{*}}^{2}$.
Hence, we have
\begin{equation}
\begin{split} & \Phi(\pi_{0},p_{0},\bar{\pi}_{0},\bar{p}_{0})-\underline{\chi}\\
 & \leq L_{\pi}\norm{\pi_{0}-\pi^{*}}+L_{p}\norm{p_{0}-p^{*}}+\frac{r_{1}}{2}\norm{\pi_{0}-\bar{\pi}_{0}}^{2}\\
 & \ \ +2\Brbra{\chi\brbra{\pi(\bar{\pi}_{0}),p(\bar{\pi}_{0}),\bar{\pi}_{0},\bar{p}(\bar{\pi}_{0})}-\chi\brbra{\pi(p_{0},\bar{\pi}_{0},\bar{p}_{0}),p_{0},\bar{\pi}_{0},\bar{p}_{0}}}
\end{split}
\label{eq:append_upper1}
\end{equation}
For term $\chi\brbra{\pi(\bar{\pi}_{0}),p(\bar{\pi}_{0}),\bar{\pi}_{0},\bar{p}(\bar{\pi}_{0})}-\chi\brbra{\pi(p_{0},\bar{\pi}_{0},\bar{p}_{0}),p_{0},\bar{\pi}_{0},\bar{p}_{0}}$, we give upper bound as follows:
\begin{equation}
\begin{split} & \chi\brbra{\pi(\bar{\pi}_{0}),p(\bar{\pi}_{0}),\bar{\pi}_{0},\bar{p}(\bar{\pi}_{0})}-\chi\brbra{\pi(p_{0},\bar{\pi}_{0},\bar{p}_{0}),p_{0},\bar{\pi}_{0},\bar{p}(\bar{\pi}_{0})}\\
 & =\chi\brbra{\pi(\bar{\pi}_{0}),p(\bar{\pi}_{0}),\bar{\pi}_{0},\bar{p}(\bar{\pi}_{0})}-\chi\brbra{\pi(p_{0},\bar{\pi}_{0},\bar{p}_{0}),p(\bar{\pi}_{0}),\bar{\pi}_{0},\bar{p}(\bar{\pi}_{0})}\\
 & \ \ +\chi\brbra{\pi(p_{0},\bar{\pi}_{0},\bar{p}_{0}),p(\bar{\pi}_{0}),\bar{\pi}_{0},\bar{p}(\bar{\pi}_{0})}-\chi\brbra{\pi(p_{0},\bar{\pi}_{0},\bar{p}_{0}),p_{0},\bar{\pi}_{0},\bar{p}(\bar{\pi}_{0})}\\
 & \leq L_{\pi}\norm{\pi(\bar{\pi}_{0})-\pi(p_{0},\bar{\pi}_{0},\bar{p}_{0})}+L_{p}\norm{p(\bar{\pi}_{0})-p_{0}},
\end{split}
\label{eq:append_upper2}
\end{equation}
and 
\begin{equation}
\begin{split} & \chi\brbra{\pi(p_{0},\bar{\pi}_{0},\bar{p}_{0}),p_{0},\bar{\pi}_{0},\bar{p}(\bar{\pi}_{0})}-\chi\brbra{\pi(p_{0},\bar{\pi}_{0},\bar{p}_{0}),p_{0},\bar{\pi}_{0},\bar{p}_{0}}\\
 & =-\frac{r_{2}}{2}\norm{p_{0}-\bar{p}(\bar{\pi}_{0})}^{2}+\frac{r_{2}}{2}\norm{p_{0}-\bar{p}_{0}}^{2}.
\end{split}
\label{eq:append_upper3}
\end{equation}
Without loss of generality, taking $\bar{p}_{0}=p_{0},\bar{\pi}_{0}=\pi_{0}$
and combining~\eqref{eq:append_upper1},~\eqref{eq:append_upper2}
and~\eqref{eq:append_upper3} yields
\begin{equation*}
\Phi(\pi_{0},p_{0},\bar{\pi}_{0},\bar{p}_{0})-\underline{\chi}\leq L_{\pi}\norm{\pi_{0}-\pi^{*}}+L_{p}\norm{p_{0}-p^{*}}+L_{\pi}\norm{\pi(\bar{\pi}_{0})-\pi(p_{0},\bar{\pi}_{0},\bar{p}_{0})}+L_{p}\norm{p(\bar{\pi}_{0})-p_{0}}.
\end{equation*}

\end{proof}

\begin{lemma}
[Lemma 8 in~\citet{zheng2023universal}]\label{lem:lem8_un}Let $\varepsilon\geq0$.
Suppose that 
\[
\max\left\{ \frac{\|\pi_{k}-\pi_{k+1}\|}{\tau},\frac{\|p_{k}-p_{k}^{+}(\bar{\pi}_{k},\bar{p}_{k})\|}{\sigma},\frac{\|\bar{\pi}_{k}-\bar{\pi}_{k+1}\|}{\beta},\frac{\|\bar{p}_{k}-\bar{p}_{k+1}\|}{\mu}\right\} \leq\varepsilon.
\]
 Then, there exists a $\varpi>0$ such that $(\pi_{k+1},p_{k+1})$
is a $\varpi\varepsilon-$game stationary point.
\end{lemma}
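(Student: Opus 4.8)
The plan is to read off the first-order optimality conditions of the two projection steps in \cref{alg:dsgda}, rewrite them so that, up to a handful of residual vectors, the quantity lying in the normal cone is exactly the true gradient $\nabla_\pi J_\rho(\pi_{k+1},p_{k+1})$ (respectively $-\nabla_p J_\rho(\pi_{k+1},p_{k+1})$), and then bound each residual by a fixed multiple of $\varepsilon$ using the hypothesis together with the smoothness estimates already in hand. No envelope/differentiability machinery is needed: only the variational characterization of the projection and the Lipschitz-gradient bounds of \cref{lem:smoothmdp}.

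First I would treat the primal direction. The update $\pi_{k+1}=\proj_{\Pi}(\pi_k-\tau\nabla_\pi\chi(\pi_k,p_k,\bar\pi_k,\bar p_k))$, the variational inequality characterizing the projection, and the cone property of $\mcal N_{\Pi}$ together give, after expanding $\nabla_\pi\chi=\nabla_\pi J_\rho+r_1(\pi-\bar\pi)$,
\[
\tfrac{\pi_k-\pi_{k+1}}{\tau}-\nabla_\pi J_\rho(\pi_k,p_k)-r_1(\pi_k-\bar\pi_k)\in\mcal N_{\Pi}(\pi_{k+1}).
\]
Adding $\nabla_\pi J_\rho(\pi_{k+1},p_{k+1})$ to this element of the normal cone yields
\begin{align*}
&\dist\big(\zerobf,\nabla_\pi J_\rho(\pi_{k+1},p_{k+1})+\mcal N_{\Pi}(\pi_{k+1})\big)\\
&\quad\le\|\nabla_\pi J_\rho(\pi_{k+1},p_{k+1})-\nabla_\pi J_\rho(\pi_k,p_k)\|+\tfrac{\|\pi_k-\pi_{k+1}\|}{\tau}+r_1\|\pi_k-\bar\pi_k\|.
\end{align*}
The first term is $\le\ell_{\pi p}(\|\pi_{k+1}-\pi_k\|+\|p_{k+1}-p_k\|)$ by the joint Lipschitz-gradient bound of \cref{lem:smoothmdp}; the second is $\le\varepsilon$ by hypothesis; and for the third, the averaging rule $\bar\pi_{k+1}=\bar\pi_k+\beta(\pi_{k+1}-\bar\pi_k)$ gives $\pi_{k+1}-\bar\pi_k=(\bar\pi_{k+1}-\bar\pi_k)/\beta$, hence $\|\pi_k-\bar\pi_k\|\le\|\pi_k-\pi_{k+1}\|+\|\bar\pi_{k+1}-\bar\pi_k\|/\beta\le(\tau+1)\varepsilon$. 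The dual direction is entirely symmetric: from $p_{k+1}=\proj_{\mcal P}(p_k+\sigma\nabla_p\chi(\pi_{k+1},p_k,\bar\pi_k,\bar p_k))$ one obtains $\tfrac{p_k-p_{k+1}}{\sigma}+\nabla_p J_\rho(\pi_{k+1},p_k)-r_2(p_k-\bar p_k)\in\mcal N_{\mcal P}(p_{k+1})$, and adding $-\nabla_p J_\rho(\pi_{k+1},p_{k+1})$ bounds $\dist(\zerobf,-\nabla_p J_\rho(\pi_{k+1},p_{k+1})+\mcal N_{\mcal P}(p_{k+1}))$ by $\tfrac{\|p_k-p_{k+1}\|}{\sigma}+\ell_{p\pi}\|p_k-p_{k+1}\|+r_2\|p_k-\bar p_k\|$, with $\|p_k-\bar p_k\|\le\|p_k-p_{k+1}\|+\|\bar p_{k+1}-\bar p_k\|/\mu\le\|p_k-p_{k+1}\|+\mu\varepsilon$ from the averaging rule for $\bar p$.

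The one step that is not purely mechanical — and what I expect to be the main obstacle — is that the hypothesis controls $\|p_k-p_k^{+}(\bar\pi_k,\bar p_k)\|/\sigma$ rather than $\|p_k-p_{k+1}\|/\sigma$: the ``ideal'' dual step $p_k^{+}(\bar\pi_k,\bar p_k)$ uses the exact inner minimizer $\pi(p_k,\bar\pi_k,\bar p_k)$, whereas the algorithm uses the inexact point $\pi_{k+1}$. To bridge this gap I would invoke \cref{lem:lem4_un}, whose last assertion gives $\|p_{k+1}-p_k^{+}(\bar\pi_k,\bar p_k)\|\le\ell_{p\pi}\sigma\kappa_6\|\pi_k-\pi_{k+1}\|$; combined with the triangle inequality this yields $\|p_k-p_{k+1}\|\le\sigma(1+\ell_{p\pi}\kappa_6\tau)\varepsilon$ and hence $\|p_k-p_{k+1}\|/\sigma\le(1+\ell_{p\pi}\kappa_6\tau)\varepsilon$. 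Substituting these estimates into the two distance bounds above shows that both are at most an explicit constant — depending only on the fixed parameters $\tau,\sigma,\beta,\mu,r_1,r_2,\ell_{\pi p},\ell_{p\pi},\kappa_6$ — times $\varepsilon$; taking $\varpi$ to be the larger of the two constants makes $(\pi_{k+1},p_{k+1})$ a $\varpi\varepsilon$-game stationary point, which completes the proof.
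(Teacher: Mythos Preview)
Your argument is correct and is exactly the standard route: read off the first-order optimality conditions of the two projection updates, shift by the gradient at $(\pi_{k+1},p_{k+1})$, and bound the residual terms via the joint Lipschitz estimate of \cref{lem:smoothmdp}\ref{item:iv)}, the averaging identities $\pi_{k+1}-\bar\pi_k=(\bar\pi_{k+1}-\bar\pi_k)/\beta$ and $p_{k+1}-\bar p_k=(\bar p_{k+1}-\bar p_k)/\mu$, and \cref{lem:lem4_un}\ref{item:d)} to pass from $\|p_k-p_k^{+}(\bar\pi_k,\bar p_k)\|$ to $\|p_k-p_{k+1}\|$. The paper does not spell out its own proof of this lemma --- it is stated as Lemma~8 of \citet{zheng2023universal} and invoked directly --- so there is no alternative argument to compare against; your proof is precisely the one that statement is pointing to.

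One small slip: in the dual averaging step you write $\|\bar p_{k+1}-\bar p_k\|/\mu\le\mu\varepsilon$, but the hypothesis gives $\|\bar p_{k+1}-\bar p_k\|/\mu\le\varepsilon$, so the bound on $\|p_k-\bar p_k\|$ should end with $\varepsilon$ rather than $\mu\varepsilon$. This only affects the final constant $\varpi$, not the validity of the argument.
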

\subsection*{Proof of~\cref{thm:stationary_result}}
\begin{theorem}\label{thm:appen_stationary_thm}
Under the setting of ~\cref{prop:prop1_un} and suppose
$\beta=\mcal O(K^{-1/2})$, then for any $K>0$, there exists a $k\leq K$
such that $(\pi^{k+1},p^{k+1})$ is an $\mcal O\brbra{(D_{\Pi}^{1/2} + D_{\mcal P}^{1/2})/K^{1/4}}$ game stationary point
and $\bar{\pi}^{k+1}$ is an $\mcal O\brbra{(D_{\Pi}^{1/2} + D_{\mcal P}^{1/2})/K^{1/4}}$ optimal stationary point, where $D_{\Pi}$ and $D_{\mcal P}$ are the diameter of set $\Pi$ and $\mcal P$, respectively.
\end{theorem}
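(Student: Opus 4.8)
The plan is to turn the one-step estimate of~\cref{prop:appen_:prop1_un} into a genuine sufficient-decrease inequality by controlling the negative $\spadesuit$-term via~\cref{prop:appen_:prop2}, telescope it, bound the Lyapunov gap $\Phi_0-\underline\chi$ by the set diameters through~\eqref{eq:upper_bound_Lya}, and then pigeonhole to extract one index $k^\ast$ at which all four consecutive-iterate movements $\norm{\pi_{k+1}-\pi_k},\norm{p_k-p^+_k(\bar\pi_k,\bar p_k)},\norm{\bar\pi_{k+1}-\bar\pi_k},\norm{\bar p_{k+1}-\bar p_k}$ are simultaneously small; the game-stationarity claim then follows from~\cref{lem:lem8_un}, and the optimization-stationarity claim from a short triangle-inequality chain assembled from~\cref{lem:lem2_un,lem:lem4_un} and~\eqref{eq:appen_prop_ineq2}.

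By~\cref{prop:appen_:prop2}, $\spadesuit\le\omega\norm{\bar p^+_k(\bar\pi_{k+1})-\bar p_k}$, so Young's inequality $4r_1\beta\omega\,t\le\tfrac{r_2}{8\mu}t^2+\tfrac{32r_1^2\mu\omega^2}{r_2}\beta^2$ replaces the $-4r_1\beta\,\spadesuit$ term of~\cref{prop:appen_:prop1_un} by $-\tfrac{r_2}{8\mu}\norm{\bar p^+_k(\bar\pi_{k+1})-\bar p_k}^2-\mcal O(\beta^2)$. Using $\norm{\bar p^+_k(\bar\pi_{k+1})-\bar p_k}^2\ge\tfrac12\norm{\bar p_{k+1}-\bar p_k}^2-\norm{\bar p_{k+1}-\bar p^+_k(\bar\pi_{k+1})}^2$ together with the bound on $\norm{\bar p_{k+1}-\bar p^+_k(\bar\pi_{k+1})}^2$ by $\mu^2$ times small multiples of the other three movements (the very auxiliary estimates used at the end of the proof of~\cref{prop:appen_:prop1_un}), and the smallness of $\sigma,\beta,\mu$ imposed by the parameter window, all residual cross terms get absorbed and we are left with positive constants $c_0,\dots,c_4$ (depending only on the problem parameters) such that
\begin{equation*}
\Phi_k-\Phi_{k+1}\ \ge\ c_1\norm{\pi_{k+1}-\pi_k}^2+c_2\norm{p_k-p^+_k(\bar\pi_k,\bar p_k)}^2+\tfrac{c_3}{\beta}\norm{\bar\pi_{k+1}-\bar\pi_k}^2+\tfrac{c_4}{\mu}\norm{\bar p_{k+1}-\bar p_k}^2-c_0\beta^2 .
\end{equation*}

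Summing over $k=0,\dots,K-1$, using $\Phi_K\ge\underline\chi$ (each bracket in $\Phi-\underline\chi$ is nonnegative by construction of the nested $\min$/$\max$ problems) and $\Phi_0-\underline\chi\le 2L_\pi D_\Pi+2L_p D_{\mcal P}$ from~\eqref{eq:upper_bound_Lya} (each of its four distances lies inside $\Pi$ or $\mcal P$), I take $k^\ast$ to minimize the right-hand bracket; it is then at most $(\Phi_0-\underline\chi)/K+c_0\beta^2$. With $\beta=\Theta(K^{-1/2})$ this gives $\norm{\pi_{k^\ast+1}-\pi_{k^\ast}}/\tau,\ \norm{p_{k^\ast}-p^+_{k^\ast}}/\sigma,\ \norm{\bar p_{k^\ast+1}-\bar p_{k^\ast}}/\mu=\mcal O(K^{-1/2})$, while the slowest quantity $\norm{\bar\pi_{k^\ast+1}-\bar\pi_{k^\ast}}/\beta=\mcal O\brbra{(\Phi_0-\underline\chi)^{1/2}K^{-1/4}}=\mcal O\brbra{(D_\Pi^{1/2}+D_{\mcal P}^{1/2})K^{-1/4}}=:\vep$ (via $\sqrt{a+b}\le\sqrt a+\sqrt b$). \cref{lem:lem8_un} then yields that $(\pi_{k^\ast+1},p_{k^\ast+1})$ is a $\varpi\vep$-game stationary point. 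For the optimization-stationarity claim, $\prox_{\phi,r_1}(\bar\pi_{k^\ast+1})=\pi^\ast(\bar\pi_{k^\ast+1})$, and I bound $\norm{\pi^\ast(\bar\pi_{k^\ast+1})-\bar\pi_{k^\ast+1}}$ along the chain $\bar\pi_{k^\ast+1}\to\pi_{k^\ast+1}\to\pi(p_{k^\ast},\bar\pi_{k^\ast},\bar p_{k^\ast})\to\pi(\bar\pi_{k^\ast},\bar p_{k^\ast})\to\pi^\ast(\bar\pi_{k^\ast})\to\pi^\ast(\bar\pi_{k^\ast+1})$: the first gap equals $\tfrac{1-\beta}{\beta}\norm{\bar\pi_{k^\ast+1}-\bar\pi_{k^\ast}}\le\vep$; the next three gaps are $\mcal O(K^{-1/2})$ by~\cref{lem:lem2_un,lem:lem4_un}; the last gap is $\le\kappa_2\norm{\bar\pi_{k^\ast+1}-\bar\pi_{k^\ast}}=\mcal O(\beta\vep)$ by strong convexity of $\psi_p(\cdot\,;\bar\pi)$; and the single square-root gap, $\norm{\pi^\ast(\bar\pi_{k^\ast})-\pi(\bar\pi_{k^\ast},\bar p_{k^\ast})}^2\le\omega_1\norm{\bar p_{k^\ast}-p(\bar\pi_{k^\ast},\bar p_{k^\ast})}$ from~\eqref{eq:appen_prop_ineq2}, has its right side $\mcal O(K^{-1/2})$ because $\norm{\bar p_{k^\ast}-p(\bar\pi_{k^\ast},\bar p_{k^\ast})}$ is controlled (through $\bar p^+_{k^\ast}(\bar\pi_{k^\ast})$ and~\cref{lem:lem2_un,lem:lem4_un}) by the $\bar p$- and $p$-movements, which decay at rate $K^{-1/2}$; hence this gap too is $\mcal O\brbra{(D_\Pi^{1/2}+D_{\mcal P}^{1/2})K^{-1/4}}$, and summing gives $\norm{\prox_{\phi,r_1}(\bar\pi_{k^\ast+1})-\bar\pi_{k^\ast+1}}=\mcal O\brbra{(D_\Pi^{1/2}+D_{\mcal P}^{1/2})K^{-1/4}}$, i.e.\ $\bar\pi_{k^\ast+1}$ is an optimization stationary point of the claimed accuracy, for the same $k=k^\ast$.

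The delicate step is the first one: after injecting the extra $\mcal O(\beta^2)$ term and re-expressing $\norm{\bar p^+_k(\bar\pi_{k+1})-\bar p_k}^2$ through $\norm{\bar p_{k+1}-\bar p_k}^2$, one must recheck that $c_1,\dots,c_4$ remain strictly positive within the same parameter window of~\cref{prop:appen_:prop1_un} (the $\mu^2$-scaled absorption of $\norm{\bar p_{k+1}-\bar p^+_k(\bar\pi_{k+1})}^2$ into the $\pi$-, $p$- and $\bar\pi$-movements must stay below their positive coefficients, which is exactly what the smallness of $\sigma,\beta,\mu$ buys). A secondary subtlety, visible above, is that the square-root loss in~\eqref{eq:appen_prop_ineq2} would only deliver an $\mcal O(K^{-1/8})$ optimization-stationarity rate were $\norm{\bar p_{k^\ast}-p(\bar\pi_{k^\ast},\bar p_{k^\ast})}$ merely $\mcal O(K^{-1/4})$; the argument hinges on the sharper observation that all $p$- and $\bar p$-related movements decay at $\mcal O(K^{-1/2})$, so that only the $\bar\pi$-movement is the genuine $K^{-1/4}$ bottleneck that appears in the final rate.
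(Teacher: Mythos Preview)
Your proof is correct and takes a genuinely different route from the paper's. The paper argues by a case split: either there exists some $k\in[K-1]$ at which $4r_1\beta\,\spadesuit$ dominates $\tfrac12\iota$ (where $\iota$ is the largest of the four positive terms in \cref{prop:appen_:prop1_un}), in which case \cref{prop:appen_:prop2} forces $\|\bar p^+_k(\bar\pi_{k+1})-\bar p_k\|\le\varpi_1\beta$ directly and all movements are then shown to be $\mcal O(\beta)$ or $\mcal O(\beta^{1/2})$ at that $k$; or no such $k$ exists, in which case the $\spadesuit$ term is absorbed into $\iota/2$ at every step, yielding a clean decrease that telescopes. You instead apply Young's inequality once and for all to trade $4r_1\beta\,\spadesuit$ for half of the $\tfrac{r_2}{4\mu}\|\bar p^+_k(\bar\pi_{k+1})-\bar p_k\|^2$ term plus an additive $c_0\beta^2$ error, telescope uniformly, and pay the harmless $Kc_0\beta^2=\mcal O(1)$ overhead on the right.

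Your route is more streamlined---no bifurcation, and a single $k^\ast$ serves both stationarity claims---at the price of having to re-verify positivity of $c_1,\dots,c_4$ after the conversion $\|\bar p^+_k\|^2\to\|\bar p_{k+1}-\bar p_k\|^2$, which you correctly flag as the delicate step (it does go through within the parameter window of \cref{prop:appen_:prop1_un}, since the absorbed cross terms carry an extra factor of $\mu$). The paper's case split keeps the original Proposition coefficients intact but runs the optimization-stationarity chain separately in each case. Your chain for $\|\bar\pi_{k^\ast+1}-\pi^\ast(\bar\pi_{k^\ast+1})\|$ also differs slightly from the paper's \eqref{eq:appen_os_trans}: you pass through $\pi^\ast(\bar\pi_{k^\ast})$ and invoke the Lipschitz property of the prox map $\bar\pi\mapsto\pi^\ast(\bar\pi)$ (which follows from $(r_1-\ell_\pi)$-strong convexity of $\psi_p(\cdot\,;\bar\pi)$, just as in the proof of \cref{lem:lem2_un}(2)--(3)), whereas the paper keeps $\bar\pi_{k+1}$ fixed throughout and applies \eqref{eq:appen_prop_ineq2} with $(\bar\pi,\bar p)=(\bar\pi_{k+1},\bar p_k)$. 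Both decompositions work, and your closing observation that the $\bar p$- and $p$-related quantities inside the square root decay at $K^{-1/2}$ (not merely $K^{-1/4}$), so that only $\|\bar\pi_{k^\ast+1}-\bar\pi_{k^\ast}\|/\beta$ is the genuine bottleneck, is exactly what makes the optimization-stationarity rate match the game-stationarity rate.
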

\begin{proof}
It is easy to know that $\Phi(\pi,p,\bar{\pi},\bar{p})$ is lower
bounded by $\underline{\chi}$. Let 
\[
\iota:=\max\left\{ \frac{r_{1}}{32}\|\pi_{k+1}-\pi_{k}\|^{2},\frac{r_{2}}{15}\|p_{k}-p_{k}^{+}(\bar{\pi}_{k},\bar{p}_{k})\|^{2},\frac{r_{1}}{5\beta}\|\bar{\pi}_{k}-\bar{\pi}_{k+1}\|^{2},\frac{r_{2}}{4\mu}\|\bar{p}_{k}^{+}(\bar{\pi}_{k+1})-\bar{p}_{k}\|^{2}\ \right\} .
\]
 Then, we consider the following two cases separately: 

Case1: There exists $k\in[K-1]$ such that
\begin{equation*}
\frac{1}{2}\iota\leq4r_{1}\beta\|\pi(\bar{\pi}_{k+1},\bar{p}(\bar{\pi}_{k+1}))-\pi(\bar{\pi}_{k+1},\bar{p}_{k}^{+}(\bar{\pi}_{k+1}))\|^{2}.
\end{equation*}
It follows from ~\cref{prop:appen_:prop2} that 
\begin{equation*}
\|\bar{p}_{k}^{+}(\bar{\pi}_{k+1})-\bar{p}_{k}\|^{2}\leq\frac{32r_{1}\mu\beta}{r_{2}}\|\pi(\bar{\pi}_{k+1},\bar{p}(\bar{\pi}_{k+1}))-\pi(\bar{\pi}_{k+1},\bar{p}_{k}^{+}(\bar{\pi}_{k+1}))\|^{2}\leq\frac{32r_{1}\mu\beta\omega_{0}}{r_{2}}\|\bar{p}_{k}^{+}(\bar{\pi}_{k+1})-\bar{p}_{k}\|,
\end{equation*}
which implies that $\|\bar{p}_{k}^{+}(\bar{\pi}_{k+1})-\bar{p}_{k}\|\leq\varpi_{1}\beta$,
where $\ensuremath{\varpi_{1}:=32r_{1}\mu\omega_{0}/r_{2}}.$ Armed
with this, we can bound other terms as follows:
\begin{align*}
\frac{\|\pi_{k+1}-\pi_{k}\|^{2}}{\tau^{2}} & \leq\frac{256\beta}{\tau^{2}}\|\pi(\bar{\pi}_{k+1},\bar{p}(\bar{\pi}_{k+1}))-\pi(\bar{\pi}_{k+1},\bar{p}_{k}^{+}(\bar{\pi}_{k+1}))\|^{2}\\
 & \leq\frac{256\beta\omega_{0}}{\tau^{2}}\|\bar{p}_{k}^{+}(\bar{\pi}_{k+1})-\bar{p}_{k}\|=\varpi_{2}\beta^{2},\\
\frac{\|p_{k}-p_{k}^{+}(\bar{\pi}_{k},\bar{p}_{k})\|^{2}}{\sigma^{2}} & \leq\frac{120r_{1}\beta}{r_{2}\sigma^{2}}\|\pi(\bar{\pi}_{k+1},\bar{p}(\bar{\pi}_{k+1}))-\pi(\bar{\pi}_{k+1},\bar{p}_{k}^{+}(\bar{\pi}_{k+1}))\|^{2}\\
 & \leq\frac{120r_{1}\beta\omega_{0}}{r_{2}\sigma^{2}}\|\bar{p}_{k}^{+}(\bar{\pi}_{k+1})-\bar{p}_{k}\|=\varpi_{3}\beta,\\
\frac{\|\bar{\pi}_{k+1}-\bar{\pi}_{k}\|^{2}}{\beta^{2}} & \leq40\|\pi(\bar{\pi}_{k+1},\bar{p}(\bar{\pi}_{k+1}))-\pi(\bar{\pi}_{k+1},\bar{p}_{k}^{+}(\bar{\pi}_{k+1}))\|^{2}\leq40\omega_{0}\|z_{+}^{t}(\bar{p}_{k+1})-\bar{\pi}_{k}\|=\varpi_{4}\beta,\\
\frac{\|\bar{p}_{k+1}-\bar{p}_{k}\|^{2}}{\mu^{2}} & \leq\frac{2\|\bar{p}_{k}^{+}(\bar{\pi}_{k+1})-\bar{p}_{k}\|^{2}}{\mu^{2}}+2\Big(4\ell_{p\pi}^{2}\sigma^{2}\kappa_{6}^{2}\|\pi_{k}-\pi_{k+1}\|^{2}+4(\kappa_{8}+1)^{2}\|p_{k}-p_{k}^{+}(\bar{\pi}_{k},\bar{p}_{k})\|^{2}\Big)\\
 & \ \ +4\kappa_{3}^{2}\|\bar{\pi}_{k}-\bar{\pi}_{k+1}\|^{2}\\
 & \leq\frac{2\varpi_{1}^{2}}{\mu^{2}}\beta^{2}+8\ell_{p\pi}^{2}\sigma^{2}\kappa_{6}^{2}c^{2}\varpi_{2}\beta^{2}+8(\kappa_{8}+1)^{2}\sigma^{2}\varpi_{3}\beta^{2}+4\kappa_{3}^{2}\varpi_{3}\beta^{3}\leq\varpi_{5}\beta^{2},
\end{align*}
where $\varpi_{2}:=\frac{256\omega_{0}}{\tau^{2}}\varpi_{1}$, $\varpi_{3}:=\frac{120r_{1}\omega_{0}}{r_{2}\sigma^{2}}\varpi_{1}$,
$\varpi_{4}:=40\omega_{0}\varpi_{1}$ and $\varpi_{5}:=\frac{2\varpi_{1}^{2}}{\mu^{2}}+8\ell_{p\pi}^{2}\sigma^{2}\kappa_{6}^{2}\tau^{2}\varpi_{2}+8(\kappa_{8}+1)^{2}\sigma^{2}\varpi_{3}+4\kappa_{3}^{2}\varpi_{3}$.
According to~\cref{lem:lem8_un}, there exists a $\varpi>0$
such that $(\pi_{k+1},p_{k+1})$ is a $\varpi\varepsilon$-game stationary point, where
$\varepsilon=\max\{\sqrt{\varpi_{2}}\beta,\sqrt{\varpi_{3}}\beta,\sqrt{\varpi_{4}}\beta,\sqrt{\varpi_{5}}\beta^{\frac{1}{2}}\}=\mcal O(\beta^{1/2})$.
Next, we show that $\bar{\pi}_{k+1}$ is an $\mcal O(\beta^{1/2})$-optimization stationary point.
Note that
\begin{equation}
\begin{split} & \|\bar{\pi}_{k+1}-\pi^{*}(\bar{\pi}_{k+1})\|\\
\leq\  & \|\bar{\pi}_{k+1}-\bar{\pi}_{k}\|+\|\bar{\pi}_{k}-\pi_{k+1}\|+\|\pi_{k+1}-\pi(p_{k},\bar{\pi}_{k},\bar{p}_{k})\|+\|\pi(p_{k},\bar{\pi}_{k},\bar{p}_{k})-\pi(p(\bar{\pi}_{k},\bar{p}_{k}),\bar{\pi}_{k},\bar{p}_{k})\|\\
  &\ \ +\|\pi(\bar{\pi}_{k},\bar{p}_{k})-\pi(\bar{\pi}_{k+1},\bar{p}_{k})\|+\|\pi(\bar{\pi}_{k+1},\bar{p}_{k})-\pi^{*}(\bar{\pi}_{k+1})\|\\
\aleq\  & (1+\kappa_{2})\|\bar{\pi}_{k+1}-\bar{\pi}_{k}\|+\frac{\|\bar{\pi}_{k}-\bar{\pi}_{k+1}\|}{\beta}+\kappa_{6}\|\pi_{k}-\pi_{k+1}\|+\kappa_{1}\kappa_{8}\|p_{k}-p_{k}^{+}(\bar{\pi}_{k},\bar{p}_{k})\|\\
  &\ \ +\omega_{1}\|\bar{p}_{k}-p(\bar{\pi}_{k+1},\bar{p}_{k})\|^{\frac{1}{2}}\\
\bleq\  & (1+\kappa_{2})\sqrt{\varpi_{4}}\mcal O(\beta^{3/2})+\sqrt{\varpi_{4}}\mcal O(\beta^{1/2})+\left(\kappa_{6}\sqrt{\varpi_{2}}+\kappa_{1}\kappa_{8}\sqrt{\varpi_{3}}\right)\mcal O(\beta)+\omega_{1}\varpi_{1}\mcal O(\beta^{1/2})=\mcal O(\beta^{1/2}),
\end{split}
\label{eq:appen_os_trans}
\end{equation}
where $(a)$ holds by Item~\ref{enu:3} Item~\ref{enu:1} in~\cref{lem:lem2_un},
Item~\ref{item:a)} and Item~\ref{item:c)} in~\cref{lem:lem4_un}
and ~\cref{prop:appen_:prop2}, and $(b)$ holds by $(\pi_{k+1},p_{k+1})$
is a $\varpi\varepsilon$ game stationary point.

Case2: For any $k\in[K-1]$, we have 
\begin{equation*}
\frac{1}{2}\iota\geq4r_{1}\beta\|\pi(\bar{\pi}_{k+1},\bar{p}(\bar{\pi}_{k+1}))-\pi(\bar{\pi}_{k+1},\bar{p}_{k}^{+}(\bar{\pi}_{k+1}))\|^{2}.
\end{equation*}
Since 
\[
\begin{aligned} & \Phi(\pi_{k},p_{k},\bar{\pi}_{k},\bar{p}_{k})-\Phi(\pi_{k+1},p_{k+1},\bar{\pi}_{k+1},\bar{p}_{k+1})\\
\geq\  & \frac{r_{1}}{64}\|\pi_{k+1}-\pi_{k}\|^{2}+\frac{r_{2}}{30}\|p_{k}-p_{k}^{+}(\bar{\pi}_{k},\bar{p}_{k})\|^{2}+\frac{r_{1}}{10\beta}\|\bar{\pi}_{k}-\bar{\pi}_{k+1}\|^{2}+\frac{r_{2}}{8\mu}\|\bar{p}_{k}^{+}(\bar{\pi}_{k+1})-\bar{p}_{k}\|^{2}
\end{aligned}
\]
holds for $k\in[K-1]$ and $\Phi(\pi, p,\bar{\pi},\bar{p})\geq\underline{\chi}$,
we know that 
\[
\begin{aligned} & \Phi(\pi_{0},p_{0},\bar{\pi}_{0},\bar{p}_{0})-\underline{\chi}\\
  & \geq \sum_{k=0}^{K-1}\frac{r_{1}}{64}\|\pi_{k+1}-\pi_{k}\|^{2}+\frac{r_{2}}{30}\|p_{k}-p_{k}^{+}(\bar{\pi}_{k},\bar{p}_{k})\|^{2}+\frac{r_{1}}{10\beta}\|\bar{\pi}_{k}-\bar{\pi}_{k+1}\|^{2}+\frac{r_{2}}{8\mu}\|\bar{p}_{k}^{+}(\bar{\pi}_{k+1})-\bar{p}_{k}\|^{2}\\
&\geq  K\min\left\{ \frac{r_{1}\tau^{2}}{64},\frac{r_{2}\sigma^{2}}{30},\frac{r_{1}}{10},\frac{r_{2}\mu}{8}\right\} \left(\frac{\|\pi_{k+1}-\pi_{k}\|^{2}}{\tau^{2}}+\frac{\|p_{k}-p_{k}^{+}(\bar{\pi}_{k},\bar{p}_{k})\|^{2}}{\sigma^{2}}\right)\\
&\ \ + K\min\left\{ \frac{r_{1}\tau^{2}}{64},\frac{r_{2}\sigma^{2}}{30},\frac{r_{1}}{10},\frac{r_{2}\mu}{8}\right\} \left(\frac{\|\bar{\pi}_{k}-\bar{\pi}_{k+1}\|^{2}}{\beta}+\frac{\|\bar{p}_{k}^{+}(\bar{\pi}_{k+1})-\bar{p}_{k}\|^{2}}{\mu^{2}}\right).
\end{aligned}
\]
Since $\Phi(\pi, p,\bar{\pi},\bar{p})\geq\underline{\chi}$, there exists
a $k\in[K-1]$ such that 
\begin{equation*}
\begin{split} & \max\left\{ \frac{\|\pi_{k}-\pi_{k+1}\|^{2}}{\tau^{2}},\frac{\|p_{k}-p_{k}^{+}(\bar{\pi}_{k},\bar{p}_{k})\|^{2}}{\sigma^{2}},\frac{\|\bar{\pi}_{k}-\bar{\pi}_{k+1}\|^{2}}{\beta},\frac{\|\bar{p}_{k}-\bar{p}_{k+1}^{+}(\bar{\pi}_{k+1})\|^{2}}{\mu^{2}}\right\} \\
 & \leq\frac{\Phi(\pi_{0},p_{0},\bar{\pi}_{0},\bar{p}_{0})-\underline{\chi}}{K\min\left\{ \frac{r_{1}\tau^{2}}{64},\frac{r_{2}\sigma^{2}}{30},\frac{r_{1}}{10},\frac{r_{2}\mu}{8}\right\} }\aleq\frac{\eta}{K},
\end{split}
\end{equation*}
where $(a)$ holds by taking 
\begin{equation*}
\begin{aligned}
    \eta&:=\Brbra{\min\left\{ \frac{r_{1}\tau^{2}}{64},\frac{r_{2}\sigma^{2}}{30},\frac{r_{1}}{10},\frac{r_{2}\mu}{8}\right\}}^{-1}\brbra{L_{\pi}\norm{\pi_{0}-\pi^{*}}+L_{p}\norm{p_{0}-p^{*}}+L_{\pi}\norm{\pi(\bar{\pi}_{0})-\pi(p_{0},\bar{\pi}_{0},\bar{p}_{0})}+L_{p}\norm{p(\bar{\pi}_{0})-p_{0}}}\\
    &=\mcal O(D_{\Pi} + D_{\mcal P})
    \end{aligned}
\end{equation*} and~\eqref{eq:upper_bound_Lya}.
Note that 
\[
\begin{aligned}\frac{\|\bar{p}_{k+1}-\bar{p}_{k}\|^{2}}{\mu^{2}} & \leq\frac{2\|\bar{p}_{k}^{+}(\bar{\pi}_{k+1})-\bar{p}_{k}\|^{2}}{\mu^{2}}+4\kappa_{3}^{2}\|\bar{\pi}_{k}-\bar{\pi}_{k+1}\|^{2}\\
 & \ \ +2\Brbra{4\ell_{p\pi}^{2}\sigma^{2}\kappa_{6}^{2}\|\pi_{k}-\pi_{k+1}\|^{2}+4(\kappa_{8}+1)^{2}\|p_{k}-p_{k}^{+}(\bar{\pi}_{k},\bar{p}_{k})\|^{2}}\\
 & \leq\frac{\eta\left(2+8\ell_{p\pi}^{2}\sigma^{2}\kappa_{6}^{2}\tau^{2}+8(\kappa_{8}+1)^{2}\sigma^{2}+4\kappa_{3}^{2}\beta\right)}{K}\aleq\frac{\varpi_{6}}{K},
\end{aligned}
\]
where $\varpi_{6}=\eta\left(2+8\ell_{p\pi}^{2}\sigma^{2}\kappa_{6}^{2}\tau^{2}+8(\kappa_{8}+1)^{2}\sigma^{2}+4\kappa_{3}^{2} \right)$ and $(a)$ holds by $\beta<1$.
Thus, we know there exists a $\varpi>0$ such that $(\pi_{k+1},p_{k+1})$
is a $\varpi\varepsilon$ game stationary point, where $\varepsilon=\sqrt{\frac{\varpi_{6}}{K\beta}}$.
Similar argument as~\eqref{eq:appen_os_trans} shows that $\bar{\pi}^{k+1}$
is an $\mcal O(\omega_6^{1/2}K^{-1/2}\beta^{-1/2})$ optimal stationary point. 

Combining the two cases, we have the following conclusion: if we choose
$\beta=\mcal O(K^{-1/2})$, then optimal stationary point and game stationary point are coincide as the same
rate $\mcal O(\omega_6^{1/2}K^{-1/4})$. 
\end{proof}

\subsection*{Proof of~\cref{thm:global_opt} }
\begin{theorem}\label{thm:appen_global_opt}
    Under all settings of~\cref{thm:appen_stationary_thm}. Then for the sequence $\cbra{\bar{\pi}_k}_{k=1}^{K}$ generated by {\srpg}, there exists a $k<K$ such that $\bar{\pi}_{k+1}$ is $\mcal O\brbra{(D_{\Pi}^{1/2} + D_{\mcal P}^{1/2})/K^{1/4}}$ global optimal solution.
\end{theorem}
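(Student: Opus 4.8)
The plan is to combine the optimization-stationarity guarantee of \cref{thm:appen_stationary_thm} with the gradient-dominance-like property of the Moreau envelope established in \cref{thm:append_Moreau_smooth_grad_dom}. First, by \cref{thm:appen_stationary_thm}, under the prescribed schedule $\beta = \mcal O(K^{-1/2})$ there is an index $k < K$ for which $\bar{\pi}_{k+1}$ is an $\vep_K$-optimization stationary point, where $\vep_K = \mcal O\brbra{(D_{\Pi}^{1/2} + D_{\mcal P}^{1/2})/K^{1/4}}$; that is, $\norm{\prox_{\phi(\cdot), r_1}(\bar{\pi}_{k+1}) - \bar{\pi}_{k+1}} \le \vep_K$. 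The next step would be to translate this proximal residual into a bound on $\norm{\nabla \phi_{2\ell_{\pi}}(\bar{\pi}_{k+1})}$. Since $\phi$ is $\ell_{\pi}$-weakly convex and $2\ell_{\pi} > \ell_{\pi}$, the Moreau envelope $\phi_{2\ell_{\pi}}$ is continuously differentiable with $\nabla \phi_{2\ell_{\pi}}(\pi) = 2\ell_{\pi}\brbra{\pi - \prox_{\phi(\cdot), 2\ell_{\pi}}(\pi)}$; if one fixes $r_1 = 2\ell_{\pi}$ in the statement of \cref{thm:appen_stationary_thm} this identification is immediate, and otherwise a standard comparison of scaled proximal residuals across parameters $r_1, 2\ell_{\pi} > \ell_{\pi}$ for weakly convex functions gives $\norm{\nabla \phi_{2\ell_{\pi}}(\bar{\pi}_{k+1})} \le c_0 \vep_K$ with $c_0$ depending only on $r_1, \ell_{\pi}$ (hence on $A$, $\gamma$) and not on $K$.

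Then I would invoke \cref{thm:append_Moreau_smooth_grad_dom}: \cref{assu:irreduc} makes the distribution mismatch coefficient $D$ finite, so the constant $C_{\ell_{\pi}} = \frac{D\sqrt{SA}}{1-\gamma} + \frac{L_{\pi}}{2\ell_{\pi}}$ satisfies $\phi(\pi) \le C_{\ell_{\pi}}\norm{\nabla \phi_{2\ell_{\pi}}(\pi)} + \phi(\pi^{*})$ for all $\pi \in \Pi$. Evaluating at $\bar{\pi}_{k+1}$ and chaining with the residual bound would yield
\begin{equation*}
\phi(\bar{\pi}_{k+1}) - \phi(\pi^{*}) \le C_{\ell_{\pi}} c_0 \vep_K = \mcal O\brbra{(D_{\Pi}^{1/2} + D_{\mcal P}^{1/2})/K^{1/4}},
\end{equation*}
the hidden constant collecting $C_{\ell_{\pi}}$, $c_0$, and the absolute constants from \cref{thm:appen_stationary_thm}, all of which are $K$-independent. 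By the definition of an $\vep$-global optimal solution, this is exactly the claimed assertion, so $\bar{\pi}_{k+1}$ is an $\mcal O\brbra{(D_{\Pi}^{1/2} + D_{\mcal P}^{1/2})/K^{1/4}}$-global optimal solution.

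Most of the heavy lifting is already carried by \cref{thm:appen_stationary_thm} (the sufficient-descent analysis of the Lyapunov function together with the two-case argument) and by \cref{thm:append_Moreau_smooth_grad_dom}, so the only genuinely new work is the bookkeeping that the parameter mismatch between the proximal operator in the stationarity definition ($r_1$) and the Moreau envelope in the gradient-dominance inequality ($2\ell_{\pi}$) costs only a dimension-free multiplicative constant. I expect this to be the main, though routine, obstacle; the cleanest resolution is simply to take $r_1 = 2\ell_{\pi}$ in \cref{thm:appen_stationary_thm}, after which the global-optimality bound drops out by direct substitution into the gradient-dominance inequality, with no loss in the rate.
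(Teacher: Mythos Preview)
Your proposal is correct and follows essentially the same route as the paper: invoke \cref{thm:appen_stationary_thm} to obtain an $\mcal O\brbra{(D_{\Pi}^{1/2} + D_{\mcal P}^{1/2})/K^{1/4}}$ bound on the proximal residual at some $\bar{\pi}_{k+1}$, identify this residual with $\norm{\nabla \phi_{2\ell_{\pi}}(\bar{\pi}_{k+1})}$ up to a constant, and then apply the gradient-dominance-like inequality \cref{thm:append_Moreau_smooth_grad_dom}. You are in fact slightly more careful than the paper, which silently glosses over the $r_{1}$ versus $2\ell_{\pi}$ parameter mismatch that you explicitly flag and resolve.
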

\begin{proof}
    It follows from~\cref{thm:appen_stationary_thm} that there exists a $\bar{\pi}_{k+1}$ such that $\norm{\prox_{\phi,r_1}(\bar{\pi}_{k+1})-\bar{\pi}_{k+1}}=\mcal O\brbra{(D_{\Pi}^{1/2} + D_{\mcal P}^{1/2})/K^{1/4}}$. Recall the definition of $\nabla \phi_{2\ell_{\pi}}(\pi) = \ell_{\pi} (\prox_{\phi,2\ell_{\pi}}(\pi) - \pi)$, then combining the result and~\cref{thm:Moreau_smooth_grad_dom}, we have there exists a $k<K$ such that $\phi(\bar{\pi}_{k+1})-\phi^* = \mcal O\brbra{(D_{\Pi}^{1/2} + D_{\mcal P}^{1/2})/K^{1/4}}$. 
\end{proof}

\subsection{Proof of Results in Section~\ref{sec:reg_exp}}
\begin{lemma}
For function $\phi(\pi)$ and the corresponding Moreau Envelope function
$\phi_{1/\ell_{\pi}}(\pi)$, we have 

\begin{enumerate}[label=\Alph*)]

\item \label{item:A)}$\ensuremath{\phi(\pi)-\phi_{2\ell_{\pi}}(x)\geq\ell_{\pi}\norm{\pi-\tilde{\pi}(\pi)}^{2}/2}$,
where $\tilde{\pi}(\pi)\in\argmin_{\pi'\in\Pi}\{\phi(\pi')+\ell_{\pi}\norm{\pi'-\pi}^{2}\}$.

\item \label{item:B)} $\Pi^{*}=\Pi_{\ell_{\pi}}^{*}$ where $\Pi^{*}:=\argmin_{\pi\in\Pi}\phi(\pi)$
and $\Pi_{\ell_{\pi}}^{*}:=\argmin_{\pi\in\Pi}\phi_{2\ell_{\pi}}(\pi)$

\item \label{item:C)}$\phi(\pi^{*})=\phi_{2\ell_{\pi}}(\pi^{*})$
holds for any $\pi^{*}\in\Pi^{*}$.

\end{enumerate}
\end{lemma}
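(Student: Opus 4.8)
The plan is to prove the three items directly from the definition of the Moreau envelope $\phi_{2\ell_\pi}(\pi)=\inf_{\pi'\in\Pi}\{\phi(\pi')+\ell_\pi\|\pi'-\pi\|^2\}$ and the weak convexity of $\phi$ established in \cref{lem:smoothmdp}\ref{item:iii)}. Throughout I write $\tilde\pi(\pi)$ for a minimizer in this infimum; such a minimizer exists and is unique because $\pi'\mapsto\phi(\pi')+\ell_\pi\|\pi'-\pi\|^2$ is strongly convex (its the sum of the $\ell_\pi$-weakly convex $\phi$ and the $2\ell_\pi$-strongly convex quadratic, hence $\ell_\pi$-strongly convex) over the compact convex set $\Pi$.

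For item \ref{item:A)}: First I would plug $\pi'=\tilde\pi(\pi)$ into the definition to get $\phi_{2\ell_\pi}(\pi)=\phi(\tilde\pi(\pi))+\ell_\pi\|\tilde\pi(\pi)-\pi\|^2$. Rearranging gives $\phi(\pi)-\phi_{2\ell_\pi}(\pi)=\phi(\pi)-\phi(\tilde\pi(\pi))-\ell_\pi\|\tilde\pi(\pi)-\pi\|^2$. The remaining work is to show $\phi(\pi)-\phi(\tilde\pi(\pi))\ge \tfrac{3\ell_\pi}{2}\|\tilde\pi(\pi)-\pi\|^2$, which would yield the claimed bound $\tfrac{\ell_\pi}{2}\|\pi-\tilde\pi(\pi)\|^2$. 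This follows from strong convexity: the function $g(\pi')=\phi(\pi')+\ell_\pi\|\pi'-\pi\|^2$ is $\ell_\pi$-strongly convex with minimizer $\tilde\pi(\pi)$ on $\Pi$, so $g(\pi)\ge g(\tilde\pi(\pi))+\tfrac{\ell_\pi}{2}\|\pi-\tilde\pi(\pi)\|^2$; since $g(\pi)=\phi(\pi)$, this is exactly $\phi(\pi)\ge\phi(\tilde\pi(\pi))+\ell_\pi\|\tilde\pi(\pi)-\pi\|^2+\tfrac{\ell_\pi}{2}\|\pi-\tilde\pi(\pi)\|^2$, i.e. $\phi(\pi)-\phi_{2\ell_\pi}(\pi)\ge\tfrac{\ell_\pi}{2}\|\pi-\tilde\pi(\pi)\|^2$, as required.

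For items \ref{item:B)} and \ref{item:C)}: I would first observe $\phi_{2\ell_\pi}(\pi)\le\phi(\pi)$ always (take $\pi'=\pi$), and $\min_{\pi\in\Pi}\phi_{2\ell_\pi}(\pi)=\min_{\pi\in\Pi}\phi(\pi)=:\phi^*$, since on one hand $\phi_{2\ell_\pi}\le\phi$ gives $\min\phi_{2\ell_\pi}\le\phi^*$, and on the other hand $\phi_{2\ell_\pi}(\pi)=\phi(\tilde\pi(\pi))+\ell_\pi\|\tilde\pi(\pi)-\pi\|^2\ge\phi(\tilde\pi(\pi))\ge\phi^*$ for every $\pi$. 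For \ref{item:C)}, if $\pi^*\in\Pi^*$ then $\phi_{2\ell_\pi}(\pi^*)\le\phi(\pi^*)=\phi^*$, and by the lower bound just derived $\phi_{2\ell_\pi}(\pi^*)\ge\phi^*$, so equality holds. For \ref{item:B)}, the inclusion $\Pi^*\subseteq\Pi_{\ell_\pi}^*$ is immediate from \ref{item:C)} (any $\pi^*\in\Pi^*$ attains the minimum value $\phi^*$ of $\phi_{2\ell_\pi}$). Conversely, if $\bar\pi\in\Pi_{\ell_\pi}^*$ then $\phi_{2\ell_\pi}(\bar\pi)=\phi^*$, so by item \ref{item:A)}, $0\le\tfrac{\ell_\pi}{2}\|\bar\pi-\tilde\pi(\bar\pi)\|^2\le\phi(\bar\pi)-\phi_{2\ell_\pi}(\bar\pi)$; but also $\phi(\tilde\pi(\bar\pi))\le\phi(\tilde\pi(\bar\pi))+\ell_\pi\|\tilde\pi(\bar\pi)-\bar\pi\|^2=\phi_{2\ell_\pi}(\bar\pi)=\phi^*$, forcing $\tilde\pi(\bar\pi)\in\Pi^*$ and $\|\bar\pi-\tilde\pi(\bar\pi)\|=0$, hence $\bar\pi=\tilde\pi(\bar\pi)\in\Pi^*$.

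The main obstacle — really the only subtlety — is being careful that all the convexity arguments are performed over the constrained set $\Pi$ rather than all of $\mathbb{R}^{S\times A}$, so that the strong-convexity inequality $g(\pi)\ge g(\tilde\pi(\pi))+\tfrac{\ell_\pi}{2}\|\pi-\tilde\pi(\pi)\|^2$ is legitimate (it follows from the first-order optimality condition $\langle\nabla g(\tilde\pi(\pi))+\text{(normal cone term)},\,\pi-\tilde\pi(\pi)\rangle\ge0$ together with $\ell_\pi$-strong convexity of $g$). Everything else is a short chain of elementary inequalities. I would present items \ref{item:B)} and \ref{item:C)} together since they share the lemma $\min\phi_{2\ell_\pi}=\min\phi$, and derive \ref{item:A)} first since \ref{item:B)} uses it.
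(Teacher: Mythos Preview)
Your proof is correct. For Item~\ref{item:A)} you use exactly the same idea as the paper: the function $\pi'\mapsto\phi(\pi')+\ell_\pi\|\pi'-\pi\|^2$ is $\ell_\pi$-strongly convex, and the quadratic growth inequality at its minimizer $\tilde\pi(\pi)$, evaluated at $\pi'=\pi$, yields the bound.

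For Items~\ref{item:B)} and~\ref{item:C)} your route is genuinely different from the paper's, and cleaner. The paper argues each inclusion separately using the explicit gradient formula $\nabla\phi_{2\ell_\pi}(\pi)=2\ell_\pi(\pi-\tilde\pi(\pi))$: for $\Pi_{\ell_\pi}^*\subseteq\Pi^*$ it asserts that the gradient vanishes at a minimizer over $\Pi$ (which tacitly needs the observation that a constrained minimizer is also an unconstrained one), and for $\Pi^*\subseteq\Pi_{\ell_\pi}^*$ it invokes a weak-convexity lower bound and a further sandwiching argument to force $\tilde\pi(\pi^*)=\pi^*$. You bypass all of this by first establishing the one-line lemma $\min_{\Pi}\phi_{2\ell_\pi}=\min_{\Pi}\phi=\phi^*$ (from $\phi_{2\ell_\pi}\le\phi$ and $\phi_{2\ell_\pi}(\pi)\ge\phi(\tilde\pi(\pi))\ge\phi^*$), from which \ref{item:C)} and $\Pi^*\subseteq\Pi_{\ell_\pi}^*$ are immediate, and the reverse inclusion follows because $\phi_{2\ell_\pi}(\bar\pi)=\phi^*$ together with $\phi(\tilde\pi(\bar\pi))\ge\phi^*$ forces the quadratic term to vanish. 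Your argument never touches the gradient of the envelope or weak convexity beyond what is needed for~\ref{item:A)}, and it sidesteps the constrained-vs-unconstrained minimizer issue entirely. One minor note: your invocation of Item~\ref{item:A)} in the proof of the inclusion $\Pi_{\ell_\pi}^*\subseteq\Pi^*$ is not actually needed---the argument you give right after it is self-contained.
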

\begin{proof}
Item~\ref{item:A)}: Note that:
\begin{equation}
\begin{split} & \phi(\pi)-\phi_{2\ell_{\pi}}(\pi)\\
 & =\phi(\pi)+\ell_{\pi}\norm{\pi-\pi}^{2}-(\phi(\tilde{\pi}(\pi))+\ell_{\pi}\norm{\tilde{\pi}(\pi)-\pi}^{2})\\
 & \ageq\frac{\ell_{\pi}}{2}\norm{\tilde{\pi}(\pi)-x}^{2},
\end{split}
\label{eq:appen_smoothing_g_dom}
\end{equation}
where $(a)$ holds by $\phi(\pi')+\ell_{\pi}\norm{\pi'-\pi}^{2}$
is $\ell_{\pi}$-strongly convex with respect to $\pi'$.

Item~\ref{item:B)}: we consider $\Pi_{\ell_{\pi}}^{*}\subseteq\Pi^{*}$
and $\Pi^{*}\subseteq\Pi_{\ell_{\pi}}^{*}$, separately. $\Pi_{\ell_{\pi}}^{*}\subseteq\Pi^{*}$:
for any $\pi^{*}\in\argmin_{\pi\in\Pi}\phi_{2\ell_{\pi}}(\pi)$,
we have 
\begin{equation*}
\norm{\nabla\phi_{2\ell_{\pi}}(\pi^{*})}=4\ell_{\pi}^{2}\norm{\tilde{\pi}(\pi^{*})-\pi^{*}}^{2}=0,
\end{equation*}
 i.e., $\tilde{\pi}(\pi^{*})=\pi^{*}$, where $\tilde{\pi}(\pi^{*})\in\argmin_{\pi'\in\Pi}\{\phi(\pi')+\ell_{\pi}\norm{\pi'-\pi^{*}}^{2}\}$,
which implies that 
\begin{equation*}
\tilde{\pi}(\pi^{*})\in\argmin_{\pi'\in\Pi}\{\phi(\pi')\}.
\end{equation*}
 Hence $\argmin_{\pi\in\Pi}\phi_{2\ell_{\pi}}(\pi)\subseteq\argmin_{\pi\in\Pi}\phi(\pi)$. 

$\Pi^{*}\subseteq\Pi_{\ell_{\pi}}^{*}$: For any $\pi^{*}\in\argmin_{\pi\in\Pi}\phi(\pi)$,
we have 
\begin{equation}
\phi(\pi)\geq\phi(\pi^{*})-\ell_{\pi}\norm{\pi-\pi^{*}}/2,\label{eq:appen_temp1-1}
\end{equation}
Taking $\pi=\tilde{\pi}(\pi^{*})$, where $\tilde{\pi}(\pi^{*})\in\argmin_{\pi'\in\Pi}\{\phi(\pi')+L\norm{\pi'-\pi^{*}}^{2}\}$,
then we have
\begin{equation}
\phi(\tilde{\pi}(\pi^{*}))\geq\phi(\pi^{*})-\ell_{\pi}\norm{\tilde{\pi}(\pi^{*})-\pi^{*}}/2.\label{eq:appen_temp2}
\end{equation}
 It follows from the definition of $\tilde{\pi}(\pi^{*})$ that $\phi(\pi^{*})\geq\phi(\tilde{\pi}(\pi^{*}))+\ell_{\pi}\norm{\tilde{\pi}(\pi^{*})-\pi^{*}}^{2}$.
Putting~\eqref{eq:appen_temp1-1} and~\eqref{eq:appen_temp2} together, we have
\begin{equation*}
\phi(\pi^{*})-\ell_{\pi}\norm{\tilde{\pi}(\pi^{*})-\pi^{*}}/2\leq\phi(\tilde{\pi}(\pi^{*}))\leq\phi(\pi^{*})-\ell_{\pi}\norm{\tilde{\pi}(\pi^{*})-\pi^{*}}^{2}.
\end{equation*}
 Hence we have $\tilde{\pi}(\pi^{*})=\pi^{*}$, which implies that
$\norm{\nabla\phi_{2\ell_{\pi}}(\tilde{\pi}(\pi^{*}))}=\norm{\nabla\phi_{2\ell_{\pi}}(\pi^{*})}=4L^{2}\norm{\tilde{\pi}(\pi^{*})-\pi^{*}}^{2}=0$.
Note that
\begin{equation}
\phi_{2\ell_{\pi}}(\pi^{*})=\phi(\tilde{\pi}(\pi^{*}))+\ell_{\pi}\norm{\tilde{\pi}(\pi^{*})-\pi^{*}}^{2}=\phi(\pi^{*}),\label{eq:appen_equal}
\end{equation}
 and it follows from Lemma~\ref{thm:Moreau_smooth_grad_dom} that
\begin{equation}
\phi(\pi)-\phi(\pi^{*})\leq C_{\ell\pi}\norm{\nabla\phi_{2\ell_{\pi}}(\pi)},\ \ \forall\pi\in\Pi.\label{eq:appen_phi_gradient_dom}
\end{equation}
 Combining~\eqref{eq:appen_equal}, $\phi(\pi^{*})\leq\phi(\tilde{\pi}(\pi)),\forall\pi\in\Pi$
and the definition of $\phi_{2\ell_{\pi}}(\pi)$ yields
\begin{equation*}
0\leq\phi(\tilde{\pi}(\pi))+\ell_{\pi}\norm{\tilde{\pi}(\pi)-\pi}^{2}-\phi(\pi^{*})=\phi_{2\ell_{\pi}}(\pi)-\phi_{2\ell_{\pi}}(\pi^{*}),\ \ \forall\pi\in\Pi,
\end{equation*}
 which implies that $\pi^{*}\in\argmin_{\pi\in\Pi}\phi_{2\ell_{\pi}}(\pi)$.

Item~\ref{item:C)}: It follows from~\eqref{eq:appen_equal} that Item~\ref{item:C)}
holds.
\end{proof}
\subsection*{Proof of~\cref{lem:regional_exp_grow}}
\begin{lemma}
[Regional Exponential Growth]\label{lem:appen_regional_exp_grow}Suppose all assumptions of~\cref{thm:Moreau_smooth_grad_dom}
holds. Then for $\ell_{\pi}$-weakly convex function $\phi(\pi)$,
we have
\begin{equation*}
\phi(\pi)-\phi^{*}\geq\zeta\exp\Brbra{\dist(\pi,\Pi_{\ell_{\pi}}^{*}(\zeta))/C_{\ell\pi}},\ \ \pi\in\Pi\setminus\Pi^*_{\ell_{\pi}}(\zeta)
\end{equation*}
where $\Pi_{\ell_{\pi}}^{*}(\zeta)=\{x\mid\phi_{2\ell_{\pi}}(\pi)-\phi^{*}<\zeta\}$
and $\zeta>0$ is a constant.
\end{lemma}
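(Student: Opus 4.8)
The plan is to combine the gradient-dominance-like property of the Moreau envelope (Theorem~\ref{thm:Moreau_smooth_grad_dom}) with a differential-inequality / integration argument along a suitable path from $\pi$ to the set $\Pi_{\ell_{\pi}}^*(\zeta)$. The starting point is the chain of inequalities
\[
\phi_{2\ell_{\pi}}(\pi)\le \phi(\pi)\le C_{\ell_{\pi}}\norm{\nabla\phi_{2\ell_{\pi}}(\pi)}+\phi(\pi^*),
\]
which after rearranging gives, for the $C^1$ function $h(\pi):=\phi_{2\ell_{\pi}}(\pi)-\phi^*\ge 0$, the inequality $\norm{\nabla h(\pi)}\ge \big(\phi(\pi)-\phi^*\big)/C_{\ell_{\pi}}\ge h(\pi)/C_{\ell_{\pi}}$. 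This is exactly a gradient-dominance bound of the form $\norm{\nabla h}\ge h/C_{\ell_{\pi}}$, which is the $\theta=1$ (``exponential growth'') endpoint of the \L{}ojasiewicz-type spectrum; it is the natural source of an exponential, rather than polynomial, lower bound on function values.

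The key steps, in order, are as follows. First I would fix $\pi\in\Pi\setminus\Pi_{\ell_{\pi}}^*(\zeta)$ and consider the gradient flow (or, since $\phi_{2\ell_{\pi}}$ is smooth and $\Pi$ is convex, a projected gradient trajectory) $\dot{x}(t)=-\nabla h(x(t))$ started at $x(0)=\pi$, run until the first time $T$ that $x(T)\in\Pi_{\ell_{\pi}}^*(\zeta)$, i.e.\ $h(x(T))=\zeta$ (such a finite $T$ exists because $h$ strictly decreases along the flow while outside the set, and the flow is bounded in the compact set $\Pi$). Second, along this trajectory, $\tfrac{d}{dt}\sqrt{h(x(t))} = -\tfrac{\norm{\nabla h}^2}{2\sqrt{h}}\le -\tfrac{\norm{\nabla h}}{2}$ using $\norm{\nabla h}\ge h/C_{\ell_{\pi}}\ge \sqrt{h}\sqrt{\zeta}/C_{\ell_{\pi}}$ on the region $h\ge \zeta$; actually the cleanest route is to work directly with $\ln h$: $\tfrac{d}{dt}\ln h(x(t)) = -\norm{\nabla h}^2/h \le -\norm{\nabla h}/C_{\ell_{\pi}}$, since $\norm{\nabla h}/h\ge 1/C_{\ell_{\pi}}$. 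Third, integrate from $0$ to $T$:
\[
\ln h(\pi)-\ln\zeta \;=\; -\!\int_0^T \frac{d}{dt}\ln h(x(t))\,dt \;\ge\; \frac{1}{C_{\ell_{\pi}}}\int_0^T \norm{\nabla h(x(t))}\,dt \;=\; \frac{1}{C_{\ell_{\pi}}}\int_0^T \norm{\dot x(t)}\,dt \;\ge\; \frac{\dist(\pi,\Pi_{\ell_{\pi}}^*(\zeta))}{C_{\ell_{\pi}}},
\]
where the last inequality is because the trajectory is a path from $\pi$ to a point of $\Pi_{\ell_{\pi}}^*(\zeta)$, so its length dominates the distance. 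Exponentiating gives $h(\pi)=\phi_{2\ell_{\pi}}(\pi)-\phi^*\ge \zeta\exp\!\big(\dist(\pi,\Pi_{\ell_{\pi}}^*(\zeta))/C_{\ell_{\pi}}\big)$, and then $\phi(\pi)-\phi^*\ge \phi_{2\ell_{\pi}}(\pi)-\phi^*$ from item~\ref{item:A)} of the preceding lemma finishes the claim. (An alternative to the flow argument, if one wants to avoid ODE existence technicalities on a constrained domain, is a discrete version: iterate $x_{k+1}=\proj_\Pi(x_k-\eta\nabla h(x_k))$ with small $\eta$, use the descent $\ln h(x_{k+1})-\ln h(x_k)\lesssim -\eta\norm{\nabla h(x_k)}/C_{\ell_{\pi}}$, sum up, and bound $\sum_k\norm{x_{k+1}-x_k}\ge\dist$; I expect this to give the same bound up to a constant absorbed into $C_{\ell_{\pi}}$, but the continuous argument is cleaner and matches the constant in the statement.)

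The main obstacle I anticipate is the interplay with the constraint set $\Pi$: $\phi_{2\ell_{\pi}}$ is $C^1$ on all of $\mathbb{R}^{S\times A}$ but its unconstrained gradient flow could try to leave $\Pi$, and the relevant stationarity/gradient-dominance statement in Theorem~\ref{thm:Moreau_smooth_grad_dom} involves $\norm{\nabla\phi_{2\ell_{\pi}}(\pi)}$ with the envelope's infimum taken over $\pi'\in\Pi$. So I must be careful to either (i) use the projected-gradient trajectory and verify that $\tfrac{d}{dt}\ln h\le -\norm{\nabla h}^2/h$ still holds with the projected velocity (it does, since projection onto a convex set is non-expansive and $\langle\nabla h, \text{projected direction}\rangle$ still controls the decrease), or (ii) argue that $\phi_{2\ell_{\pi}}$ restricted to $\Pi$ inherits weak convexity/smoothness so the flow stays in $\Pi$. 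A secondary technical point is confirming that $\dist(\pi,\Pi_{\ell_{\pi}}^*(\zeta))$ is finite and that $\Pi_{\ell_{\pi}}^*(\zeta)$ is nonempty and closed, which follows since $\Pi$ is compact and $\phi_{2\ell_{\pi}}$ is continuous with $\Pi^*\subseteq\Pi_{\ell_{\pi}}^*(\zeta)$ (from item~\ref{item:C)} of the preceding lemma, $\phi_{2\ell_{\pi}}(\pi^*)=\phi^*<\phi^*+\zeta$). Everything else is routine integration and the already-established envelope inequalities.
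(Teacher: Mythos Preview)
Your proposal is correct and follows essentially the same route as the paper: work with the smooth Moreau envelope $h=\phi_{2\ell_\pi}-\phi^*$, use Theorem~\ref{thm:Moreau_smooth_grad_dom} to get $\norm{\nabla h}\ge h/C_{\ell_\pi}$, run a gradient flow from $\pi$ until it hits $\Pi_{\ell_\pi}^*(\zeta)$, integrate $\ln h$ along the trajectory, and bound the arc length below by the distance. The only cosmetic difference is that the paper flows along $-\nabla g$ with $g=\ln h$ (so the integrand is $\norm{\nabla g}^2$ directly), whereas you flow along $-\nabla h$ and analyze $\ln h$ along that trajectory; both yield the same inequality with the same constant, and the paper likewise does not treat the constraint-set issue you flag any more carefully than you do.
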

\begin{proof}
Our proof is similar to~\citet{bolte2017error,karimi2016linear}. 
Define the function $g(\pi)=\ln\Brbra{\phi_{2\ell_{\pi}}(\pi)-\phi^{*}},\pi\in\Pi\setminus\Pi_{\ell_{\pi}}^{*}(\zeta)$,
we have 
\begin{equation}
\norm{\nabla g(\pi)}=\Bnorm{\frac{\nabla\phi_{2\ell_{\pi}}(\pi)}{\phi_{2\ell_{\pi}}(\pi)-\phi^{*}}}\ageq\Bnorm{\frac{\nabla\phi_{2\ell_{\pi}}(\pi)}{\phi(\pi)-\phi^{*}}}\bgeq\frac{1}{C_{\ell\pi}},\label{eq:appen_g_below}
\end{equation}
where $(a)$ holds by Item~\ref{item:A)} and $(b)$ follows from~\cref{thm:Moreau_smooth_grad_dom}.
For any point $\pi\in\Pi\setminus\Pi^*_{\ell_{\pi}}(\zeta)$, consider
the following differential equation: 
\begin{align*}
\frac{d\pi(t)}{dt} & =-\nabla g(\pi(t)),\\
x(t=0) & =\pi_{0},\ \ \pi(t)\in\Pi\setminus\Pi_{\ell_{\pi}}^{*}(\zeta)
\end{align*}
By~\eqref{eq:appen_g_below}, $\norm{\nabla g}$ is bounded from below,
and as $g$ is also bounded from below $\ln\zeta$, since $\pi\in\Pi\setminus\Pi_{\ell_{\pi}}^{*}(\zeta)$.
Thus, by moving along the path defined by above, we are sufficiently
reducing and will eventually reach the set $\Pi_{\ell_{\pi}}^{*}(\zeta)$.
Hence, there exists a $T$ such that $\pi(t)\in\Pi_{\ell_{\pi}}^{*}(\zeta),\forall t \leq T$. Now,
we can show this by using the steps 
\begin{equation*}
\begin{split}g(\pi_{0})-g(\pi_{T}) & =\int_{\pi_{T}}^{\pi_{0}}\inner{\nabla g(x)}{dx}=-\int_{\pi_{0}}^{\pi_{T}}\inner{\nabla g(x)}{dx}\\
 & =-\int_{_{0}}^{T}\inner{\nabla g(\pi(t))}{\frac{d\pi(t)}{dt}}dt=\int_{0}^{T}\norm{\nabla g(\pi(t))}^{2}dt
\end{split}
\end{equation*}
The length of the orbit $\pi(t)$ starting at $\pi_{0}$, which will
be denoted by $\mcal L(\pi_{0})$ is given by 
\begin{equation*}
\mcal L(\pi_{0})=\int_{0}^{T}\norm{d\pi(t)/dt}dt=\int_{0}^{T}\norm{\nabla g(\pi(t))}dt\geq\dist(\pi_{0},\Pi_{\ell_{\pi}}^{*}(\zeta)).
\end{equation*}
Hence, we have
\begin{equation*}
\begin{split}g(\pi_{0})-g(\pi_{T}) = \int_{0}^{T}\norm{\nabla g(\pi(t))}^{2}dt
 \geq\frac{1}{C_{\ell\pi}}\int_{0}^{T}\norm{\nabla g(\pi(t))}dt
 \geq\dist(\pi_{0},\Pi_{\ell_{\pi}}^{*}(\zeta))/C_{\ell\pi}.
\end{split}
\end{equation*}
 Since $g(\pi_{T})=\ln\zeta$, this yields 
\begin{equation*}
\ln\Brbra{\phi_{2\ell_{\pi}}(\pi_{0})-\phi^{*}}=g(\pi_{0})\geq\ln\zeta+\dist(\pi_{0},\Pi_{\ell_{\pi}}^{*}(\zeta))/C_{\ell\pi}.
\end{equation*}
 The above result implies that for any $\pi\in\Pi\setminus\Pi_{\ell_{\pi}}^{*}(\zeta)$,
we have 
\begin{equation*}
\phi(\pi)-\phi^{*}\geq\phi_{2\ell_{\pi}}(\pi)-\phi^{*}\geq\zeta\exp\Brbra{\dist(\pi,\Pi_{\ell_{\pi}}^{*}(\zeta))/C_{\ell\pi}}.
\end{equation*}
\end{proof}

\subsection*{Proof of~\cref{prop:region_growth}}
\begin{proposition}
    Suppose all the assumptions of Lemma~\ref{thm:Moreau_smooth_grad_dom} holds. Then for the sequence $\cbra{\bar{\pi}_k}$ generated by {\srpg}, we have 
    \begin{equation*}
        \dist(\bar{\pi}_k, \Pi^*_{\ell_{\pi}}(\vep)) = \mcal O(\log \rbra{(D_{\pi}^{1/2} + D_{\mcal P}^{1/2})k^{-0.25}\vep^{-1})},\forall \bar{\pi}_k\in \Pi\setminus \Pi^*_{\ell_{\pi}}(\vep). 
    \end{equation*}
\end{proposition}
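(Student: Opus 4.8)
The plan is to combine the convergence rate from \cref{thm:appen_stationary_thm} with the regional exponential growth property in \cref{lem:appen_regional_exp_grow} taken at $\zeta = \vep$. Recall that \cref{thm:appen_stationary_thm} guarantees that for the sequence generated by {\srpg} with $\beta = \mcal O(K^{-1/2})$, there exists an index $k<K$ such that $\bar\pi_{k+1}$ is an $\mcal O\brbra{(D_\Pi^{1/2}+D_{\mcal P}^{1/2})/K^{1/4}}$-optimization stationary point, and more precisely (via \cref{thm:appen_global_opt} and the identity $\nabla\phi_{2\ell_\pi}(\pi)=\ell_\pi(\prox_{\phi,2\ell_\pi}(\pi)-\pi)$) that $\phi(\bar\pi_{k+1})-\phi^* = \mcal O\brbra{(D_\Pi^{1/2}+D_{\mcal P}^{1/2})/K^{1/4}}$. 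So first I would record this: for some $k<K$, writing $k$ in place of $K$ for the general iterate index via the same argument applied to the prefix $\{\bar\pi_0,\dots,\bar\pi_{k}\}$, we have $\phi(\bar\pi_k)-\phi^* = \mcal O\brbra{(D_\Pi^{1/2}+D_{\mcal P}^{1/2})k^{-1/4}}$.

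Next, for any $\bar\pi_k \in \Pi\setminus\Pi_{\ell_\pi}^*(\vep)$, apply \cref{lem:appen_regional_exp_grow} with the constant $\zeta = \vep$:
\begin{equation*}
\phi(\bar\pi_k)-\phi^* \geq \vep \exp\Brbra{\dist(\bar\pi_k,\Pi_{\ell_\pi}^*(\vep))/C_{\ell_\pi}}.
\end{equation*}
Rearranging by taking logarithms on both sides (both sides are positive since $\bar\pi_k\notin\Pi_{\ell_\pi}^*(\vep)$ forces $\phi(\bar\pi_k)-\phi^*\geq\vep>0$) gives
\begin{equation*}
\dist(\bar\pi_k,\Pi_{\ell_\pi}^*(\vep)) \leq C_{\ell_\pi}\log\Brbra{\frac{\phi(\bar\pi_k)-\phi^*}{\vep}}.
\end{equation*}
Substituting the bound $\phi(\bar\pi_k)-\phi^* = \mcal O\brbra{(D_\Pi^{1/2}+D_{\mcal P}^{1/2})k^{-1/4}}$ into the right-hand side and absorbing the constant $C_{\ell_\pi}$ into the $\mcal O(\cdot)$ yields the claimed estimate
\begin{equation*}
\dist(\bar\pi_k,\Pi_{\ell_\pi}^*(\vep)) = \mcal O\Brbra{\log\Brbra{(D_\Pi^{1/2}+D_{\mcal P}^{1/2})k^{-1/4}\vep^{-1}}}.
\end{equation*}

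The main subtlety — and what I expect to be the only real obstacle — is the bookkeeping needed to assert that the $\mcal O\brbra{(D_\Pi^{1/2}+D_{\mcal P}^{1/2})k^{-1/4}}$ bound on $\phi(\bar\pi_k)-\phi^*$ holds for a running index $k$ rather than only at the terminal horizon $K$; this requires either re-running the argument of \cref{thm:appen_stationary_thm} on each prefix (noting that the Lyapunov-descent telescoping argument and the upper bound on $\Phi_0-\underline\chi$ are horizon-independent, so the step-size/extrapolation choice $\beta = \mcal O(k^{-1/2})$ can be read off at each $k$), or observing that the descent inequality in \cref{prop:appen_:prop1_un} already produces such a $k$ within any window. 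A secondary point to handle carefully is that \cref{lem:appen_regional_exp_grow} is only valid on $\Pi\setminus\Pi_{\ell_\pi}^*(\vep)$, which is exactly the hypothesis of the proposition, so no extra case analysis is needed there. Everything else is a one-line logarithmic rearrangement, so the proof is short once the first ingredient is invoked correctly.
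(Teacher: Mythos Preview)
Your proposal is correct and follows essentially the same approach as the paper: set $\zeta=\vep$ in \cref{lem:appen_regional_exp_grow}, combine with the $\mcal O\brbra{(D_\Pi^{1/2}+D_{\mcal P}^{1/2})k^{-1/4}}$ bound on $\phi(\bar\pi_k)-\phi^*$ from \cref{thm:appen_global_opt}, and take logarithms. In fact, the paper's proof simply writes the combined inequality with $k^{0.25}$ in the denominator without addressing the ``running index versus terminal horizon'' issue you flag, so your treatment of that subtlety is more careful than the original.
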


\begin{proof}
    Take $\zeta=\vep$ in~\cref{lem:appen_regional_exp_grow}, and combine it with the result in~\cref{thm:appen_global_opt}, we have there exist a constant $\mathfrak{C}$ such that
    \begin{equation}
        \vep\cdot \exp\Brbra{\dist(\bar{\pi}_k,\Pi_{\ell_\pi}^*(\vep))/C_{\ell_{\pi}}}\leq \frac{\mathfrak{C}\cdot (D_{\Pi}^{1/2} + D_{\mcal P}^{1/2})}{k^{0.25}},
    \end{equation}
    which implies the finally result $\dist(\bar{\pi}_k,\Pi_{\ell_\pi}^*(\vep))=\mcal O\Brbra{ \log\brbra{(D_{\Pi}^{1/2} + D_{\mcal P}^{1/2}) k^{-0.25}\vep^{-1}}}$ for $\bar{\pi}_k \in \Pi\setminus \Pi^*_{\ell_{\pi}}(\vep)$.
\end{proof}
\section{Experiment Details\label{sec:appendix_num}}

\subsection{Inventory management problem}
We parameterize nominal transition kernel by  $\theta\in \mbb R^{m}$ and $\lambda\in \mbb R^{n}_{++}$. The $L_1$-norm ambiguity set is defined by
{\footnotesize
\begin{equation*}
\Xi=\left\{ (\theta,\lambda)\vert \begin{array}{c}
\theta\in\mbb R^{m},\lambda\in\mbb R_{++}^{n},\\
\norm{\theta-\theta_{c}}_{1}\leq\kappa_{\theta},\norm{\lambda-\lambda_{c}}_{1}\leq\kappa_{\lambda}
\end{array}\right\}, 
\end{equation*}
}
where $\theta_c,\lambda_c,\kappa_{\theta}$ and $\kappa_{\lambda}$ are pre-specified parameters.
For any $(s, a, s')$, we parameterize transition kernel with:
$$p_{sas'}^{\xi}:=\frac{\bar{p}_{sas'}\cdot\exp(\theta^{\top}\phi_{\theta}(s')/(\lambda^{\top}\phi_{\lambda}(s,a)))}{\sum_{s'\in\mcal S}\bar{p}_{sas'}\cdot\exp(\theta^{\top}\phi_{\theta}(s')/(\lambda^{\top}\phi_{\lambda}(s,a)))},$$
where $\xi:=(\theta,\lambda)$, $\theta\in\mbb R^{m},\phi_{\theta}(s)\in\mbb R^{m},\lambda\in\mbb R^{n}_{++},\phi_{\lambda}(s,a)\in\mbb R^{n}$, $m$ and $n$ are hyperparameters. We term function $\phi_{\theta}(\cdot)$
as state featuring function and $\phi_{\lambda}(\cdot)$ as state-action featuring function. We use radial-type features~\cite{sutton2018reinforcement}:
\begin{equation*}
\begin{aligned}
        [\phi_{\theta}(s)]_i &=\frac{1}{\sqrt{2\pi}\sigma_{\theta}}\exp{\Bcbra{-\frac{\norm{s-[c_\theta]_i}^2}{2\sigma_{\theta}^2}}},\forall i\in [m],\\
    [\phi_{\lambda}(s,a)]_i &=\frac{1}{\sqrt{2\pi}\sigma_{\lambda}}\exp{\Bcbra{-\frac{\norm{s-[c_{\lambda,s}]_i}^2 + \norm{a - [c_{\lambda,a}]_i}^2}{2\sigma_{\lambda}^2}}},\forall i \in [n].
\end{aligned}
\end{equation*}

Based on the above model, we give the specific parameter setting in our experiments: $S = 8, A = 3, b = 5, \gamma = 0.95, \theta_c = [0.4,0.9]^\top\in \mbb R^2, \lambda_c = [0.7,0.6]^\top\in \mbb R^2$, every state is a two dimension vector (storing and selling), we set $\mcal S:=\bcbra{(0.25, 1.3), (0.5, -2.1), (0.75, 3.4), (1,-1), (0.25, 2.5), (0.5, 0.5), (0.75,1.8), (1,-0.8)}$. Every action (ordering) is one dimension, we set $\mcal A:=\cbra{-3,-1,5}$. We set $\kappa_{\lambda} = \kappa_\theta = 1, [c_\theta]_1 = [-1, 2]^{\top}, [c_\theta]_2 = [0.3, -0.6]^\top, [c_{\lambda,s}]_1 = [1.3,2.1]^\top, [c_{\lambda,s}]_2 = [-0.7,1.5]^\top, [c_{\lambda,a}]_1 = 1, [c_{\lambda,a}]_2 = 0.5, \sigma_\theta = 1, \sigma_\lambda = 2$.

Now, we give more details on the gradient.
Let $\lambda_{sa}=(\lambda^{\top}\phi_{\lambda}(s,a)),\partial\lambda_{sa}/\partial\lambda_{i}=[\phi_{\lambda}(s,a)]_{i}$, then the partial gradient are
\begin{equation}
    \begin{aligned}\label{eq:theta_lambda_mid}
        \frac{\partial\log p_{sas'}^{\xi}}{\partial\theta_{i}}&=	\Brbra{\frac{\phi_{\theta i}(s')}{\lambda_{sa}}-\sum_{k}p_{sak}^{\xi}\cdot\frac{\phi_{\theta i}(k)}{\lambda_{sa}}}\\
\frac{\partial\log p_{sas'}^{\xi}}{\partial\lambda_{i}}&=	\Brbra{\Brbra{\sum_{k}p_{sak}^{\xi}\cdot\frac{\theta^{\top}\phi_{\theta}(k)}{\lambda_{sa}^{2}}}-\frac{\theta^{\top}\phi(s')}{\lambda_{sa}^{2}}}\cdot[\phi_{\lambda}(s,a)]_{i}
    \end{aligned}
\end{equation}

Moreover, we have
\begin{equation}\label{eq:J_partial_xi}
\frac{J_{\rho}(\pi,\xi)}{\partial\xi}=\frac{1}{1-\gamma}\mbb E_{s\sim d_{\rho}^{\pi,\xi},
a\sim\pi_{s},
s'\sim p_{sas'}}\Bsbra{\frac{\partial\log p_{sas'}^{\xi}}{\partial\xi}\brbra{c_{sas'}+\gamma v_{s'}^{\pi,\xi}}},
\end{equation}
where $d_{\rho}^{\pi,\xi}=(1-\gamma)\sum_{s\in\mcal S}\sum_{t=0}^{\infty}\gamma^{t}\rho(s)p_{ss'}^{\pi}(t)$ and $\xi:=(\theta, \lambda)$.
Combining~\eqref{eq:theta_lambda_mid} and~\eqref{eq:J_partial_xi} yields
\begin{equation*}
    \begin{aligned}
        \frac{\partial J_{\rho}(\pi,\xi)}{\partial \theta_i} &= \frac{1}{1-\gamma}\sum_{s\in \mcal S} d^{\pi,\xi}_s\sum_{a}\pi_{sa}\sum_{s'\in\mcal S}p_{sas'}^{\xi}\Bsbra{\Brbra{\frac{\phi_{\theta i}(s')}{\lambda_{sa}}- \sum_{k\in \mcal S} p_{sak}^{\xi} \cdot \frac{\phi_{\theta i}(k)}{\lambda_{sa}}}\cdot\brbra{c_{sas'}+\gamma v_{s'}^{\pi,\xi}}}\\
        \frac{\partial J_{\rho}(\pi,\xi)}{\partial \lambda_{sa}} &=  \frac{1}{1-\gamma}\sum_{s\in \mcal S} d^{\pi,\xi}_s\sum_{a}\pi_{sa}\sum_{s'\in\mcal S}p_{sas'}^{\xi} \Bsbra{\Brbra{\Brbra{\sum_{k\in \mcal S}p_{sak}^{\xi}\cdot\frac{\theta^{\top}\phi_{\theta}(k)}{\lambda_{sa}^{2}}}-\frac{\theta^{\top}\phi(s')}{\lambda_{sa}^{2}}}\cdot \phi_{\lambda i}(sa)\cdot \brbra{c_{sas'}+\gamma v_{s'}^{\pi,\xi}}}
    \end{aligned}
\end{equation*}

On the other hand, we have $\frac{\partial J_{\rho}(\pi, p)}{\partial \pi_{sa}} = \frac{1}{1-\gamma}\cdot d_{\rho}^{\pi, p}(s)\cdot q_{sa}^{\pi, p}$, where $d_\rho^{\pi,p}$ and $q^{\pi,p}$ are calculated by the following:
\begin{equation}
    \begin{aligned}
        d_\rho^{\pi,p} &= (1-\gamma)\cdot (I - \gamma P_{\pi})^{-1} \rho, \text{ $P_{\pi}$ is the state transition matrix under policy $\pi$}\\
        q_{sa}^{\pi, p} &= \mbb E_{\pi, p} \Bsbra{\frac{1}{1- \gamma}\cdot c_{sas}\mid s_0 = s, a_0 = a}.
    \end{aligned}
\end{equation}

Furthermore, for parameterized policy experiments, we consider the following softmax parameterized method:
\begin{equation}
    \pi_{sa}=\frac{\exp(w^\top \phi_{\lambda}(s,a))}{\sum_{a'\in \mcal A}\exp(w^\top \phi_{\lambda}(s,a'))}.
\end{equation}

\end{document}